\documentclass[reqno,11pt]{amsart}
\usepackage{tikz,genyoungtabtikz,enumitem,rotating,latexsym,bm,stmaryrd}
\usetikzlibrary{positioning,intersections}
\usepackage{amsmath,amsthm,amsfonts,amssymb,hyperref
  }
\usepackage{mathrsfs}
\usepackage{caption,cases}
\usepackage{lipsum,wasysym}
\usepackage{mathtools}
\usetikzlibrary{arrows,calc,through,backgrounds,matrix,decorations.pathmorphing}
\usepackage[a4paper,margin=2.3cm]{geometry}
\usepackage{cleveref}
\usepackage{xcolor, comment}
\usepackage{scalefnt}
\usepackage{xspace} 

\allowdisplaybreaks

\newcommand{\concat}{\circledast}

\newcommand{\DQ}{{\mathsf{DQ}}}
\DeclareMathOperator{\PSSYT}{PSSYT}
\newcommand{\PP}[3]{\mathscr{P}\!\mathscr{P}^#2_#1(#3)}

\renewcommand{\boxtimes}{\otimes}
\renewcommand{\epsilon}{\varepsilon}

\newtheorem{thm}{Theorem}[section]
\newtheorem{cor}[thm]{Corollary}
\newtheorem{lem}[thm]{Lemma}
\newtheorem{prop}[thm]{Proposition}
\newtheorem*{thmA}{Theorem~A}
\newtheorem*{thmB}{\TheoremB}
\newtheorem*{thmC}{\TheoremC}
\newtheorem*{thmD}{\TheoremD}

\theoremstyle{remark}
\newtheorem{rmk}[thm]{Remark}

\theoremstyle{definition}
\newtheorem{defn}[thm]{Definition}
\newtheorem{eg}[thm]{Example}

\crefname{defn}{Definition}{Definitions}
\crefname{thm}{Theorem}{Theorems}
\crefname{prop}{Proposition}{Propositions}
\crefname{lem}{Lemma}{Lemmas}
\crefname{cor}{Corollary}{Corollaries}
\crefname{conj}{Conjecture}{Conjectures}
\crefname{section}{Section}{Sections}
\crefname{subsection}{Subsection}{Subsections}
\crefname{eg}{Example}{Examples}
\crefname{figure}{Figure}{Figures}
\crefname{rem}{Remark}{Remarks}
\crefname{rmk}{Remark}{Remarks}
\crefname{equation}{equation}{equation}

\newcommand{\ten}{10}
\newcommand{\eleven}{11}
\newcommand{\elevena}{12}
\newcommand{\elevenb}{13}
\newcommand{\elevenc}{14}
\newcommand{\elevend}{15}
\newcommand{\Cee}{{p}}
\newcommand{\Dee}{{q}}

\numberwithin{equation}{section}

\newcommand{\pgen}{{\sf p}}
\newcommand{\egen}{e}
\newcommand{\sgen}{s}


\newcommand{\stt}{\mathsf{t}}  



\newcommand{\W}{{\mathfrak{S}}}

\newcommand{\Hom}{\operatorname{Hom}}
\newcommand{\End}{\operatorname{End}}

\newcommand{\Sym}{\W}

\newcommand{\sgn}{\operatorname{sgn}}

\newcommand{\ZZ}{{\mathbb Z}}
\newcommand{\NN}{{{\mathbb N}}}

\newcommand{\la}{{\lambda}}

\renewcommand{\le}{\leqslant}
\renewcommand{\ge}{\geqslant}
\newcommand{\GL}{\mathrm{GL}}

\newcommand{\Id}{\mathrm{Id}}
 
\newcommand{\rightspecht}[1]{{{\bf S}^{#1}}}
\newcommand{\leftspecht}[1]{{{\bf S}_{#1}}}

\renewcommand{\P}{\mathrm{P}}
\newcommand{\C }{\mathbb{C}}

\newcommand{\N}{\mathrm{N}}

\newcommand{\Stab}{\operatorname{Stab}}

\renewcommand{\C}{\mathbb{C}}

\newcommand{\ParSet}{\mathscr{P}}
\newcommand{\DMParSet}{\mathscr{M\!P}^\star}
\newcommand{\MParSet}{\mathscr{M\!P}}
\newcommand{\Pdelta}{P_r(\delta_{\rm in}  \delta_{\rm out} )}

\newcommand{\Fdelta}{\Delta_r( \alpha^\beta)}

\hyphenation{tab-le-aux }

\usetikzlibrary{decorations.pathreplacing}

\newcounter{thmlistcnt}
\newenvironment{thmlist}%
	{\setcounter{thmlistcnt}{0}%
	\begin{list}{\emph{(\roman{thmlistcnt})}}{%
		\usecounter{thmlistcnt}%
		\setlength{\topsep}{0pt}%
		\setlength{\leftmargin}{0pt}%
		\setlength{\itemsep}{0pt}%
		\setlength{\labelwidth}{17pt}
		\setlength{\itemindent}{30pt}}%
	}%
	{\end{list}}%

\DeclareMathOperator{\Inf}{Inf} 

\newcommand{\Ind}{\bigl\uparrow}
\newcommand{\ind}{{\uparrow}}
\newcommand{\Res}{\bigl\downarrow}
\newcommand{\res}{{\downarrow}}

\newcommand{\ram}{R}

\newcommand{\Std}{\mathrm{Std }}

\let\originalleft\left
\let\originalright\right
\def\left#1{\mathopen{}\originalleft#1}
\def\right#1{\originalright#1\mathclose{}}

\renewcommand{\ge}{\geqslant}

\renewcommand{\ge}{\geqslant}
\renewcommand{\geq}{\geqslant}
\renewcommand{\le}{\leqslant}
\renewcommand{\leq}{\leqslant}

\renewcommand{\preceq}{\preccurlyeq}

\newcommand{\one}{{$\bullet$}}
\newcommand{\two}{{$\bullet$}}
\newcommand{\three}{{$\bullet$}}
\newcommand{\four}{{$\bullet$}}
\newcommand{\five}{{$\bullet$}}
\newcommand{\six}{{$\bullet$}}

\newcounter{i}
\newcounter{j}

\newcommand{\pyoung}[3]{
	\begin{tikzpicture}[line width=0.6pt, x=#1,y=-#2]
		\pyoungInner{#3}
	\end{tikzpicture}
}

\newcommand{\pyoungInner}[1]{
	\setcounter{i}{0}
	\setcounter{j}{0}
	\foreach \tRow in #1 {
		\addtocounter{i}{1}
		\setcounter{j}{0}
		\foreach \tEntry in \tRow {
			\addtocounter{j}{1}
			\node at (\value{j}+0.5, \value{i}+0.5) {\tEntry};
			\draw (\value{j},\value{i})--(\value{j},\value{i}+1);
		}
		\draw (\value{j}+1,\value{i})--(\value{j}+1,\value{i}+1);
		\draw (1,\value{i})--(\value{j}+1,\value{i});
		\draw (1,\value{i}+1)--(\value{j}+1,\value{i}+1);
	}
}




\DeclareMathOperator{\IndInfRes}{IndInfRes}

\newcommand{\barone}{{$\bullet$}}
\newcommand{\bartwo}{{$\bullet$}}
\newcommand{\barthree}{{$\bullet$}}
\newcommand{\barfour}{{$\bullet$}}
\newcommand{\barfive}{{$\bullet$}}
\newcommand{\barsix}{{$\bullet$}}
\newcommand{\barseven}{{$\bullet$}}
\newcommand{\bareight}{{$\bullet$}}

\def\ignore#1{\relax}

\def\ignore#1{\relax}

\newcommand{\rc}{{\overline{rc}}}
 \newcommand{\Tmnkr}[4]{{T}{\bigl((#2-#3,#3),#1\bigr)_{#4}}}

\newcommand\encircle[1]{%
  \tikz[baseline=(X.base)] 
    \node (X) [draw, shape=circle, inner sep=0] {\strut #1};}

\newcommand{\ForPrmn}{_{P_r(mn)}}
\newcommand{\ForPrmnLow}{{\raisebox{-6pt}{$\scriptstyle P_r(mn)$}}}
\newcommand{\deltaP}{\epsilon}

\newcommand{\lambdaP}{\gamma}
\newcommand{\muP}{\pi}

\newcommand{\TheoremA}{Theorem~\hyperlink{thmA}{A}\xspace}
\newcommand{\TheoremB}{Theorem~\hyperlink{thmB}{B}\xspace}
\newcommand{\TheoremC}{Theorem~\hyperlink{thmC}{C}\xspace}
\newcommand{\TheoremD}{Theorem~\hyperlink{thmD}{D}\xspace}

\renewcommand{\underline}{\relax} 

\begin{document}
\title[The partition algebra and the plethysm coefficients, II] {The partition algebra and the plethysm coefficients  II: Ramified plethysm } 
\author{Chris Bowman, Rowena Paget and Mark Wildon}
\address{Department of Mathematics,  University of York, Heslington, York,  UK}
\email{Chris.Bowman-Scargill@york.ac.uk}
\address{School of Mathematics, Statistics and Actuarial Science, University of Kent, CT2 7NF, UK}
\email{R.E.Paget@kent.ac.uk}
\address{Heilbronn Institute for Mathematical Research, School of Mathematics, University of Bristol, Woodland Road, Bristol BS8 1UG, UK}
\email{mark.wildon@bristol.ac.uk}
\maketitle
 
\thispagestyle{empty}

\begin{abstract}
The plethysm coefficient $p(\nu, \mu, \lambda)$ is the multiplicity of the Schur function $s_\lambda$ in the plethysm product $s_\nu \circ s_\mu$. In this paper we use Schur--Weyl duality between wreath products of symmetric groups and the ramified partition algebra to interpret an arbitrary plethysm coefficient as the multiplicity of an appropriate composition factor in the restriction of a  module for the ramified partition algebra to the partition algebra. This result implies new stability phenomenon for plethysm coefficients when the first parts of $\nu$, $\mu$ and $\lambda$ are all large. In particular, it gives the first positive formula  in the case when $\nu$ and $\lambda$ are arbitrary and $\mu$ has one part. Corollaries include new explicit positive formulae and combinatorial interpretations for the plethysm coefficients $p((n-b,b), (m), (mn-r,r))$, and $p((n-b,1^b), (m), (mn-r,r))$ when $m$ and $n$ are large.
\end{abstract}
 
\section{Introduction}

Understanding the {\sf plethysm coefficients} is a fundamental problem in the representation theories of symmetric and general linear groups.
It was identified by R. Stanley as one of the most important open problems in algebraic combinatorics \cite{MR1754784}.    
Beyond pure mathematics, the plethysm coefficients 
arise in quantum information theory \cite{MR2421478,MR2745569} and 
are central objects  in geometric complexity theory (GCT), 
an approach that seeks to settle the $\mathrm{P} \neq \mathrm{NP}$ problem. 
The importance of  plethysm coefficients  in GCT derives from their frequent appearance in   formulas for multiplicities in coordinate rings: the orbit of the product of variables \cite[Section 9]{MR3729273}, the permanent polynomial \cite[Equation (5.5.2)]{MR2861717}, 
the power sum polynomial  and  the unit tensor \cite[Introduction]{MR4171310}.  
Thus  the problems of both {\em calculating} and  {\em bounding}   plethysm  coefficients are of fundamental importance  in  GCT. 

One of the most effective ways to study  plethysm  coefficients is through  their stability phenomena 
 (for example, these stabilities were used to spectacular effect in \cite{MR3868002}). 
  Throughout this paper  we set  $\alpha{[d]} = (d-|\alpha|, \alpha_1, \alpha_2, \dots, \alpha_{\ell(\alpha)})$
and we write  $p(\beta{[n]}, \alpha{[m]},  \kappa{[mn]})$ for the plethysm coefficient
$\langle s_{\beta{[n]}} \circ s_{\alpha{[m]}}\, , \,s_{\kappa{[mn]}} \rangle$ defined using the plethysm
product of Schur functions.
  Our original motivation  was to study the {\em stable plethysm coefficients}, 
defined by  
\[
\lim_{m,n\to \infty} p(\beta{[n]}, (m) ,  \kappa{[mn]}).
\]
(This stability is proven in  \cite{MR1190119,MR1037395}.)   
In the very special case
when $\beta = \varnothing$,
the stable coefficients  
 are amongst the most celebrated and well-understood  of all plethysm coefficients:
 they satisfy  the stable version of  Foulkes'  conjecture \cite[Theorem 4.3.1]{MR1651092} 
  and 
 they 
 were an important stepping stone in the resolution of Weintraub's conjecture \cite{MR1651092,MR2745569}. 
Moreover,
there is a positive combinatorial formula  for their calculation 
\cite[Theorem 4.1.1]{MR1651092} (see also \cite[Theorem B]{MR4756467}). 
Our first main result is a vast generalisation of this formula 
to the case when the partition $\beta$ is arbitrary.

\begin{thmA}\label{thmA}\hypertarget{thmA}
Let $\beta\vdash b$ and $\kappa \vdash r$ be partitions such that 
$\beta[n]$ and $\kappa[mn]$ are also partitions.
The plethysm coefficient 
 $p(\beta[n], (m), \kappa[mn])$ is constant for
$m \ge r - |\beta| + [\beta \not= \varnothing]$ and $n \ge r + \beta_1$
and its value is
\[  \sum_{\begin{subarray}c
\Cee,\,\Dee\, : \, \Cee+\Dee \,=\, r  \\
\gamma=  
   ( \Cee^{c_\Cee}, \dots ,  1^{c_1} ) \vdash \Cee,\, \ell(\gamma) = b 
 \\
 \beta^i \vdash c_i \text{ for }  1\leq i \leq p 
   \\
   \deltaP \in \ParSet_{>1}(\Dee)
\end{subarray}}
c^\beta_{\beta^\Cee, \ldots, \beta^1}
\Bigl[ \Bigl(\bigl( \hskip0.5pt \bigotimes_{i=1}^{\Cee}
\Inf_{\W_{c_i}}^{\W_i \wr \W_{c_i}} 
\rightspecht{\beta^i} \bigr) \Ind^{\W_\Cee} \, \otimes\, \C_{{\rm Stab}(\deltaP)}\Ind^{\W_\Dee} \Bigr)\Ind^{\W_r} :
\rightspecht{\kappa}
\Big]_{\W_r}.
\]
\end{thmA}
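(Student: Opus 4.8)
\emph{Overview and reduction to wreath products.} The plan is to rewrite the plethysm coefficient as a multiplicity inside an induced wreath-product module, to transport that multiplicity through Schur--Weyl duality with the ramified partition algebra, to use the generic semisimplicity of the partition algebra and of the ramified partition algebra as the source of the stability, and finally to evaluate the resulting parameter-free branching multiplicity combinatorially; this last step is where essentially all of the work lies. To begin, since $s_{(m)}$ is the Frobenius characteristic of the trivial character of $\W_m$, the classical description of plethysm with a complete homogeneous symmetric function gives
\[
p(\beta[n],(m),\kappa[mn]) \;=\; \Bigl\langle\ \bigl(\Inf_{\W_n}^{\W_m\wr\W_n}\rightspecht{\beta[n]}\bigr)\Ind^{\W_{mn}}\ ,\ \rightspecht{\kappa[mn]}\ \Bigr\rangle_{\W_{mn}} ,
\]
where $\W_m\wr\W_n\le\W_{mn}$ acts on $n$ blocks of size $m$; so the theorem amounts to an explicit decomposition of this induced module for $m\ge r-|\beta|+[\beta\neq\varnothing]$ and $n\ge r+\beta_1$.

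\emph{Schur--Weyl duality.} Let $V$ and $W$ have dimensions $n$ and $m$, so that $\W_m\wr\W_n$ acts on $(W\otimes V)^{\otimes r}$ with centralising algebra the ramified partition algebra $\Pdelta$ (its two parameters specialised to $mn$ and $n$): the ``outer'' layer of a ramified diagram records the $\W_n$-orbit structure of a tensor and the ``inner'' layer its refinement into $\W_m$-orbits, while the ordinary partition algebra $\Pmn$ is recovered inside $\Pdelta$ by collapsing the outer layer. Decomposing this tensor space as a bimodule and passing through the associated Schur functor identifies the multiplicity displayed above --- provided $m$ and $n$ are large enough that $\rightspecht{\beta[n]}$ and $\rightspecht{\kappa[mn]}$ lie in the stable range --- with the multiplicity of the partition-algebra cell module $\Delta_r(\kappa)$ as a composition factor of the restriction to $\Pmn$ of the cell module of $\Pdelta$ determined by $\beta$. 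Because $\kappa\vdash r$, the module $\Delta_r(\kappa)$ lies in the top cell layer and is just the inflation of the $\W_r$-Specht module $\rightspecht{\kappa}$, which is why the final multiplicity in the statement is computed inside the category of $\W_r$-modules.

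\emph{Stability.} For $\delta=mn$ large relative to $r$ the partition algebra $P_r(\delta)$ is semisimple, and the ramified partition algebra is semisimple once both its parameters are large; moreover the coefficients governing restriction from $\Pdelta$ to $\Pmn$ on cell modules are constant throughout the semisimple range, so the composition multiplicity of the previous paragraph coincides with a genuine branching multiplicity there. Separating the two thresholds --- the inner parameter $m$, controlled by the number $r-|\beta|$ of boxes added in passing from $\beta[n]$ to $\kappa[mn]$ together with the correction $[\beta\neq\varnothing]$, and the outer parameter $n$, controlled by $r+\beta_1$ --- yields the stated ranges and shows that $p(\beta[n],(m),\kappa[mn])$ equals this parameter-free value.

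\emph{The branching computation, and the main obstacle.} It remains to compute the generic restriction to $\Pmn$ of the ramified cell module determined by $\beta$ and to read off the multiplicity of $\rightspecht{\kappa}$. Each of the $r$ outer propagating blocks carries an inner refinement; outer blocks of size $i$ occur $c_i$ times, which produces the partition $\gamma=(\Cee^{c_\Cee},\dots,1^{c_1})\vdash\Cee$ with $\ell(\gamma)=b$, so that the size-$b$ ``outer content'' $\beta$ must be distributed across exactly these $b$ blocks; permuting the $c_i$ equal outer blocks of size $i$, together with the internal symmetry, produces the module $\Inf_{\W_{c_i}}^{\W_i\wr\W_{c_i}}\rightspecht{\beta^i}$; the Littlewood--Richardson coefficient $c^\beta_{\beta^\Cee,\dots,\beta^1}$ records the compatible ways of cutting $\beta$ into the pieces $\beta^i$; and the remaining $\Dee=r-\Cee$ points collapse into purely internal blocks of size at least $2$, indexed by $\deltaP\in\ParSet_{>1}(\Dee)$ and contributing $\C_{\mathrm{Stab}(\deltaP)}\Ind^{\W_\Dee}$. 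Inducing the resulting $\W_\Cee\times\W_\Dee$-module up to $\W_r$ and extracting the coefficient of $\rightspecht{\kappa}$ then gives the displayed sum. The genuinely hard part is this last step: one must pin down the precise ramified-to-ordinary branching rule on cell modules --- not just its numerical shadow --- and verify that the simultaneous bookkeeping of outer block sizes, internal symmetries, Littlewood--Richardson gluing and purely internal blocks reproduces each summand verbatim; extracting the two sharp thresholds of the stability step from the underlying cellular (Gram-determinant) data is a second, lesser, difficulty.
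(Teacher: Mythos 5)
Your overall route is the same as the paper's — rewrite the plethysm coefficient as a wreath-product branching multiplicity, pass through Schur--Weyl duality with the ramified partition algebra, restrict to the partition algebra, and decompose — but there are two genuine gaps, both at the points you yourself flag as "where the work lies."

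First, the stability step is not a semisimplicity argument, and cannot be made into one. The ramified partition algebra $R_r(m,n)$ is semisimple only for $m,n\ge 2r$, and $P_r(mn)$ only for $mn\notin\{0,1,\dots,2r-2\}$; the theorem's bounds $m\ge r-|\beta|+[\beta\ne\varnothing]$ and $n\ge r+\beta_1$ lie well inside the non-semisimple range, and the paper shows in Section 11 that they cannot be weakened, so the thresholds are genuinely sharp and do not follow from any generic "constancy throughout the semisimple range." What is actually needed — and what the paper proves as Theorem~\ref{this!!} — is that the \emph{single standard module} $\Delta_r(\varnothing^\beta)$ coincides with its simple head $L_r(\varnothing^\beta)$ under precisely these bounds, even though the algebra itself is not semisimple there. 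The proof is an explicit construction: one exhibits $\dim\Delta_r(\varnothing^\beta)$ linearly independent $\W_m\wr\W_n$-homomorphisms from $\leftspecht{(m)}\oslash\leftspecht{\beta[n]}$ into $(\C^{mn})^{\otimes r}$, built from the orbit basis of $\Delta_r(\beta)$ for $P_r(n)$ (which is simple exactly when $n\ge r+\beta_1$, whence that bound) and inflated using enough distinct subscripts (whence $m\ge r-b+[b\ne 0]$). Your phrase "separating the two thresholds yields the stated ranges" asserts the conclusion without supplying this argument.

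Second, the branching computation is described but not carried out, and the mechanism that makes it tractable is missing. Since $|\kappa|=r$, one needs only the composition factors of $\Delta_r(\varnothing^\beta)\res_{P_r(mn)}$ lying in the top cell layer; the paper isolates these by constructing the depth radical and depth quotient ${\sf DQ}(\Delta_r(\varnothing^\beta))$ — a quotient on which $P_r(mn)$ acts through $\C\W_r$ and whose diagram basis consists of exactly the ramified diagrams with no inner southern arcs and no outer southern singletons. The short exact sequence of Corollary~\ref{explicitlystated} shows the depth quotient carries all multiplicities $[\,\cdot:L_r(\kappa)]$ with $\kappa\vdash r$, and Theorem~\ref{thm:DQdecomp} then decomposes it type by type (propagating type $\gamma$, non-propagating type $\deltaP$), which is where the inflations $\Inf_{\W_{c_i}}^{\W_i\wr\W_{c_i}}\rightspecht{\beta^i}$, the generalized Littlewood--Richardson coefficients, and $\C_{\Stab(\deltaP)}\Ind^{\W_\Dee}$ actually arise. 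Your final paragraph correctly anticipates the shape of the answer but concedes that the precise branching rule is unestablished; without the depth-quotient filtration (or an equivalent device) the summand-by-summand identification is an assertion, not a proof. (Two smaller slips: the ramified algebra acting on $(\C^{mn})^{\otimes r}$ has parameters $\delta_{\rm in}=m$, $\delta_{\rm out}=n$, not $mn$ and $n$; and there are $b$, not $r$, outer propagating blocks.)
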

Unavoidably, this formula is heavy on notation:
\begin{itemize}[leftmargin=12pt]
\item $[\beta \not=\varnothing]$ is an Iverson bracket, equal to $1$ when $\beta \not=\varnothing$ 
and $0$ when  $\beta = \varnothing$;
\item $\ell(\gamma)$ is the length of the partition $\gamma$: thus $c_\Cee + \cdots + c_1 = b$;
\item $\ParSet_{> 1}(q)$ is the set of
partitions of $q$ having no singleton parts;
\item \smash{$c^\beta_{\beta^\Cee, \ldots, \beta^1 }$} is a generalized Littlewood--Richardson coefficient,
as defined in~equation~\eqref{eq:genLR}; 
\item $\rightspecht{\kappa}$ denotes the right Specht module canonically labelled
by the partition $\kappa$;
\item $\Stab(\epsilon)$ is the stabiliser in the symmetric group $\W_\Dee$
of a set-partition of $\{1,\ldots, \Dee\}$
into parts of sizes $\epsilon_1,  \ldots, \epsilon_{\ell(\epsilon)}$, as defined
in Definition~\ref{defn:stabiliserSubgroup};
\item
{\color{red}\color{black} \smash{$\Inf_{\W_{c_i}}^{\W_i \wr \W_{c_i}} $}
 is the usual inflation functor along the canonical surjection $ \W_i \wr \W_{c_i} \to \W_{c_i}$.}
\end{itemize}

 
The entire proof of \TheoremA is outlined in Section~\ref{subsec:structure} at the end of this introduction.

 \subsection{Recasting plethysm in terms of diagram algebras}
We deduce  \TheoremA  as
a corollary of  a number of more powerful theorems for calculating and bounding  plethysm coefficients proved in this paper.  
The key to our approach is the see-saw pair {\color{red}\color{black}(in the sense of \cite{MR1606831})} below.

\[ \smallskip
\begin{minipage}{6.75cm}
\begin{tikzpicture}

\path(-2.5,1) node {$\W_{mn}$};
\path(-2.5,-1) node {$\W_{m}\wr \W_{n} $};


\path(-2.5,-0.5) --++(-45:0.1) coordinate (X)--++(45:0.1) coordinate (XX);
\draw[<-](-2.5,0.5) -- (-2.5,-0.5) to [out =-90, in =180]  (X)
 to [out =0, in =-90]  (XX)--++(90:0.05);

\begin{scope}[xshift=0.5cm]
\path(2.5,0.5) --++(45:0.1) coordinate (X)--++(-45:0.1) coordinate (XX);
\draw[<-](2.5,-0.5) -- (2.5,0.5) to [out =90, in =180]  (X)
 to [out =0, in =90]  (XX)--++(90:0.05);

\path(2.5,1) node {$\; P_r(mn) $};
\path(2.5,-1) node {$\; R_r(m,n) $};

\draw[->] (1.75,0.85) --(0.55,0.255);
\draw[->] (1.75,-0.85) --(0.55,-0.255);
\end{scope}

\path(0.25,0) node {$(\mathbb C^{mn})^{\otimes r}$};

\draw[->] (-1.75,0.85) --(-0.55,0.255);
\draw[->] (-1.75,-0.85) --(-0.55,-0.255);

\end{tikzpicture}
\end{minipage}\smallskip
\]
This shows the {\em partition algebra} $P_r(mn)$  in its  well-known  Schur--Weyl duality with the symmetric group 
$\W_{mn}$: see \cref{sec2,SWsec}. Restricting  to the subgroups  
$\W_{m}\wr \W_{n} \leq \W_{mn}$,  
we obtain a larger   algebra  of invariants on the other side of Schur--Weyl duality, the 
{\em ramified partition algebras} $R_r(m,n)$: see \cref{Ramified partition algebras,SWsec}.  
By~\eqref{eq:plethysmBranchingCoefficient}, the 
plethysm coefficients are the branching coefficients for 
the subgroups $\W_{m}\wr \W_{n} \leq \W_{mn}$ 
and thus, by 
the general theory of see-saw pairs \cite{MR1606831}, 
we can recast these coefficients as branching coefficients for restriction to  the  
subalgebra $P_r(mn)$ of the ramified partition algebra $R_r(m,n)$.  
 In \cref{The ramified Schur functor} we match up the labels of simple modules under 
 these Schur--Weyl dualities and interpret plethysm coefficients as composition multiplicities
 in the partition algebra.

\begin{thmB}\label{thmB}\hypertarget{thmB}
Let $\alpha, \beta, \kappa$ be partitions such that $\alpha{[m]}$, $\beta{[n]}$ and $\kappa{[mn]}$ are partitions. Suppose that $r \ge |\kappa|$.
The plethysm coefficient $p(\beta{[n]}, \alpha{[m]}, \kappa{[mn]})$ is equal
to the composition multiplicity of the simple module $L_r(\kappa)$ for the
partition algebra~$P_r(mn)$ in the relevant restricted simple module for the ramified
partition algebra specified on the right-hand side below:
\[
p(\beta{[n]}, \alpha{[m]}, \kappa{[mn]})=
\begin{cases} 
\big[ L_r(\varnothing^{\beta}) \Res\ForPrmn^{\ram _r(m,n)} 
: \,L_r(\kappa)\big]\ForPrmn
& \textrm{if } \alpha=\varnothing,  r\ge |\beta|,\\[3pt]
\big[ L_r(\alpha^{\beta[n]})\Res\ForPrmn^{\ram _r(m,n)} : \,L_r(\kappa)\big]\ForPrmn
& \textrm{if } \alpha \neq \varnothing,  r\ge n|\alpha|. 
\end{cases}\]
\end{thmB}

 A key observation in the proof of \TheoremB, used at the start
 of each case in the proof of  \cref{SW-simples}, is that the 
ramified partition algebras are quasi-hereditary and hence their simple modules 
 $L_r(\alpha^\beta)$, which are in general   difficult to construct, arise as the 
 simple heads of  corresponding standard modules $\Delta_r(\alpha^\beta)$, which in turn
can easily be constructed using Young symmetrizers: see ~\eqref{actual-d-basis}.

This provides a new  dichotomy between the role played by the inner partition $\mu$
and the outer partition $\nu$ in a plethysm product $s_\nu \circ s_\mu$;
another such dichotomy is obtained in  \cite[Theorem 3.1 versus Proposition 3.3]{MR4171310}.   
{\color{red}\color{black}This is discussed in  detail in \cref{subsec:alphaEmptyVersusNonEmpty}.}

\subsection{Stability  and ramified branching coefficients}
 
The simple modules $L_r(\alpha^\beta)$ for the ramified partition algebras are quotients of the 
standard modules  $\Delta_r(\alpha^\beta)$.  We may therefore bound the plethysm coefficients appearing in \TheoremB by the multiplicity of a simple module in the restriction of a standard module to the partition algebra, 
as in the first part of \TheoremC below. 
%
%
In the case when $\alpha = \varnothing$ we improve on this bound. In Section 7 we prove that the standard module 
$\Delta_r(\varnothing^{\beta})$ for the ramified partition algebra
is simple whenever both
$m \ge r-|\beta|+[\beta \not= \varnothing]$ and $n \ge r + \beta_1$,
proving the equality in the second part of the theorem below.

\begin{thmC}\label{thmC}\hypertarget{thmC}
 Let $\alpha, \beta, \kappa$ be partitions such that $\alpha{[m]}, \beta{[n]}, \kappa{[mn]}$ are also  partitions. Suppose that $r \ge |\kappa|$. Then
\[
p( \beta{[n]}, \alpha{[m]},  \kappa{[mn]}) \leq  
\begin{cases}
\big[ \Delta_r(\varnothing^{\beta}) \Res\ForPrmn^{\ram _r(m,n)} 
: \,L_r(\kappa)\big]\ForPrmn
& \textrm{if } \alpha=\varnothing,  r\ge |\beta|,\\[3pt]
\big[ \Delta_r(\alpha^{\beta[n]})\Res\ForPrmn^{\ram _r(m,n)} : \,L_r(\kappa)\big]\ForPrmn
& \textrm{if } \alpha \neq \varnothing,  r\ge n|\alpha|. 
\end{cases}\]
Moreover, if $m \ge r-|\beta|+[\beta \not= \varnothing]$ and $n \ge r + \beta_1$, then
\[
p(\beta{[n]}, (m),  \kappa{[mn]}) = \bigl[\Delta_r( \varnothing^{\beta})\Res^{\ram_r(m,n)}_{P_r(mn)} :
L_r(\kappa)\bigr]\ForPrmn. \]
\end{thmC}


\subsection{Combinatorial formulas for ramified branching coefficients }
In light of \TheoremB, we seek to calculate the \textsf{ramified branching coefficients}
$\bigl[\Delta_r(\alpha^\beta)\res^{\ram_r(m,n)}_{P_r(mn)} : L_r(\kappa)\bigr]\ForPrmn$,
and hence calculate and bound the plethysm coefficients.
The partition algebras and  ramified partition algebras arise as towers of recollement in the sense of \cite{MR2236606}; roughly speaking this means that 
they arise as sequences of algebras and are
 equipped with idempotents which allow us to work by induction on the rank. 
In \cref{whynamed,explicitlystated}, we utilise this theory to construct quotient $P_r(mn)$-modules 
\begin{equation}\label{eq:DQintro}
\Delta_r(\alpha^\beta)\Res^{R_r(m,n)}_{P_r(mn)} \xrightarrow{ \ \ \ }  {\sf DQ}\bigl(\Delta_{r}(\alpha^\beta)\bigr).
\end{equation}
These quotient modules  possess beautiful planar diagram bases (see Section~\ref{subsec:elementaryDiagrams}) which are 
amenable  to computation. By decomposing these quotient modules  we are able to calculate the ramified branching coefficients explicitly, as follows.

 \begin{thmD}\label{thmD}\hypertarget{thmD}
 The ramified branching coefficient $[\Delta_r(\alpha^\beta)\res^{\ram_r(m,n)}_{P_r(mn)} : L_r(\kappa)]$ 
 for $\alpha\vdash a, \beta\vdash b  $ and $\kappa \vdash r$ is equal to the multiplicity of 
$\rightspecht {\kappa}$ in the following $\C \W_r$-module: 
%
\[
\bigoplus
_{
\begin{subarray}c
\Cee,\,\Dee\, : \, \Cee+\Dee \,=\, r-ab  \\
\gamma=  
   ( \Cee^{c_\Cee}, \dots ,  1^{c_1}, 0^{c_0} )
\in  \ParSet _{\!(a^b)}(\Cee) 
   \\[-3pt]
    \beta^i \vdash c_i \text{ for } 0 \leq i \leq \Cee 
\\
   \deltaP \in \ParSet_{>1}(\Dee)
\end{subarray}
} 
\!\!\!\!\!\!\!\!\!\!\! c^\beta_{\beta^\Cee, \ldots, \beta^1, \beta^0} \Bigl(\bigotimes  _{i=0}^\Cee
\Bigl(
 (
\rightspecht {\alpha} \, \boxtimes\, \C  )
\Ind
_{\W_a\times \W_{ i }}
^{\W_{a+i}}
)
\,\oslash\, \rightspecht{\beta^i}
 \Bigr)
 \,\boxtimes\, \C _{{\rm Stab}(\deltaP)}\Bigr)
 \Ind_{{\rm Stab}((a^b)+\gamma)\times {\rm Stab}(\deltaP)} ^{\W_r}
\]
 where 
$\ParSet_{>1}(\Dee)$
 is the set of partitions of~$\Dee$ with no singleton parts, 
 $(a^b)$ is the empty partition if $a=0$ and otherwise the partition with $b$ parts all of size~$a$,
and the set $\ParSet_{(a^b)}(\Cee)$ is as defined in Definition~\ref{defn:ParSet}.
{\color{red}\color{black}The 
wreath product of modules, denoted $\oslash$, is defined in 
\cref{subsec:wreathProducts}.}
\end{thmD}

By Definition~\ref{defn:ParSet}, 
the elements $\gamma \in \ParSet_{(a^b)}(\Cee)$ 
are partitions having exactly $b$ parts \emph{including~$c_0$ distinguished zero parts}. Thus
$c_\Cee  + \cdots + c_1 + c_0 = b$ and the Littlewood--Richardson coefficient
$c^\beta_{\beta^\Cee, \ldots, \beta^1, \beta^0}$ is well-defined. When $a=0$,  the partitions 
$\gamma \in \ParSet_{(a^b)}(\Cee)$
have exactly~$b$ non-zero parts and $c_0 = 0$, and the sum is the same as in \TheoremA.

\TheoremC provides a  combinatorial  formula for computing arbitrary ramified branching coefficients in terms of {\em  much smaller} plethysm products and Littlewood--Richardson coefficients.
We prove \TheoremD in Section~\ref{sec9} and then immediately deduce 
\TheoremA as a corollary of \TheoremC and \TheoremD.

Using symmetric functions we prove in Section 11 that the bounds on $m$
and $n$ in \TheoremA cannot be weakened in infinitely many cases.
This is notable because, unlike the usual direction in this paper, we successfully
apply symmetric functions to deduce a result about the ramified partition algebra.

\subsection{Examples and applications}\label{subsec:introExamplesApplications}
To make this paper accessible, particularly to a non-diagram\-matic-algebra audience, 
we give plenty of examples throughout the paper. After the proof of \TheoremA is complete,
we begin  Section~\ref{egs} by giving two
substantial examples showing how the decomposition of the
depth quotient (see Definition~\ref{dq}) determines  plethysm coefficients.
We then give a more conceptual restatement of \TheoremA using a functor on
symmetric group modules in Proposition~\ref{prop:IndInfRes},
before proving Proposition~\ref{prop:symmetricFunctions} which restates \TheoremA and Proposition~\ref{prop:IndInfRes} in the 
language of symmetric functions.
As applications we find explicitly positive formulae for the stable limits of the plethysm
coefficients $p\bigl((n-b,b), (m), (mn-r,r)\bigr)$, 
 $p\bigl((n-b,1^b), (m), (mn-r,r)\bigr)$
and $p\bigl((n-b,1^b), (m), (mn-r,1^r)\bigr)$.

\subsection{Contrasting the cases $\alpha = \varnothing$ and $\alpha \not= \varnothing$}
\label{subsec:alphaEmptyVersusNonEmpty}
Since it illuminates two key points in the proof, we remark on the qualitative difference in our results
on plethysm coefficients
in these two cases. The reader may prefer to return to this remark after reading Section 6.

\subsubsection*{The ramified Schur functor}
The outer partition in the standard module in \TheoremC is $\beta$ when $\alpha =\varnothing$
and $\beta[n]$ when $\alpha\not=\varnothing$. The difference arises
 from the behaviour  of the ramified Schur functor 
$\Hom_{\W_m \wr \W_n}\bigl( -, (\mathbb{C}^{mn})^{\otimes r} \bigr)$ 
mapping left $\C\W_m \wr \W_n$-modules to right $R_r(m,n)$-modules. 
The module $\leftspecht{(m)} \oslash \leftspecht{\beta[n]} = \Inf_{\W_n}^{\W_m \wr \W_n} \leftspecht{\beta[n]}$
is acted on trivially by the base group $\W_m \times \cdots \times \W_m$ in the wreath
product $\W_m \wr \W_n$ and embeds in the tensor space $(\mathbb{C}^{mn})^{\otimes r}$
whenever $r \ge b$; its image under the Schur functor
is then the simple module $L_r(\varnothing^\beta)$ for the ramified partition
algebra $\ram_r(m,n)$. 
 In contrast, 
when $\alpha\not=\varnothing$, the base group acts non-trivially on $\leftspecht{\alpha}
\oslash \leftspecht{\beta[n]}$ and it is necessary to 
take $r \ge n |\alpha|$ to get an embedding. We then obtain the simple
module $L_r(\alpha^{\beta[n]})$. The distinction can be seen by comparing equations~\eqref{eq:rowenaz}
and~\eqref{eq:rowenazNonEmpty}. This is the first and most critical point where the two cases
diverge.

\subsubsection*{Standard modules versus simple modules and semisimplicity}
In the case when $\alpha = \varnothing$, the standard $R_r(m,n)$-module $\Delta_r(\varnothing^\beta)$ is isomorphic
to the simple module $L_r(\varnothing^\beta)$ by Theorem~\ref{this!!}
for suitably large values of $m$ and $n$,
and so the plethysm coefficient $p(\beta[n], (m), \kappa[mn])$ is equal to the ramified branching coefficient $\bigl[ \Delta_r(\varnothing^\beta)\res^{R_r(m,n)}_{P_r(mn)} : L_r(\kappa) \bigr]_{P_r(mn)}$.
Putting aside bounds for now, this leads to the formula in \TheoremA for the joint limit of the plethysm coefficient
when $m$ and $n$ independently become large.
When $\alpha \not= \varnothing$,
the equality of \TheoremB requires $r \ge n |\alpha|$, but $R_r(m,n)$ is never semisimple when this condition holds.
In this case we obtain only an inequality bounding $p(\beta[n], \alpha[m], \kappa[mn])$ by
the ramified branching coefficient  $\bigl[ \Delta_r(\alpha^{\beta[n]})\res^{R_r(m,n)}_{P_r(mn)} : L_r(\kappa) \bigr]_{P_r(mn)}$. 
This is the second point of divergence and explains why our results on plethysm
coefficients are sharp only when $\alpha=\varnothing$.

\subsection{Analogies and motivation from Kronecker coefficients}
It is worth emphasising that all the ideas of this paper have analogues in the context of the Kronecker coefficients. 

By \TheoremB, the   plethysm  and  ramified branching coefficients can be  interpreted as restriction multiplicities
 of simple and standard modules from the ramified partition algebra to the partition algebra.  
By \cite[Equation 3.1.3]{BDO15} 
the   Kronecker coefficients and stable  Kronecker coefficients can be interpreted as the 
  restriction multiplicities of simple and standard modules from the partition algebra to a certain Young subalgebra.
The  formula  for  calculating ramified branching coefficients in   \TheoremD of this paper  has an exact analogue for the stable Kronecker coefficients, which is proven in the partition algebra context in \cite[Theorem~4.3]{BDO15}.

 \TheoremC of this paper says that the plethysm coefficients are bounded above by their ramified branching coefficient analogues and examines when this bound is sharp.  
The Kronecker coefficients are bounded above by their stable analogues and  
analogous bounds were found by Brion in \cite{BrionStability} and reproved in the context of the partition algebra \cite[Corollary 3.6]{BDO15}.

\subsection{Structure of the paper}\label{subsec:structure}
We intend that this paper will be found readable both by people primarily interested in plethysms
of symmetric functions and by people primarily interested in diagram algebras. We therefore take
some care to collect all the necessary background. 
\begin{itemize}[leftmargin=18pt]
\item In Section 2 we give background on symmetric functions and 
modules for symmetric groups and wreath products. 
\item In Section~3 we give a self-contained introduction to the partition algebra, showing 
how to use diagrams to compute its action on its standard modules in 
Example~\ref{egnocross}.
\item In Section~4 we give a similar self-contained introduction to the ramified partition algebra.
\item In Section~5 we finish the background material with results on Schur--Weyl duality.
\end{itemize}
The proofs of the main theorems occupy Sections~6 to 9, following the outline above: Section~6 proves \TheoremB;
Section 7 proves \TheoremC, showing in particular that 
the simple module $L_r(\varnothing^\beta)$ for the ramified partition algebra
is equal to the standard module $\Delta_r(\varnothing^\beta)$ provided~$m$ and $n$
satisfy the inequalities in \TheoremA; Sections 8 and 9 study the restricted
module $\Delta_r(\varnothing^\beta)\res^{R_r(m,n)}_{P_r(mn)}$ and hence prove \TheoremD and deduce \TheoremA.
Thus, when presented as a series of equations, the proof of \TheoremA is 
\begin{align*}
p&(\beta[n], (m), \kappa[mn]) \\
\quad &= \langle s_{\beta[n]} \circ s_\mu, s_{\kappa[mn]} \rangle  \tag{a} \\
&= \bigl[ (\leftspecht{(m)} \oslash \leftspecht{\beta[n]} ) \Ind_{\W_m \wr \W_n}^{\W_{mn}} : \leftspecht{\kappa[mn]}
\bigr]_{\W_{mn}} \tag{b} \\
&= \bigl[ \mathrm{Hom}_{\mathbb{C}\W_{mn}}\bigl( (\leftspecht{(m)} \oslash \leftspecht{\beta[n]})\Ind_{\W_m \wr \W_n}^{\W_{mn}},
(\mathbb{C}^{mn})^{\otimes r}\bigr) : \mathrm{Hom}_{\mathbb{C}\W_{mn}} \bigl( \leftspecht{\kappa[mn]}, 
(\mathbb{C}^{mn})^{\otimes r} \bigr)
 \bigr]_{P_r(mn)} \tag{c}
\\
&= \bigl[ \mathrm{Hom}_{\mathbb{C}\W_m \wr \W_n} \bigl( \leftspecht{(m)} \oslash \leftspecht{\beta[n]}, 
(\mathbb{C}^{mn})^{\otimes r}
\bigr) \Res_{\W_m \wr \W_n} : L_r(\kappa) \bigr]_{P_r(mn)} \tag{d}\\
&= \bigl[ L_r(\varnothing^\beta)\Res^{R_r(m,n)}_{P_r(mn)} : L_r(\kappa) \bigr]_{P_r(mn)} \tag{e}\\
&=\bigl[ \Delta_r(\varnothing^\beta)\Res^{R_r(m,n)}_{P_r(mn)} : L_r(\kappa) \bigr]_{P_r(mn)}  \tag{f}\\
&= \bigl[ \DQ\bigl( \Delta_r(\varnothing^\beta) \bigr) : L_r(\kappa) \bigr]_{P_r(mn)} \tag{g} \\
&= \sum  c^\beta_{\beta^\Cee, \ldots, \beta^1}  \Bigl[
\Bigl( \bigl( \bigotimes
\Inf_{\W_{c_i}}^{\W_i \wr \W_{c_i}} 
\rightspecht{\beta^i} \bigr) \Ind^{\W_\Cee} \, \otimes\, \C_{{\rm Stab}(\deltaP)}\Ind^{\W_\Dee} \Bigr)\Ind^{\W_r}_{\W_\Cee \times \W_\Dee} :
\rightspecht{\kappa} \Bigr]_{\W_r} \tag{h} 
\end{align*}
\noindent where the outline argument for each step is as follows:
\begin{itemize}[leftmargin=18pt] 
\item[(a)] Definition of plethysm coefficients; 
\item[(b)] Apply~\eqref{eq:plethysmBranchingCoefficient} to get the equivalent restatement
using left Specht modules;
\item[(c)] Apply the Schur functor for the partition algebra in Corollary~\ref{cor:partitionAlgebraSchurFunctor} 
to get right modules for $P_r(mn)$; the module action is defined
using the action of $P_r(mn)$ on $(\mathbb{C}^{mn})^{\otimes r}$;
\item[(d)] Apply Frobenius reciprocity on the left-hand
module and use Corollary~\ref{cor:partitionAlgebraSchurFunctor} to identify 
the $P_r(mn)$-module $\mathrm{Hom}_{\mathbb{C}\W_{mn}} \bigl( \leftspecht{\kappa[mn]}, (\mathbb{C}^{mn})^{\otimes r} \bigr)$ with
the simple module $L_r(\kappa)$;
\item[(e)] Apply the case of $\alpha = \varnothing$ of Proposition~\ref{SW-simples}
(with the conclusion that the right-hand side equals $p(\beta[n],(m),\alpha)$
being a special case of \TheoremA);
\item[(f)] Apply Theorem~\ref{this!!} 
using the hypotheses $m \ge r-|\beta| + [\beta \not=\varnothing]$ and  $n \ge r + \beta_1$ 
(with the conclusion that the right-hand side equals $p(\beta[n],(m),\alpha)$
being a special case of \TheoremC);
\item[(g)] Apply 
Corollary~\ref{explicitlystated};
\item[(h)] Apply Theorem~\ref{thm:DQdecomp}
decomposing the depth quotient into right Specht modules (from which we deduce \TheoremD).
\end{itemize}
We end in Sections 10 and 11 with the examples and applications already outlined.

\subsection{Other diagram algebras} 
In recent work Orellana, Saliola, Schilling and
Zabrocki have recast the plethysm coefficients  in the context of the {\em party algebra}, a subalgebra of the partition algebra \cite{saliola}.
There does not appear to be  any overlap in our results, but 
the ideas do have a similar diagrammatic flavour.

\subsection*{Acknowledgements}
We thank  Christian Ikenmeyer, Stacey Law, Rosa Orellana, Anne Schil\-ling, and Mike Zabrocki for interesting and informative conversations.
We thank  the Oberwolfach workshop 
`Character Theory and Categorification' and the Institute for Mathematical Sciences workshop `Representation Theory, Combinatorics and Geometry' held at the University of Singapore 
and  the Heilbronn Institute for the award of a HIMR Focused Research Grant to run the workshop `New approaches to plethysm: computation and complexity', held at the University of York in September 2023 
---
 all three of which  provided excellent environments for collaboration.  
The first author is grateful to financial support from EPSRC grant EP/V00090X/1.
The third author is grateful to the Okinawa Institute of Science and Technology  for
sponsoring a visit in November 2022 during which some of this work was done.
We would like to thank Eoghan McDowell, Maud De Visscher, and both of the anonymous referees for   their helpful comments on a previous version of this paper.

  \section{ Symmetric groups, wreath products  and symemtric functions }\label{sec1}
  
 \subsection{Symmetric groups}
  We let $\W_{n}$ denote the symmetric group on $n$ letters
 generated by the Coxeter generators $s_i=(i,i+1)$ for $1\leq i <n$.   
     The combinatorics underlying the representation theory of  symmetric      groups, and also of partition algebras,  is based on integer 
       partitions.  
 A  {\sf partition}  of $n$ is defined to be a   sequence  $(\lambda_1, \lambda_2, \dots, \lambda_\ell)$ of strictly positive  {\color{red}\color{black} weakly decreasing integers} which  sum to $n$. We write  $\lambda \vdash n$ and say that the \textsf{length} of $\lambda$,
 denoted $\ell(\lambda)$, is $\ell$.  
There is a unique partition of zero, namely the empty partition $\varnothing$.
 We let 
  $\ParSet(n)$ denote the set of all partitions of $n$ and we let   $\ParSet_{>1}(n)$ denote the subset of those partitions with no singleton parts. (That is, all parts are strictly greater than~1.)

  Given $\lambda\in \ParSet(n)$, we define a {\sf tableau} of shape $\lambda$ to be a filling of the nodes  of
the Young diagram of  $\la $ with the numbers
$\{1,\dots , n\}$.
We define a {\sf  standard tableau} to be a tableau  in which    the entries increase along the rows
read left to right and the columns read top to bottom.  
 We let $\Std(\lambda)$ denote the set of all standard tableaux of shape $\lambda\in\ParSet(n)$. For $\lambda \vdash n$, let $\stt^\lambda$ denote the  $\lambda$-tableau with the numbers $1, 2, \ldots, n$ entered in increasing order along the rows from left to right and then from top to bottom. For example, 
\[
\stt^{(3,2,1)}=\begin{minipage}{2cm}\gyoung(1;2;3,4;5,6)\, \end{minipage}\hspace*{-12pt}.
\]
We denote by $C(\stt^\lambda)$ the subgroup of $\W_n$ that    preserves the set of entries in each column of $\stt^\lambda$ and by $R(\stt^\lambda)$ the subgroup that preserves the set of entries in each row. Recall (for example from \cite[Chapter 4]{fultonharris}) that the \textsf{Young symmetrizer} $c_\lambda$ is defined by
\begin{equation}\label{eq:youngSymmetrizer} c_\lambda= \Big( \sum_{\rho\in R(\stt^\lambda)} \rho\Big)
\Big(\sum_{\pi \in C(\stt^\lambda)} \sgn(\pi)\pi\Big).\end{equation}
It is well-known that  $c_\lambda$ is a quasi-idempotent (that is $c_\lambda^2$ is equal to $c_\lambda$
up to a non-zero scalar), and, for  partitions  $\lambda, \nu$ of $n$, that
\[ c_\lambda (\C  \W_n) c_\nu = \begin{cases}
\C  c_\lambda & \textrm{if } \lambda=\nu,\\ 0 & \textrm{otherwise.}
\end{cases}\]
The left \textsf{Specht module} labelled by $\lambda$ is the $\C  \W_n$-module 
$\C  \W_n  c_\lambda = \leftspecht {\lambda}$.
Later, we shall  use right modules for symmetric groups to  construct right modules for partition algebras. We 
define  the right Specht  $\C  \W_n$-module labelled by $\lambda$ to be
$\rightspecht{\la}= c_\lambda^\ast \C  \W_n  $ where
\begin{equation}c_\lambda^\ast= 
\Big(\sum_{\pi \in C(\stt^\lambda)} \sgn(\pi)\pi\Big) \Big( \sum_{\rho\in R(\stt^\lambda)} \rho\Big).
\label{eq:dualYoungSymmetrizer}\end{equation}
Observe that $c_\lambda^\ast$ is the image of  $c_\lambda$ under the anti-involution on $\C \W_n$ that sends each group element to its inverse. The Specht modules are a full set of non-isomorphic irreducible
$\C \W_n$-modules.

\subsection{The Littlewood--Richardson rule} 
The Littlewood--Richardson rule is a combinatorial rule
for the restriction of a Specht module to a Young subgroup of the symmetric group.  
Through Schur--Weyl duality, the  rule also computes the decomposition of a tensor product of 
simple representations of $\GL_n(\mathbb{C})$.  

\begin{thm}[The Littlewood--Richardson Rule]\label{thm:LR} 
 For $\lambda\vdash m+n$, $\mu\vdash m$ and $\nu \vdash n$,
\begin{align*}
\leftspecht {\lambda} \Res_{\W_{m} \times \W_{n}}^{\W_{m+n}} \cong \bigoplus_{ 
\mu \vdash m, \nu\vdash n}c^\lambda_{\mu,\nu} (\leftspecht{\mu} \boxtimes \leftspecht{\nu})
\end{align*}
where the $c^\lambda_{\mu, \nu}$ are the Littlewood--Richardson coefficients 
as defined in \cite[Section 2.8.13]{jk}.
\end{thm}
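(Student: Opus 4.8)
The plan is to carry the statement across the characteristic isometry to the ring of symmetric functions, where it becomes the classical rule for multiplying Schur functions.

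First I would set up the characteristic map $\ch \colon \bigoplus_{n \ge 0} \mathcal{R}(\W_n) \to \Lambda$, from the complexified representation rings of the symmetric groups to the ring $\Lambda$ of symmetric functions, normalised so that $\ch(\leftspecht{\lambda}) = s_\lambda$ for every partition $\lambda$. I would then record its two standard properties: it is an isometry, where $\mathcal{R}(\W_n)$ carries the form making the Specht modules orthonormal and $\Lambda$ carries the Hall form with $\langle s_\lambda, s_\mu \rangle = \delta_{\lambda\mu}$; and it intertwines the induction product $(M,N) \mapsto (M \boxtimes N)\Ind_{\W_m \times \W_n}^{\W_{m+n}}$ with multiplication in $\Lambda$. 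Both facts are classical.

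Granting these, the theorem follows in two lines. By Frobenius reciprocity, the multiplicity of $\leftspecht{\mu} \boxtimes \leftspecht{\nu}$ in $\leftspecht{\lambda}\Res_{\W_m \times \W_n}^{\W_{m+n}}$ equals the multiplicity of $\leftspecht{\lambda}$ in $(\leftspecht{\mu} \boxtimes \leftspecht{\nu})\Ind_{\W_m \times \W_n}^{\W_{m+n}}$; applying $\ch$ and using the two properties above, this multiplicity is $\langle s_\mu s_\nu, s_\lambda \rangle$. By the definition of the Littlewood--Richardson coefficients recalled in \cite[Section 2.8.13]{jk}, this number is $c^\lambda_{\mu,\nu}$, and summing over $\mu \vdash m$ and $\nu \vdash n$ yields the stated decomposition of $\leftspecht{\lambda}\Res_{\W_m \times \W_n}^{\W_{m+n}}$.

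The main obstacle is simply deciding how much of the classical toolkit to cite rather than prove. The isometry property of $\ch$ follows from the orthogonality relations for symmetric group characters, and its multiplicativity from unwinding the definition of the induction product together with transitivity of induction; both are routine. More substantially, if one takes the \emph{combinatorial} definition of $c^\lambda_{\mu,\nu}$ as the number of Littlewood--Richardson skew tableaux of shape $\lambda/\mu$ and content $\nu$, then the identification $\langle s_\mu s_\nu, s_\lambda \rangle = c^\lambda_{\mu,\nu}$ is the Littlewood--Richardson rule proper, and that combinatorial evaluation is the real work: I would obtain it by iterating the Pieri rule and cancelling with a sign-reversing involution, or alternatively via the Robinson--Schensted--Knuth correspondence and jeu de taquin, or via Littelmann paths. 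For the purposes of this paper this may be invoked from the literature, so that the representation-theoretic content reduces to the two-line deduction above.
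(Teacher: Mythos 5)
Your outline is correct, but note that the paper itself offers no proof of this theorem at all: it is stated as a classical result and referred to \cite[Section 2.8.13]{jk}, so there is nothing internal to compare against. Your reduction via the characteristic isometry and Frobenius reciprocity is the standard one, and the two properties of $\ch$ you invoke are exactly the content of Lemma~\ref{lemma:characteristicIsometry}(a) together with the orthonormality of Schur functions recorded in Section~\ref{subsec:symmetricFunctionsAndPlethysm}, so your argument sits comfortably on machinery the paper already sets up. You also correctly isolate where the genuine work lies: since \cite[Section 2.8.13]{jk} defines $c^\lambda_{\mu,\nu}$ combinatorially, the identity $\langle s_\mu s_\nu, s_\lambda\rangle = c^\lambda_{\mu,\nu}$ is the Littlewood--Richardson rule proper and must either be proved (Pieri plus a sign-reversing involution, RSK and jeu de taquin, etc.) or cited; deferring it to the literature is exactly what the paper does, so your treatment is consistent with theirs.
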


Given a partition $\beta$ and $c_1, \ldots, c_r \in \N_0$ such that  $c_r + \cdots + c_1 = |\beta|$,
we have
\begin{equation}
\label{eq:genLR} \leftspecht{\beta} \Res_{\W_{c_r} \times \cdots \times \W_{c_1}} 
\cong
\bigoplus_{\beta^i \vdash c_i} c^\beta_{\beta^r,\ldots, \beta^1}
\bigl( \leftspecht{\beta^r} \otimes \cdots \otimes \leftspecht{\beta^1}\bigr)
\end{equation}
for some coefficients $c^\beta_{\beta^r,\ldots, \beta^1} \in \NN_0$.
(As a standing convention
$\beta^i \vdash c_i$ in a sum indicates that the sum is over all relevant
sequences of partitions.)
We call these coefficients \textsf{generalized Littlewood--Richardson coefficients};
they may be computed by iterative applications of Theorem~\ref{thm:LR}.
In our examples it works best to order these sequences by \emph{decreasing} index,
hence the order $\beta^r, \ldots, \beta^1$ above; since $c^\lambda_{\mu, \nu} = c^\lambda_{\nu, \mu}$,
this is just a matter of notation.
Later, for example in the proof of Theorem~\ref{thm:DQdecomp},
we use
the equivalent formulation of the rule for right Specht modules. The coefficients are of course the same.

  \subsection{Wreath products and their modules}\label{subsec:wreathProducts}
Let $m,n\in \NN$.  
Following~\cite[Section 4.1]{jk}, we consider
\begin{equation} \W_m \wr \W_n =\bigl\{ (\sigma_1,\sigma_2,\ldots, \sigma_n ; \pi) \mid \sigma_i \in  \Sym _m, i=1,\ldots, n, \pi \in \Sym _n\bigr\}, \label{eq:wreathProductElement} \end{equation}
which we identify with a subgroup of $ \W _{mn}$ via the embedding
\begin{equation}\label{eqn:wreath_embedding}
(\sigma_1,\sigma_2,\ldots, \sigma_n ; \pi ) \mapsto \Bigl( \begin{array}{c} (j-1)m+i \\ (\pi(j)-1)m+\sigma_{\pi(j)}(i) \end{array}\Bigr)_{i=1,\ldots, m \atop j=1\ldots, n}.
\end{equation}
  The  representation theory of $\W_m \wr \W_n$ is well-developed (see \cite[chapter 4]{jk} or \cite{ChuangTan}).  If $\mu \vdash m$ and $\nu \vdash n$ then we can use the irreducible $\C \W_m$-module $\leftspecht{\mu}$ and the irreducible $\C \W_n$-module $\leftspecht{\nu}$ to construct an irreducible $\C \W_m \wr \W_n$-module
\[\leftspecht{\mu} \oslash \leftspecht{\nu} = ( \leftspecht{\mu})^{\otimes n} \otimes \Inf_{S_n}^{S_m \wr S_n} \leftspecht{\nu},\]
  where elements of the distinguished top group $\W_n$ in the wreath
  product $\W_m \wr \W_n$ act on $(\leftspecht{\mu})^{\otimes n}$ by place permutation.
   (The symbol $\oslash$ was introduced  in \cite{ChuangTan}.)
A complete set of irreducible left $\C \W_m \wr \W_n$-modules 
is obtained by inducing suitable tensor products of these modules.

\subsection{Young symmetrizers for wreath product modules}
We shall need 
an alternative construction of a complete set of simple    $\C \W_m \wr \W_n$-modules using Young symmetrizers
that generalises the theory for symmetric groups 
outlined  above 
to wreath  products of symmetric groups. 
Generalizing \cite[Chapter 4]{fultonharris} we  define Young symmetrisers 
for the wreath product $\W_m \wr \W_n$. 
Let $L=|\ParSet(m)|$ and list the $L$ partitions of $m$  in {\color{red}\color{black}decreasing}  lexicographic order as
\[\color{red}\color{black} \mu^1 > \mu^2 > \ldots > \mu^L. \]
An    {\sf $L$-partition}  $\bm\nu=(\nu^{(1)},\dots,\nu^{(L)})$ of $n$  is an $L$-tuple of     partitions  such that
 $|\nu^{(1)}|+\dots+ |\nu^{(L)}|=n$. 
We define  $\ParSet(m,n)$  to be  the set of all 
 $L$-partitions of $n$.
  (This is the labelling set for the simple $\C \W_m \wr \W_n$-modules.)
We shall write elements of $\ParSet(m,n)$ as
\[{\bm \mu} ^{\bm \nu} = (\mu^1,  \mu^2 , \dots, \mu^L)^{ (\nu^1,  \nu^2 , \dots, \nu^L)},\]
remembering that $\bm \mu$ is fixed and is the tuple of all partitions of $m$ in lexicographic order.
For example 
\[  \bigl( (3),  (2,1) , (1^3) \bigr)^{ ((2), \varnothing, (3,1))} \in \ParSet(3,6).\] 
We shall frequently be interested in the case where there is a unique non-empty entry $\nu^j$ of~$\bm \nu$;
in this case, we shall write $(\mu^j)  ^{ \nu^j} $ in place of ${\bm \mu} ^{\bm \nu}$. For example, we write  $(2,1)^{(3,2,1)}$ in place of
 $ \bigl((3),  (2,1) , (1^3)\bigr)^{ ( \varnothing, (3,2,1), \varnothing )}$; this labels the simple 
 $\C \W_3 \wr \W_6$-module $\leftspecht{(2,1)} \oslash \leftspecht{(3,2,1)}$. 

Given $\color{red}\color{black}\bm \nu  \in \ParSet (m,n)$ we 
define
\[
\stt^{\bm \nu}=(\stt^{\nu_1}, \stt^{\nu_2}, \dots, \stt^{\nu_L})
\]
by placing the entries $\{1,\dots, n\}$ 
 in the tableaux $\stt^{\nu_i}$ in 
 increasing order along the rows from left to right, finishing with the bottom row in each tableau,
 working in order of increasing $i$. For example
\[
\stt^{((3,2,1),(2^2,1),(3,1))}=\scalefont{0.9}
\left(\;
\begin{minipage}{1.5cm}
\gyoung(1;2;3,4;5,6) 
\end{minipage}\!, \;
\begin{minipage}{1.15cm}\gyoung(7;8,9;\ten,\eleven) 
\end{minipage}, \;
\begin{minipage}{1.6cm}\gyoung(\elevena;\elevenb;\elevenc,\elevend)\end{minipage}\!
\right).
\] 
We extend the notion of row and column stabilisers in the obvious fashion.  
 For  $\bm \mu ^{\bm \nu}\in \ParSet(m,n)$,  define
\[ c_{\bm \mu ^{\bm \nu}}=
 \displaystyle\sum_{\begin{subarray}c
 \pi \in C(\stt^{\bm \nu}) \\
  \rho \in R(\stt^{\bm \nu}) 
 \end{subarray}
 } \sgn(\pi)
\big(\underbrace{c_{\mu^1}, \ldots, c_{\mu^1}}_{|\nu^1|}  , \ldots, , \underbrace{c_{\mu^L}, \ldots, c_{\mu^L}}_{|\nu^L|} \, ; \,  \rho \pi \big). \]
Here each $\color{red}\color{black}c_{\mu^i}$ should be interpreted as the appropriate sum over elements of $\W_m \wr \W_n$
using the notation of~\eqref{eq:wreathProductElement}. 
  In particular, if $\mu\vdash m$ and $\nu \vdash n$ then 
\[c_{ \mu ^{ \nu}}=
 \displaystyle\sum_{\begin{subarray}c
 \pi \in C(\stt^{ \nu}) \\
  \rho \in R(\stt^{ \nu}) 
 \end{subarray}
 } \sgn(\pi)
\big(\underbrace{c_{\mu}, \ldots, c_{\mu}}_{n}  \, ; \,  \rho \pi \big).\]
 The following proposition is well-known to experts, but we do not believe it has
  appeared before in print.  
   The proof is technical but follows precisely the   method of \cite[Chapter~4]{fultonharris}.  

\begin{prop} \label{Young}
 For  $\bm \mu ^{\bm \nu}$,
$\bm \mu ^{\bm \lambda}
\in \ParSet(m,n)$, 
\[ c_{\bm \mu ^{\bm \nu}}(\C  \W_m \wr \W_n)c_{\bm \mu ^{\bm \lambda}}=\begin{cases}
\C  c_{ \bm \mu ^{\bm \lambda}} & \textrm{if } \bm \mu ^{\bm \lambda}
=\bm \mu ^{\bm \nu},\\ 0 & \textrm{otherwise.}
\end{cases}\]
In particular, $c_{\bm \mu ^{\bm \nu}}$ is a quasi-idempotent and  $c_{\bm \mu ^{\bm \nu}} c_{\bm \mu ^{\bm \lambda}}=0$ for $\bm \mu ^{\bm \lambda} \neq  \bm \mu ^{\bm \nu}$.
The set 
\begin{align}\label{ohgood!} 
\bigl\{  \leftspecht{\bm \mu ^{\bm \nu}} =
{  \color{red}\color{black}(\C \W_m \wr \W_n) }c_{\bm \mu ^{\bm \nu}}
\mid\bm \mu ^{\bm \nu}\in \ParSet(m,n)\bigr\} 
\end{align}
provides a full set of pairwise non-isomorphic  {\color{red}\color{black}simple}  left
 $\C  \W_m \wr \W_n$-modules and 
 \[
{ \color{red}\color{black}(\C  \W_m \wr \W_n)} c_{\bm \mu ^{\bm \nu}}\cong  
 \big[ (\leftspecht{\mu^1} \oslash \leftspecht{\nu^1}) \boxtimes \cdots \boxtimes  
 (\leftspecht{\mu^L} \oslash \leftspecht{\nu^L})\big]\Ind_{\W_m \wr \W_{|\bm \nu|}}^{\W_{m} \wr \W_{n}}.\]
\end{prop}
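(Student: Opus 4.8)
The plan is to follow the blueprint of the symmetric-group case treated in \cite[Chapter~4]{fultonharris}, lifted to the wreath product $\W_m \wr \W_n$. The crucial algebraic fact underlying everything is that $c_\mu (\C\W_m) c_\nu$ is zero unless $\mu = \nu$, in which case it is $\C c_\mu$; this is what forces orthogonality in the base group and will be used component by component. First I would establish the key computation $c_{\bm\mu^{\bm\nu}}(\C\W_m\wr\W_n)c_{\bm\mu^{\bm\lambda}}$ directly. Writing a generic element $(\sigma_1,\dots,\sigma_n;\pi)$ of $\W_m\wr\W_n$ and expanding, each factor of the product forces, in the $i$-th coordinate of the base group, an expression of the form $c_{\mu^{j}}\,\tau\,c_{\mu^{j'}}$ for appropriate partitions $\mu^j,\mu^{j'}$ of $m$ determined by $\bm\nu$, $\bm\lambda$ and the top-group permutation $\pi$. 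By the base-group fact this vanishes unless $\mu^j=\mu^{j'}$ for every coordinate, which (since the multiset of base partitions is recorded by $\bm\nu$ resp.\ $\bm\lambda$) forces $|\nu^k|=|\lambda^k|$ for all $k$ and restricts $\pi$ to a block-permutation preserving these blocks. On the surviving terms, the top-group part reduces exactly to the symmetric-group Young-symmetrizer computation $c_{\bm\nu}(\C\W_n)c_{\bm\lambda}$, which is $\C c_{\bm\nu}$ if $\bm\nu=\bm\lambda$ (as $L$-partitions) and $0$ otherwise. Combining, $c_{\bm\mu^{\bm\nu}}(\C\W_m\wr\W_n)c_{\bm\mu^{\bm\lambda}} = \C c_{\bm\mu^{\bm\nu}}$ if $\bm\mu^{\bm\nu}=\bm\mu^{\bm\lambda}$ and $0$ otherwise; in particular $c_{\bm\mu^{\bm\nu}}$ is a quasi-idempotent and the orthogonality statement follows.

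Next I would identify the module $\C(\W_m\wr\W_n)c_{\bm\mu^{\bm\nu}}$ with the claimed induced module. The idea is that the symmetrizer $c_{\bm\mu^{\bm\nu}}$ factors (up to reordering of the coordinates into the blocks indexed by $1,\dots,L$) as a product of a symmetrizer supported on the Young subgroup $\W_m\wr\W_{|\bm\nu|} = (\W_m\wr\W_{|\nu^1|})\times\cdots\times(\W_m\wr\W_{|\nu^L|})$ with the trivial idempotent of that subgroup, so that $\C(\W_m\wr\W_n)c_{\bm\mu^{\bm\nu}}\cong \bigl(\C(\W_m\wr\W_{|\bm\nu|})c_{\bm\mu^{\bm\nu}}\bigr)\Ind_{\W_m\wr\W_{|\bm\nu|}}^{\W_m\wr\W_n}$, and on each factor $\W_m\wr\W_{|\nu^k|}$ the symmetrizer $c_{(\mu^k)^{\nu^k}}$ cuts out precisely $\leftspecht{\mu^k}\oslash\leftspecht{\nu^k}$. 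For this last point one checks that $\C(\W_m\wr\W_t)c_{(\mu)^{\nu}}$, with $c_{(\mu)^\nu}=\sum \sgn(\pi)(c_\mu,\dots,c_\mu;\rho\pi)$, is isomorphic to $(\leftspecht\mu)^{\otimes t}\otimes\Inf\leftspecht\nu$: the base-group part $(c_\mu,\dots,c_\mu)$ produces $(\leftspecht\mu)^{\otimes t}$ and the alternating sum over $C(\stt^\nu)$, $R(\stt^\nu)$ in the top group produces $\leftspecht\nu$, with the top group acting by place permutation on the tensor factors exactly as in the definition of $\oslash$. A dimension count using the hook-length formula, or a direct comparison of generators, confirms the isomorphism.

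Finally, to see that the set \eqref{ohgood!} is a full set of pairwise non-isomorphic simple modules, I would argue as follows. Pairwise non-isomorphism is immediate from the orthogonality $c_{\bm\mu^{\bm\lambda}}c_{\bm\mu^{\bm\nu}}=0$ together with $c_{\bm\mu^{\bm\nu}}(\C\W_m\wr\W_n)c_{\bm\mu^{\bm\nu}}=\C c_{\bm\mu^{\bm\nu}}$: if $\leftspecht{\bm\mu^{\bm\nu}}\cong\leftspecht{\bm\mu^{\bm\lambda}}$ then $\Hom_{\C\W_m\wr\W_n}(\leftspecht{\bm\mu^{\bm\nu}},\leftspecht{\bm\mu^{\bm\lambda}})\neq 0$, but this Hom-space is $c_{\bm\mu^{\bm\nu}}(\C\W_m\wr\W_n)c_{\bm\mu^{\bm\lambda}}$, forcing $\bm\mu^{\bm\nu}=\bm\mu^{\bm\lambda}$. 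Simplicity of each $\leftspecht{\bm\mu^{\bm\nu}}$ follows because $c_{\bm\mu^{\bm\nu}}(\C\W_m\wr\W_n)c_{\bm\mu^{\bm\nu}}=\C c_{\bm\mu^{\bm\nu}}$ is one-dimensional, so the module has a one-dimensional endomorphism algebra and a unique maximal submodule; alternatively this is transparent from the isomorphism with $\bigl(\bigotimes_k(\leftspecht{\mu^k}\oslash\leftspecht{\nu^k})\bigr)\Ind$, which is simple by the standard Clifford-theoretic classification of simple $\C\W_m\wr\W_n$-modules recalled in \cite[Chapter~4]{jk}. That the list is exhaustive then follows by counting: the number of $L$-partitions $\bm\mu^{\bm\nu}$ of $n$ equals the number of conjugacy classes of $\W_m\wr\W_n$, hence the number of simple modules. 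The main obstacle will be the bookkeeping in the first step — carefully tracking how the top-group permutation $\pi$ reshuffles which copy $c_{\mu^j}$ lands in which base coordinate, and confirming that the double-coset analysis of $c_{\bm\nu}(\C\W_n)c_{\bm\lambda}$ goes through unchanged when each permutation carries a decoration by base-group elements; everything else is a faithful transcription of the Fulton--Harris argument.
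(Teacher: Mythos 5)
Your proposal is correct and follows exactly the route the paper itself indicates: the paper gives no proof of Proposition~\ref{Young}, saying only that the argument ``follows precisely the method of \cite[Chapter~4]{fultonharris}'', and your write-up is a faithful execution of that method, with the base-group orthogonality $c_{\mu^j}(\C\W_m)c_{\mu^{j'}}=0$ for $j\neq j'$ (the $\mu^j$ being the \emph{distinct} partitions of $m$) doing the essential work of forcing $|\nu^j|=|\lambda^j|$ and confining the top-group permutation to the blocks. The one point to phrase carefully is that the multipartition vanishing $c_{\bm\nu}(\cdot)\hskip1pt c_{\bm\lambda}=0$ for $\bm\nu\neq\bm\lambda$ holds only over the Young subgroup $\W_{|\nu^1|}\times\cdots\times\W_{|\nu^L|}$ (to which your surviving block-preserving terms already belong), not over all of $\C\W_n$, so it really is the base group that rescues the orthogonality here.
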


  In particular, if $\mu\vdash m$ and $\nu \vdash n$ then 
${\color{red}\color{black}(\C  \W_m \wr \W_n)}c_{\mu ^{\nu}}\cong  \leftspecht {\mu}  \oslash  \leftspecht{\nu}.$ 
Analogous statements hold for the right  modules $ \rightspecht{\bm \mu ^{\bm \nu}} =
 c_{\bm \mu ^{\bm \nu}}^\ast {\color{red}\color{black}( \C  \W_m \wr \W_n)}$. Here $ c_{\bm \mu ^{\bm \nu}}^\ast$ is obtained from  $c_{\bm \mu ^{\bm \nu}}$ by applying the anti-involution of $\W_m \wr \W_n$ that inverts group elements.


\subsection{Symmetric functions and plethysm}\label{subsec:symmetricFunctionsAndPlethysm}
For background on symmetric functions we refer the reader to \cite[Ch.~7]{StanleyII} or \cite{MacDonald}.
Here we recall that the ring $\Lambda$ of symmetric functions has as an orthonormal $\mathbb{Z}$-basis
the Schur functions $s_\lambda$ indexed by partitions $\lambda$ and is graded by degree: $s_\lambda$
has degree $|\lambda|$.
The plethysm product $s_\nu \circ s_\mu$ may be defined by substituting the monomials
in $s_\mu$ (taken with multiplicities) for the variables in $s_\nu$;
see \cite[A2.6 Definition]{StanleyII} or \cite[Ch.~I (8.2)]{MacDonald}. 
As a small example,
$s_{(2)}(x_1,x_2, x_3, \ldots) = x_1^2 + x_2^2 + x_3^2 +
 \cdots + x_1x_2 + x_1x_3 +x_2x_3 + \cdots$ is the complete homogeneous symmetric function of
 degree $2$, and
 so working with two variables, we have $s_{(2)}(x_1,x_2) = x_1^2 + x_2^2 + x_1x_2$ and
 \begin{align*}
 (s_{(2)} \,\circ\,& s_{(2)})(x_1,x_2)\\ &= s_{(2)}(x_1^2,x_2^2,x_1x_2) \\ &=
 (x_1^2)(x_1^2) + (x_2^2)(x_2^2) + (x_1x_2)(x_1x_2) + (x_1^2)(x_2^2) + (x_1^2)(x_1x_2) + (x_2^2)(x_1x_2)
 \\ &= x_1^4 + x_1^3x_2 + 2x_1^2x_2^2 + x_1x_2^3 + x_2^4 \\ &= s_{(4)}(x_1,x_2) + s_{(2,2)}(x_1,x_2).
 \end{align*}

For our purposes we need two key results
on the characteristic isometry from the character ring of $\W_{d}$
to the degree $d$ component of $\Lambda$.  

\begin{lem}\label{lemma:characteristicIsometry}
For all partitions $\nu$ of $n$ and $\mu$ of $m$ we have 
\begin{thmlist}
\item[\emph{(a)}] the induced product $\bigl( \leftspecht{\mu} \otimes \leftspecht{\nu}\bigr)
 \ind_{\W_{m} 
\times \W_{n}}^{\W_{m+n}}$ corresponds to the ordinary product
$s_\mu s_\nu${\color{red}\color{black};}
\item[\emph{(b)}] 
the $\W_{mn}$-module $\bigl( \leftspecht{\mu} \oslash \leftspecht{\nu} \bigr)
\ind_{\W_m \wr \W_n}^{\W_{mn}}$ 
corresponds to the plethysm product $s_\nu \circ s_\mu$. 
\end{thmlist} 
\end{lem}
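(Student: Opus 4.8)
The plan is to prove Lemma~\ref{lemma:characteristicIsometry} by unwinding the definition of the characteristic map and matching both sides against the standard facts about the characteristic isometry $\ch \colon \bigoplus_d R(\W_d) \to \Lambda$, where $R(\W_d)$ is the character ring of $\W_d$. Recall that $\ch$ sends the trivial character of $\W_d$ to the complete homogeneous symmetric function $h_d$, sends $\leftspecht{\mu}$ to $s_\mu$, and is an isometry for the respective inner products; moreover, the defining property relevant here is that $\ch(\chi) = \sum_{\rho \vdash d} z_\rho^{-1} \chi(\rho)\, p_\rho$, where $p_\rho$ is the power-sum symmetric function and $z_\rho$ is the usual centraliser order. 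Both parts of the lemma are then statements that $\ch$ intertwines induction product with multiplication (part (a)) and the wreath-product ``$\oslash$ then induce'' operation with plethysm (part (b)); both are classical, but since the paper wants a self-contained treatment I would give the short character-theoretic derivations.

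For part (a), the cleanest route is to use the fact that $\ch$ is a ring homomorphism from $\bigoplus_d R(\W_d)$ (with the induction product $\chi \cdot \psi = (\chi \boxtimes \psi)\ind_{\W_m \times \W_n}^{\W_{m+n}}$) to $\Lambda$. This is standard (e.g.\ \cite[Ch.~I~\S7]{MacDonald} or \cite[Ch.~7]{StanleyII}), and it follows from Frobenius reciprocity plus the observation that the restriction of $p_\rho$-type class functions is compatible with splitting cycle types between the two factors. Applying the ring homomorphism property to $\leftspecht{\mu}$ and $\leftspecht{\nu}$, which map to $s_\mu$ and $s_\nu$ respectively, gives exactly $\ch\bigl( (\leftspecht{\mu} \boxtimes \leftspecht{\nu})\ind_{\W_m \times \W_n}^{\W_{m+n}}\bigr) = s_\mu s_\nu$. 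I would simply cite these sources and state the needed properties as a recollection, since the excerpt's stated goal is to collect background rather than reprove it.

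For part (b), I would reduce to the case where $\leftspecht{\mu} = \mathbf{1}_{\W_m}$ is the trivial module, and then bootstrap. When $\mu = (m)$, the module $\leftspecht{(m)} \oslash \leftspecht{\nu}$ is by definition $\Inf_{\W_n}^{\W_m \wr \W_n} \leftspecht{\nu}$, and the classical computation of the character of $\bigl(\Inf_{\W_n}^{\W_m \wr \W_n}\leftspecht{\nu}\bigr)\ind_{\W_m \wr \W_n}^{\W_{mn}}$ via the plethysm $p_k \mapsto p_{km}$ substitution (see \cite[Ch.~I Appendix A, (6.1)--(6.2)]{MacDonald}) shows that its characteristic is $s_\nu \circ h_m = s_\nu \circ s_{(m)}$, which is part (b) in this case. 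For general $\mu$, one writes $s_\mu$ as an integral linear combination of products $h_{\lambda} = h_{\lambda_1}h_{\lambda_2}\cdots$ (the Jacobi--Trudi / Kostka-triangularity argument), uses that $s_\nu \circ (-)$ distributes over the relevant operations, and checks on the group side that $\leftspecht{\mu}$ is the corresponding virtual combination of modules $\mathbf{1}_{\W_{\lambda}}\ind_{\W_\lambda}^{\W_m}$ under $\ch^{-1}$; combining with part (a) and the $\mu=(m)$ case via the fact that $\oslash$ is additive in the first argument and the ``induce'' operation is transitive in wreath products yields the general statement. Alternatively, and more slickly, one invokes that $\ch$ is an isometric isomorphism of $\lambda$-rings and that plethysm of symmetric functions is \emph{defined} to be the transport of the wreath-product operation under $\ch$; this is literally \cite[A2.6]{StanleyII}, so in a background section one-line citations suffice.

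The main obstacle, such as it is, is purely expository rather than mathematical: deciding how much of the standard characteristic-map formalism to reproduce versus cite. The genuinely substantive point buried in part (b) is the identity $\ch\bigl(\mathrm{Ind}\,\mathrm{Inf}\,\leftspecht{\nu}\bigr) = s_\nu \circ h_m$, which rests on the cycle-type bookkeeping for the embedding \eqref{eqn:wreath_embedding}: an element $(\sigma_1, \ldots, \sigma_n; \pi)$ with $\pi$ having a $k$-cycle contributes, on that block of $km$ points, a permutation whose cycle type is governed by the product $\sigma_{i_1}\cdots\sigma_{i_k}$ and whose power-sum contribution is $p_{km}$ evaluated appropriately --- this is exactly the combinatorial heart of the classical proof. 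I would present that computation in a compressed form (or cite \cite[Ch.~I App.~A]{MacDonald} and \cite[A2.6]{StanleyII} wholesale) and not belabour it, since the novelty of the paper lies entirely downstream.
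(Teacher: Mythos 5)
Your proposal is correct and ends up in essentially the same place as the paper, whose entire proof is a citation to \cite[Proposition 7.18.2, A2.2.3]{StanleyII} and \cite[Ch.~I (7.3), Appendix A (6.2)]{MacDonald}; your expanded character-theoretic sketch is accurate background for those citations. (One small caution if you were to write out the bootstrap for general $\mu$ in part (b): plethysm $f \circ g$ is linear in $f$ but not in $g$, so the ``distributes over the relevant operations'' step needs the coproduct formulas for $f \circ (g+h)$ and $f \circ (gh)$ rather than naive linearity --- but since you, like the paper, ultimately defer to the standard references, this does not affect correctness.)
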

\begin{proof}
See \cite[Proposition 7.18.2, A2.2.3]{StanleyII} or 
\cite[Ch.~I (7.3), Appendix A (6.2)]{MacDonald}.
\end{proof}

In Section 10 we also use the analogous versions of (a) and (b) for right Specht modules.

\subsection{Plethysm coefficients}
We are now ready to introduce the combinatorial objects which motivate this paper: the plethysm coefficients.  
\begin{defn}\label{defn:plethysmCoefficient}
Given $\nu \vdash n$ and $\mu \vdash m$ we
define the {\sf plethysm coefficient} $p (\nu, \mu, \lambda)$ for $\lambda \vdash mn$
by
 \[ s_\nu \circ s_\mu = \sum_{\lambda \vdash mn} p(\nu, \mu, \lambda)
 s_\lambda. \]
\end{defn}

Equivalently by~Lemma~\ref{lemma:characteristicIsometry}(b), the plethysm coefficients may be defined
by 
\begin{equation}\label{eq:plethysmBranchingCoefficient}
(\rightspecht{\mu} \oslash \rightspecht{\nu})\Ind_{\W_m\wr \W_n}^{\W_{mn}}
 \cong \bigoplus _{\lambda\vdash mn} p (\nu, \mu, \lambda) \rightspecht{\lambda}.\end{equation}
We invite the  reader to use~\eqref{eq:plethysmBranchingCoefficient}
to show that $p\bigl((2),(2),\lambda\bigr) \not= 0$ only in the two cases $\lambda = (4)$
or $\lambda = (2,2)$ identified in the previous subsection
by considering the $3$-dimensional symmetric group module
$\bigl( \rightspecht{(2)} \oslash \rightspecht{(2)} \bigr) 
\ind_{\W_2 \wr \W_2}^{\W_4}$.

\subsection{Stabiliser subgroups and induction}\label{subsec:stabiliserSubgroups}
The following subgroups are used in \TheoremA and \TheoremD.

\begin{defn}\label{defn:stabiliserSubgroup}
Given a partition $\epsilon$ of $\Dee$
 we define
$\Stab(\deltaP)$ to be the stabiliser in $\W_\Dee$ of a set-partition of $\{1,\ldots, \Dee\}$
into parts of sizes $\epsilon_1, \ldots, \epsilon_{\ell(\epsilon)}$.
\end{defn}

Equivalently, if $\deltaP$ has exactly $e_j$ parts of size $j$ then
\[ \Stab(\deltaP) \cong \W_{1} \wr \W_{e_1} \times \W_2 \wr \W_{e_2}
 \cdots
\times \W_{\Dee} \wr \W_{e_\Dee}.
\]
We note that $\Stab(\deltaP) \le \W_{e_1} \times \W_{2e_2} \times
\cdots \times \W_{\Dee e_\Dee}$ and 
that (by transitivity of induction) the induced module  
\smash{$\mathbb{C}\ind_{\Stab(\deltaP)}^{S_\Dee}$} decomposes as a direct sum of 
Specht modules with coefficients   equal to  products of Littlewood--Richardson and plethysm coefficients. 
For example the induced module $\C \ind_{\W_2 \wr \W_2}^{\W_4} 
= \bigl( \rightspecht{(2)} \oslash \rightspecht{(2)} \bigr) 
\ind_{\W_2 \wr \W_2}^{\W_4}$
is the permutation module of
 $\W_4$ acting on the cosets of $\W_2 \wr
\W_2$, and so it may be written as
 $\mathbb{C}_{\mathrm{Stab}((2,2))}\ind_{\W_2 \wr \W_2}^{\W_4}$.
   
    
  \section{Partition algebras  }\label{sec2}
  The partition   algebra  was originally defined by
   Martin   
    in \cite{marbook}. 
   In this section we recall the definition and basic properties of this algebra, which can be found in
   \cite{mar1}.
     
   \subsection{Set-partitions}\label{convetnions}
  For $r,s\in\mathbb{N}$, we consider the set $\{1,2,\ldots, r, \bar{1},\bar{2}, \ldots, \bar{s}\}$ with the total ordering 
 $$
 1<2<\dots <r <\bar{1}<\bar{2}<\dots <\overline{s}.
 $$  
We refer to a set-partition  of $\{1,2,\ldots, r, \bar{1},\bar{2}, \ldots, \bar{s}\}$ 
 as an $(r,s)$-\textsf{set-partition}.
A subset appearing in a set-partition is called a {\sf block}.
For example,
\begin{equation}\label{set-par}
\Lambda=\bigl\{\{1, 2, 4, \bar{2}, \bar{5}\}, \{3\}, \{5, 6, 7,   \bar{4}, \bar{6}, \bar{7}, \bar{8}\}, \{8, \bar{3}\}, \{\bar{1}\}\bigr\},
\end{equation}
is an $(8,8)$-set-partition   with five  blocks. 

\begin{rmk}\label{usefulconvention}
Let $\Lambda$ be an $(r,s)$-set-partition.  We order the  subsets in $\Lambda = \{\Lambda_1,
\dots, \Lambda_l\}$ by \emph{increasing minima}, so that
\[
 1 =   \min \Lambda_1 < \min \Lambda_2 < \cdots < \min \Lambda_{l-1} <
   \min \Lambda_l
  \leq  \overline{s}.
\]

\end{rmk}

An  $(r,s)$-set-partition, $\Lambda$,
 can be represented  
 by an $(r,s)$-{\sf partition diagram}, $d_\Lambda$, 
 consisting of  $r$     {\sf northern} and $s$ {\sf southern vertices}. 
 We number the northern vertices from left to right by $1,2,\ldots, r$ and the southern vertices
 from left to right by $\bar{1},\bar{2},\ldots, \bar{s}$ and connect two   vertices by an edge
  if they belong to the same block and 
   are 
 adjacent in the total ordering 
  given by restriction of the above ordering to the given block. 
In this manner, we  pick a unique representative from the equivalence class of all diagrams having the same connected components. 
 For example, the diagram $d_\Lambda$ of the $(8,8)$-set-partition $\Lambda$ in~\eqref{set-par}
 is shown in \cref{diargram1}.
\begin{figure}[ht!]
 $$\scalefont{0.8}
 \begin{tikzpicture}[scale=0.45]
    \draw  (0,3.5) arc (180:360:1 and 0);
      \draw  (2,3.5) arc (180:360:2 and .5);
       \draw  (8,3.5) arc (180:360:1 and .5);
              \draw  (10,3.5) arc (180:360:1 and .5);
        \draw  (2,0) arc (180:360:3 and -1.5);
                \draw  (4,0) arc (180:360:1 and -0.5);
        \draw  (6,0) arc (180:360:2 and -1);      
                \draw  (10,0) arc (180:360:1 and -1);

    \draw  (2,0)to [out=90,in=-90] (6,3.5);   
                                
           \draw  (14,0) to [out=90,in=-90] (12,3.5);                            
   \draw  (4,0) to [out=60,in=-120] (14,3.5);     
                                 
  \foreach \x in {0,2,...,16}
        \fill[white](\x,3.5) circle (7pt);   
      \draw (4,3.5) node {$ \bullet  $};   
                  \draw (2,3.5) node{$ \bullet  $};   
                           \draw (0,3.5) node {$ \bullet  $};   
         \draw (6,3.5) node {$ \bullet  $};   
                  \draw (8,3.5) node {$ \bullet  $};   
                           \draw (10,3.5) node{$ \bullet  $};   
         \draw (12,3.5) node{$ \bullet  $};         \draw (14,3.5) node {$ \bullet  $};   
  \foreach \x in {0,2,...,16}
        \fill[white](\x,0) circle (7pt);   
      \draw (4,0) node {{\color{black} \barthree}}; 
            \draw (2,0) node {{\color{black} \bartwo}}; 
       \draw (0,0) node {\color{white}  \encircle{\color{black} \barone}};      
             \draw (6,0) node {{\color{black} \barfour}}; 
            \draw (8,0) node {{\color{black} \barfive}}; 
       \draw (10,0) node {{\color{black} \barsix}};    
             \draw (12,0) node {{\color{black} \barseven}}; 
            \draw (14,0) node {{\color{black} \bareight}};    
          \end{tikzpicture} 
$$
\caption{The diagram $d_\Lambda$ for $\Lambda$
as in \eqref{set-par}.}
\label{diargram1}
\end{figure}
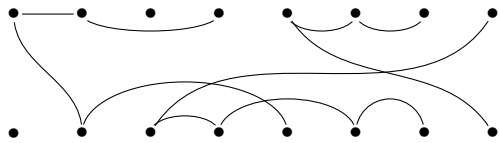

 A block of a set-partition  is called a  {\sf propagating block}  if it contains at least one  northern and at least one southern vertex; a block consisting of either all southern or all northern vertices is called
a {\sf non-propagating block}. In \eqref{set-par}, $\Lambda$ has three propagating blocks and two non-propagating blocks;  both non-propagating blocks are singleton blocks, consisting of a single vertex.

To each $(r,s)$-set-partition diagram $d_\Lambda$, we have an associated
 permutation $\pi_\Lambda$ given 
by 
{\color{red}\color{black}   deleting all nodes which are not incidental to an edge connecting a northern vertex to the southern vertex}, and then interpreting
  the diagram as a permutation. For instance,
  for the diagram $d_\Lambda$ above, we have 
  $\pi_\Lambda=(2,3) \in \W_3$.

   We now consider a parameter  $\delta \in \C $.  We define the product 
   $d_{\Lambda}  d_{\Gamma}$ of two $(r,r)$-partition diagrams $d_{\Lambda}$
and $d_{\Gamma}$ by concatenating $d_{\Lambda}$ above $d_{\Gamma}$, and identifying
the southern vertices of $d_{\Lambda}$ with the northern vertices of $d_{\Gamma}$.   
If there are $t$ connected components consisting only of  middle vertices, 
then the product is $ \delta ^t$ times the 
$(r,r)$-partition diagram equivalent to the 
diagram with the middle components removed. 

For example, take $d_{\Lambda}$ to be the diagram of \cref{diargram1} and  $d_{\Gamma}$  to be the diagram of 
\[ \Gamma= \{\{1\}, \{2, \bar{1}, \bar{2}\}, \{3,\bar{4}\}, \{4,\bar{3}\}, \{5,\bar{5},\bar{6}\}, \{6\}, \{7,8,\bar{7}, \bar{8}\}\}.\] Then
$d_{\Lambda}d_{\Gamma}$ equals $\delta$ times the diagram of 
$\{\{1,2,4 ,\bar{1}, \bar{2}, \bar{5}, \bar{6}\}, \{3\}, \{5,6,7,8,\bar{3},\bar{4},\bar{7}, 
\bar{8}\}\}$, as shown in \cref{multiplication}.

 \begin{figure}[ht!]
\[\scalefont{0.8}
\begin{minipage}{6.5cm}
 \begin{tikzpicture}[scale=0.4]
    \draw  (0,3.5) arc (180:360:1 and 0.5);
      \draw  (2,3.5) arc (180:360:2 and .5);
       \draw  (8,3.5) arc (180:360:1 and .5);
              \draw  (10,3.5) arc (180:360:1 and .5);
        \draw  (2,0) arc (180:360:3 and -1.5);
                \draw  (4,0) arc (180:360:1 and -0.5);
        \draw  (6,0) arc (180:360:2 and -1);      
                \draw  (10,0) arc (180:360:1 and -1);  
%
%
%
    \draw  (2,0)to [out=90,in=-90] (6,3.5);                                 
           \draw  (14,0) to [out=90,in=-90] (12,3.5);                            
   \draw  (4,0) to [out=60,in=-120] (14,3.5);     
     \foreach \x in {0,2,...,16}
        \fill[white](\x,3.5) circle (7pt);   
      \draw (4,3.5) node {$ \bullet  $};   
                  \draw (2,3.5) node{$ \bullet  $};   
                           \draw (0,3.5) node {$ \bullet  $};   
         \draw (6,3.5) node {$ \bullet  $};   
                  \draw (8,3.5) node {$ \bullet  $};   
                           \draw (10,3.5) node{$ \bullet  $};   
         \draw (12,3.5) node{$ \bullet  $};         \draw (14,3.5) node {$ \bullet  $};   
  \foreach \x in {0,2,...,16}
        \fill[white](\x,0) circle (7pt);   
      \draw (4,0) node {{\color{black} \barthree}}; 
            \draw (2,0) node {{\color{black} \bartwo}}; 
       \draw (0,0) node {\color{white}  \encircle{\color{black} \barone}};      
             \draw (6,0) node {{\color{black} \barfour}}; 
            \draw (8,0) node {{\color{black} \barfive}}; 
       \draw (10,0) node {{\color{black} \barsix}};    
             \draw (12,0) node {{\color{black} \barseven}}; 
            \draw (14,0) node {{\color{black} \bareight}};

       \draw  (12,0) arc (180:360:1 and .5);

                \draw  (8,-3.75) arc (180:360:1 and -0.5);  
                \draw  (0,-3.75) arc (180:360:1 and -0.5);  
                \draw  (12,-3.75) arc (180:360:1 and -0.5);  
                                 
           \draw  (0,-3.75) to [out=90,in=-90] (2,0);                            

           \draw  (4,-3.75) -- (6,0);                            
           \draw  (6,-3.75) -- (4,0);                            

           \draw  (12,-3.75) -- (12,0);           
           \draw  (8,-3.75) -- (8,0);                            

  \foreach \x in {0,2,...,16}
        \fill[white](\x,0) circle (7pt);   
      \draw (4,0) node {$ \bullet  $};   
                  \draw (2,0) node{$ \bullet  $};   
                           \draw (0,0) node {$ \bullet  $};   
         \draw (6,0) node {$ \bullet  $};   
                  \draw (8,0) node {$ \bullet  $};   
                           \draw (10,0) node{$ \bullet  $};   
         \draw (12,0) node{$ \bullet  $};         \draw (14,0) node {$ \bullet  $};   
  \foreach \x in {0,2,...,16}
        \fill[white](\x,-3.75) circle (7pt);   
      \draw (4,-3.75) node {{\color{black} \barthree}}; 
            \draw (2,-3.75) node {{\color{black} \bartwo}}; 
       \draw (0,-3.75) node {\color{white}  {\color{black} \barone}};      
             \draw (6,-3.75) node {{\color{black} \barfour}}; 
            \draw (8,-3.75) node {{\color{black} \barfive}}; 
       \draw (10,-3.75) node {{\color{black} \barsix}};    
             \draw (12,-3.75) node {{\color{black} \barseven}}; 
            \draw (14,-3.75) node {{\color{black} \bareight}};    
          \end{tikzpicture} 
          \end{minipage}          
          = \; \; \delta \;
          \begin{minipage}{6.5cm} \begin{tikzpicture}[scale=0.4]
    \draw  (0,3.5) arc (180:360:1 and 0.5);
      \draw  (2,3.5) arc (180:360:2 and .5);
       \draw  (8,3.5) arc (180:360:1 and .5);
              \draw  (10,3.5) arc (180:360:1 and .5);
              \draw  (12,3.5) arc (180:360:1 and .5);              
              
        \draw  (2,0) arc (180:360:3 and -1.5);
                \draw  (4,0) arc (180:360:1 and -0.5);
                 \draw  (6,0) arc (180:360:3 and -1);                  \draw  (12,0) arc (180:360:1 and -0.5);  

                 \draw  (0,0)to [out=90,in=-90] (6,3.5);   
            \draw  (4,0) to [out=70,in=-100] (14,3.5);   

     \draw  (8,0) arc (180:360:1 and -0.5);  
                 \draw  (0,0) arc (180:360:1 and -1);

  \foreach \x in {0,2,...,16}
        \fill[white](\x,3.5) circle (7pt);   
      \draw (4,3.5) node {$ \bullet  $};   
                  \draw (2,3.5) node{$ \bullet  $};   
                           \draw (0,3.5) node {$ \bullet  $};   
         \draw (6,3.5) node {$ \bullet  $};   
                  \draw (8,3.5) node {$ \bullet  $};   
                           \draw (10,3.5) node{$ \bullet  $};   
         \draw (12,3.5) node{$ \bullet  $};         \draw (14,3.5) node {$ \bullet  $};   
  \foreach \x in {0,2,...,16}
        \fill[white](\x,0) circle (7pt);   
      \draw (4,0) node {{\color{black} \barthree}}; 
            \draw (2,0) node {{\color{black} \bartwo}}; 
       \draw (0,0) node {\color{white}  {\color{black} \barone}};      
             \draw (6,0) node {{\color{black} \barfour}}; 
            \draw (8,0) node {{\color{black} \barfive}}; 
       \draw (10,0) node {{\color{black} \barsix}};    
             \draw (12,0) node {{\color{black} \barseven}}; 
            \draw (14,0) node {{\color{black} \bareight}};    
          \end{tikzpicture} \end{minipage}
\]
\caption{An example of a product  in $P_8(\delta)$. }
\label{multiplication}
\end{figure}
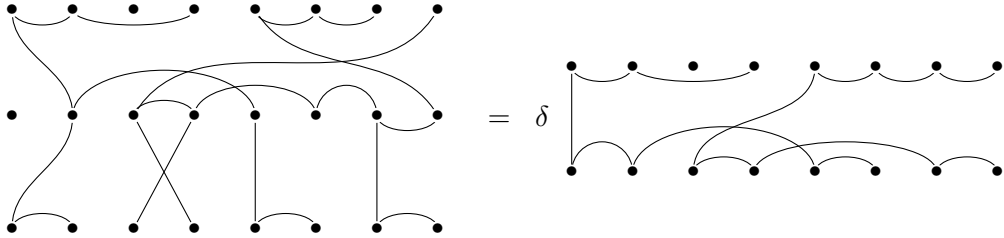

We let $P_r(\delta)$ denote the complex vector space with basis given by  all set-partitions of  $\{1,2,\ldots, r, $ $\bar{1},\bar{2}, \ldots, \bar{r}\} $  
and with multiplication given by  linearly extending  the multiplication of diagrams.   
Then $P_r(\delta)$ is an associative $\C $-algebra, known as the  {\sf partition  algebra}.
  The $(r,r)$-set-partitions with exactly $r$ propagating blocks are just the permutations, 
  hence
  $\C \W_r$ is a subalgebra of $P_r(\delta) $.

The partition algebra is generated by the usual Coxeter generators of $\W_r$ together with the two diagrams ${\sf p}_1=d_\Lambda$ for $\Lambda=
\{\{1\},\{\overline{1}\}\} \cup \{\{k,\overline{k}\}
\mid k>1 \}$ and  ${\sf p}_{1,2}=d_\Lambda$ for
$\Lambda=\{\{1,2,\overline{1},\overline{2}\}\} \,\cup\, \{\{k,\overline{k}\}
\mid 2 < k \le r \}$ depicted  in 
\cref{generators}.
Set
${\sf p}_i = s_{i-1}   \dots s_2 s_1{\sf p}_1 s_1 s_2 \dots  s_{i-1}$ for $i\geq 2$.

\begin{figure}[h!]
  $$ 
 \scalefont{0.8}
\begin{minipage}{60mm}\begin{tikzpicture}[scale=0.45]
   \draw  (2,3.5) --(2,0);  \draw  (4,3.5) --(4,0);
    \draw  (8,3.5) --(8,0); 
  \foreach \x in {0,2,4,8}
        \fill[white](\x,3.5) circle (10pt);   
      \draw (4,3.5) node {$ \color{white}\encircle{\color{black} \three}  $};   
                  \draw (2,3.5) node {$\color{white}\encircle{\color{black} \two}$}; 
                           \draw (0,3.5) node {$\color{white}\encircle{\color{black} \one}$}; 
     \draw (6,0) node {$\dots $};          \draw (6,3.5) node {$\dots $};                             \draw (8,3.5) node {$\color{white}\encircle{\color{black} $\bullet$}$};    
   \foreach \x in {0,2,4,8}
        \fill[white](\x,0) circle (10pt);   
      \draw (4,0) node {  \color{white}\encircle{\color{black} \barthree}};   
                 \draw (2,0) node { \color{white}\encircle{\color{black} \bartwo}}; 
                           \draw (0,0) node {\color{white}\encircle{\color{black} \barone}}; 
                            \draw (8,0) node {\color{white}\encircle{\color{black} $\bullet$}};    
      \end{tikzpicture}
      \end{minipage}
    \begin{minipage}{50mm}  \begin{tikzpicture}[scale=0.45,yscale=-1]
   \draw  (2,3.5) --(2,0);  \draw  (4,3.5) --(4,0);
    \draw  (8,3.5) --(8,0); 
        \draw  (-2,3.5) --(-0,0); 
        \draw  (-2,3.5) arc (180:360:1 and 0.5);
                \draw  (-2,0) arc (180:360:1 and -0.5);
  \foreach \x in {-2,0,2,4,8}
        \fill[white](\x,3.5) circle (10pt);   
      \draw (4,3.5) node {$ \color{white}\encircle{\color{black} \four}  $};   
                  \draw (2,3.5) node {$\color{white}\encircle{\color{black} \three}$}; 
                           \draw (-2,3.5) node {$\color{white}\encircle{\color{black} \one}$};                            \draw (0,3.5) node {$\color{white}\encircle{\color{black} \two}$}; 
     \draw (6,0) node {$\dots $};          \draw (6,3.5) node {$\dots $};                             \draw (8,3.5) node {$\color{white}\encircle{\color{black} $\bullet$}$};    
   \foreach \x in {-2,0,2,4,8}
        \fill[white](\x,0) circle (10pt);   
      \draw (4,0) node {\color{white}\encircle{\color{black} \barfour  }};   
                 \draw (2,0) node {\color{white}\encircle{\color{black} \barthree}}; 
                           \draw (-2,0) node {\color{white}\encircle{\color{black} \barone}}; 
                           \draw (0,0) node {\color{white}\encircle{\color{black} \bartwo}}; 
                            \draw (8,0) node {\color{white}\encircle{\color{black}  $\bullet$}};    
      \end{tikzpicture}      \end{minipage}
 $$
\caption{The non-Coxeter generators ${\sf p}_1$ and  ${\sf p}_{1,2}$  of $P_r(\delta)$}
\label{generators}
\end{figure}
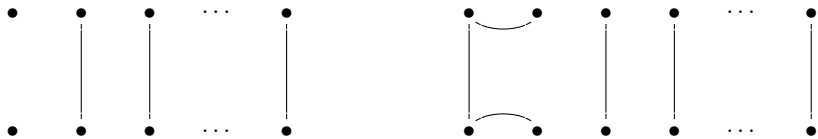
   
 \subsection{Horizontal concatenation}\label{hoz}
 Given  an $(r_1,s_1)$-set-partition diagram 
$d_{\Lambda_1}$ and an $(r_2,s_2)$-set-partition diagram  $d_{\Lambda_2}$ we define their 
\textsf{horizontal concatenation}, $d_{\Lambda_1}\circledast d_{\Lambda_2}$,  to be the 
  $(r_1+r_2,s_1+s_2)$-set-partition obtained by  placing the diagram $d_{\Lambda_1}$ to the left of $d_{\Lambda_2}$ and relabelling the vertices (that is, the $k$th northern vertex in $d_{\Lambda_2}$ is relabelled by $r_1+k$ and 
 the $k$th southern vertex in $d_{\Lambda_2}$ is relabelled by $s_1+k$).  This is illustrated in \cref{concate-hoz}.

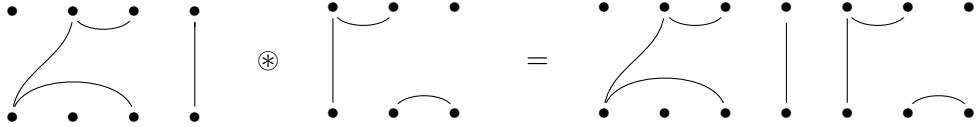
\begin{figure}[ht!]
\[ \scalefont{0.8}
 \begin{minipage}{3cm}
 \begin{tikzpicture}[scale=0.4]
    \draw  (2,3.5) to [out=-90,in=-90] (4,3.5);  
   \draw  (0,0) to [out=90,in=-90] (4,3.5);   
   \draw  (0,0) to [out=90,in=90] (4,0);  
      \draw  (6,0) to [out=90,in=90] (6,3);  
  \foreach \x in {0,2,4,6}
     {   \fill[white](\x,3.5) circle (10pt);   \fill[white](\x,0) circle (10pt);   
      \draw (\x,3.5) node {$ \bullet  $};   
                  \draw (\x,0) node{$ \bullet  $};   
}               
     \end{tikzpicture}\end{minipage} \;\;\; \circledast \; \; \;\;
      \begin{minipage}{2.3cm}
 \begin{tikzpicture}[scale=0.4] 
   \draw  (0,3.5) to [out=-90,in=-90] (2,3.5);  
   \draw  (0,0) to [out=90,in=-90] (2,3.5);   
   \draw  (2,0) to [out=90,in=90] (4,0);  
 
  \foreach \x in {0,2,4}
     {   \fill[white](\x,3.5) circle (10pt);   \fill[white](\x,0) circle (10pt);   
      \draw (\x,3.5) node {$ \bullet  $};   
                  \draw (\x,0) node{$ \bullet  $};   
}                      
     \end{tikzpicture}\end{minipage} \;\;\; =\; \; \;\;
      \begin{minipage}{6.2cm}
 \begin{tikzpicture}[scale=0.4]
  \draw  (2,3.5) to [out=-90,in=-90] (4,3.5);  
   \draw  (0,0) to [out=90,in=-90] (4,3.5);   
   \draw  (0,0) to [out=90,in=90] (4,0);  
   
      \draw  (6,0) -- (6,3);  
   
    \draw  (8+0,3.5) to [out=-90,in=-90] (8+2,3.5);  
   \draw  (8+0,0) to [out=90,in=-90] (10,3.5);   
   \draw  (8+2,0) to [out=90,in=90] (8+4,0);  

  \foreach \x in {0,2,4,6,8,10,12}
     {   \fill[white](\x,3.5) circle (10pt);   \fill[white](\x,0) circle (10pt);   
      \draw (\x,3.5) node {$ \bullet  $};   
                  \draw (\x,0) node{$ \bullet  $};   
}              
       \end{tikzpicture}\end{minipage} \]
     
\caption{A horizontal concatenation.}
\label{concate-hoz}
\end{figure}
 
\subsection{A filtration of the partition algebra}\label{subsec:filtrationPartitionAlgebra}
Recall that a block of a set-partition 
is propagating if the block contains both northern and southern vertices. 
 %
It is clear that multiplication in $P_r(\delta)$ cannot increase the number of propagating blocks.  
   This leads to a filtration of the algebra $P_r(\delta)$ by the number of propagating blocks.
  Supposing that  $\delta  \ne 0$, there are idempotents 
   ${e}_l= {\color{red}\color{black} \delta ^{ l-r}}
   \mathsf{p}_r\mathsf{p}_{r-1}\ldots \mathsf{p}_{l+1}$ for $0\leq l \leq r$, as depicted in \cref{ghgh221}.

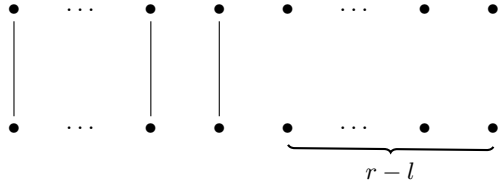
\begin{figure}[h!]
 \[ 
 \scalefont{0.8}
 \begin{tikzpicture}[yscale=0.45,xscale=-0.45]
   \draw  (2,3.5) --(2,0);  \draw  (4,3.5) --(4,0);
    \draw  (8,3.5) --(8,0); 
  \foreach \x in {0,2,4,8}
        \fill[white](\x,3.5) circle (10pt);   
      \draw (4,3.5) node{$ \bullet  $};   
                  \draw (2,3.5) node{$ \bullet  $};   
                           \draw (0,3.5) node{$ \bullet  $};   
     \draw (6,0) node {$\dots $};          \draw (6,3.5) node {$\dots $};                             \draw (8,3.5) node{$ \bullet  $};   
   \foreach \x in {0,2,4,8}
        \fill[white](\x,0) circle (10pt);   
      \draw (4,0) node{$ \bullet  $};   
                 \draw (2,0) node{$ \bullet  $};   
                           \draw (0,0) node{$ \bullet  $};   
                            \draw (8,0) node{$ \bullet  $};   

                       \draw (-4,0) node{$ \bullet  $};  
                       \draw (-4,3.5) node{$ \bullet  $};  

                       \draw (-6,0) node{$ \bullet  $};  
                       \draw (-6,3.5) node{$ \bullet  $};  

     \draw (-2,0) node {$\dots $};          \draw (-2,3.5) node {$\dots $};     

\draw [
    thick,
    decoration={
        brace,
        mirror,
        raise=-0.5cm
    },
    decorate
]   (0,-1.6) --(-6,-1.66) ;
\draw(-3,-1.3) node {$r-l$};

      \end{tikzpicture}
\]  
     \vspace{-0.6cm}
\caption{The idempotent $e_l$ is defined to be $\delta ^{{\color{red}\color{black}l-r}}$ times the diagram above having $\ell$ propagating
blocks.}
\label{ghgh221}
\end{figure}

We have
\begin{equation}\label{eq:Pfiltration} \{0\} \subset
 P_r(\delta) {e}_0 P_r(\delta) \subset P_r(\delta) {e}_{1}P_r(\delta) \subset \ldots \subset  P_r(\delta){e}_{r-1}P_r(\delta) \subset P_r(\delta).		\end{equation}
Set $J_l = P_r(\delta) {e}_{l}P_r(\delta)$  for $0\leq l \leq r$.  
 The ideal $J_l$ is spanned by all $(r,r)$-partition-diagrams having at most $l$ propagating blocks.
It is easy to see that
\begin{equation}\label{bob}
{e}_{r-1}P_r(\delta) {e}_{r-1} \cong  P_{r-1}(\delta),  \end{equation}
and that this generalises to $P_{l}(\delta)\cong {e}_{l}P_r(\delta) {e}_{l}$ for $0\leq l\leq r$. 
Moreover, since $P_r(\delta){e}_{r-1}P_r(\delta)$ is the span of all $(r,r)$-partition diagrams with at most $r-1$ propagating blocks, 
\begin{equation}\label{bob2} \frac{P_r(\delta)}{P_r(\delta) {e}_{r-1}P_r(\delta)}\cong \mathbb{C}\W_r\end{equation}
where the left-hand side is $J_r/J_{r-1}$.

\subsection{Standard and simple modules for the partition algebra}
 \label{subsec:standardSimpleModulesPartitionAlgebra}
We use this filtration to construct the standard modules for the partition algebra.
 Since we later use the
 commuting left action of the symmetric group and right action of the partition algebra on tensor space (see \cref{SWsec}), we require right $P_r(\delta)$-modules.
   
 We set $V_r(k)= e_k (J_k / J_{k-1})$.  Observe that $V_r(k)$ has a basis given by all $(r,r)$-partition diagrams with exactly $k$ propagating blocks such that $\{j\}$ is a singleton part for all $j\geq k+1$.  
Thus the corresponding diagrams have no edges from the north vertices $k+1,\ldots, r$.
We identify such diagrams with the  $(k,r)$-partition diagrams having precisely $k$ propagating blocks. 
For instance, two different examples of $(3,5)$-partition diagrams with 3 propagating blocks 
appear as bottom halves of the concatenated diagrams in \cref{delta21}.

Since $e_kP_r(\delta)e_k \cong P_k(\delta)$ and $P_k(\delta)$ has $\C\W_k$ as a quotient by~\eqref{bob2}, 
 $V_r(k)$ has the  structure of a $(\C\W_{k}, P_r(\delta))$-bimodule.
We remark that
$V_r(k) \otimes _{\W_k} 
 {\C \W_k} 
\otimes _{\W_k} 			V_r(k) \cong J_k / J_{k-1}$
where the isomorphism is defined 
 on diagrams 
 by $u \otimes \sigma \otimes v \mapsto u^\ast \sigma v$,
  where $u^\ast$ denotes the $(r,k)$-partition diagram obtained from the $(k,r)$-partition diagram $u$ by horizontal reflection; this makes concrete the filtration in~\eqref{eq:Pfiltration}.
From~\eqref{bob2}, we see that any right 
$\mathbb{C}\W_k$-module can be inflated 
 to a $P_k(\delta)$-module. 
The simple right $\mathbb{C}\W_k$-modules are the right Specht modules $\rightspecht{\kappa}$
for $\kappa \vdash k$.
 Thus by induction using~\eqref{bob} and~\eqref{bob2}
 we find that the simple $P_r(\delta)$-modules are indexed by $\ParSet(\leq r) =\bigcup_{0\leq i\leq r}\ParSet(i)$.
For any $\kappa \vdash k$, we define the {\sf standard} (right) $P_r(\delta)$-module, $\Delta_{r     }(\kappa)$, by
\begin{equation}\label{eq:DeltaPartitionAlgebra} \Delta_{r  
}(\kappa) \cong \rightspecht {\kappa} \otimes_{\W_{k}} V_r(k) \end{equation}
where    
the action of $(r,r)$-diagrams $d \in P_r(\delta)$ is given as follows. Let $v$ be a $(k,r)$-partition diagram in $V_r(k)$ and let
 $x\in \rightspecht {\kappa}$. Concatenate  $v$ above $d$ to get $\delta ^t v'$ for some $(k,r)$-partition diagram~$v'$ and some non-negative integer $t$. If $v'$ has fewer than $k$ propagating blocks then we set
$(x\otimes v)d=0$.
Otherwise we set $(x\otimes v)d=\delta ^t x\otimes v'$. 

By \cite[8.4]{james}, the 
$\C \W_k$-Specht module $ \rightspecht{\kappa}$ has basis $\{  c_{\kappa}^\ast \sigma \mid   t_\kappa \sigma \in \Std(\kappa)\}$, where $c_\kappa^\ast$ is the dual Young symmetrizer defined 
in~\eqref{eq:dualYoungSymmetrizer}.
By~\eqref{eq:DeltaPartitionAlgebra}, we have $v \otimes \tau d = v \tau \otimes d$ for any $\tau \in \W_k$,
and so we need to multiply the basis elements of $\rightspecht{\kappa}$ only by diagrams $d$ such 
that~$\pi_d$ is the identity permutation.
After this reduction, we obtain the basis
\begin{align}\label{actual-d-basis}
 \Bigl\{  c_{\kappa}^\ast \sigma d_{\Lambda }
\, \Bigl|  \,
 \begin{minipage}{3.2in}$\stt^\kappa \sigma \in \Std(\kappa)$, 
 $\Lambda$ is a $(k,r)$-set-partition with $k$ propagating blocks, 
  $\pi _\Lambda=1 \in \W_k$\end{minipage} \Bigr\} 
 \end{align}  
of $\Delta_r(\kappa)$, with the action of $P_r(\delta)$ as specified after~\eqref{eq:DeltaPartitionAlgebra}.
  
In particular, taking $k=r$, we have
\begin{equation}\label{annihilate2}
\Delta_{r 
}(\kappa)\cong \rightspecht{\kappa}\otimes_{\W_r}V_r( r) \cong 
 \rightspecht{\kappa},
\end{equation}
where the right-hand side is viewed as a $P_r(\delta)$-module by inflation using  (\ref{bob2}).

\begin{eg}\label{egnocross} 
 Three distinct basis elements of $\Delta_5\bigl((2,1)\bigr)$ are depicted in \cref{delta21}.
The middle diagram shows ${\color{red}\color{black}c_{(2,1)}^*} (2,3) d_\Lambda$ where $\Lambda = \bigl\{ \{1, \overline{1}, \overline{3} \},
\{ 2, \overline{2}, \overline{5} \}, \{3, \overline{4} \bigr\}$. 
Consider the two diagrams~$d_\Gamma$ and $d_{\Gamma'}$ shown in \cref{delta21gammas}.
The product
${\color{red}\color{black}c_{(2,1)}^*} (2,3) d_\Lambda d_\Gamma$ is non-zero as $d_\Lambda d_\Gamma$ has~$3$ propagating blocks; it
is computed in \cref{delta21product}
 using the action described after~\eqref{eq:DeltaPartitionAlgebra}, with a further `untwisting' step to
 obtain a canonical basis element from~\eqref{actual-d-basis}.
 On the other hand since $d_\Lambda d_{\Gamma'}$ has only~$2$ 
 propagating blocks, we have ${\color{red}\color{black}c_{(2,1)}^*} (2,3) d_\Lambda d_{\Gamma'} = 0$.

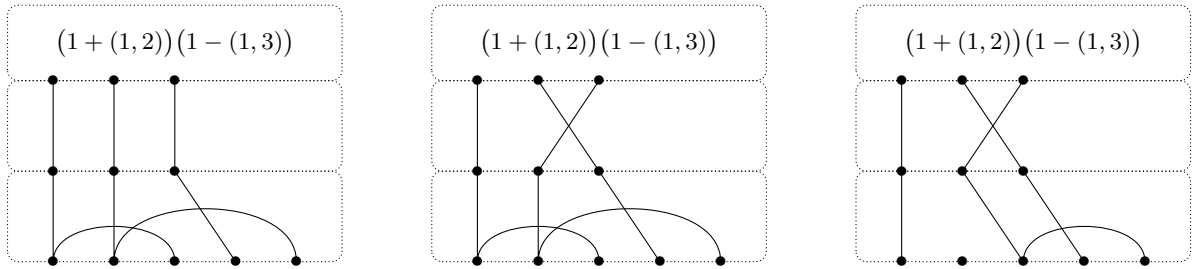
\begin{figure}[ht!]$$ 
 \begin{tikzpicture}[xscale=0.4,yscale=-0.4]
  \scalefont{0.8}
       \foreach \x in {0,1,2,3,4,5}
     {  
      \path(\x*2,0) coordinate (down\x);  
   }

       \foreach \x in {0,1,2,3,4,5,6}
    {   \path(\x*2,3) coordinate (up\x);  
    }

        \foreach \x in {0,2,4,6,8}
     {   \fill[white](\x,6) circle (4pt);   
                        \draw (\x,6) node {$\bullet$}; 
                  }

        \foreach \x in {0,2,4}
     {   \fill[white](\x,3) circle (4pt);   
                        \draw (\x,3) node {$\bullet$}; 
                  }
                
      \foreach \x in {0,2,4}
     {   \fill[white](\x,0) circle (4pt);   
                        \draw (\x,0) node {$\bullet$}; 
                  }

\draw[rounded corners ,densely dotted ] (-1.5,-0)  rectangle( 9.5,-2.5)   node[midway] 
{$\color{red}\color{black} \bigl(1 - (1,3) \bigr)\bigl( 1 + (1,2) \bigr)$};

\draw[rounded corners ,densely dotted ] (-1.5,-0)  rectangle( 9.5,3);

\draw[rounded corners ,densely dotted ] (-1.5,6)  rectangle( 9.5,3);

                        
\draw(0,0)--(0,3)         ;      
\draw(2,0)--(2,3)         ;         \draw(4,0)--(4,3)         ;    
\draw(0,3)--(0,6) to [out=-90,in=-90] (4,6)        ;               
\draw(2,3)--(2,6) to [out=-90,in=-90] (8,6)        ;                             
\draw(4,3)--(6,6) ;
         \end{tikzpicture}  \ \ 
          \begin{tikzpicture}[xscale=0.4,yscale=-0.4]
   \scalefont{0.8}
       \foreach \x in {0,1,2,3,4,5}
     {  
      \path(\x*2,0) coordinate (down\x);  
   }

       \foreach \x in {0,1,2,3,4,5,6}
    {   \path(\x*2,3) coordinate (up\x);  
    }

        \foreach \x in {0,2,4,6,8}
     {   \fill[white](\x,6) circle (4pt);   
                        \draw (\x,6) node {$\bullet$}; 
                  }

        \foreach \x in {0,2,4}
     {   \fill[white](\x,3) circle (4pt);   
                        \draw (\x,3) node {$\bullet$}; 
                  }
                
      \foreach \x in {0,2,4}
     {   \fill[white](\x,0) circle (4pt);   
                        \draw (\x,0) node {$\bullet$}; 
                  }

\draw[rounded corners ,densely dotted ] (-1.5,-0)  rectangle( 9.5,-2.5)   node[midway] 
{$\color{red}\color{black} \bigl(1 - (1,3) \bigr)\bigl( 1 + (1,2) \bigr)$};

\draw[rounded corners ,densely dotted ] (-1.5,-0)  rectangle( 9.5,3);

\draw[rounded corners ,densely dotted ] (-1.5,6)  rectangle( 9.5,3);

                        
\draw(0,0)--(0,3)         ;      
\draw(2,0)--(4,3)         ;         \draw(4,0)--(2,3)         ;    
\draw(0,3)--(0,6) to [out=-90,in=-90] (4,6)        ;               
\draw(2,3)--(2,6) to [out=-90,in=-90] (8,6)        ;                             
\draw(4,3)--(6,6) ;
         \end{tikzpicture} 
\ \ 
          \begin{tikzpicture}[xscale=0.4,yscale=-0.4]
   \scalefont{0.8}
       \foreach \x in {0,1,2,3,4,5}
     {  
      \path(\x*2,0) coordinate (down\x);  
   }

       \foreach \x in {0,1,2,3,4,5,6}
    {   \path(\x*2,3) coordinate (up\x);  
    }

        \foreach \x in {0,2,4,6,8}
     {   \fill[white](\x,6) circle (4pt);   
                        \draw (\x,6) node {$\bullet$}; 
                  }

        \foreach \x in {0,2,4}
     {   \fill[white](\x,3) circle (4pt);   
                        \draw (\x,3) node {$\bullet$}; 
                  }
                
      \foreach \x in {0,2,4}
     {   \fill[white](\x,0) circle (4pt);   
                        \draw (\x,0) node {$\bullet$}; 
                  }

\draw[rounded corners ,densely dotted ] (-1.5,-0)  rectangle( 9.5,-2.5)   node[midway] 
{$\color{red}\color{black} \bigl(1 - (1,3) \bigr)\bigl( 1 + (1,2) \bigr)$};

\draw[rounded corners ,densely dotted ] (-1.5,-0)  rectangle( 9.5,3);

\draw[rounded corners ,densely dotted ] (-1.5,6)  rectangle( 9.5,3);

                        
\draw(0,0)--(0,3)         ;      
\draw(2,0)--(4,3)         ;         \draw(4,0)--(2,3)         ;    
\draw(0,3)--(0,6) ; 
\draw(2,3)--(4,6) to [out=-90,in=-90] (8,6)        ;                             
\draw(4,3)--(6,6) ;
         \end{tikzpicture}   
$$
 \caption{Three  elements shown in the form $ {\color{red}\color{black}c_{(2,1)}^*} \sigma  d_{\Lambda}$  in the basis of 
 $\Delta_5\bigl((2,1)\bigr)$ from \cref{actual-d-basis}. The bottom halves are $(3,5)$-diagrams 
lying in the basis of $V_5(3)$, regarding these halves as $(5,5)$-diagrams using 
the identification made at the start of 
 Section~\ref{subsec:standardSimpleModulesPartitionAlgebra}.}
 \label{delta21}
 \end{figure}

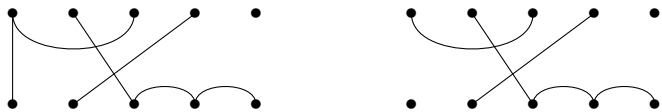
\begin{figure}[ht!]
          \begin{tikzpicture}[xscale=0.4,yscale=-0.4]
   \scalefont{0.8}
       \foreach \x in {0,1,2,3,4,5}
     {  
      \path(\x*2,0) coordinate (down\x);  
   }

       \foreach \x in {0,1,2,3,4,5,6}
    {   \path(\x*2,3) coordinate (up\x);  
    }

        \foreach \x in {0,2,4,6,8}
     {   \fill[white](\x,0) circle (4pt);   
                        \draw (\x,0) node {$\bullet$}; 
                  }

        \foreach \x in {0,2,4,6,8}
     {   \fill[white](\x,3) circle (4pt);   
                        \draw (\x,3) node {$\bullet$}; 
                  }
                
        
\draw (4,0) to [out=90,in=-90](0,3);
\draw (2,0)--(4,3); 
\draw (4,3) to [out=-90,in=-90] (6,3);
 \draw (6,3) to [out=-90,in=-90] (8,3);
\draw (4,0) to [out=120,in=0] (2,1) to [out=180,in=60] (0,0);
\draw (6,0)--(2,3);
                    
\end{tikzpicture}\ \
\begin{tikzpicture}[xscale=0.4,yscale=-0.4]
   \scalefont{0.8}
       \foreach \x in {0,1,2,3,4,5}
     {  
      \path(\x*2,0) coordinate (down\x);  
   }

       \foreach \x in {0,1,2,3,4,5,6}
    {   \path(\x*2,3) coordinate (up\x);  
    }

        \foreach \x in {0,2,4,6,8}
     {   \fill[white](\x,0) circle (4pt);   
                        \draw (\x,0) node {$\bullet$}; 
                  }

        \foreach \x in {0,2,4,6,8}
     {   \fill[white](\x,3) circle (4pt);   
                        \draw (\x,3) node {$\bullet$}; 
                  }

\draw (2,0)--(4,3); \draw (4,3) to [out=-90,in=-90] (6,3); \draw (6,3) to [out=-90,in=-90] (8,3);
\draw (4,0) to [out=90,in=90] (0,0);
\draw (6,0)--(2,3);
                    
\end{tikzpicture}

\caption{The diagrams $d_\Gamma$ and $d_{\Gamma'}$ of two $(5,5)$-set-partitions.}
\label{delta21gammas}
\end{figure}

\begin{figure}[ht!]
\begin{minipage}{8in}
   \begin{tikzpicture}[xscale=0.4,yscale=-0.4]
   \scalefont{0.8}
       \foreach \x in {0,1,2,3,4,5}
     {  
      \path(\x*2,0) coordinate (down\x);  
   }

       \foreach \x in {0,1,2,3,4,5,6}
    {   \path(\x*2,3) coordinate (up\x);  
    }

        \foreach \x in {0,2,4,6,8}
     {   \fill[white](\x,6) circle (4pt);   
                        \draw (\x,6) node {$\bullet$}; 
                  }

        \foreach \x in {0,2,4}
     {   \fill[white](\x,3) circle (4pt);   
                        \draw (\x,3) node {$\bullet$}; 
                  }
                
      \foreach \x in {0,2,4}
     {   \fill[white](\x,0) circle (4pt);   
                        \draw (\x,0) node {$\bullet$}; 
                  }

\draw[rounded corners ,densely dotted ] (-1.5,-0)  rectangle( 9.5,-2.5)   node[midway] 
{$\color{red}\color{black} \bigl(1 - (1,3) \bigr)\bigl( 1 + (1,2) \bigr)$};

\draw[rounded corners ,densely dotted ] (-1.5,-0)  rectangle( 9.5,3);

\draw[rounded corners ,densely dotted ] (-1.5,6)  rectangle( 9.5,3);

                        
\draw(0,0)--(0,3)         ;      
\draw(2,0)--(4,3)         ;         \draw(4,0)--(2,3)         ;    
\draw(0,3)--(0,6) to [out=-90,in=-90] (4,6)        ;               
\draw(2,3)--(2,6) to [out=-90,in=-90] (8,6)        ;                             
\draw(4,3)--(6,6) ;

\begin{scope}[yshift=7cm] 
\draw[rounded corners ,densely dotted ] (-1.5,0)  rectangle( 9.5,3);
       \foreach \x in {0,1,2,3,4,5}
     {  
      \path(\x*2,0) coordinate (down\x);  
   }

       \foreach \x in {0,1,2,3,4,5,6}
    {   \path(\x*2,3) coordinate (up\x);  
    }

        \foreach \x in {0,2,4,6,8}
     {   \fill[white](\x,0) circle (4pt);   
                        \draw (\x,0) node {$\bullet$}; 
                  }

        \foreach \x in {0,2,4,6,8}
     {   \fill[white](\x,3) circle (4pt);   
                        \draw (\x,3) node {$\bullet$}; 
                  }
                
\draw (4,0) to [out=90,in=-90](0,3);
\draw (2,0)--(4,3); 
\draw (4,3) to [out=-90,in=-90] (6,3);
 \draw (6,3) to [out=-90,in=-90] (8,3);
\draw (4,0) to [out=120,in=0] (2,1) to [out=180,in=60] (0,0);
\draw (6,0)--(2,3);
   \end{scope}
         \end{tikzpicture} 
\!\!\!\!\!\!\!\!\!\!\!\!\!\!\!\! \raisebox{2.4cm}{$=$} \ 
\raisebox{0.7cm}{\begin{tikzpicture}[xscale=0.4,yscale=-0.4]
   \scalefont{0.8}
       \foreach \x in {0,1,2,3,4,5}
     {  
      \path(\x*2,0) coordinate (down\x);  
   }

       \foreach \x in {0,1,2,3,4,5,6}
    {   \path(\x*2,3) coordinate (up\x);  
    }

        \foreach \x in {0,2,4,6,8}
     {   \fill[white](\x,6) circle (4pt);   
                        \draw (\x,6) node {$\bullet$}; 
                  }

        \foreach \x in {0,2,4}
     {   \fill[white](\x,3) circle (4pt);   
                        \draw (\x,3) node {$\bullet$}; 
                  }
                
      \foreach \x in {0,2,4}
     {   \fill[white](\x,0) circle (4pt);   
                        \draw (\x,0) node {$\bullet$}; 
                  }

\draw[rounded corners ,densely dotted ] (-1.5,-0)  rectangle( 9.5,-2.5)   node[midway] 
{$\color{red}\color{black} \bigl(1 - (1,3) \bigr)\bigl( 1 + (1,2) \bigr)$};

\draw[rounded corners ,densely dotted ] (-1.5,-0)  rectangle( 9.5,3);

\draw[rounded corners ,densely dotted ] (-1.5,6)  rectangle( 9.5,3);


\draw(0,0)--(0,3)         ;      
\draw(2,0)--(4,3)         ;         \draw(4,0)--(2,3)         ; 
\draw (2,3)--(4,6); \draw (4,6) to [out=-90,in=-90] (6,6); \draw (6,6) to [out=-90,in=-90] (8,6);
\draw (4,3)--(2,6);
\draw (0,3)--(0,6);

         \end{tikzpicture} }
\!\!\!\!\!\!\!\!\!\!\!\!\!\!\!\! \raisebox{2.4cm}{$=$} \ 
\raisebox{0.7cm}{\begin{tikzpicture}[xscale=0.4,yscale=-0.4]
   \scalefont{0.8}
       \foreach \x in {0,1,2,3,4,5}
     {  
      \path(\x*2,0) coordinate (down\x);  
   }

       \foreach \x in {0,1,2,3,4,5,6}
    {   \path(\x*2,3) coordinate (up\x);  
    }

        \foreach \x in {0,2,4,6,8}
     {   \fill[white](\x,6) circle (4pt);   
                        \draw (\x,6) node {$\bullet$}; 
                  }

        \foreach \x in {0,2,4}
     {   \fill[white](\x,3) circle (4pt);   
                        \draw (\x,3) node {$\bullet$}; 
                  }
                
      \foreach \x in {0,2,4}
     {   \fill[white](\x,0) circle (4pt);   
                        \draw (\x,0) node {$\bullet$}; 
                  }

\draw[rounded corners ,densely dotted ] (-1.5,-0)  rectangle( 9.5,-2.5)   node[midway] 
{$\color{red}\color{black} \bigl(1 - (1,3) \bigr)\bigl( 1 + (1,2) \bigr)$};

\draw[rounded corners ,densely dotted ] (-1.5,-0)  rectangle( 9.5,3);

\draw[rounded corners ,densely dotted ] (-1.5,6)  rectangle( 9.5,3);


\draw(0,0)--(0,3)         ;      
\draw(2,0)--(2,3)         ;         \draw(4,0)--(4,3)         ; 
\draw (4,3)--(4,6); \draw (4,6) to [out=-90,in=-90] (6,6); \draw (6,6) to [out=-90,in=-90] (8,6);
\draw (2,3)--(2,6);
\draw (0,3)--(0,6);

         \end{tikzpicture} }         
\end{minipage}
\caption{The product ${\color{red}\color{black}c_{(2,1)}^*} (2,3) d_\Lambda d_\Gamma$ shown first in non-canonical
form as $\color{red}\color{black} \bigl(1 - (1,3) \bigr)\bigl( 1 + (1,2) \bigr)(2,3)d$ where $\pi_d = (2,3)$ and then
as a canonical basis element from~\eqref{actual-d-basis} as
$\color{red}\color{black} \bigl(1 - (1,3) \bigr)\bigl( 1 + (1,2) \bigr)d'$ where $\pi_{d'} = 1$.}
\label{delta21product}

\end{figure}
\end{eg}

\begin{thm}{\cite[Proposition 3, Proposition 9]{mar1}}
\label{semisimple}
 The partition algebra $P_r(\delta)$ is semisimple if and only if $\delta \not\in \{0,1,\ldots, 2r-2\}$ and, in this case, the set $\{\Delta_{r 
}(\kappa):\kappa \in  \ParSet(\leq r) \}$ is a complete set of non-isomorphic simple right $P_r(\delta)$-modules. 
 More generally, provided $\delta \ne 0$,  the standard module $ \Delta_{r
 }(\kappa)$ has a simple head, which we denote $L_{r}(\kappa)$, and $\{L_{r}(\kappa):\kappa \in \ParSet(\leq r) \}$ is a complete set of non-isomorphic simple right $P_r(\delta)$-modules.  
 \end{thm}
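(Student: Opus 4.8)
My plan is to exhibit $P_r(\delta)$, for $\delta\neq 0$, as a cellular algebra in the sense of Graham--Lehrer, reading the cell datum off the iterated--inflation structure already recorded in \eqref{eq:Pfiltration}--\eqref{bob2}, and then to deduce the theorem from the general theory of cellular algebras together with two explicit computations. The chain of ideals $J_0\subset J_1\subset\cdots\subset J_r=P_r(\delta)$ has sections $J_k/J_{k-1}\cong V_r(k)\otimes_{\W_k}\C\W_k\otimes_{\W_k}V_r(k)$, so each layer is an inflation of the split cellular algebra $\C\W_k$ along the vector space $V_r(k)$, and the flip of diagrams $d_\Lambda\mapsto d_\Lambda^{\ast}$ is an anti-involution of $P_r(\delta)$ preserving this structure; hence $P_r(\delta)$ is cellular with poset $\ParSet(\le r)$ and cell modules the standard modules $\Delta_r(\kappa)=\rightspecht{\kappa}\otimes_{\W_k}V_r(k)$ of \eqref{eq:DeltaPartitionAlgebra}. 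From this point the general machinery supplies, for each $\kappa$, a symmetric bilinear form $\langle\,,\,\rangle_\kappa$ on $\Delta_r(\kappa)$ whose radical is a $P_r(\delta)$-submodule, with quotient $L_r(\kappa):=\Delta_r(\kappa)/\mathrm{rad}\,\langle\,,\,\rangle_\kappa$ that is either $0$ or absolutely simple; moreover, if $\langle\,,\,\rangle_\kappa\neq 0$ then $L_r(\kappa)$ is the simple head of $\Delta_r(\kappa)$, the modules $\{L_r(\kappa):\langle\,,\,\rangle_\kappa\neq 0\}$ form a complete set of pairwise non-isomorphic simple right $P_r(\delta)$-modules, and $P_r(\delta)$ is semisimple if and only if every form $\langle\,,\,\rangle_\kappa$ is nondegenerate (equivalently $\dim L_r(\kappa)=\dim\Delta_r(\kappa)$ for all $\kappa$, using the cellular identity $\dim P_r(\delta)=\sum_{\kappa\in\ParSet(\le r)}(\dim\Delta_r(\kappa))^2$, which is compatible with $\dim P_r(\delta)$ being the Bell number $B(2r)$ independently of $\delta$).

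\textbf{Nonvanishing of the cell forms.} Next I would show $\langle\,,\,\rangle_\kappa\neq 0$ for every $\kappa\vdash k$ with $k\le r$, provided $\delta\neq0$. Let $d_I\in V_r(k)$ be the planar $(k,r)$-diagram with blocks $\{i,\bar i\}$ for $1\le i\le k$ and singletons $\{\bar j\}$ for $k<j\le r$; then $c_\kappa^{\ast}d_I$ occurs in the basis \eqref{actual-d-basis} with $\sigma=1$. Stacking $d_I$ on top of its flip $d_I^{\ast}$ leaves $k$ through--strands and $r-k$ isolated middle vertices, so $d_I d_I^{\ast}=\delta^{\,r-k}\,\mathrm{id}_k$; unwinding the definition of the cell form this yields $\langle c_\kappa^{\ast}d_I,\,c_\kappa^{\ast}d_I\rangle_\kappa=\delta^{\,r-k}\,\langle c_\kappa^{\ast},c_\kappa^{\ast}\rangle_{\rightspecht{\kappa}}$, and the latter is nonzero because $\C\W_k$ is split semisimple in characteristic $0$. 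Hence $\langle\,,\,\rangle_\kappa\neq 0$ for all $\kappa\in\ParSet(\le r)$ whenever $\delta\neq0$, which already establishes the second assertion of the theorem: for $\delta\neq0$ each $\Delta_r(\kappa)$ has simple head $L_r(\kappa)$, and $\{L_r(\kappa):\kappa\in\ParSet(\le r)\}$ is a complete, pairwise non-isomorphic list of simple right $P_r(\delta)$-modules.

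\textbf{The semisimplicity criterion, and the main obstacle.} By the above, $P_r(\delta)$ is semisimple exactly when all the cell forms are nondegenerate, and in that case the $\Delta_r(\kappa)$ are themselves the simple modules, so it remains only to locate the degeneracy locus. The Gram matrix of $\langle\,,\,\rangle_\kappa$ in the basis \eqref{actual-d-basis} has entries in $\ZZ[\delta]$; I would compute its determinant, or argue inductively via the idempotent $e_{r-1}$ (using $e_{r-1}P_r(\delta)e_{r-1}\cong P_{r-1}(\delta)$ from \eqref{bob}), tracking when a cell module in one layer of \eqref{eq:Pfiltration} admits a nonzero homomorphism into a cell module of an adjacent layer. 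The scalar obstructions turn out, up to nonzero constants, to be the linear factors $\delta-i$ with $0\le i\le 2r-2$, so every cell form is nondegenerate precisely when $\delta\notin\{0,1,\ldots,2r-2\}$: one verifies the Gram determinants are nonzero for such $\delta$, and at each $\delta=i$ with $0\le i\le 2r-2$ one exhibits an explicit singular vector, hence a proper submodule of some $\Delta_r(\kappa)$. The cellular set-up and the nonvanishing of the forms are formal once \eqref{eq:Pfiltration}--\eqref{bob2} is in hand; the genuine work — and the main obstacle — is this last step: proving nondegeneracy of every Gram matrix off the set $\{0,1,\ldots,2r-2\}$ (a determinant computation, or a deformation argument from $\delta=n\gg r$ where $P_r(n)\cong\End_{\W_n}\bigl((\C^n)^{\otimes r}\bigr)$ is semisimple) and producing genuine singular vectors at each of the $2r-1$ exceptional values.
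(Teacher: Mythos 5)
First, a framing remark: the paper does not prove this theorem at all --- it is quoted verbatim from Martin \cite{mar1} (his Propositions 3 and 9), so your proposal is being measured against a citation rather than an argument. That said, your strategy is sound and, for the \emph{second} assertion, essentially complete: realising $P_r(\delta)$ as a cellular algebra via the iterated inflation along \eqref{eq:Pfiltration}--\eqref{bob2} with the diagram flip as anti-involution, and then checking that each cell form is nonzero by pairing the basis element $c_\kappa^{\ast}d_I$ with itself, is a correct and standard way to see that for $\delta\neq 0$ every $\Delta_r(\kappa)$ has a simple head $L_r(\kappa)$ and that these exhaust the simple right modules up to isomorphism. Your computation $d_I d_I^{\ast}=\delta^{\,r-k}\,\mathrm{id}_k$ is right (the $r-k$ isolated middle vertices each contribute a factor of $\delta$), and the dimension identity $\dim P_r(\delta)=\sum_{\kappa}(\dim\Delta_r(\kappa))^2$ you invoke does hold, since both sides equal $\sum_k k!\,N_k^2$ where $N_k$ counts half-diagrams with $k$ propagating blocks.

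The genuine gap is the \emph{first} assertion, the exact semisimplicity locus. You correctly reduce it to nondegeneracy of all the cell forms, and the deformation argument you mention (semisimplicity at $\delta=n\ge 2r$ from Theorem~\ref{mrjones}, hence the Gram discriminant is a nonzero polynomial in $\delta$) does give semisimplicity for all but finitely many $\delta$. But neither direction of ``semisimple if and only if $\delta\notin\{0,1,\ldots,2r-2\}$'' is actually established: you neither compute the Gram determinants, nor exhibit a singular vector in some $\Delta_r(\kappa)$ at each $\delta=i$ with $0\le i\le 2r-2$, nor rule out degeneracy at other complex values. This is precisely the content of Martin's Propositions 3 and 9 --- an induction through the localisations $e_{r-1}P_r(\delta)e_{r-1}\cong P_{r-1}(\delta)$ of \eqref{bob}, determining for which $\delta$ there is a nonzero homomorphism between standard modules in adjacent layers of \eqref{eq:Pfiltration} --- and it is the substantive part of the theorem. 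As written, your argument proves the ``$\delta\neq0$'' half of the statement and only outlines, without carrying out, the half that pins down $\{0,1,\ldots,2r-2\}$.
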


Martin showed in  \cite{mar1} that   $P_r(\delta)$ is a quasi-hereditary algebra provided $\delta \neq 0$. The partition algebra $P_r(\delta)$ is also a cellular algebra {\color{red}\color{black}\cite{MR1711582}}, for any value of $\delta$, and the standard modules 
$\Delta_{r }(\kappa)$ for $\kappa \in  \ParSet(\leq r)$ are the {\sf cell modules}. The following proposition 
tells us that certain standard modules are  simple.

\begin{lem}{\cite[Proposition 23]{mar1}}
\label{semisimple2}
Let ${\color{red}\color{black}\delta= n\in \mathbb Z_{>0}}$, suppose that $\kappa$ is a partition such that  $\kappa{[n]}$ is a partition of $n$ 
(i.e.~$n-|\kappa| \ge \kappa_1$). Then the $P_r(n)$-standard  module $\Delta_r(\kappa)=L_r(\kappa)$   if and only if $\delta\geq r+\kappa_1$. Moreover, in this case, the module belongs to a simple block.  
 \end{lem}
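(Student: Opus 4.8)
The plan is to recognize that \Cref{semisimple2} is \cite[Proposition~23]{mar1}, so the substantive task is to indicate how Martin's criterion follows from the general theory of cellular/quasi-hereditary algebras together with an analysis of the Gram form on the standard module $\Delta_r(\kappa)$. Recall that $P_r(\delta)$ is cellular with cell modules $\Delta_r(\kappa)$ for $\kappa \in \ParSet(\le r)$, and that $L_r(\kappa)$ is the head of $\Delta_r(\kappa)$ (when $\delta \ne 0$); by the standard theory, $\Delta_r(\kappa) = L_r(\kappa)$ precisely when the bilinear form $\langle -,- \rangle_\kappa$ on $\Delta_r(\kappa)$ is nondegenerate. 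So the claim reduces to: this form is nondegenerate if and only if $\delta \ge r + \kappa_1$ (under the hypothesis $n - |\kappa| \ge \kappa_1$, which just ensures $\kappa[n]$ makes sense but plays no role in the partition algebra statement itself).

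First I would set $k = |\kappa|$ and use the basis \eqref{actual-d-basis} of $\Delta_r(\kappa)$, together with the bimodule description $V_r(k) = e_k(J_k/J_{k-1})$. The Gram matrix factors through the pairing $V_r(k) \otimes V_r(k) \to \C\W_k$ coming from $u \otimes v \mapsto u^\ast v$ followed by projecting onto propagating-block-preserving diagrams, composed with the $\C\W_k$-form on $\rightspecht{\kappa}$. Since the Specht form is nondegenerate in characteristic zero, nondegeneracy of the $P_r$-form is governed entirely by the combinatorial pairing on $V_r(k)$, whose entries are monomials in $\delta$ recording the number of closed middle loops formed when stacking a $(k,r)$-diagram above an $(r,k)$-diagram. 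Next I would filter $V_r(k)$ by the number of northern singleton blocks (equivalently, run the induction on $r$ using $e_{r-1} P_r(\delta) e_{r-1} \cong P_{r-1}(\delta)$ from \eqref{bob}), reducing to understanding when the new relations introduced at each step preserve nondegeneracy; the first obstruction to nondegeneracy occurs exactly when $\delta$ equals one of the integers appearing as the "number of parts available to absorb a singleton", and a careful bookkeeping shows the relevant bad values are $\delta \in \{r + \kappa_1 - 1, r + \kappa_1 - 2, \ldots\}$ intersected with the bad set $\{0, 1, \ldots, 2r-2\}$ from \Cref{semisimple}. Hence nondegeneracy holds iff $\delta \ge r + \kappa_1$.

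The "moreover" clause — that $\Delta_r(\kappa)$ then lies in a simple block — I would obtain by comparing with the block combinatorics of $P_r(\delta)$: two cell modules $\Delta_r(\kappa)$ and $\Delta_r(\kappa')$ lie in the same block only if $\kappa$ and $\kappa'$ are linked by a chain of "adding/removing a box on the $\delta$-translated diagonal" moves, and the condition $\delta \ge r + \kappa_1$ forces $\kappa$ to be maximal (in the relevant dominance-type order on $\ParSet(\le r)$) with no partition above it that could be linked to it within rank $r$; since $\Delta_r(\kappa)$ is already simple, its block contains only itself. Concretely one checks that the "$\delta$-content" of any box one could add to $\kappa$ to stay within $\ParSet(\le r)$ is nonzero under the stated bound, which rules out linkage.

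The main obstacle will be the precise tracking of the powers of $\delta$ in the Gram matrix of $V_r(k)$ and verifying that the determinant, as a polynomial in $\delta$, has its largest integer root equal to $r + \kappa_1 - 1$: this is exactly the content of Martin's Proposition~23 and requires either a clever triangularization of the Gram matrix with respect to a suitable ordering of diagrams by "non-propagating structure", or an appeal to the known formula for the Gram determinant of partition-algebra cell modules. Since the statement is cited verbatim from \cite{mar1}, I would present the argument above as a sketch of why the bound takes this form and refer the reader to \cite{mar1} for the full determinant computation, noting only that in our applications $\delta = mn$ is a (large) positive integer, so the relevant direction — that $\delta \ge r + \kappa_1$ \emph{implies} $\Delta_r(\kappa) = L_r(\kappa)$ and lies in a simple block — is all we shall use.
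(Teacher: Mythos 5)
The paper gives no proof of this lemma: it is quoted verbatim as \cite[Proposition 23]{mar1}, so your decision to defer the Gram-determinant computation to Martin is exactly what the paper does. Your surrounding sketch (nondegeneracy of the cellular form, block linkage via contents of addable boxes) is the standard route to such results, but the specific numerical claims — that the integer roots of the Gram determinant of $V_r(k)$ are precisely $r+\kappa_1-1, r+\kappa_1-2,\ldots$ — are asserted rather than derived, which is acceptable here only because you explicitly fall back on the citation for the actual computation.
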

 
  \subsection{The orbit basis of Benkart--Halverson}
 
The diagram basis is the most natural basis for the partition algebra.  In particular, we were able to define a multiplication with respect to this basis with ease.  There is another basis of the partition algebra, the orbit basis, which was studied  by Benkart--Halverson in \cite{MR3969570}.  The advantage of this basis is that it is more intimately connected to the semisimple quotient of the partition algebra that acts {\em faithfully} on tensor space.

 We note that the set of  {$(r,r)$-set-partitions} is a lattice (a partially ordered set in which each pair of elements admits an upper and lower bound) under the partial order
\begin{align}\label{refine1}
 \Lambda \leq \Lambda' \quad \text{if every block of $\Lambda$ is contained in a block of $\Lambda'$}.
\end{align}
If $\Lambda \leq \Lambda'$ we say that  $\Lambda'$ is {\sf coarser} 
than $\Lambda$; equivalently $\Lambda'$ is coarser than $\Lambda$ if
 $\Lambda'$ may be formed from $\Lambda$ by merging some of its blocks together.

The {\sf orbit basis} of the partition algebra consists 
of the elements $x_{\Lambda }$  indexed by set-partitions~${\Lambda}$  
defined by the coarsening relation as follows 
\begin{align}\label{refine2}
d_{{\Lambda }}= \sum_{\Lambda \leq \Lambda' } x_{\Lambda'}.
\end{align}
In other words,  the diagram basis element $d_\Lambda$ is the sum of all orbit basis elements $x_{\Lambda'}$
 for which $\Lambda'$ is coarser
 than $\Lambda$.  Conversely, the elements $x_{\Lambda}$ can be written  as a sum of diagram basis elements by way of the M\"obius function for the coarsening partial order; we refer to \cite[Section 4.3]{MR3969570} for more details.   The orbit basis was so-named by Benkart and Halverson because, by \cite[Remark 4.7]{MR3969570}, 
 the action (see~equation \eqref{eq:Ptn-action} below) 
 of  $x_{\Lambda}$  on tensor space  corresponds to an $\W_n$-orbit on simple tensors. 

In this paper, we shall only need to know that the elements $x_{\Lambda}$ form a basis and to know how these basis elements act on tensor space (shown later in \cref{eq:Ptn-action}). Knowing this set forms a basis of the partition algebra, and using \eqref{actual-d-basis}, we deduce  immediately that
the standard module $\Delta_r(\kappa)$ has an {\sf orbit basis} given by
\begin{align}\label{rowenaB}
 \Bigl\{  c_{\kappa}^\ast \sigma x_{\Lambda} 
\, \Bigl|  \,
 \begin{minipage}{3.2in}$\stt^\kappa \sigma \in \Std(\kappa)$, 
 $\Lambda$ is a $(k,r)$-set-partition with $k$ propagating blocks, 
  $\pi _\Lambda=1 \in \W_k$\end{minipage} \Bigr\}.
 \end{align}

\section{Ramified partition algebras}\label{Ramified partition algebras}
  The   ramified partition algebra  was originally defined by
  Martin--Elgamal  
    in \cite{MR2073453} and later rediscovered by Kennedy   \cite{MR2287557},
    who referred to it as the class partition algebra.

\subsection{Ramified set-partitions}
 We define a {\sf ramified $(r,s)$-set-partition} to be an ordered pair  $( {\Lambda},{\Lambda'})$ such that $\Lambda, \Lambda'$ are  set-partitions of $\{1,2,\ldots, r, \bar{1},\bar{2}, \ldots, \bar{s}\}$ and $\Lambda'$ is coarser than $ \Lambda$ in the sense of \cref{refine1}. 
  We refer to $ {\Lambda}$ as the {\sf inner set-partition} and $ {\Lambda'}$ as the {\sf outer set-partition}. 
  Diagrammatically we represent these  ordered pairs 
  by drawing the partition diagram of the inner set-partition as usual and then merging 
  parts from~$d_{\Lambda}$ to form~$d_{\Lambda'}$.  
  We continue to draw the (inner and outer) set-partitions with respect to the conventions of \cref{usefulconvention}.  
Examples are depicted in \cref{draw,draw2}.

 \begin{figure}[ht!]
 $$ 
   \scalefont{0.8}
 \begin{minipage}{2.2cm} \begin{tikzpicture}[scale=0.45]
  
       \foreach \x in {0,2}
     {   \path(\x,3) coordinate (up\x);  
      \path(\x,0) coordinate (down\x);  
   }

     \foreach \x in {0,2}
     {   \path(up\x) --++ (135:0.4) coordinate(up1\x);  
      \path(up\x) --++ (45:0.4) coordinate(up2\x);
            \path(up\x) --++ (-45:0.4) coordinate(up3\x);  
                        \path(up\x) --++ (-135:0.4) coordinate(up4\x);  
   }

    \foreach \x in {0,2}
     {   \path(down\x) --++ (135:0.4) coordinate(down1\x);  
      \path(down\x) --++ (45:0.4) coordinate(down2\x);
            \path(down\x) --++ (-45:0.4) coordinate(down3\x);  
                        \path(down\x) --++ (-135:0.4) coordinate(down4\x);  
   }

  \draw [fill=white] plot [smooth cycle]
   coordinates {(up12) (up32) (down30) (down10)  };
      \draw(2,3)--(0,0);

   \draw [fill=white] plot [smooth cycle]
   coordinates {(up40) (up20) (down22) (down42)  };
 \draw(0,3)--(2,0);

     \foreach \x in {0,2}
     {   \fill[white](\x,3) circle (4pt);   
                \fill[white](\x,0) circle (4pt);   }
                        \draw (2,3) node {$\bullet$}; 
                           \draw (0,3) node {$\bullet$}; 
                    \draw (2,0) node {$\bullet$}; 
                           \draw (0,0) node {$\bullet$};

         \end{tikzpicture}\end{minipage} 
 \quad\quad 
  \begin{minipage}{2.2cm}\begin{tikzpicture}[scale=0.45]
  
       \foreach \x in {0,2}
     {   \path(\x,3) coordinate (up\x);  
      \path(\x,0) coordinate (down\x);  
   }

     \foreach \x in {0,2}
     {   \path(up\x) --++ (135:0.4) coordinate(up1\x);  
      \path(up\x) --++ (45:0.4) coordinate(up2\x);
            \path(up\x) --++ (-45:0.4) coordinate(up3\x);  
                        \path(up\x) --++ (-135:0.4) coordinate(up4\x);  
   }

    \foreach \x in {0,2}
     {   \path(down\x) --++ (135:0.4) coordinate(down1\x);  
      \path(down\x) --++ (45:0.4) coordinate(down2\x);
            \path(down\x) --++ (-45:0.4) coordinate(down3\x);  
                        \path(down\x) --++ (-135:0.4) coordinate(down4\x);  
   }

   \draw [fill=white] plot [smooth cycle]
  coordinates {(up10) (up22) (down32)   (down40)  };

 \draw(0,3)--(2,0);                       
    \draw(2,3)--(0,0);

     \foreach \x in {0,2}
     {   \fill[white](\x,3) circle (4pt);   
                \fill[white](\x,0) circle (4pt);   }
                        \draw (2,3) node {$\bullet$}; 
                           \draw (0,3) node {${\bullet}$}; 
                    \draw (2,0) node {$\bullet$}; 
                           \draw (0,0) node {${\bullet}$};

         \end{tikzpicture}  \end{minipage} 
 \quad\quad   
     \begin{minipage}{2.2cm}      \begin{tikzpicture}[scale=0.45]
  
       \foreach \x in {0,2}
     {   \path(\x,3) coordinate (up\x);  
      \path(\x,0) coordinate (down\x);  
   }

     \foreach \x in {0,2}
     {   \path(up\x) --++ (135:0.4) coordinate(up1\x);  
      \path(up\x) --++ (45:0.4) coordinate(up2\x);
            \path(up\x) --++ (-45:0.4) coordinate(up3\x);  
                        \path(up\x) --++ (-135:0.4) coordinate(up4\x);  
   }

    \foreach \x in {0,2}
     {   \path(down\x) --++ (135:0.4) coordinate(down1\x);  
      \path(down\x) --++ (45:0.4) coordinate(down2\x);
            \path(down\x) --++ (-45:0.4) coordinate(down3\x);  
                        \path(down\x) --++ (-135:0.4) coordinate(down4\x);  
   }

   \draw [fill=white] plot [smooth cycle]
  coordinates {(up10) (up20) (down30)   (down40)  };

   \draw [fill=white] plot [smooth cycle]
  coordinates {(up12) (up22) (down32)   (down42)  };

    \draw(2,3)--(2,0);

     \foreach \x in {0,2}
     {   \fill[white](\x,3) circle (4pt);   
                \fill[white](\x,0) circle (4pt);   }
                        \draw (2,3) node {$\bullet$}; 
                           \draw (0,3) node {${\bullet}$}; 
                    \draw (2,0) node {$\bullet$}; 
                           \draw (0,0) node {${\bullet}$};

         \end{tikzpicture} \end{minipage} 
         \quad\quad    
     \begin{minipage}{2.2cm}      \begin{tikzpicture}[scale=0.45]
  
       \foreach \x in {0,2}
     {   \path(\x,3) coordinate (up\x);  
      \path(\x,0) coordinate (down\x);  
   }

     \foreach \x in {0,2}
     {   \path(up\x) --++ (135:0.4) coordinate(up1\x);  
      \path(up\x) --++ (45:0.4) coordinate(up2\x);
            \path(up\x) --++ (-45:0.4) coordinate(up3\x);  
                        \path(up\x) --++ (-135:0.4) coordinate(up4\x);  
   }

    \foreach \x in {0,2}
     {   \path(down\x) --++ (135:0.4) coordinate(down1\x);  
      \path(down\x) --++ (45:0.4) coordinate(down2\x);
            \path(down\x) --++ (-45:0.4) coordinate(down3\x);  
                        \path(down\x) --++ (-135:0.4) coordinate(down4\x);  
   }

   \draw [fill=white] plot [smooth cycle]
  coordinates {(up12) (up22) (down32)   (down42)  };

                           \draw (0,0) node {$ \encircle{\color{black} \one}$}; 

                           \draw (0,3) node {$ \encircle{\color{black} \one}$};

    \draw(2,3)--(2,0);

     \foreach \x in {0,2}
     {   \fill[white](\x,3) circle (4pt);   
                \fill[white](\x,0) circle (4pt);   }
                        \draw (2,3) node {$\bullet$}; 
                           \draw (0,3) node {${\bullet}$}; 
                    \draw (2,0) node {$\bullet$}; 
                           \draw (0,0) node {${\bullet}$};

         \end{tikzpicture} \end{minipage} 
$$
\smallskip
$$    \scalefont{0.8}
         \hspace*{1.1in}
      \begin{minipage}{2.2cm}      \begin{tikzpicture}[scale=0.45]
  
       \foreach \x in {0,2}
     {   \path(\x,3) coordinate (up\x);  
      \path(\x,0) coordinate (down\x);  
   }

     \foreach \x in {0,2}
     {   \path(up\x) --++ (135:0.4) coordinate(up1\x);  
      \path(up\x) --++ (45:0.4) coordinate(up2\x);
            \path(up\x) --++ (-45:0.4) coordinate(up3\x);  
                        \path(up\x) --++ (-135:0.4) coordinate(up4\x);  
   }

    \foreach \x in {0,2}
     {   \path(down\x) --++ (135:0.4) coordinate(down1\x);  
      \path(down\x) --++ (45:0.4) coordinate(down2\x);
            \path(down\x) --++ (-45:0.4) coordinate(down3\x);  
                        \path(down\x) --++ (-135:0.4) coordinate(down4\x);  
   }

   \draw [fill=white] plot [smooth cycle]
  coordinates {(up10) (up20) (down30)   (down40)  };

   \draw [fill=white] plot [smooth cycle]
  coordinates {(up12) (up22) (down32)   (down42)  };
   
     \foreach \x in {0,2}
     {   \fill[white](\x,3) circle (4pt);   
                \fill[white](\x,0) circle (4pt);   }
                        \draw (2,3) node {$\bullet$}; 
                           \draw (0,3) node {${\bullet}$}; 
                    \draw (2,0) node {$\bullet$}; 
                           \draw (0,0) node {${\bullet}$};

         \end{tikzpicture} \end{minipage} 
         \quad\quad  
          \begin{minipage}{2.2cm}
 \begin{tikzpicture}[scale=0.45]
  
       \foreach \x in {0,2}
     {   \path(\x,3) coordinate (up\x);  
      \path(\x,0) coordinate (down\x);  
   }

     \foreach \x in {0,2}
     {   \path(up\x) --++ (135:0.4) coordinate(up1\x);  
      \path(up\x) --++ (45:0.4) coordinate(up2\x);
            \path(up\x) --++ (-45:0.4) coordinate(up3\x);  
                        \path(up\x) --++ (-135:0.4) coordinate(up4\x);  
   }

    \foreach \x in {0,2}
     {   \path(down\x) --++ (135:0.4) coordinate(down1\x);  
      \path(down\x) --++ (45:0.4) coordinate(down2\x);
            \path(down\x) --++ (-45:0.4) coordinate(down3\x);  
                        \path(down\x) --++ (-135:0.4) coordinate(down4\x);  
   }

 \path(up0) --++ (165:0.4) coordinate (up10);

   \draw [fill=white] plot [smooth cycle]
  coordinates {(up10) (up22) (down32)   (down42)   };

  \draw  (0,3) to [out=-30,in=-150] (2,3);

     \foreach \x in {0,2}
     {   \fill[white](\x,3) circle (4pt);   
                \fill[white](\x,0) circle (4pt);   }
                        \draw (2,3) node {$\bullet$}; 
                           \draw (0,3) node {${\bullet}$}; 
                    \draw (2,0) node {$\bullet$}; 
                           \draw (0,0) node {$ \encircle{\color{black} \one}$};

         \end{tikzpicture} 
         \end{minipage} 
 \quad\quad 
   \begin{minipage}{2cm}
 \begin{tikzpicture}[scale=0.45]
  
       \foreach \x in {0,2}
     {   \path(\x,3) coordinate (up\x);  
      \path(\x,0) coordinate (down\x);  
   }

     \foreach \x in {0,2}
     {   \path(up\x) --++ (135:0.4) coordinate(up1\x);  
      \path(up\x) --++ (45:0.4) coordinate(up2\x);
            \path(up\x) --++ (-45:0.4) coordinate(up3\x);  
                        \path(up\x) --++ (-135:0.4) coordinate(up4\x);  
   }

    \foreach \x in {0,2}
     {   \path(down\x) --++ (135:0.4) coordinate(down1\x);  
      \path(down\x) --++ (45:0.4) coordinate(down2\x);
            \path(down\x) --++ (-45:0.4) coordinate(down3\x);  
                        \path(down\x) --++ (-135:0.4) coordinate(down4\x);  
   }
  
   \draw [fill=white] plot [smooth cycle]
  coordinates {(up10) (up22) (up32)   (up40)   };

  \draw  (0,3) to  (2,3);  
    
     \foreach \x in {0,2}
     {   \fill[white](\x,3) circle (4pt);   
                \fill[white](\x,0) circle (4pt);   }
                        \draw (2,3) node {$\bullet$}; 
                           \draw (0,3) node {${\bullet}$}; 
     \draw (2,0) node {$ \encircle{\color{black} \two}$}; 
                           \draw (0,0) node {$ \encircle{\color{black} \one}$};

         \end{tikzpicture} \end{minipage} 
         \quad\quad\!
           \begin{minipage}{2cm}
 \begin{tikzpicture}[scale=0.45]
  
       \foreach \x in {0,2}
     {   \path(\x,3) coordinate (up\x);  
      \path(\x,0) coordinate (down\x);  
   }

     \foreach \x in {0,2}
     {   \path(up\x) --++ (135:0.4) coordinate(up1\x);  
      \path(up\x) --++ (45:0.4) coordinate(up2\x);
            \path(up\x) --++ (-45:0.4) coordinate(up3\x);  
                        \path(up\x) --++ (-135:0.4) coordinate(up4\x);  
   }

   \foreach \x in {0,2}
     {   \path(down\x) --++ (135:0.4) coordinate(down1\x);  
      \path(down\x) --++ (45:0.4) coordinate(down2\x);
            \path(down\x) --++ (-45:0.4) coordinate(down3\x);  
                        \path(down\x) --++ (-135:0.4) coordinate(down4\x);  
   }
 
   \draw [white,fill=white] plot [smooth cycle]
  coordinates {(up10) (up22) (up32)   (up40)   };

  \draw[white]  (0,3) to  (2,3);

     \foreach \x in {0,2}
     {   \fill[white](\x,3) circle (4pt);   
                \fill[white](\x,0) circle (4pt);   }
                        \draw[white] (2,3) node {$\bullet$}; 
                           \draw[white] (0,3) node {${\bullet}$}; 
     \draw[white] (2,0) node {$ \encircle{\color{white} \two}$}; 
                           \draw[white] (0,0) node {$ \encircle{\color{white} \one}$};

         \end{tikzpicture} \end{minipage}\quad\;
$$
\caption{Some examples of ramified $(2,2)$-set-partitions.  
The propagating indices (see Section~\ref{subsec:propagatingIndex})
are $(1,1),(2), (1,0), (1), (0,0), (0),$ and $ \varnothing$ respectively. 
 }
\label{draw2}
\end{figure}
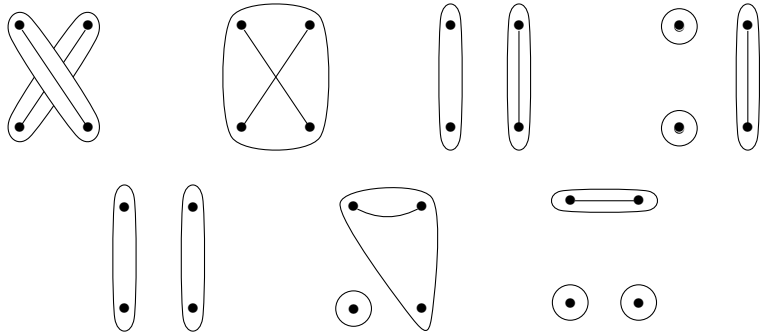

We now consider two parameters $\delta_{\rm in} $ and $\delta _{\rm out}\in \C $. 
The product  of ramified set-partitions 
is  derived from the product in the two partition algebras  $P_r(\delta_{\rm in} )$ and  $P_r( \delta_{\rm out} )$
 in a manner we shall now describe.   
 To distinguish the products in these two algebras let us temporarily denote the product in the partition algebra  $P_r(\delta_{\rm in} )$ by $\cdot_{\delta_{\rm in} }$, and that in $P_r(\delta_{\rm out} )$ by $\cdot_{\delta_{\rm out} }$. 
 Then  we define the product of two ramified  $(r,r)$-set-partition diagrams  as follows:
\begin{align}\label{multi}
(d_{\Lambda}, d_{\Lambda'})(d_{\Gamma}, d_{\Gamma'})=(\delta_{\rm in} )^{t} (\delta_{\rm out} )^{s} (d_{\Delta}, d_{\Delta'}),  
\end{align} where $
 d _{\Lambda } \cdot_{\delta_{\rm in} } d_{\Gamma }=(\delta_{\rm in} )^{s}  d_{\Delta }
 \textrm{ and } d _{\Lambda'} \cdot_{\delta_{\rm out} } d_{\Gamma'}=(\delta_{\rm out} )^{s}  d_{\Delta'}$.
In other words, the multiplication of inner set-partition diagrams  yields a parameter $\delta_{\rm in} $ 
and the multiplication of outer set-partition diagrams a parameter~$\delta_{\rm out} $.
{\color{red}\color{black} The fact that   $\Delta'$ is a coarsening of $\Delta$ is proven in \cite[Proposition 2]{MR2073453}.
}
We let $R_r(\delta_{\rm in} , \delta_{\rm out} )$ denote the complex vector space with basis given by  all ramified set-partitions of  $\{1,2,\ldots, r, \bar{1},\bar{2}, \ldots, \bar{r}\} $  
and with multiplication given by  linearly extending  the multiplication of ramified diagrams.   
Then $R_r(\delta_{\rm in} , \delta_{\rm out} )$ is an associative $\C $-algebra, known as the  {\sf ramified partition  algebra}.
 We define an anti-involution $\ast$ on   $R_r(\delta_{\rm in} , \delta_{\rm out} )$ by reflecting a  diagram
 through its horizontal axis.


\begin{figure}[ht!]
 $$
 \begin{tikzpicture}[scale=0.45]
  \draw(-1.9,1.5) node {$p_{i}=$};  
       \foreach \x in {0,2,4,6,8,10,12}
     {   \path(\x,3) coordinate (up\x);  
      \path(\x,0) coordinate (down\x);  
   }

  {   \path(down12) --++ (135:0.4) coordinate(down1twelve);  
      \path(down12) --++ (45:0.4) coordinate(down2twelve);
            \path(down12) --++ (-45:0.4) coordinate(down3twelve);  
                        \path(down12) --++ (-135:0.4) coordinate(down4twelve);  
   }

     {   \path(up12) --++ (135:0.4) coordinate(up1twelve);  
      \path(up12) --++ (45:0.4) coordinate(up2twelve);
            \path(up12) --++ (-45:0.4) coordinate(up3twelve);  
                        \path(up12) --++ (-135:0.4) coordinate(up4twelve);  
   }

     \foreach \x in {0,2,4,6,8,10,12}
     {   \path(up\x) --++ (135:0.4) coordinate(up1\x);  
      \path(up\x) --++ (45:0.4) coordinate(up2\x);
            \path(up\x) --++ (-45:0.4) coordinate(up3\x);  
                        \path(up\x) --++ (-135:0.4) coordinate(up4\x);  
   }

    \foreach \x in {0,2,4,6,8,10,12}
     {   \path(down\x) --++ (135:0.4) coordinate(down1\x);  
      \path(down\x) --++ (45:0.4) coordinate(down2\x);
            \path(down\x) --++ (-45:0.4) coordinate(down3\x);  
                        \path(down\x) --++ (-135:0.4) coordinate(down4\x);  
   }

     \draw [fill=white] plot [smooth cycle]
  coordinates {(up18) (up28) (down38)   (down48)  };
         \draw(8,0)--(8,3);

%
%

     \draw [fill=white] plot [smooth cycle]
  coordinates {(up14) (up24) (down34)   (down44)  };
         \draw(4,0)--(4,3);

     \draw [fill=white] plot [smooth cycle]
  coordinates {(up10) (up20) (down30)   (down40)  };
         \draw(0,0)--(0,3);

     \draw [fill=white] plot [smooth cycle]
  coordinates {(up1twelve) (up2twelve) (down3twelve)   (down4twelve)  };
         \draw(12,0)--(12,3);

                                    \path (6,0) node {$ \encircle{\color{black} \ }$}; 
                                    \path (6,3) node {$ \encircle{\color{black} \ }$};     
        
     \foreach \x in {0,4,6,8,12}
     {   \fill[white](\x,3) circle (4pt);   
                \fill[white](\x,0) circle (4pt);    
                        \draw (\x,3) node {$\bullet$}; 
                           \draw (\x,0) node {${\bullet}$};  }
                         
 	\draw (2*3,4) node {$\scriptstyle i$};
	\draw (2*3,-1) node {$\scriptstyle \overline{i}$};
\draw (2*1,0) node {$\ldots$};\draw (2*1,3) node {$\ldots$};
\draw (2*5,0) node {$\ldots$};\draw (2*5,3) node {$\ldots$};

         \end{tikzpicture}  
 $$

          \vspace*{-0.5cm}
$$ \hspace*{-0.17cm}\begin{tikzpicture}[scale=0.45]
  \draw(-1.9,1.5) node {$p_{i}^{(2)}=$};  
       \foreach \x in {0,2,4,6,8,10,12}
     {   \path(\x,3) coordinate (up\x);  
      \path(\x,0) coordinate (down\x);  
   }

  {   \path(down12) --++ (135:0.4) coordinate(down1twelve);  
      \path(down12) --++ (45:0.4) coordinate(down2twelve);
            \path(down12) --++ (-45:0.4) coordinate(down3twelve);  
                        \path(down12) --++ (-135:0.4) coordinate(down4twelve);  
   }

     {   \path(up12) --++ (135:0.4) coordinate(up1twelve);  
      \path(up12) --++ (45:0.4) coordinate(up2twelve);
            \path(up12) --++ (-45:0.4) coordinate(up3twelve);  
                        \path(up12) --++ (-135:0.4) coordinate(up4twelve);  
   }

     \foreach \x in {0,2,4,6,8,10,12}
     {   \path(up\x) --++ (135:0.4) coordinate(up1\x);  
      \path(up\x) --++ (45:0.4) coordinate(up2\x);
            \path(up\x) --++ (-45:0.4) coordinate(up3\x);  
                        \path(up\x) --++ (-135:0.4) coordinate(up4\x);  
   }

    \foreach \x in {0,2,4,6,8,10,12}
     {   \path(down\x) --++ (135:0.4) coordinate(down1\x);  
      \path(down\x) --++ (45:0.4) coordinate(down2\x);
            \path(down\x) --++ (-45:0.4) coordinate(down3\x);  
                        \path(down\x) --++ (-135:0.4) coordinate(down4\x);  
   }

     \draw [fill=white] plot [smooth cycle]
  coordinates {(up18) (up28) (down38)   (down48)  };
         \draw(8,0)--(8,3);

     \draw [fill=white] plot [smooth cycle]
  coordinates {(up16) (up26) (down36)   (down46)  };

     \draw [fill=white] plot [smooth cycle]
  coordinates {(up14) (up24) (down34)   (down44)  };
         \draw(4,0)--(4,3);

     \draw [fill=white] plot [smooth cycle]
  coordinates {(up10) (up20) (down30)   (down40)  };
         \draw(0,0)--(0,3);

     \draw [fill=white] plot [smooth cycle]
  coordinates {(up1twelve) (up2twelve) (down3twelve)   (down4twelve)  };
         \draw(12,0)--(12,3);

     \foreach \x in {0,4,6,8,12}
     {   \fill[white](\x,3) circle (4pt);   
                \fill[white](\x,0) circle (4pt);    
                        \draw (\x,3) node {$\bullet$}; 
                           \draw (\x,0) node {${\bullet}$};  }
                         
 	\draw (2*3,4) node {$\scriptstyle i$};
	\draw (2*3,-1) node {$\scriptstyle \overline i$};
\draw (2*1,0) node {$\ldots$};\draw (2*1,3) node {$\ldots$};
\draw (2*5,0) node {$\ldots$};\draw (2*5,3) node {$\ldots$};

         \end{tikzpicture}  
 $$
 
          \vspace*{-0.5cm}
 $$
\begin{tikzpicture}[scale=0.45]
  \draw(-2,1.5) node {$p _{i,j}=$};
       \foreach \x in {0,2,4,6,8,10,12,14,16,18,20}
     {   \path(\x,3) coordinate (up\x);  
      \path(\x,0) coordinate (down\x);  
   }

  {   \path(down10) --++ (135:0.4) coordinate(down1ten);  
      \path(down10) --++ (45:0.4) coordinate(down2ten);
            \path(down10) --++ (-45:0.4) coordinate(down3ten);  
                        \path(down10) --++ (-135:0.4) coordinate(down4ten);  
   }

     {   \path(up10) --++ (135:0.4) coordinate(up1ten);  
      \path(up10) --++ (45:0.4) coordinate(up2ten);
            \path(up10) --++ (-45:0.4) coordinate(up3ten);  
                        \path(up10) --++ (-135:0.4) coordinate(up4ten);  
   }

  {   \path(down12) --++ (135:0.4) coordinate(down1twelve);  
      \path(down12) --++ (45:0.4) coordinate(down2twelve);
            \path(down12) --++ (-45:0.4) coordinate(down3twelve);  
                        \path(down12) --++ (-135:0.4) coordinate(down4twelve);  
   }

     {   \path(up12) --++ (135:0.4) coordinate(up1twelve);  
      \path(up12) --++ (45:0.4) coordinate(up2twelve);
            \path(up12) --++ (-45:0.4) coordinate(up3twelve);  
                        \path(up12) --++ (-135:0.4) coordinate(up4twelve);  
   }

  {   \path(down14) --++ (135:0.4) coordinate(down1fourteen);  
      \path(down14) --++ (45:0.4) coordinate(down2fourteen);
            \path(down14) --++ (-45:0.4) coordinate(down3fourteen);  
                        \path(down14) --++ (-135:0.4) coordinate(down4fourteen);  
   }

     {   \path(up14) --++ (135:0.4) coordinate(up1fourteen);  
      \path(up14) --++ (45:0.4) coordinate(up2fourteen);
            \path(up14) --++ (-45:0.4) coordinate(up3fourteen);  
                        \path(up14) --++ (-135:0.4) coordinate(up4fourteen);  
   }

  {   \path(down16) --++ (135:0.4) coordinate(down1sixteen);  
      \path(down16) --++ (45:0.4) coordinate(down2sixteen);
            \path(down16) --++ (-45:0.4) coordinate(down3sixteen);  
                        \path(down16) --++ (-135:0.4) coordinate(down4sixteen);  
   }

     {   \path(up16) --++ (135:0.4) coordinate(up1sixteen);  
      \path(up16) --++ (45:0.4) coordinate(up2sixteen);
            \path(up16) --++ (-45:0.4) coordinate(up3sixteen);  
                        \path(up16) --++ (-135:0.4) coordinate(up4sixteen);  
   }

  {   \path(down18) --++ (135:0.4) coordinate(down1eighteen);  
      \path(down18) --++ (45:0.4) coordinate(down2eighteen);
            \path(down18) --++ (-45:0.4) coordinate(down3eighteen);  
                        \path(down18) --++ (-135:0.4) coordinate(down4eighteen);  
   }

     {   \path(up18) --++ (135:0.4) coordinate(up1eighteen);  
      \path(up18) --++ (45:0.4) coordinate(up2eighteen);
            \path(up18) --++ (-45:0.4) coordinate(up3eighteen);  
                        \path(up18) --++ (-135:0.4) coordinate(up4eighteen);  
   }

  {   \path(down20) --++ (135:0.4) coordinate(down1twenty);  
      \path(down20) --++ (45:0.4) coordinate(down2twenty);
            \path(down20) --++ (-45:0.4) coordinate(down3twenty);  
                        \path(down20) --++ (-135:0.4) coordinate(down4twenty);  
   }

     {   \path(up20) --++ (135:0.4) coordinate(up1twenty);  
      \path(up20) --++ (45:0.4) coordinate(up2twenty);
            \path(up20) --++ (-45:0.4) coordinate(up3twenty);  
                        \path(up20) --++ (-135:0.4) coordinate(up4twenty);  
   }

     \foreach \x in {0,2,4,6,8,10,12}
     {   \path(up\x) --++ (135:0.4) coordinate(up1\x);  
      \path(up\x) --++ (45:0.4) coordinate(up2\x);
            \path(up\x) --++ (-45:0.4) coordinate(up3\x);  
                        \path(up\x) --++ (-135:0.4) coordinate(up4\x);  
   }

    \foreach \x in {0,2,4,6,8,10,12}
     {   \path(down\x) --++ (135:0.4) coordinate(down1\x);  
      \path(down\x) --++ (45:0.4) coordinate(down2\x);
            \path(down\x) --++ (-45:0.4) coordinate(down3\x);  
                        \path(down\x) --++ (-135:0.4) coordinate(down4\x);  
   }

     \draw[smooth ]
(down46)   to [out=150,in=-150]  
(up16)to [out=30,in=180] (10,2) to  [out=0,in=150] (up2fourteen) to [out=-30,in=30] (down3fourteen)
to [out=-150,in=0] (10,1) to  [out=180,in=-30] (down46)
;

 \draw[smooth ]
 (6,0) to [out=30,in=180] (10,1.25) to  [out=0,in=150] (14,0) to  (14,3)
 to [out=-150,in=0] (10.2,1.75) to  [out=180,in=-30] (6,3)  to  (6,0) 
;

\draw[white,fill=white](9.4,2.5) rectangle (10.6,0.5);

     \draw[smooth,densely dotted ]
(down46)   to [out=150,in=-150]  
(up16)to [out=30,in=180] (10,2) to  [out=0,in=150] (up2fourteen) to [out=-30,in=30] (down3fourteen)
to [out=-150,in=0] (10,1) to  [out=180,in=-30] (down46)
;

 \draw[smooth,densely dotted ]
 (6,0) to [out=30,in=180] (10,1.25) to  [out=0,in=150] (14,0) to  (14,3)
 to [out=-150,in=0] (10.2,1.75) to  [out=180,in=-30] (6,3)  to  (6,0) 
;

     \draw [fill=white] plot [smooth cycle]
  coordinates {(up18) (up28) (down38)   (down48)  };
         \draw(8,0)--(8,3);

%

     \draw [fill=white] plot [smooth cycle]
  coordinates {(up14) (up24) (down34)   (down44)  };
         \draw(4,0)--(4,3);

     \draw [fill=white] plot [smooth cycle]
  coordinates {(up10) (up20) (down30)   (down40)  };
         \draw(0,0)--(0,3);

     \draw [fill=white] plot [smooth cycle]
  coordinates {(up1twelve) (up2twelve) (down3twelve)   (down4twelve)  };
         \draw(12,0)--(12,3);

             \draw(16,0)--(16,3);
     \draw  plot [smooth cycle]
  coordinates {(up1sixteen) (up2sixteen) (down3sixteen)   (down4sixteen)  };
         \draw(16,0)--(16,3);

             \draw(20,0)--(20,3);
       \draw  plot [smooth cycle]
  coordinates {(up1twenty) (up2twenty) (down3twenty)   (down4twenty)  };
         \draw(20,0)--(20,3);

     \foreach \x in {0,4,6,8,12,14,16,20}
     {   \fill[white](\x,3) circle (4pt);   
                \fill[white](\x,0) circle (4pt);    
                        \draw (\x,3) node {$\bullet$}; 
                           \draw (\x,0) node {${\bullet}$};  }
                         
	\draw (2*3,4) node {$\scriptstyle i$};\draw (2*7,4)
        node {$\scriptstyle j$};
	\draw (2*3,-1) node {$\scriptstyle \overline i$};\draw (2*7,-1)
        node {$\scriptstyle \overline j$};
\draw (2*1,3) node {$\ldots$};\draw (2*1,0) node {$\ldots$};
\draw (2*5,3) node {$\ldots$};\draw (2*5,0) node {$\ldots$};
\draw (2*9,3) node {$\ldots$};\draw (2*9,0) node {$\ldots$};    
         \end{tikzpicture}           
           $$ 
           
           \vspace*{-0.3cm}
            $$
\begin{tikzpicture}[scale=0.45]
  \draw(-2,1.5) node {$p _{i,j}^{(2)}=$};
       \foreach \x in {0,2,4,6,8,10,12,14,16,18,20}
     {   \path(\x,3) coordinate (up\x);  
      \path(\x,0) coordinate (down\x);  
   }

  {   \path(down10) --++ (135:0.4) coordinate(down1ten);  
      \path(down10) --++ (45:0.4) coordinate(down2ten);
            \path(down10) --++ (-45:0.4) coordinate(down3ten);  
                        \path(down10) --++ (-135:0.4) coordinate(down4ten);  
   }

     {   \path(up10) --++ (135:0.4) coordinate(up1ten);  
      \path(up10) --++ (45:0.4) coordinate(up2ten);
            \path(up10) --++ (-45:0.4) coordinate(up3ten);  
                        \path(up10) --++ (-135:0.4) coordinate(up4ten);  
   }

  {   \path(down12) --++ (135:0.4) coordinate(down1twelve);  
      \path(down12) --++ (45:0.4) coordinate(down2twelve);
            \path(down12) --++ (-45:0.4) coordinate(down3twelve);  
                        \path(down12) --++ (-135:0.4) coordinate(down4twelve);  
   }

     {   \path(up12) --++ (135:0.4) coordinate(up1twelve);  
      \path(up12) --++ (45:0.4) coordinate(up2twelve);
            \path(up12) --++ (-45:0.4) coordinate(up3twelve);  
                        \path(up12) --++ (-135:0.4) coordinate(up4twelve);  
   }

  {   \path(down14) --++ (135:0.4) coordinate(down1fourteen);  
      \path(down14) --++ (45:0.4) coordinate(down2fourteen);
            \path(down14) --++ (-45:0.4) coordinate(down3fourteen);  
                        \path(down14) --++ (-135:0.4) coordinate(down4fourteen);  
   }

     {   \path(up14) --++ (135:0.4) coordinate(up1fourteen);  
      \path(up14) --++ (45:0.4) coordinate(up2fourteen);
            \path(up14) --++ (-45:0.4) coordinate(up3fourteen);  
                        \path(up14) --++ (-135:0.4) coordinate(up4fourteen);  
   }

  {   \path(down16) --++ (135:0.4) coordinate(down1sixteen);  
      \path(down16) --++ (45:0.4) coordinate(down2sixteen);
            \path(down16) --++ (-45:0.4) coordinate(down3sixteen);  
                        \path(down16) --++ (-135:0.4) coordinate(down4sixteen);  
   }

     {   \path(up16) --++ (135:0.4) coordinate(up1sixteen);  
      \path(up16) --++ (45:0.4) coordinate(up2sixteen);
            \path(up16) --++ (-45:0.4) coordinate(up3sixteen);  
                        \path(up16) --++ (-135:0.4) coordinate(up4sixteen);  
   }

  {   \path(down18) --++ (135:0.4) coordinate(down1eighteen);  
      \path(down18) --++ (45:0.4) coordinate(down2eighteen);
            \path(down18) --++ (-45:0.4) coordinate(down3eighteen);  
                        \path(down18) --++ (-135:0.4) coordinate(down4eighteen);  
   }

     {   \path(up18) --++ (135:0.4) coordinate(up1eighteen);  
      \path(up18) --++ (45:0.4) coordinate(up2eighteen);
            \path(up18) --++ (-45:0.4) coordinate(up3eighteen);  
                        \path(up18) --++ (-135:0.4) coordinate(up4eighteen);  
   }

  {   \path(down20) --++ (135:0.4) coordinate(down1twenty);  
      \path(down20) --++ (45:0.4) coordinate(down2twenty);
            \path(down20) --++ (-45:0.4) coordinate(down3twenty);  
                        \path(down20) --++ (-135:0.4) coordinate(down4twenty);  
   }

     {   \path(up20) --++ (135:0.4) coordinate(up1twenty);  
      \path(up20) --++ (45:0.4) coordinate(up2twenty);
            \path(up20) --++ (-45:0.4) coordinate(up3twenty);  
                        \path(up20) --++ (-135:0.4) coordinate(up4twenty);  
   }

     \foreach \x in {0,2,4,6,8,10,12}
     {   \path(up\x) --++ (135:0.4) coordinate(up1\x);  
      \path(up\x) --++ (45:0.4) coordinate(up2\x);
            \path(up\x) --++ (-45:0.4) coordinate(up3\x);  
                        \path(up\x) --++ (-135:0.4) coordinate(up4\x);  
   }

    \foreach \x in {0,2,4,6,8,10,12}
     {   \path(down\x) --++ (135:0.4) coordinate(down1\x);  
      \path(down\x) --++ (45:0.4) coordinate(down2\x);
            \path(down\x) --++ (-45:0.4) coordinate(down3\x);  
                        \path(down\x) --++ (-135:0.4) coordinate(down4\x);  
   }
  \draw[red] (14,0)--(14,0);
     \draw[smooth ]
(down46)   to [out=150,in=-150]  
(up16)to [out=30,in=180] (10,2) to  [out=0,in=150] (up2fourteen) to [out=-30,in=30] (down3fourteen)
to [out=-150,in=0] (10,1) to  [out=180,in=-30] (down46)
;

     \draw [fill=white] plot [smooth cycle]
  coordinates {(up18) (up28) (down38)   (down48)  };
         \draw(8,0)--(8,3);

%

     \draw [fill=white] plot [smooth cycle]
  coordinates {(up14) (up24) (down34)   (down44)  };
         \draw(4,0)--(4,3);

     \draw [fill=white] plot [smooth cycle]
  coordinates {(up10) (up20) (down30)   (down40)  };
         \draw(0,0)--(0,3);

      \draw[red] (14,0)--(14,0);
    
     \draw [fill=white] plot [smooth cycle]
  coordinates {(up1twelve) (up2twelve) (down3twelve)   (down4twelve)  };
         \draw(12,0)--(12,3);

             \draw(16,0)--(16,3);
     \draw  plot [smooth cycle]
  coordinates {(up1sixteen) (up2sixteen) (down3sixteen)   (down4sixteen)  };
         \draw(16,0)--(16,3);

             \draw(20,0)--(20,3);
       \draw  plot [smooth cycle]
  coordinates {(up1twenty) (up2twenty) (down3twenty)   (down4twenty)  };
         \draw(20,0)--(20,3);

     \foreach \x in {0,4,6,8,12,14,16,20}
     {   \fill[white](\x,3) circle (4pt);   
                \fill[white](\x,0) circle (4pt);    
                        \draw (\x,3) node {$\bullet$}; 
                           \draw (\x,0) node {${\bullet}$};  }
                         
	\draw (2*3,4) node {$\scriptstyle i$};\draw (2*7,4)
        node {$\scriptstyle j$};
	\draw (2*3,-1) node {$\scriptstyle \overline i$};\draw (2*7,-1)
        node {$\scriptstyle \overline j$};
\draw (2*1,3) node {$\ldots$};\draw (2*1,0) node {$\ldots$};
\draw (2*5,3) node {$\ldots$};\draw (2*5,0) node {$\ldots$};
\draw (2*9,3) node {$\ldots$};\draw (2*9,0) node {$\ldots$};

        \draw  (6,0)--(6,3);
         
          \draw  (14,0)--(14,3);
        
\draw[white,fill=white](9.4,2.5) rectangle (10.6,0.5);

     \draw[smooth,densely dotted ]
(down46)   to [out=150,in=-150]  
(up16)to [out=30,in=180] (10,2) to  [out=0,in=150] (up2fourteen) to [out=-30,in=30] (down3fourteen)
to [out=-150,in=0] (10,1) to  [out=180,in=-30] (down46)
;   
         \end{tikzpicture}  $$
        
\caption{Important diagrams for $1\leq i \neq j\leq r$.  Note that the diagrams $p_i$ and $p_{i,j}$ are simply  the  generators of the partition algebra  embedded 
 as a subalgebra of the ramified partition algebra. \color{red}\color{black} We have drawn two propagating lines for the inner block 
 $\{i, \overline{i}, j,  \overline{j}\}$
  of $p_{i,j}$, going against   our  conventions from \cref{convetnions}, as we think this makes the block structure  clearer to the reader. }
\label{annihilators}
\end{figure}

\begin{eg}
The square of the third diagram 
\cref{draw2} is equal to $\delta_{\rm in}  $ times itself  
and the square of the seventh diagram in \cref{draw2} is equal to 
$\delta_{\rm in}  \delta_{\rm out}$ times itself.  
\end{eg}

  It is apparent that the ramified partition algebra  $ \ram _r(\delta_{\rm in} , \delta_{\rm out} )$ contains a subalgebra isomorphic to the partition algebra  $P_r(\delta_{\rm in}   \delta_{\rm out} )$ whose parameter is the product of the two original parameters: simply take the span of basis elements $(d_{\Lambda} , d_{\Lambda })$
  whose inner and outer partitions are identical.

\begin{eg}
The first, fourth, and seventh diagrams in \cref{draw2}  belong  to the subalgebra
 $P_2(\delta_{\rm in}    \delta_{\rm out} )$ of $\ram _2(\delta_{\rm in} , \delta_{\rm out} )$.
\end{eg}

In addition to the Coxeter generators $s_i$ for $1\leq i <r$ (embedded via the partition algebra embedding, see for example the first diagram of \cref{draw2}), we shall also require the diagrams in  \cref{annihilators}. 
These diagrams, together with the  Coxeter generators,
generate the algebra $\ram _r(\delta_{\rm in} ,\delta_{\rm out} )$, see \cite[Theorem 3.1.3]{MR2287557}.  
   
 We also find subalgebras of $\ram _r(\delta_{\rm in} ,\delta_{\rm out} )$ isomorphic to  group algebras of wreath products. Suppose $a, b$ are positive integers with $ab= r$. Then 
$\C \W_a \wr \W_b$ is a subalgebra of  $\ram _{ab}(\delta_{\rm in} ,\delta_{\rm out} )$. 
Diagrammatically, elements of  $\W_a \wr \W_b$ can be 
visualised as the ramified diagrams with $b$ consecutive
  outer blocks each consisting of $a$ inner propagating pairs.  The element
$(\sigma_1, \dots, \sigma_b; \pi) \in   \W_a \wr \W_b$ is visualised as the ramified diagram with outer blocks
\[ 
\bigl\{(j-1)a+1,(j-1)a+2,\dots , ja,\overline{(\pi(j)-1)a+1}, \overline{(\pi(j)-1)a+2} \dots,\overline{\pi(j)a} 
\bigr\},\] 
for $j=1, \dots b$, and inner blocks
$\bigl\{(j-1)a+i,\overline{(\pi(j)-1)a+\sigma_j(i)}\bigr\}$, for $i=1, \dots a$.
  An example for $a=2$ and $b=3$ is  depicted in \cref{draw}.  

\begin{figure}[ht!] 
 $$ 
   \scalefont{0.8}
 \begin{tikzpicture}[scale=0.5] 
  
       \foreach \x in {0,2,4,8,10,6}
     {   \path(\x,4) coordinate (up\x);  
      \path(\x,0) coordinate (down\x);  
   }

  {   \path(down10) --++ (135:0.4) coordinate(down1ten);  
      \path(down10) --++ (45:0.4) coordinate(down2ten);
            \path(down10) --++ (-45:0.4) coordinate(down3ten);  
                        \path(down10) --++ (-135:0.4) coordinate(down4ten);  
   }

     {   \path(up10) --++ (135:0.4) coordinate(up1ten);  
      \path(up10) --++ (45:0.4) coordinate(up2ten);
            \path(up10) --++ (-45:0.4) coordinate(up3ten);  
                        \path(up10) --++ (-135:0.4) coordinate(up4ten);  
   }

     \foreach \x in {0,2,4,8,10,6}
     {   \path(up\x) --++ (135:0.4) coordinate(up1\x);  
      \path(up\x) --++ (45:0.4) coordinate(up2\x);
            \path(up\x) --++ (-45:0.4) coordinate(up3\x);  
                        \path(up\x) --++ (-135:0.4) coordinate(up4\x);  
   }

    \foreach \x in {0,2,4,8,10,6}
     {   \path(down\x) --++ (135:0.4) coordinate(down1\x);  
      \path(down\x) --++ (45:0.4) coordinate(down2\x);
            \path(down\x) --++ (-45:0.4) coordinate(down3\x);  
                        \path(down\x) --++ (-135:0.4) coordinate(down4\x);  
   }
         
       \draw [fill=white] plot [smooth cycle]
  coordinates {(up14) (up26) (down2ten) (down3ten) (down48) (up44)};

          \draw(4,4)--(8,0);                       
    \draw(6,4)--(10,0);                         

     \draw [fill=white] plot [smooth cycle]
  coordinates {(up18) (up2ten) (up3ten)   (down32) (down40) (down10)  };

        \draw(8,4)--(0,0);                       
    \draw(10,4)--(2,0);

   \draw [fill=white] plot [smooth cycle]
  coordinates {(up10) (up22) (down26) (down36) (down44) (up40)};
 \draw(0,4)--(6,0);                       
    \draw(2,4)--(4,0);       

     \foreach \x in {0,2,4,8,10,6}
     {   \fill[white](\x,4) circle (4pt);   
                \fill[white](\x,0) circle (4pt);   }
      \draw (10,4) node { \color{white}\encircle{\color{black} \six}  };   
                  \draw (8,4) node {\color{white}\encircle{\color{black} \five}}; 

      \draw (6,4) node { \four  };   
      \draw (4,4) node { \three  };   
                  \draw (2,4) node { \two}; 
                           \draw (0,4) node { \one};

      \draw (10,0) node { \six  };   
                  \draw (8,0) node { \five}; 

      \draw (6,0) node { \four  };   
      \draw (4,0) node { \three  };   
                  \draw (2,0) node { \two}; 
                           \draw (0,0) node { \one}; 
  
         \end{tikzpicture}
$$

\vspace*{-12pt}
\!\!\!\!\!\!\!
\caption{The visualisation of $\bigl((12),1_{\W_2},1_{\W_2}; (1,2,3)\bigr)\in \W_2 \wr \W_3$ as a ramified $(6,6)$-set-partition.}
\label{draw}
\end{figure}

\subsection{Horizontal concatenation}Given a ramified $(r_1,s_1)$-set-partition 
$(d_{\Lambda_1}, d_{\Lambda_1'})$ and a  ramified $(r_2,s_2)$-set-partition 
$(d_{\Lambda_2}, d_{\Lambda_2'})$, we define their \textsf{horizontal concatenation}
  $(d_{\Lambda_1}, d_{\Lambda_1'})\circledast 
  (d_{\Lambda_2}, d_{\Lambda_2'})
  $  in the analogous fashion to the partition algebra case (see Section~\ref{hoz}).
Note that  the resulting diagram is indeed a ramified $(r_1+r_2,s_1+s_2)$-set-partition. 

\subsection{Propagating indices and a filtration of the ramified partition algebra}
\label{subsec:propagatingIndex}
Following \cite{MR2073453}, we define the {\sf propagating index} of a ramified $(r,s)$-set-partition $(\Lambda, \Lambda')$  
to be  $(a_1, a_2, \ldots, a_k)$
 if the outer partition $\Lambda'$ has $k$ propagating blocks and, within the $i\textsuperscript{th}$ such block of $\Lambda'$, the inner partition $\Lambda$ has $a_i$ propagating blocks, for $i=1, \ldots, k$. 
 We arrange the numbers of the propagating index so that $a_1 \ge a_2 \ge \cdots \ge a_k \ge 0$.  An example is depicted in \cref{propindex}.

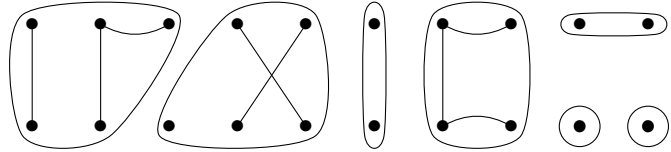
\begin{figure}[ht!]
 $$\begin{tikzpicture}[scale=0.45]
  
       \foreach \x in {0,2,4,6,8,10,12,14,16,18,20}
     {   \path(\x,3) coordinate (up\x);  
      \path(\x,0) coordinate (down\x);  
   }

  {   \path(down10) --++ (135:0.4) coordinate(down1ten);  
      \path(down10) --++ (45:0.4) coordinate(down2ten);
            \path(down10) --++ (-45:0.4) coordinate(down3ten);  
                        \path(down10) --++ (-135:0.4) coordinate(down4ten);  
   }

     {   \path(up10) --++ (135:0.4) coordinate(up1ten);  
      \path(up10) --++ (45:0.4) coordinate(up2ten);
            \path(up10) --++ (-45:0.4) coordinate(up3ten);  
                        \path(up10) --++ (-135:0.4) coordinate(up4ten);  
   }

  {   \path(down12) --++ (135:0.4) coordinate(down1twelve);  
      \path(down12) --++ (45:0.4) coordinate(down2twelve);
            \path(down12) --++ (-45:0.4) coordinate(down3twelve);  
                        \path(down12) --++ (-135:0.4) coordinate(down4twelve);  
   }

     {   \path(up12) --++ (135:0.4) coordinate(up1twelve);  
      \path(up12) --++ (45:0.4) coordinate(up2twelve);
            \path(up12) --++ (-45:0.4) coordinate(up3twelve);  
                        \path(up12) --++ (-135:0.4) coordinate(up4twelve);  
   }

  {   \path(down14) --++ (135:0.4) coordinate(down1fourteen);  
      \path(down14) --++ (45:0.4) coordinate(down2fourteen);
            \path(down14) --++ (-45:0.4) coordinate(down3fourteen);  
                        \path(down14) --++ (-135:0.4) coordinate(down4fourteen);  
   }

     {   \path(up14) --++ (135:0.4) coordinate(up1fourteen);  
      \path(up14) --++ (45:0.4) coordinate(up2fourteen);
            \path(up14) --++ (-45:0.4) coordinate(up3fourteen);  
                        \path(up14) --++ (-135:0.4) coordinate(up4fourteen);  
   }

  {   \path(down16) --++ (135:0.4) coordinate(down1sixteen);  
      \path(down16) --++ (45:0.4) coordinate(down2sixteen);
            \path(down16) --++ (-45:0.4) coordinate(down3sixteen);  
                        \path(down16) --++ (-135:0.4) coordinate(down4sixteen);  
   }

     {   \path(up16) --++ (135:0.4) coordinate(up1sixteen);  
      \path(up16) --++ (45:0.4) coordinate(up2sixteen);
            \path(up16) --++ (-45:0.4) coordinate(up3sixteen);  
                        \path(up16) --++ (-135:0.4) coordinate(up4sixteen);  
   }

  {   \path(down18) --++ (135:0.4) coordinate(down1eighteen);  
      \path(down18) --++ (45:0.4) coordinate(down2eighteen);
            \path(down18) --++ (-45:0.4) coordinate(down3eighteen);  
                        \path(down18) --++ (-135:0.4) coordinate(down4eighteen);  
   }

     {   \path(up18) --++ (135:0.4) coordinate(up1eighteen);  
      \path(up18) --++ (45:0.4) coordinate(up2eighteen);
            \path(up18) --++ (-45:0.4) coordinate(up3eighteen);  
                        \path(up18) --++ (-135:0.4) coordinate(up4eighteen);  
   }

  {   \path(down16) --++ (135:0.4) coordinate(down1twenty);  
      \path(down16) --++ (45:0.4) coordinate(down2twenty);
            \path(down16) --++ (-45:0.4) coordinate(down3twenty);  
                        \path(down16) --++ (-135:0.4) coordinate(down4twenty);  
   }

     {   \path(up16) --++ (135:0.4) coordinate(up1twenty);  
      \path(up16) --++ (45:0.4) coordinate(up2twenty);
            \path(up16) --++ (-45:0.4) coordinate(up3twenty);  
                        \path(up16) --++ (-135:0.4) coordinate(up4twenty);  
   }

     \foreach \x in {0,2,4,6,8,10,12,14,16,18,20}
     {   \path(up\x) --++ (135:0.4) coordinate(up1\x);  
      \path(up\x) --++ (45:0.4) coordinate(up2\x);
            \path(up\x) --++ (-45:0.4) coordinate(up3\x);  
                        \path(up\x) --++ (-135:0.4) coordinate(up4\x);  
   }

    \foreach \x in {0,2,4,6,8,10,12,14,16,18,20}
     {   \path(down\x) --++ (135:0.4) coordinate(down1\x);  
      \path(down\x) --++ (45:0.4) coordinate(down2\x);
            \path(down\x) --++ (-45:0.4) coordinate(down3\x);  
                        \path(down\x) --++ (-135:0.4) coordinate(down4\x);  
   }

   \draw [fill=white] plot [smooth cycle]
  coordinates {(up10) (up24) (down32)   (down40)  };


 \draw(0,3)--(0,0);                       
    \draw(2,3) to [out=-30,in=-150] (4,3);
    \draw(4,3) --(2,0);

      \draw [fill=white] plot [smooth cycle]
  coordinates {(up16) (up28) (down38)   (down44)  };
 
    \draw(6,0)--(8,3);
        \draw(8,0)--(6,3);

     \draw [fill=white] plot [smooth cycle]
  coordinates {(up1ten) (up2ten) (down3ten)   (down4ten)  };
 

     \draw [fill=white] plot [smooth cycle]
  coordinates {(up1twelve) (up2fourteen) (down3fourteen)   (down4twelve)  };

    \draw(12,0) to [out=30,in=150] (14,0);
    \draw(14,3) to [out=-150,in=-30] (12,3); 
    \draw(12,0) to   (14,3); 
    
    \draw  (16,0) node {\encircle{$\bullet$}};

    \draw  (18,0) node {\encircle{$\bullet$}};

     \draw [fill=white] plot [smooth cycle]
  coordinates {(up1sixteen) (up2eighteen) (up3eighteen)   (up4sixteen)  };

     \foreach \x in {0,2,4,6,8,10,12,14,16,18}
     {   \fill[white](\x,3) circle (4pt);   
                \fill[white](\x,0) circle (4pt);    
                        \draw (\x,3) node {$\bullet$}; 
                           \draw (\x,0) node {${\bullet}$};  }

         \end{tikzpicture}    $$

\caption{The diagram of a ramified $(10,10)$-set-partition with  
propagating index   $(2,2,1,0)$.  Note that the unique zero entry
in the propagating index records that there is a unique outer propagating block containing 
no inner propagating blocks. No information about non-propagating outer
blocks is recorded in the propagating index.
}
\label{propindex}
\end{figure}

We let $\Theta_r$ denote the set of  all possible propagating indices for  $\ram _r(\delta_{\rm in} , \delta_{\rm out} )$. 
For example, if $r=2$ then $ \Theta_2=\{ (1,1),(2), (1,0), (1), (0,0), (0), \varnothing \}$; an example of a ramified diagram with each propagating index is  depicted in \cref{draw2}.  
We write $\vartheta'<\vartheta $   if  $\vartheta'$ is obtained   by subtracting~$1$ from a single entry from $\vartheta $,  or
 if  $\vartheta'$ is obtained from $\vartheta$  by merging two parts into a single part, or finally if $\vartheta'=\varnothing$ and $\vartheta=(0 ) $.  
Abusing notation, we  
let $<$ denote the transitive closure of the above relation. 
Then $ \Theta_r$ is   partially ordered by $\le$.    
The poset $\Theta_3$ is illustrated in the \cref{poset} below; for example,  $(1,0,0)<(1,1,0)$ because we have subtracted 1 from the second entry of $(1,1,0)$ and $(2,0)<(1,1,0)$ because we have merged the first two parts of 
$(1,1,0)$.
Martin and Elgamal \cite[Proposition 6]{MR2073453} show that multiplication in  $\ram _r(\delta_{\rm in} , \delta_{\rm out} )$ preserves or decreases the 
propagating index under $\leq$. 

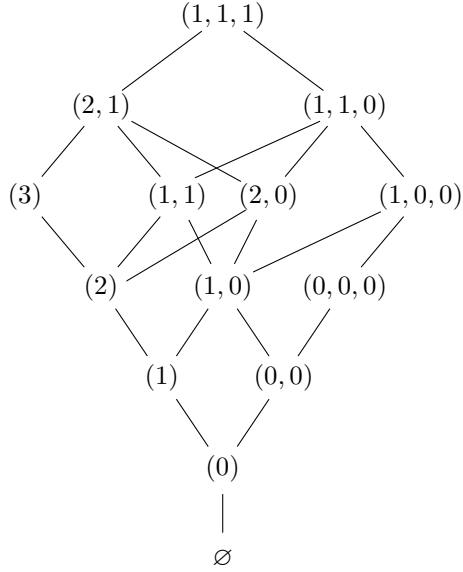
\begin{figure}[ht!]
 $$\scalefont{0.9}
 \begin{tikzpicture}[scale=0.8]

 \draw(0,0)-- (-2,-1.5);  \draw(0,0)--  (2,-1.5); 
 \draw(-2,-1.5)--(-0.75,-3)
 (-2,-1.5)--(-3.25,-3);

 \draw(-2,-4.5)--(-0.75,-3)
 (-2,-4.5)--(-3.25,-3);

\draw(3.25,-3)--(2,-1.5) ;
\draw(2,-1.5)-- (0.75,-3);

\draw(3.25,-3)--(2,-4.5) ;
 \draw(0.5,-4.25)-- (2.6,-3.25); 

\draw(0,-4.5)--(0.75,-3);
\draw(0,-4.5)--(-0.75,-3);

\draw(0,-4.5)--(1,-6);\draw(0,-4.5)--(-1,-6);
\draw(-2,-4.5)--(-1,-6);
\draw(2,-4.5)--(1,-6);

\draw(0,-7.5)--(1,-6);\draw(0,-7.5)--(-1,-6);

\draw(0,-9)--(0,-7.5);

\draw(1.6,-1.75)--(-0.7,-2.8);

  \draw(0.3,-2.75)--(-1.5,-1.75);
    \draw(0.75,-3)--(-1.75,-4.5);

 \draw[white,fill=white](0,0) circle (12pt) ; 
 \draw[white,fill=white](-2,-1.5) circle (12pt) ; 
  \draw[white,fill=white](2,-1.5) circle (12pt) ; 


 \draw[white,fill=white](-0.75,-3) circle (12pt) ; 
  \draw[white,fill=white](0.75,-3) circle (12pt) ; 
 \draw[white,fill=white](-3.25,-3) circle (12pt) ; 
  \draw[white,fill=white](3.25,-3) circle (12pt) ; 

 \draw[white,fill=white](0,-4.5) circle (12pt) ; 
  \draw[white,fill=white](2,-4.5) circle (12pt) ; 
 \draw[white,fill=white](-2,-4.5) circle (12pt) ; 
 
 \draw[white,fill=white](0,-6) circle (12pt) ; 
 
  \draw[white,fill=white](0,-7.5) circle (12pt) ; 

  \draw[white,fill=white](1,-6) circle (12pt) ; 
    \draw[white,fill=white](-1,-6) circle (12pt) ; 
 
    \draw[white,fill=white](0,-9) circle (12pt) ;

 \draw(0,0) node {$(1,1,1)$};
 \draw(-2,-1.5) node {$(2,1)$};
  \draw(2,-1.5) node {$(1,1,0)$};

 \draw(-0.75,-3) node {$(1,1)$};
  \draw(0.75,-3) node {$(2,0)$};
 \draw(-3.25,-3) node {$(3)$};
  \draw(3.25,-3) node {$(1,0,0)$};

 \draw(0,-4.5) node {$(1,0)$};
  \draw(2,-4.5) node {$(0,0,0)$};
 \draw(-2,-4.5) node {$(2)$};
 
 \draw(-1,-6) node {$(1)$};
  \draw(1,-6) node {$(0,0)$};
 
  \draw(0,-7.5) node {$(0)$};
  
    \draw(0,-9) node {$\varnothing$};

 \end{tikzpicture} $$
 
 \vspace*{-9pt}
\caption{The Hasse diagram for the poset $\Theta_3$}\label{poset}
\end{figure}

 \renewcommand{\Id}{e}

To each element $\vartheta =(a_1, a_2, \ldots a_k)\in \Theta_r$,  we have a canonically associated basis element, $\Id_\vartheta\in \ram _r(\delta_{\rm in} ,\delta_{\rm out} )$ constructed as follows (an example is depicted in \cref{thetaidemp}).
The first outer block of $\Id_\vartheta$ consists of the leftmost $a_1$ (or $1$ if $a_1=0$) nodes from both top and bottom rows, the second outer block consists of the next $a_2$ nodes (or $1$ if $a_2=0$)  from both top and bottom rows, and so on until the $k\textsuperscript{th}$ outer block consists of the next $a_k$ nodes (or  $1$ if $a_k=0$) from top and bottom rows, and any remaining nodes form singleton outer blocks (necessarily containing a singleton inner block). Inside the $j\textsuperscript{th}$ outer propagating block, each top row node is joined to the bottom row node immediately below it, unless $a_j=0$ when the inner blocks are singletons. 
The basis element $\Id_\vartheta$ can be scaled to an idempotent
 (provided  $\delta_{\rm in} , \delta_{\rm out} \neq 0$).

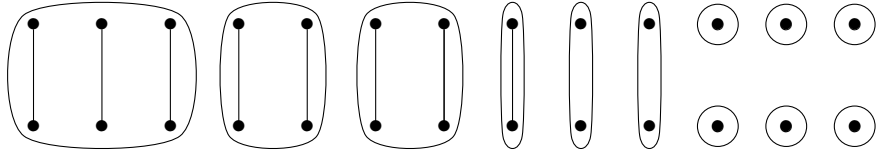
\begin{figure}[ht!]
 $$\begin{tikzpicture}[scale=0.45]
  
       \foreach \x in {0,2,4,6,8,10,12,14,16,18,20}
     {   \path(\x,3) coordinate (up\x);  
      \path(\x,0) coordinate (down\x);  
   }

  {   \path(down10) --++ (135:0.4) coordinate(down1ten);  
      \path(down10) --++ (45:0.4) coordinate(down2ten);
            \path(down10) --++ (-45:0.4) coordinate(down3ten);  
                        \path(down10) --++ (-135:0.4) coordinate(down4ten);  
   }

     {   \path(up10) --++ (135:0.4) coordinate(up1ten);  
      \path(up10) --++ (45:0.4) coordinate(up2ten);
            \path(up10) --++ (-45:0.4) coordinate(up3ten);  
                        \path(up10) --++ (-135:0.4) coordinate(up4ten);  
   }

  {   \path(down12) --++ (135:0.4) coordinate(down1twelve);  
      \path(down12) --++ (45:0.4) coordinate(down2twelve);
            \path(down12) --++ (-45:0.4) coordinate(down3twelve);  
                        \path(down12) --++ (-135:0.4) coordinate(down4twelve);  
   }

     {   \path(up12) --++ (135:0.4) coordinate(up1twelve);  
      \path(up12) --++ (45:0.4) coordinate(up2twelve);
            \path(up12) --++ (-45:0.4) coordinate(up3twelve);  
                        \path(up12) --++ (-135:0.4) coordinate(up4twelve);  
   }

  {   \path(down14) --++ (135:0.4) coordinate(down1fourteen);  
      \path(down14) --++ (45:0.4) coordinate(down2fourteen);
            \path(down14) --++ (-45:0.4) coordinate(down3fourteen);  
                        \path(down14) --++ (-135:0.4) coordinate(down4fourteen);  
   }

     {   \path(up14) --++ (135:0.4) coordinate(up1fourteen);  
      \path(up14) --++ (45:0.4) coordinate(up2fourteen);
            \path(up14) --++ (-45:0.4) coordinate(up3fourteen);  
                        \path(up14) --++ (-135:0.4) coordinate(up4fourteen);  
   }

  {   \path(down16) --++ (135:0.4) coordinate(down1sixteen);  
      \path(down16) --++ (45:0.4) coordinate(down2sixteen);
            \path(down16) --++ (-45:0.4) coordinate(down3sixteen);  
                        \path(down16) --++ (-135:0.4) coordinate(down4sixteen);  
   }

     {   \path(up16) --++ (135:0.4) coordinate(up1sixteen);  
      \path(up16) --++ (45:0.4) coordinate(up2sixteen);
            \path(up16) --++ (-45:0.4) coordinate(up3sixteen);  
                        \path(up16) --++ (-135:0.4) coordinate(up4sixteen);  
   }

  {   \path(down18) --++ (135:0.4) coordinate(down1eighteen);  
      \path(down18) --++ (45:0.4) coordinate(down2eighteen);
            \path(down18) --++ (-45:0.4) coordinate(down3eighteen);  
                        \path(down18) --++ (-135:0.4) coordinate(down4eighteen);  
   }

     {   \path(up18) --++ (135:0.4) coordinate(up1eighteen);  
      \path(up18) --++ (45:0.4) coordinate(up2eighteen);
            \path(up18) --++ (-45:0.4) coordinate(up3eighteen);  
                        \path(up18) --++ (-135:0.4) coordinate(up4eighteen);  
   }

  {   \path(down16) --++ (135:0.4) coordinate(down1twenty);  
      \path(down16) --++ (45:0.4) coordinate(down2twenty);
            \path(down16) --++ (-45:0.4) coordinate(down3twenty);  
                        \path(down16) --++ (-135:0.4) coordinate(down4twenty);  
   }

     {   \path(up16) --++ (135:0.4) coordinate(up1twenty);  
      \path(up16) --++ (45:0.4) coordinate(up2twenty);
            \path(up16) --++ (-45:0.4) coordinate(up3twenty);  
                        \path(up16) --++ (-135:0.4) coordinate(up4twenty);  
   }

     \foreach \x in {0,2,4,6,8,10,12,14,16,18,20}
     {   \path(up\x) --++ (135:0.4) coordinate(up1\x);  
      \path(up\x) --++ (45:0.4) coordinate(up2\x);
            \path(up\x) --++ (-45:0.4) coordinate(up3\x);  
                        \path(up\x) --++ (-135:0.4) coordinate(up4\x);  
   }

    \foreach \x in {0,2,4,6,8,10,12,14,16,18,20}
     {   \path(down\x) --++ (135:0.4) coordinate(down1\x);  
      \path(down\x) --++ (45:0.4) coordinate(down2\x);
            \path(down\x) --++ (-45:0.4) coordinate(down3\x);  
                        \path(down\x) --++ (-135:0.4) coordinate(down4\x);  
   }

   \draw [fill=white] plot [smooth cycle]
  coordinates {(up10) (up24) (down34)   (down40)  };


 \draw(0,3)--(0,0);                       
 \draw(2,3)--(2,0);       
 \draw(4,3)--(4,0);       
    
      \draw [fill=white] plot [smooth cycle]
  coordinates {(up16) (up28) (down38)   (down46)  };
 
    \draw(6,0)--(6,3);
        \draw(8,0)--(8,3);

      \draw [fill=white] plot [smooth cycle]
  coordinates {(up1ten) (up2twelve) (down3twelve)   (down4ten)  };

     \draw [fill=white] plot [smooth cycle]
  coordinates {(up1fourteen) (up2fourteen) (down3fourteen)   (down4fourteen)  };

    \draw(6+4,0)--(6+4,3);
        \draw(8+4,0)--(8+4,3);
  \draw(6+4+2,0)--(6+4+2,3);
        \draw(8+4+2,0)--(8+4+2,3);


       \draw [fill=white] plot [smooth cycle]
  coordinates {(up1sixteen) (up2sixteen) (down3sixteen)   (down4sixteen)  };

      \draw [fill=white] plot [smooth cycle]
  coordinates {(up1eighteen) (up2eighteen) (down3eighteen)   (down4eighteen)  };

    \draw  (20,0) node {\encircle{$\bullet$}};

    \draw  (22,0) node {\encircle{$\bullet$}};
    
    \draw  (20,3) node {\encircle{$\bullet$}};

    \draw  (22,3) node {\encircle{$\bullet$}};

    \draw  (24,0) node {\encircle{$\bullet$}};

    \draw  (24,3) node {\encircle{$\bullet$}};

     \foreach \x in {0,2,4,6,8,10,12,14,16,18,20,22,24}
     {   \fill[white](\x,3) circle (4pt);   
                \fill[white](\x,0) circle (4pt);    
                        \draw (\x,3) node {$\bullet$}; 
                           \draw (\x,0) node {${\bullet}$};  }

         \end{tikzpicture}    $$

\vspace*{-3pt}
\caption{ The quasi-idempotent   $e_\vartheta$ for $\vartheta=(3,2^2,1,0^2)$ in $\ram_{13}(\delta_{\rm in} ,\delta_{\rm out} )$. Note that there are two outer-propagating blocks that are not inner-propagating from 
the two zero entries. Since $1. 3+ 2.2+1.1+2.1 = 10$, there are $3$ remaining northern and southern nodes forming
singleton outer blocks.
}
\label{thetaidemp}
\end{figure}

We fix $\prec$ to be  any total refinement of the ordering $\leq $ on $\Theta_r$.  
We set $$J_{ \preceq\vartheta}= \bigcup _{\vartheta'\preceq \vartheta}
 \ram _r(\delta_{\rm in} , \delta_{\rm out} ) \Id_{\color{red}\color{black}\vartheta '} \ram _r(\delta_{\rm in} , \delta_{\rm out} ) $$
and we define $J_{\prec \vartheta}$ in the obvious fashion. 
Similarly, we define 
$J_{\leq  \vartheta}$, $J_{<  \vartheta}$
in an analogous fashion in terms of the partial ordering $\leq$.  
Denote the list of all elements of $\Theta_r$ in  order as 
$$ \varnothing= \vartheta_1 \prec  \vartheta_2\dots \prec \vartheta_N=(1^r).$$  
Then we obtain the chain of ideals
\begin{equation}\label{eqn:chain}
0 \subset J_{\preceq\vartheta_1} \subset J_{\preceq\vartheta_2} \subset \cdots \subset  J_{\preceq\vartheta_N}= \ram _r(\delta_{\rm in} , \delta_{\rm out} ).
\end{equation} 
analogous to~\eqref{eq:Pfiltration}.
The quotient  $ J_{\preceq\vartheta_i} / J_{\preceq\vartheta_{i-1}} = J_{\preceq\vartheta_i} / J_{\prec\vartheta_i} $ has basis 
consisting of those ramified diagrams with propagating index $\vartheta_i$.
We observe, for 
  $\vartheta  =(a_1^{b_1}, \dots, a_\ell^{b_\ell}) \in\Theta_r$,   that 
\begin{equation}\label{eqn:inflation2}
 e_{\vartheta  }(J_{\preceq\vartheta } / J_{\prec \vartheta })e_{\vartheta }\cong   \C{\rm Stab}(\vartheta)={\C}  \prod_{i=1}^\ell \W_{a_i}\wr \W_{b_i}
\end{equation}
simply using the identification of group elements with ramified diagrams  illustrated in \cref{draw} for each $1\leq i \leq  \ell$.  
We set $V_r({\vartheta })$ to be the 
$( {\rm Stab}(\vartheta ), R_r(\delta_{\rm in} ,\delta_{\rm out} ))$-bimodule 
\[V_r({\vartheta })= 
 e_{\vartheta } 
(J_{\preceq  {\vartheta }} / J_{\prec  {\vartheta }}  )   \]
 with basis given by all ramified  diagrams in
$ e_{\vartheta }R_r(\delta_{\rm in} ,\delta_{\rm out} ) $  with propagating index $\vartheta $.   
 For $\vartheta =(a_1^{b_1}, \dots, a_\ell^{b_\ell})$, we have that 
\begin{equation}\label{eqn:inflation}
 \frac{J_{\preceq\vartheta }}{ J_{\prec\vartheta }} \cong V_r{(\vartheta }) \otimes_{\C{\rm Stab}(\vartheta)}    
\C {\rm Stab}(\vartheta) \otimes_{\C{\rm Stab}(\vartheta)} V_r{(\vartheta )}.
\end{equation}
 The isomorphism  in \cref{eqn:inflation} is the key observation needed for the following theorem.

 \begin{thm} [{\cite[Proposition 11]{MR2073453}, \cite{davidthesis}}]
\label{quasi-hereditary}
The algebra $\ram _r(\delta_{\rm in} ,\delta_{\rm out} )$  is 
an iterated inflation  of the algebras $\C {\rm Stab}(\vartheta)$ for  
$\vartheta \in   \Theta_r$ and thus is a cellular algebra.    If $\delta_{\rm in} , \delta_{\rm out} \not=0$,
then
$\ram _r(\delta_{\rm in} ,\delta_{\rm out} )$  is quasi-hereditary. 
 \end{thm}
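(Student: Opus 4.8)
The plan is to establish \Cref{quasi-hereditary} by invoking the general theory of iterated inflations of cellular algebras, as developed by K\"onig--Xi, with the combinatorial input supplied by the filtration \eqref{eqn:chain} and the isomorphisms \eqref{eqn:inflation2} and \eqref{eqn:inflation}. The key structural fact is already in place: the two-sided ideals $J_{\preceq\vartheta_i}$ filter $R_r(\delta_{\rm in},\delta_{\rm out})$ with subquotients $J_{\preceq\vartheta_i}/J_{\prec\vartheta_i}$ whose bases consist of ramified diagrams of fixed propagating index $\vartheta_i$, and each such layer is, by \eqref{eqn:inflation}, of the form $V_r(\vartheta)\otimes_{\C\Stab(\vartheta)}\C\Stab(\vartheta)\otimes_{\C\Stab(\vartheta)}V_r(\vartheta)$, i.e.\ an inflation of the group algebra $\C\Stab(\vartheta)$ along the bimodule $V_r(\vartheta)$.

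First I would verify that the multiplication rule \eqref{multi} for ramified diagrams, combined with the fact (Martin--Elgamal, \cite[Proposition~6]{MR2073453}) that multiplication preserves or strictly decreases the propagating index under $\le$, shows that each $J_{\preceq\vartheta}$ really is a two-sided ideal and that the chain \eqref{eqn:chain} is exhausting; this is where the choice of total refinement $\prec$ of $\le$ is used, so that consecutive quotients are indexed by single elements $\vartheta_i\in\Theta_r$. Next I would check the inflation axioms on each layer: the involution $\ast$ (reflection through the horizontal axis) restricts compatibly to each $V_r(\vartheta)$ and induces the inverse-map anti-involution on $\C\Stab(\vartheta)$ via \eqref{eqn:inflation2}; and the multiplication of two diagrams in the layer, computed modulo $J_{\prec\vartheta}$, factors through a bilinear form $V_r(\vartheta)\otimes V_r(\vartheta)\to\C\Stab(\vartheta)$ given by the ``middle part'' of the concatenation (the inner/outer middle components contributing scalars $\delta_{\rm in}$, $\delta_{\rm out}$, and a permutation-with-base-group element when the propagating index is exactly preserved). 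Since each $\C\Stab(\vartheta)=\C\prod_i\W_{a_i}\wr\W_{b_i}$ is a semisimple, hence cellular, algebra with the inverse anti-involution, the iterated inflation of cellular algebras is cellular by \cite[Section~3 and Proposition~3.7 or similar]{MR2073453} or the general K\"onig--Xi criterion; this gives the first assertion, with the cell modules being exactly the standard modules $\Delta_r(\vartheta)$ obtained from the layers.

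For the quasi-heredity statement when $\delta_{\rm in},\delta_{\rm out}\neq 0$, the point is that each $\Id_\vartheta$ can be rescaled to a genuine idempotent $e_\vartheta$ (the scalar being a power of $\delta_{\rm in}\delta_{\rm out}$, nonzero by hypothesis), so each layer $J_{\preceq\vartheta}/J_{\prec\vartheta}$ contains an idempotent and is therefore \emph{not} a nilpotent (radical) layer; equivalently the bilinear form on $V_r(\vartheta)$ is nonzero, since evaluating the form on the diagram underlying $e_\vartheta$ against its reflection produces $\Id_\vartheta$ itself up to the same nonzero scalar. By the standard criterion (cellular algebra is quasi-hereditary iff every cell bilinear form is nonzero, or: iterated inflation along algebras with nonzero forms is quasi-hereditary --- see \cite[Proposition~11]{MR2073453} and \cite{davidthesis}), this yields quasi-heredity. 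The main obstacle I anticipate is the bookkeeping in showing that the product of two layer-diagrams, reduced modulo $J_{\prec\vartheta}$, genuinely lands in the span corresponding to $V_r(\vartheta)\otimes\C\Stab(\vartheta)\otimes V_r(\vartheta)$ with the correct bimodule structure --- one must track separately how the inner and outer set-partitions compose and confirm that the outer blocks of the ``middle'' assemble into exactly the wreath-product combinatorics of $\Stab(\vartheta)$, with the two parameters appearing as the respective numbers of removed inner- and outer-middle components. This is routine diagram manipulation but requires care with the conventions of \Cref{usefulconvention} and the identification illustrated in \Cref{draw}; since \eqref{eqn:inflation2} and \eqref{eqn:inflation} are stated as known, I would cite \cite[Proposition~11]{MR2073453} and \cite{davidthesis} for these verifications rather than reproduce them.
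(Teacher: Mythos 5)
Your proposal is correct and follows essentially the same route as the paper: the paper also treats this as an iterated inflation along the layers $J_{\preceq\vartheta}/J_{\prec\vartheta}\cong V_r(\vartheta)\otimes_{\C\Stab(\vartheta)}\C\Stab(\vartheta)\otimes_{\C\Stab(\vartheta)}V_r(\vartheta)$, defers the verification of the inflation axioms to \cite{davidthesis} (citing \cite{GreenCellular} and \cite{GG} for cellularity of the wreath-product group algebras), and obtains quasi-heredity by rescaling the quasi-idempotent $e_\vartheta$ to exhibit each $J_{\leq\vartheta}$ as a heredity ideal. Your extra discussion of the bilinear form and the Martin--Elgamal index-decreasing property is consistent with, and slightly more explicit than, the paper's remark.
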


\begin{rmk}
The conditions for an iterated inflation in \cite[Theorem 1]{GreenCellular}  
are checked in \cite{davidthesis}.
 The group algebra 
$\C ( \prod_{i=1}^\ell \W_{a_i}\wr \W_{b_i})$ is cellular by the work of    \cite{GG} or
  \cite[Proposition 1, Theorem 4]{GreenCellular}.
 Thus $\ram _r(\delta_{\rm in} ,\delta_{\rm out} )$ is an iterated inflation of cellular algebras and hence a cellular algebra. 
If $\delta_{\rm in} ,\delta_{\rm out} \not =0$, then  $J_{\leq  \vartheta}$ (for $\vartheta\in \Theta_r$) is a heredity ideal containing the quasi-idempotent 
 $\Id_\vartheta$ which can be rescaled.   Hence the algebra is quasi-hereditary (see also \cite[Proposition 11]{MR2073453}).  
 \end{rmk}

\subsection{Standard and simple modules for the ramified partition algebra}
\label{subsec:standardSimpleModulesRamifiedPartitionAlgebra}
By Theorem~\ref{quasi-hereditary}, the ramified partition algebra has distinguished
cell modules. We shall
assume $\delta_{\rm in} ,\delta_{\rm out} \neq 0$ and therefore the cell modules are standard modules.
We describe these standard modules in generality here, 
although we require only a good understanding of the most elementary case when $\vartheta =(a^b)$.
Fix $\vartheta=(a_1^{b_1}, \dots, a_\ell^{b_\ell}) \in \Theta_r$.  
The simple right modules for $\C {\rm Stab}(\vartheta) $ of \cref{eqn:inflation} are  the outer tensor products of the (right module analogues of) modules from \cref{ohgood!} as follows: 
\begin{equation} \rightspecht{\bm \alpha ^{\bm \beta}} = \rightspecht{{ \bm \alpha_1 ^{\bm \beta_1} } }\otimes  \rightspecht{{ \bm \alpha _2^{\bm \beta_2} } }
\otimes  \dots  \otimes  \rightspecht{{ \bm \alpha_\ell ^{\bm \beta_\ell} } },
\label{eq:wreathModule} \end{equation}
where ${\bm \alpha ^{\bm \beta}}
$ 
{\color{red}\color{black}is}
an $\ell$-tuple of $\bm\alpha_i ^{\bm \beta_i} \in \ParSet(a_i,b_i)$.  
 We define the (right) {\sf standard}   $R_r(\delta_{\rm in}, \delta_{\rm out})$-module, 
$\Delta_{r     }(\bm \alpha ^{\bm \beta})$, by
\begin{equation}\label{ramcell}
\Delta_{r  
}(\bm \alpha ^{\bm \beta}) \cong \rightspecht{\bm \alpha ^{\bm \beta}} \otimes_{{\rm Stab}(\vartheta)} V_r(\vartheta) 
\end{equation}
 where    
the action of $\ram_r(\delta_{\rm in}, \delta_{\rm out})$ is given as follows. 
Let $v$ be a ramified partition diagram in $V_r(\vartheta)$, 
$x\in \color{red}\color{black} \rightspecht  { \bm \alpha ^{\bm \beta}} $ and 
$d$ be an {\color{red}\color{black}ramified $(r,r)$-set-partition diagram}. 
Concatenate  $v$ above $d$ to get $\delta_{\rm in} ^ {s}
\delta_{\rm out} ^{t}
 v'$ for some ramified partition diagram $v'$ and    $s,t \in \ZZ_{\geq0}$.
  If
the propagating index  of $v'$ is not equal to  
  $\vartheta $  (and so it has smaller index in the order $\prec$) then we set
$(x\otimes v)d=0$.
Otherwise we set $(x\otimes v)d=\delta_{\rm in} ^ {s}
\delta_{\rm out} ^{t} x\otimes v'$. 
 As $\ram_r(\delta_{\rm in}, \delta_{\rm out})$ is a cellular algebra, the simple $\ram_r(\delta_{\rm in}, \delta_{\rm out})$-modules are quotients of the standard modules. If $\delta_{\rm in}, \delta_{\rm out} \neq 0$, then each standard module $\Delta_{r  
}(\bm \alpha ^{\bm \beta})$ has a simple quotient which we shall denote by $L_{r  
}(\bm \alpha ^{\bm \beta})$, and these are a complete set of non-isomorphic simple modules.
 
  We now restrict our attention to the case 
  $\vartheta =(a^b)$ (with $ab\leq r$ or $b \leq r$ in the case $a=0$). 
Ramified diagrams in $V_r(a^b)=  e_{(a^b) } 
(J_{\preceq  {(a^b) }} / J_{\prec  {(a^b) }}  )   $ will be identified with {\color{red}\color{black}ramified $(ab, r)$-set-partition diagrams} provided $a\ne 0$ (respectively {\color{red}\color{black}ramified $(b,r)$-set-partition diagrams} if $a=0$) of propagating index $(a^b)$. We simply delete the additional $r-ab$ (respectively $r-b$) northern outer singleton vertices. An example of a ramified diagram from $V_5(2^2)$ is shown in \cref{thetaidemp2}. We let $\alpha \vdash a$ and $\beta\vdash b$ and use the right $\C \W_a \wr \W_b$-module   $\rightspecht{\alpha} \oslash \rightspecht{\beta}$  to construct 
   $$\Delta_r(  \alpha ^{  \beta}) = (\rightspecht{\alpha} \oslash \rightspecht{\beta}  )
    \otimes_{\W_a \wr \W_b} V_r(a^b)
    = c_{\alpha^\beta}^\ast  \C \W_a \wr \W_b
    \otimes_{\W_a \wr \W_b} V_r(a^b).$$ 
   %
%
Let $\mathcal{S}({\alpha^\beta})$ be a set of elements of $\color{red}\color{black}\W_a \wr \W_b$ chosen so that 
$\{  c_{\alpha^\beta}^\ast \sigma \mid \sigma \in \mathcal{S}({\alpha^\beta}) \}$ is a basis of 
   $c_{\alpha^\beta}^\ast {\color{red}\color{black}(\C\W_a \wr \W_b)}= \rightspecht{\alpha} \oslash \rightspecht{\beta}$. This set has cardinality $|\Std(\alpha)|^b |\Std(\beta)|$. 
    We therefore obtain a basis of  $\Delta_r(  \alpha ^{  \beta})$: 
\begin{align}\label{ramified basis}
 \Bigg\{  c_{\alpha^\beta}^\ast \sigma d_{(\Lambda,\Lambda')}
\,\Bigg|\hskip0.5pt
 \begin{array}{l}
 \sigma \in \mathcal{S}(\alpha^\beta)\\ 
 (\Lambda,\Lambda') \textrm{ is a {\color{red}\color{black}ramified $(ab,r)$-set-partition} of propagating index } (a^b)\\
  \pi_{\Lambda'}=1_{ \W_b },
    \pi_{\Lambda'_j   \,\cap\, \Lambda }=1_{ \W_a }, 1\leq j \leq b
    \end{array}
   \Bigg\}.
 \end{align} 
In the case $\alpha =\varnothing$, we need to replace {\color{red}\color{black}ramified  $(ab,r)$-set-partitions} with 
{\color{red}\color{black}ramified 
$(b,r)$-set-partitions} in the above.
An example is depicted in \cref{thetaidemp2}. 
   These standard modules $\Delta_r(  \alpha ^{  \beta})$, for $\alpha \vdash a$ and $\beta \vdash b$ 
   (with $ab\leq r$ or $b \leq r$ in the case $a=0$), and their simple quotients $L_r(  \alpha ^{  \beta})$,  are the only ramified partition algebra modules which will be of importance to the plethysm question. 

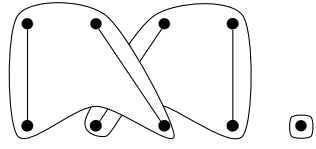
\begin{figure}[ht!]
\vspace*{-3pt}
 $$\begin{minipage}{4cm}\begin{tikzpicture}[scale=0.45]
  
       \foreach \x in {0,2,4,6,8,10,12,14,16,18,20}
     {   \path(\x,3) coordinate (up\x);  
      \path(\x,0) coordinate (down\x);  
   }

  {   \path(down10) --++ (135:0.4) coordinate(down1ten);  
      \path(down10) --++ (45:0.4) coordinate(down2ten);
            \path(down10) --++ (-45:0.4) coordinate(down3ten);  
                        \path(down10) --++ (-135:0.4) coordinate(down4ten);  
   }

     {   \path(up10) --++ (135:0.4) coordinate(up1ten);  
      \path(up10) --++ (45:0.4) coordinate(up2ten);
            \path(up10) --++ (-45:0.4) coordinate(up3ten);  
                        \path(up10) --++ (-135:0.4) coordinate(up4ten);  
   }

  {   \path(down12) --++ (135:0.4) coordinate(down1twelve);  
      \path(down12) --++ (45:0.4) coordinate(down2twelve);
            \path(down12) --++ (-45:0.4) coordinate(down3twelve);  
                        \path(down12) --++ (-135:0.4) coordinate(down4twelve);  
   }

     {   \path(up12) --++ (135:0.4) coordinate(up1twelve);  
      \path(up12) --++ (45:0.4) coordinate(up2twelve);
            \path(up12) --++ (-45:0.4) coordinate(up3twelve);  
                        \path(up12) --++ (-135:0.4) coordinate(up4twelve);  
   }

  {   \path(down14) --++ (135:0.4) coordinate(down1fourteen);  
      \path(down14) --++ (45:0.4) coordinate(down2fourteen);
            \path(down14) --++ (-45:0.4) coordinate(down3fourteen);  
                        \path(down14) --++ (-135:0.4) coordinate(down4fourteen);  
   }

     {   \path(up14) --++ (135:0.4) coordinate(up1fourteen);  
      \path(up14) --++ (45:0.4) coordinate(up2fourteen);
            \path(up14) --++ (-45:0.4) coordinate(up3fourteen);  
                        \path(up14) --++ (-135:0.4) coordinate(up4fourteen);  
   }

  {   \path(down16) --++ (135:0.4) coordinate(down1sixteen);  
      \path(down16) --++ (45:0.4) coordinate(down2sixteen);
            \path(down16) --++ (-45:0.4) coordinate(down3sixteen);  
                        \path(down16) --++ (-135:0.4) coordinate(down4sixteen);  
   }

     {   \path(up16) --++ (135:0.4) coordinate(up1sixteen);  
      \path(up16) --++ (45:0.4) coordinate(up2sixteen);
            \path(up16) --++ (-45:0.4) coordinate(up3sixteen);  
                        \path(up16) --++ (-135:0.4) coordinate(up4sixteen);  
   }

  {   \path(down18) --++ (135:0.4) coordinate(down1eighteen);  
      \path(down18) --++ (45:0.4) coordinate(down2eighteen);
            \path(down18) --++ (-45:0.4) coordinate(down3eighteen);  
                        \path(down18) --++ (-135:0.4) coordinate(down4eighteen);  
   }

     {   \path(up18) --++ (135:0.4) coordinate(up1eighteen);  
      \path(up18) --++ (45:0.4) coordinate(up2eighteen);
            \path(up18) --++ (-45:0.4) coordinate(up3eighteen);  
                        \path(up18) --++ (-135:0.4) coordinate(up4eighteen);  
   }

  {   \path(down16) --++ (135:0.4) coordinate(down1twenty);  
      \path(down16) --++ (45:0.4) coordinate(down2twenty);
            \path(down16) --++ (-45:0.4) coordinate(down3twenty);  
                        \path(down16) --++ (-135:0.4) coordinate(down4twenty);  
   }

     {   \path(up16) --++ (135:0.4) coordinate(up1twenty);  
      \path(up16) --++ (45:0.4) coordinate(up2twenty);
            \path(up16) --++ (-45:0.4) coordinate(up3twenty);  
                        \path(up16) --++ (-135:0.4) coordinate(up4twenty);  
   }

     \foreach \x in {0,2,4,6,8,10,12,14,16,18,20}
     {   \path(up\x) --++ (135:0.4) coordinate(up1\x);  
      \path(up\x) --++ (45:0.4) coordinate(up2\x);
            \path(up\x) --++ (-45:0.4) coordinate(up3\x);  
                        \path(up\x) --++ (-135:0.4) coordinate(up4\x);  
   }

    \foreach \x in {0,2,4,6,8,10,12,14,16,18,20}
     {   \path(down\x) --++ (135:0.4) coordinate(down1\x);  
      \path(down\x) --++ (45:0.4) coordinate(down2\x);
            \path(down\x) --++ (-45:0.4) coordinate(down3\x);  
                        \path(down\x) --++ (-135:0.4) coordinate(down4\x);  
   }

        \draw [fill=white] plot [smooth cycle]
  coordinates {(up14) (up26) (down36) (4 ,0.6) (3 ,0.6)  (down32) (down12)  };

 \draw(4,3)--(2,0);

    \draw(6,0)--(6,3);

   \draw [fill=white] plot [smooth cycle]
  coordinates {(up10) (up22) (down34) (2,0.6)  (down40)  };

 \draw(0,3)--(0,0);                       
 \draw(2,3)--(4,0);

   \draw [fill=white] plot [smooth cycle]
  coordinates {(down18) (down28) (down38)  (down48)  };

     \foreach \x in {0,2,4,6,6}
     {   \fill[white](\x,3) circle (4pt);   
                \fill[white](\x,0) circle (4pt);    
                        \draw (\x,3) node {$\bullet$}; 
                           \draw (\x,0) node {${\bullet}$};  }

     \foreach \x in {0,2,4,6,8 }
     {   
                           \draw (\x,0) node {${\bullet}$};  }
      
         \end{tikzpicture}   \end{minipage} $$

\vspace*{-6pt}
\caption{  An element $d_{(\Lambda,\Lambda')}$ appearing in 
the basis in \cref{ramified basis}.  Note that $\pi_{\Lambda}=(2,3)$ but $\pi_{\Lambda'_1\cap \Lambda}={\rm id}_{\W_2}=\pi_{\Lambda'_2\cap \Lambda}$, as required. 
}
\label{thetaidemp2}
\end{figure}

\begin{rmk}
The orbit-basis  of the ramified partition algebras has not yet been studied in the literature.  We do not  require this basis for our purposes, but we posit that it should be worthy of study.  In particular, it is natural 
in light of Benkart--Halverson's work \cite{MR3969570}
to expect that such orbit bases should exist and that they split into natural bases of the kernel and image of the ramified partition algebras acting on tensor space.
 \end{rmk}

\newcommand{\vf}{\mathsf{v}}\newcommand{\sff}{\mathsf{s}}
 
\section{Partition algebras, ramified partition algebras and Schur--Weyl duality}\label{SWsec}

In this section we review results from the literature regarding the Schur--Weyl dualities between    group algebras of symmetric groups and partition algebras, and between group algebras of wreath products of symmetric groups and ramified partition algebras.  
 
\subsection{Symmetric groups and partition  algebras}  Let $I(d,r) = \{1, \dots, d\}^r$ be the set of multi-indices. For a
given multi-index $i = (i_1, \dots, i_r) \in I(d,r)$, we put $e^i =
e^{i_1} \otimes \cdots \otimes e^{i_r}$. Then $\{ e^i \mid i \in I(d,r)
\}$ is a basis of tensor space $(\C  ^{d})^{\otimes r}$ over $\C  $.
We define the diagonal  action $\Phi :    \W_{d}  \rightarrow \End((\C  ^{d})^{\otimes r})$ by 
\begin{equation} \label{eqn:tensor_action}
 \Phi(\sigma )  (e^{i_1} \otimes \cdots e^{i_r}) = e^{\sigma(i_1)} \otimes \cdots e^{\sigma(i_r)}
\end{equation} 
for any $\sigma \in \W_{d}$ and $i = (i_1, \dots, i_r) \in I(d,r)$. (The reason for using upper indices
will be seen in Section 6.)
Now  we consider the partition algebra $P_r(d)$ with parameter $d$ and define a right action of this algebra on tensor space. Let $\Lambda$ be a $(r,r)$-set-partition. 
Following \cite{MR3969570}, we define  $\Psi :  P_r(d) \rightarrow \End((\C  ^{d})^{\otimes r})$ on the orbit basis of \cref{refine2} by 
first setting   
\begin{equation}\label{eq:Phi-coeffs-orbit}
 (x_\Lambda)_{{i_{\overline{1}},\dots, i_{\overline{r}}}}^{ {i_1,\dots, i_r}}   = 
  \begin{cases}  1  & \quad \text{if $i_a = i_b$ if and only if 
   $a$ and $b$ are in the same block of $\Lambda$},  \\
 0  & \quad \text{otherwise},
  \end{cases} 
  \end{equation}  
  where $(i_{\overline{1}}, \dots,     i_{\overline{r}}) \in I(d,r)$ and $a,b$ run over $\{1,\dots, r, \overline{1}, \dots, \overline{r}\}$.
   We then set
   \begin{equation}\label{eq:Ptn-action}
(e^{i_1} \otimes \cdots \otimes e^{i_r})   \Psi  (x_\Lambda) = 
\sum_{(i_{\overline{1}}, \dots,     i_{\overline{r}}) \in I(d,r)} 
(x_\Lambda)^{i_1, \dots, i_r}_{i_{\overline{1}}, \dots,    i_{\overline{r}}} \,
 (e^{i_{\overline{1}}} \otimes \cdots \otimes e^{i_{\overline{r}}}).
\end{equation}
  Since the diagram basis   is related to the orbit basis  by 
 the refinement relation of \cref{refine1} and \cref{refine2}, we have as an immediate consequence, 
that, setting 
 \begin{equation}\label{eq:Phi-coeffs-diagram}
 (d_\Lambda)_{{i_{\overline{1}},\dots, i_{\overline{r}}}}^{ {i_1,\dots, i_r}}   =
  \begin{cases}  1  & \quad \text{if $i_a = i_b$ when 
  $a$ and $b$ are in the same block of $\Lambda$,}  \\
 0  & \quad \text{otherwise},
  \end{cases} 
  \end{equation} 
the  diagram basis element $d_\Lambda$ acts  on the right, by
the rule
\begin{equation}\label{eq:Ptn-action2}
(e^{i_1} \otimes \cdots \otimes e^{i_r})   \Psi  (d_\Lambda) = \sum_{(i_{\overline{1}}, \dots,
    i_{\overline{r}}) \in I(d,r)} (d_\Lambda)^{i_1, \dots, i_r}_{i_{\overline{1}}, \dots,
    i_{\overline{r}}} \, (e^{i_{\overline{1}}} \otimes \cdots \otimes e^{i_{\overline{r}}}).
\end{equation}
 We have constructed actions of the symmetric group and   partition algebra on tensor space as follows:
\begin{equation}\label{eq:theSituationAbove}
  \C   \W_{d}   \xrightarrow{\ \Phi  \ }
   \End_\C  \bigl((\C  ^{d})^{\otimes r}\bigr)
\xleftarrow{\ \Psi \ }  P_r(d).
\end{equation}

\begin{thm}[\cite{jones, marbook}]\label{mrjones}    In the situation 
of~\eqref{eq:theSituationAbove}, the image of each representation is
  equal to the full centraliser algebra for the other action. That is,
  \[
  \Phi (\C\W_{d}) = \End_{P_r(d)}\bigl((\C  ^{d})^{\otimes r}\bigr), \qquad 
  \Psi(P_r(d)) = \End_{\C\W_{d}}\bigl((\C  ^{d})^{\otimes r}\bigr).
  \]
Moreover 
\begin{thmlist}
\item
  As a $(\C  \W_{d},P_r(d))$-bimodule, the tensor space decomposes as 
 \[
  (\C^{d})^{\otimes r} \cong \bigoplus  \leftspecht{\kappa[d]} \otimes L_{r }(\kappa) 
  \]
  where the sum is over all partitions $\kappa \in \ParSet(\leq r)$ with $\kappa_1 \le d-|\kappa|$.
  \item
 For $d \geq 2r$, the   partition algebra $P_r(d)$ is isomorphic to 
 $\mathrm{End}_{\C\W_d}\bigl((\C^{d})^{\otimes r}\bigr)$ and
  acts faithfully on tensor space.
   Thus $\color{red}\color{black}P_r(d)$ 
  is a semisimple $\C$-algebra and the modules
$\{\Delta_r(\kappa ) \mid  \kappa \in \ParSet(\leq r) 
\} $
provide a complete set of non-isomorphic simple  $P_r(d)$-modules.
\end{thmlist}
\end{thm}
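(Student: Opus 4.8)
The plan is to deduce the theorem from the classical Jones basic construction together with the explicit formulae \eqref{eq:Ptn-action}--\eqref{eq:Ptn-action2} for the action of $P_r(d)$ on tensor space. First I would establish the double centraliser property. The permutation action $\Phi$ of $\W_d$ on $V = (\C^d)^{\otimes r}$ has as its centraliser, by a direct orbit-counting argument, the span of the linear maps whose matrix coefficients are constant on $\W_d$-orbits of pairs of multi-indices $(i_1,\dots,i_r;i_{\overline 1},\dots,i_{\overline r}) \in I(d,r)\times I(d,r)$. Two such pairs lie in the same orbit exactly when the induced set-partitions of $\{1,\dots,r,\overline 1,\dots,\overline r\}$ (group two indices together iff their values agree) coincide; this matches precisely the orbit basis elements $x_\Lambda$ of \eqref{eq:Phi-coeffs-orbit}. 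Hence $\Psi(P_r(d)) = \End_{\W_d}(V)$. The reverse equality $\Phi(\C\W_d) = \End_{P_r(d)}(V)$ then follows from the general double centraliser theorem for a semisimple algebra (here $\C\W_d$) acting on a finite-dimensional module: one has $\End_{\End_{\C\W_d}(V)}(V) = \Phi(\C\W_d)$, and $\End_{P_r(d)}(V) = \End_{\Psi(P_r(d))}(V) = \End_{\End_{\C\W_d}(V)}(V)$.

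Next I would obtain the bimodule decomposition in (i). Since $\C\W_d$ is semisimple, $V$ decomposes as a $(\C\W_d,\,\End_{\C\W_d}(V))$-bimodule as $\bigoplus_{\kappa} \leftspecht{\kappa[d]}\otimes W_\kappa$, where $\kappa$ runs over the partitions of $d$ occurring in $V$ (equivalently over $\kappa\in\ParSet(\le r)$ with $\kappa_1\le d-|\kappa|$, since these index exactly the irreducibles of $\W_d$ that appear as summands of the permutation tensor space), and each $W_\kappa = \Hom_{\C\W_d}(\leftspecht{\kappa[d]},V)$ is a simple module for $\End_{\C\W_d}(V) = \Psi(P_r(d))$. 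It remains to identify $W_\kappa$ with the simple head $L_r(\kappa)$ of the standard module $\Delta_r(\kappa)$. For this I would produce a nonzero $P_r(d)$-module homomorphism $\Delta_r(\kappa)\to V$ (or to the $\kappa[d]$-isotypic Hom space), using the explicit diagram basis \eqref{actual-d-basis} of $\Delta_r(\kappa)$: the image of $c^\ast_\kappa \otimes (\text{the }(k,r)\text{-diagram with all }k\text{ propagating blocks vertical})$ under the action on a suitable simple tensor is nonzero by \eqref{eq:Ptn-action2}. Since $\Delta_r(\kappa)$ has simple head $L_r(\kappa)$ (Theorem~\ref{semisimple}), such a map is either zero or has image containing $L_r(\kappa)$; checking it is nonzero for every admissible $\kappa$ forces $W_\kappa\cong L_r(\kappa)$, and the parametrising sets match by a dimension/cardinality count.

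Finally, part (ii): for $d\ge 2r$, the partition algebra $P_r(d)$ is semisimple by Theorem~\ref{semisimple} (the excluded values $\{0,1,\dots,2r-2\}$ do not include $d$), so all standard modules $\Delta_r(\kappa) = L_r(\kappa)$ are simple and form a complete set of irreducibles. The map $\Psi\colon P_r(d)\to\End_{\C\W_d}(V)$ is then a surjection of semisimple algebras of the same dimension: surjectivity is the centraliser statement just proved, and equality of dimensions follows because for $d\ge 2r$ every admissible $\kappa$ satisfies $\kappa_1\le d-|\kappa|$ (so all of $\ParSet(\le r)$ contributes) and $\dim P_r(d) = \sum_{\kappa}(\dim\Delta_r(\kappa))^2 = \sum_\kappa(\dim L_r(\kappa))^2 = \dim\End_{\C\W_d}(V)$ by semisimplicity of $P_r(d)$ and of $\End_{\C\W_d}(V)$. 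Hence $\Psi$ is an isomorphism and the action is faithful.

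The main obstacle is the identification $W_\kappa\cong L_r(\kappa)$ in part (i): one must exhibit, for every admissible $\kappa$, an explicit nonzero element of $V$ generating a copy of $\Delta_r(\kappa)$ (equivalently, check that the evaluation of the diagram-basis generator of $\Delta_r(\kappa)$ on a carefully chosen simple tensor $e^{i_1}\otimes\cdots\otimes e^{i_r}$ is nonzero, using that the Young symmetriser $c^\ast_\kappa$ does not annihilate the relevant tensor). This is the technical heart; once it is in place the rest is formal double-centraliser bookkeeping and a dimension count.
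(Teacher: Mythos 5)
The paper does not prove this theorem: it is quoted from \cite{jones, marbook} (and the decomposition in (i) is the standard Schur--Weyl/double-centraliser statement, with the identification of the right-hand factors as $L_r(\kappa)$ exploited later via Corollary~\ref{cor:partitionAlgebraSchurFunctor}). So there is no in-paper proof to compare against; judged on its own terms, your outline is the standard and correct argument. Two remarks. First, a small point of care in your centraliser step: the $\W_d$-orbits on pairs of multi-indices correspond to set-partitions of $\{1,\dots,r,\overline{1},\dots,\overline{r}\}$ with \emph{at most $d$ blocks}; for $d<2r$ the remaining orbit basis elements $x_\Lambda$ act as zero by~\eqref{eq:Ptn-action}, so $\Psi$ still surjects onto $\End_{\W_d}\bigl((\C^d)^{\otimes r}\bigr)$, but the bijection you assert between orbits and all $x_\Lambda$ only holds verbatim for $d\ge 2r$. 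Second, the step you correctly identify as the technical heart --- producing a nonzero $P_r(d)$-homomorphism $\Delta_r(\kappa)\to\Hom_{\C\W_d}(\leftspecht{\kappa[d]},(\C^d)^{\otimes r})$ by evaluating the diagram-basis generator on a well-chosen pure tensor and checking the Young symmetrizer does not kill it --- is precisely the computation the authors carry out in full detail for the ramified analogue in the proof of Proposition~\ref{SW-simples} (and again, in a refined form, in Theorem~\ref{this!!}), so your plan is consistent with the paper's methodology and the missing verification is of a kind the paper demonstrates how to do. Your dimension count in (ii), using semisimplicity of $P_r(d)$ for $d\ge 2r$ from Theorem~\ref{semisimple} and the fact that every $\kappa\in\ParSet(\le r)$ then satisfies $\kappa_1\le d-|\kappa|$, is correct.
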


As an immediate corollary of the decomposition of tensor space we obtain a  Schur functor
from left symmetric group modules to right modules for the partition algebra.
This step is very familiar to experts but we give full details as we need 
an analogous argument as part of the proof of Proposition~\ref{SW-simples}
on the ramified partition algebra.

\begin{cor}\label{cor:partitionAlgebraSchurFunctor}
Let $\color{red}\color{black}\kappa\in \ParSet({\le r})$  be  such that $\kappa[d]$ is a partition. The functor
\[ \Hom_{ \C  \W_{d}}({-}, 
(\C^{d})^{\otimes r}): \C (\W_d)
{\rm -mod}\to  {\rm  mod-}P_r(d) \] 
sends the left Specht module $\leftspecht{\kappa[d]}$ to the simple module $L_r(\kappa)$
for the partition algebra $P_r(d)$.
\end{cor}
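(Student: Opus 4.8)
The plan is to read off the result directly from the bimodule decomposition of tensor space in Theorem~\ref{mrjones}(i) together with Schur's lemma for the semisimple quotient of $P_r(d)$ acting faithfully on $(\C^d)^{\otimes r}$. First I would recall that, as a $(\C\W_d, P_r(d))$-bimodule,
\[
(\C^d)^{\otimes r} \cong \bigoplus_{\substack{\kappa\in\ParSet(\le r)\\ \kappa_1\le d-|\kappa|}} \leftspecht{\kappa[d]}\otimes L_r(\kappa),
\]
and that each $L_r(\kappa)$ occurring here is a simple $P_r(d)$-module, each $\leftspecht{\kappa[d]}$ a simple $\C\W_d$-module, with the pairs $(\leftspecht{\kappa[d]}, L_r(\kappa))$ pairwise non-isomorphic as $\kappa$ varies. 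This is the only structural input needed; the rest is a formal computation with $\Hom$.

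Next I would apply $\Hom_{\C\W_d}(\leftspecht{\mu[d]}, -)$ to this decomposition, for $\mu$ a partition of $r$ with $\mu[d]$ a partition (so that $\mu_1\le d-|\mu|$ and $\mu$ indeed appears as one of the summand labels). Since $\Hom_{\C\W_d}(-, -)$ commutes with finite direct sums in the second variable and $\leftspecht{\mu[d]}$ is simple, Schur's lemma over $\C$ gives
\[
\Hom_{\C\W_d}\bigl(\leftspecht{\mu[d]}, (\C^d)^{\otimes r}\bigr) \cong \bigoplus_{\kappa} \Hom_{\C\W_d}\bigl(\leftspecht{\mu[d]}, \leftspecht{\kappa[d]}\bigr)\otimes L_r(\kappa) \cong L_r(\kappa)\Big|_{\kappa=\mu},
\]
where the isomorphism is as right $P_r(d)$-modules: the right $P_r(d)$-action on the Hom space is the one induced by the (right) action of $P_r(d)$ on $(\C^d)^{\otimes r}$ via $\Psi$, which on the $\kappa=\mu$ summand is precisely the action on $L_r(\mu)$, and the off-diagonal summands vanish by Schur. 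Here one uses that the map $\kappa\mapsto\mu[d]$ is injective on the indexing set (distinct partitions $\kappa$ of size $\le r$ give distinct $\kappa[d]$ once $d$ is large enough that all the relevant $\kappa[d]$ are partitions), so that exactly one summand survives.

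The one point requiring a little care — and the step I would flag as the main (minor) obstacle — is the identification of the \emph{right} $P_r(d)$-module structure on $\Hom_{\C\W_d}(\leftspecht{\mu[d]}, (\C^d)^{\otimes r})$ with that of $L_r(\mu)$, rather than just an abstract vector-space isomorphism. This is because $P_r(d)$ need not act faithfully on $(\C^d)^{\otimes r}$ for small $d$, so one must argue that it acts through its semisimple quotient $\Psi(P_r(d)) = \End_{\W_d}((\C^d)^{\otimes r})$ and that on this quotient the module $\Hom_{\C\W_d}(\leftspecht{\mu[d]}, (\C^d)^{\otimes r})$ is the simple module labelled by $\mu$, consistently with the labelling in Theorem~\ref{mrjones}(i). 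One resolves this by noting that both $L_r(\mu)$ and the Hom space are, by construction, the $\mu$-isotypic multiplicity spaces for the two commuting actions on the same tensor space, so they carry the same $\Psi(P_r(d))$-module structure; the simple head $L_r(\mu)$ of $\Delta_r(\mu)$ is by definition this module (cf.\ Theorem~\ref{semisimple}). I would spell this out in a sentence or two, observing that the functoriality in the first argument makes the $\Psi(P_r(d))$-equivariance automatic, and then the statement follows.
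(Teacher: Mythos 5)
Your proposal is correct and follows essentially the same route as the paper: apply $\Hom_{\C\W_d}(\leftspecht{\kappa[d]},-)$ to the bimodule decomposition of tensor space in Theorem~\ref{mrjones}(i), kill the off-diagonal summands by Schur's lemma, and identify the surviving multiplicity space with $L_r(\kappa)$ as a right $P_r(d)$-module. The point you flag about the module structure is exactly the step the paper disposes of with the phrase ``considering the right $P_r(d)$-action on the bimodule'', so no genuinely new argument is needed.
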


\begin{proof}
Let $\rho \in \ParSet({\le r})$ be such that $\rho[d]$ is a partition. 
Considered as a left $\C\W_d$-module, $\leftspecht{\rho[d]} \otimes L_r(\rho)$ is a direct
sum of $\dim L_r(\rho)$ copies of $\leftspecht{\rho[d]}$. Therefore
$\Hom_{\C\W_d}\bigl( \leftspecht{\kappa[d]}, \leftspecht{\rho[d]} \otimes L_r(\rho) \bigr) = 0$
unless $\rho = \kappa$; in the remaining case then, considering the right $P_r(d)$-action on the bimodule,
we have 
$\Hom_{\C\W_d}\bigl( \leftspecht{\kappa[d]}, \leftspecht{\kappa[d]} \otimes L_r(\kappa) \bigr) 
\cong L_r(\kappa)$ as $P_r(d)$-modules. The corollary now follows from the decomposition of
tensor space in Theorem~\ref{mrjones}(i).
\end{proof}

For subsequent work with tensor space it will be convenient
to associate a set-partition of $\{1,2, \ldots, r\}$ to each pure tensor. 
Given a set partition $P$, we write $i \sim_P i'$ if $i$ and $i'$ are in the same part of $P$. 
 
\begin{defn} \label{valuetype}
  We say that the basis vector
  $$e = e^{j_1} 
  \otimes 
  e^{j_2} 
  \otimes 
  \dots
  \otimes 
  e^{j_r} 
  \in (\C ^{d})^{\otimes r}$$ has
{\sf  value-type} 
$S$ if 
   $k \sim_{S}l$ if and only if $j_k=j_l$.  
    We write  ${\rm val}(e)= S$.
    \end{defn}

For example,  $ e =e^1 \otimes e^1 \otimes e^1 \otimes e^2 \otimes e^3 \otimes e^2 \otimes e^3$ 
has ${\rm val}(v) =  \{\{1,2,3\}, \{4,6\}, \{5,7\}\}$.

\subsection{Wreath product groups and  ramified partition  algebras}
Recall from Section~\ref{subsec:wreathProducts} that, following~\cite[Section 4.1]{jk}, we have defined
\[ \W_m \wr \W_n =\bigl\{ 
(\sigma_1,\sigma_2,\ldots, \sigma_n ; \pi) \mid \sigma_i \in  \Sym _m, i=1,\ldots, n, \pi \in \Sym _n\bigr\},
\]
which we identify with a subgroup of $ \Sym _{mn}$ via the embedding \cref{eqn:wreath_embedding}.
   The diagonal 
   action~(\ref{eqn:tensor_action}) of $\W_{mn}$ on tensor space $(\C  ^{mn})^{\otimes r}$ restricts to an action $\Phi : \W_m \wr \W_n \rightarrow \End\bigl( (\mathbb{C}^{mn})^{\otimes r} \bigr)$ 
   of $\W_m \wr \W_n$. 
 Having chosen our wreath product subgroup in the fashion above, we let this guide our choice of a new labelling set for the basis of tensor space   as follows.  For  $1\leq i \leq m$ and $1\leq j \leq n$, we set
   $$v^j_i = e^{(j-1)m+i} ,$$ 
 and we note that    
 \begin{equation}
 \label{eqn:action}
(\sigma_1,\sigma_2,\dots, \sigma_n ; \pi)    (v_{i_1}^{j_1}\otimes     v_{i_2}^{j_2} \otimes \dots \otimes     v_{i_r}^{j_r})
=
 v^{\pi(j_1)}_{\sigma_{\pi(j_1)}(i_1)}
 \otimes 
  v^{\pi(j_2)}_{\sigma_{\pi(j_2)}(i_2)}
  \otimes 
  \dots
  \otimes
   v^{\pi(j_r)}_{\sigma_{\pi(j_r)}(i_r)}.  
  \end{equation} 
On the other hand, there is an action $\color{red}\color{black}\Psi: R_r(m,n)\rightarrow \End\bigl( (\mathbb{C}^{mn})^{\otimes r} \bigr)$ 
of the ramified partition algebra $R_r(m,n)$ with parameters $ \delta_{\rm in} =m$ and $ \delta_{\rm out}  =n$ on the right on tensor space. This is given by
identifying a ramified diagram
$d_{(\Lambda,\Lambda')} \in P_r(m,n)$ with the pair of 
elements $d_\Lambda\in P_r(m)$ and $d_{\Lambda'}\in P_r(n)$ and then acting by these  elements
as in \cref{eq:Phi-coeffs-diagram} 
on the subscripts and superscripts, respectively:
\begin{equation}\label{eq:RPAaction}
(v^{j_1}_{i_1} \otimes \cdots \otimes v^{j_r}_{i_r})   \Psi  (d_{(\Lambda, \Lambda')}) = 
\sum_{
\begin{subarray}c
(i_{\overline{1}}, \dots,
    i_{\overline{r}}) \in I(m,r)
   \\
(j_{\overline{1}}, \dots,
    j_{\overline{r}}) \in I(n,r)  
\end{subarray}
} 
 (d_\Lambda)^{i_1, \dots, i_r}_{i_{\overline{1}}, \dots,
    i_{\overline{r}}}
    ({\color{red}\color{black}d_{\Lambda'}})^{j_1, \dots, j_r}_{j_{\overline{1}}, \dots,
    j_{\overline{r}}}
     \, (v^{j_{\overline{1}}}_{i_{\overline{1}}} \otimes \cdots \otimes v^{j_{\overline{r}}}_{j_{\overline{1}}}).
\end{equation}

The left action $\Phi$ of $\W_m \wr \W_n$ and the right action $\Psi$ of $R_r(m,m)$ on
$(\C  ^{mn})^{\otimes r}$ commute and we have the following analogue of 
Theorem~\ref{mrjones}.

 \begin{thm} [{\cite[Corollary 3.3.3]{MR2287557}}]\label{semisimpleee}
   In the situation outlined above, the image of each representation is
  equal to the full centraliser algebra for the other action. That is,
  \[
  \Phi \bigl( \C   \W_{m} \wr \W_n \bigr) = \End_{R_r(m,n)}\bigl(
  (\C  ^{mn})^{\otimes r}\bigr), \qquad 
  \Psi\bigl(\ram_r(m,n)\bigr) = \End_{\C  \W_{m} \wr \W_n}
  \bigl((\C  ^{mn})^{\otimes r}\bigr).
  \]
 For parameters $m \geq 2r$ and	 $n\geq 2r$, the ramified  partition algebra $\ram _r(m,n)$
 is isomorphic to $\End_{\C  ( \W_{m} \wr \W_n)}\bigl((\C  ^{mn})^{\otimes r}\bigr)$ and
  acts faithfully on tensor space.  Therefore  $\ram _r(m,n )$  is a semisimple $\C $-algebra and the modules
\[ \bigl\{\Delta_r(\bm \alpha ^{\bm \beta} ) \mid 
\text{$\vartheta=(a_1^{b_1}, \dots, a_\ell^{b_\ell})\in \Theta_r$ and $\bm \alpha ^{\bm \beta}$ an $\ell$-tuple of $\bm\alpha_i ^{\bm \beta_i} \in \ParSet(a_i,b_i)$ for 
$1\leq i \leq \ell$}  
\bigr\} \]
provide a complete set of non-isomorphic simple  $\ram _r(m,n )$-modules.
 \end{thm}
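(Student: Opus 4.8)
The plan is to run the argument of Theorem~\ref{mrjones} one level up, replacing set-partitions by ramified set-partitions. Write $V=\C^{mn}$ with the basis $\{v^j_i : 1\le i\le m,\ 1\le j\le n\}$ of Section~\ref{subsec:wreathProducts}, so that the monomials in $V^{\otimes r}$ are indexed by pairs $(\bm i,\bm j)\in I(m,r)\times I(n,r)$, the pair recording $v^{j_1}_{i_1}\otimes\cdots\otimes v^{j_r}_{i_r}$. First I would check, directly from~\eqref{eqn:action} and~\eqref{eq:RPAaction}, that $\Phi(g)$ is a morphism of right $\Psi(R_r(m,n))$-modules for every $g\in\W_m\wr\W_n$; the only delicate point is that each block of an inner set-partition $\Lambda$ lies inside a block of the outer set-partition $\Lambda'$, so that the base-group factors $\sigma_j$ in~\eqref{eqn:action} act consistently on positions that $d_\Lambda$ identifies. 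This yields the inclusions $\Phi(\C\W_m\wr\W_n)\subseteq\End_{R_r(m,n)}(V^{\otimes r})$ and $\Psi(R_r(m,n))\subseteq\End_{\C\W_m\wr\W_n}(V^{\otimes r})$.

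Next I would classify orbits. As a $\C\W_m\wr\W_n$-module $V^{\otimes r}$ is the permutation module on $(\{1,\dots,m\}\times\{1,\dots,n\})^r$, so $\End_{\C\W_m\wr\W_n}(V^{\otimes r})$ has a basis of orbit operators $E_{\mathcal O}$ indexed by $\W_m\wr\W_n$-orbits $\mathcal O$ on pairs of such index-tuples. By~\eqref{eqn:action} the top group $\W_n$ permutes the $n$-coordinates of all $2r$ positions diagonally, while inside each fixed $n$-value the relevant base copy of $\W_m$ permutes the $m$-coordinates diagonally, these copies acting independently across distinct $n$-values; hence the orbit of $(\bm i,\bm j,\bm{\bar i},\bm{\bar j})$ is determined precisely by the pair $(\Lambda,\Lambda')$ of set-partitions of $\{1,\dots,r,\bar 1,\dots,\bar r\}$ in which $a\sim_{\Lambda'}b$ iff the $n$-coordinates at $a$ and $b$ agree and $a\sim_\Lambda b$ iff both coordinates agree. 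This $(\Lambda,\Lambda')$ is a ramified $(r,r)$-set-partition, and it arises from some index pair exactly when $\Lambda'$ has at most $n$ blocks and each block of $\Lambda'$ is a union of at most $m$ blocks of $\Lambda$. Thus $\dim\End_{\C\W_m\wr\W_n}(V^{\otimes r})$ equals the number of such \emph{realisable} ramified $(r,r)$-set-partitions.

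I would then identify $\Psi(R_r(m,n))$ with this endomorphism algebra. Introduce a basis $\{x_{(\Lambda,\Lambda')}\}$ of $R_r(m,n)$ by M\"obius inversion of $d_{(\Lambda,\Lambda')}=\sum_{(\Lambda,\Lambda')\le(\Gamma,\Gamma')}x_{(\Gamma,\Gamma')}$, where $\le$ denotes coarsening of both inner and outer partitions. Comparing matrix entries shows $\Psi(d_{(\Lambda,\Lambda')})=\sum E_{(\Gamma,\Gamma')}$ summed over the realisable $(\Gamma,\Gamma')\ge(\Lambda,\Lambda')$, so M\"obius inversion forces $\Psi(x_{(\Gamma,\Gamma')})=E_{(\Gamma,\Gamma')}$ when $(\Gamma,\Gamma')$ is realisable and $\Psi(x_{(\Gamma,\Gamma')})=0$ otherwise — this is the ramified version of the observation behind~\eqref{eq:Ptn-action} and \cite[Remark 4.7]{MR3969570}, applied to the $m$-coordinates through $d_\Lambda\in P_r(m)$ and to the $n$-coordinates through $d_{\Lambda'}\in P_r(n)$ simultaneously. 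Combined with the previous paragraph this gives $\Psi(R_r(m,n))=\End_{\C\W_m\wr\W_n}(V^{\otimes r})$ for all $m,n$, the second centraliser equality. When $m,n\ge 2r$ every ramified $(r,r)$-set-partition is realisable, so the $x_{(\Lambda,\Lambda')}$ map to linearly independent operators: $\Psi$ is faithful and $R_r(m,n)\cong\End_{\C\W_m\wr\W_n}(V^{\otimes r})$.

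It remains to deduce the rest. In characteristic $0$ the module $V^{\otimes r}$ over the semisimple algebra $\C\W_m\wr\W_n$ is semisimple, so its endomorphism algebra is semisimple; when $m,n\ge 2r$ this algebra is $R_r(m,n)$, proving semisimplicity. The double centraliser theorem for the semisimple algebra $\C\W_m\wr\W_n$ then gives $\Phi(\C\W_m\wr\W_n)=\End_{\Psi(R_r(m,n))}(V^{\otimes r})=\End_{R_r(m,n)}(V^{\otimes r})$ for all $m,n$, the first centraliser equality. Finally, Theorem~\ref{quasi-hereditary} exhibits $R_r(m,n)$ as a cellular (indeed quasi-hereditary, as $\delta_{\rm in},\delta_{\rm out}\ne 0$) algebra whose cell modules are the standard modules $\Delta_r(\bm\alpha^{\bm\beta})$ indexed by $\vartheta=(a_1^{b_1},\dots,a_\ell^{b_\ell})\in\Theta_r$ and $\ell$-tuples $\bm\alpha_i^{\bm\beta_i}\in\ParSet(a_i,b_i)$; since a semisimple cellular algebra has its cell modules as a complete irredundant family of simple modules, this gives the last assertion. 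The main obstacle is the third step: verifying that the inner and outer partition-diagram actions decouple onto the $m$- and $n$-coordinates exactly as the orbit structure of the second step demands, with non-realisable ramified set-partitions acting as $0$. Tracking the refinement constraint $\Lambda\le\Lambda'$ through this computation, rather than the combinatorics in itself, is where the care is needed.
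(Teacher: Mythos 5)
The paper does not actually prove Theorem~\ref{semisimpleee}: it is quoted from \cite[Corollary 3.3.3]{MR2287557}, so there is no in-paper argument to compare yours against. Your proposal is, as far as I can check, a correct and self-contained proof, and it is the natural ramified analogue of the Benkart--Halverson orbit-basis argument that the paper recalls for the ordinary partition algebra in \cref{eq:Phi-coeffs-orbit,eq:Ptn-action} and \cref{mrjones}: you classify the $\W_m\wr\W_n$-orbits on pairs of index tuples by realisable ramified value-types, show via M\"obius inversion that the induced ``ramified orbit basis'' elements map onto the corresponding orbit operators (or to zero when unrealisable), and then get surjectivity of $\Psi$ for all parameters, faithfulness for $m,n\ge 2r$, semisimplicity from $\End$ of a semisimple module, the first centraliser equality from the double centraliser theorem, and the parametrisation of simples from cellularity (\cref{quasi-hereditary}) together with the fact that cell modules of a semisimple cellular algebra are exactly its simples. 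It is worth noting that in doing so you construct precisely the orbit basis of $R_r(m,n)$ whose existence and compatibility with the kernel/image of the tensor-space action the authors explicitly flag, in the remark closing \cref{Ramified partition algebras}, as not yet appearing in the literature; your third step is a proof of that expectation. Two small points deserve an explicit line if you write this up: first, the M\"obius inversion is performed on the finite poset of \emph{valid} ramified set-partitions (pairs with $\Lambda\le\Lambda'$) under componentwise coarsening, and one should observe that the value-type of any contributing index pair automatically satisfies $\Gamma\le\Gamma'$, so the sum $\Psi(d_{(\Lambda,\Lambda')})=\sum E_{(\Gamma,\Gamma')}$ really is indexed by that poset; second, in checking realisability for $m,n\ge 2r$ the bound $2r$ (rather than $r$) is forced because a single outer block may contain all $2r$ vertices as inner singletons, and $\Lambda'$ may consist of $2r$ singletons -- this is exactly why the stated hypothesis is $m,n\ge 2r$.
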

 
We now delve a little deeper into the combinatorics of tensor space.  
In the following definition we associate a ramified set-partition of $\{1,2, \ldots, r\}$ to each pure tensor.

\begin{defn} \label{Rvaluetype}
  We say that the pure tensor
  $$v = v^{j_1}_{i_1}
  \otimes 
  v^{j_2}_{i_2}
  \otimes 
  \dots
  \otimes 
  v^{j_r}_{i_r}
  \in (\C ^{mn})^{\otimes r}$$ has
{\sf ramified value-type}  $(R,S)$ if
     $k \sim_{S}l$ if and only if $j_k=j_l$ and 
   $k \sim_{R}l$ if and only if $j_k=j_l$ and $i_k=i_l$. We write  ${\rm ramval}(v)=(R, S)$.  Note that $R \leq S$.
    \end{defn}

   \begin{eg}  For example, the pure tensor
$ v =v_2^1 \otimes v_1^1 \otimes v_1^1 \otimes v_3^2 \otimes v_2^3 \otimes v_3^2 \otimes v_3^3$ 
has 
\[{\rm ramval}(v) =(R,S)= \bigl( \bigl\{\{1\}, \{2,3\}, \{4,6\},\{5\},\{7\}\}, \{\{1,2,3\}, \{4,6\}, \{5,7\}\bigr\}\bigr).
\]
To obtain $S= \{\{1,2,3\}, \{4,6\}, \{5,7\}\}$  note that 
the superscripts match in positions 1,2,3 and they match in positions 4 and 6 and they also match in positions 5 and 7.
 Although the subscripts match in positions $1$ and $5$,  the superscripts do not match and so $1\nsim_{R} 5$.  
  \end{eg}
 

  \begin{defn}\label{minimal vector}
Fix a ramified value-type $(R,S)$.
  We define the associated {\sf minimal 
  $R$ value-type tuple} 
$(i_1^\ast,\dots, i_r^\ast)$ by specifying the numbers 
from left to right  so that each is the minimal possible value such that 
 $$
  v^{j_1}_{i_1^\ast}\otimes   v^{j_2}_{i_2^\ast}
  \otimes \dots   v^{j_r}_{i_r^\ast}
  $$
 has value-type $(R,S)$
  for any $  j_1,\dots,j_r $  such that $j_k=j_l$ if and only if $k \sim_{S}  l$.
  \end{defn}

%

  This definition is best understood via an example. 

  \begin{eg}
Fix 
$(R,S)= \bigl(\bigl\{\{1\}, \{2,3\}, \{4,6\},\{5\},\{7\}\bigr\}, \bigl\{\{1,2,3\}, \{4,6\}, \{5,7\}\bigr\} \bigr)$. 
The     
minimal 
$R$ value-type tuple  
is given by
  $i^\ast =(1,2,2,1,1,1,2)$.  In particular, note that 
  $$
v^{j_1}_{1}
\otimes 
v^{j_1}_{2}
\otimes 
v^{j_1}_{2}
\otimes 
v^{j_2}_{1}
\otimes 
v^{j_3}_{1}
\otimes 
v^{j_2}_{1}
\otimes 
v^{j_3}_{2}
 $$
 has value-type $(R,S)$,
   for any distinct values $j_1, j_2, j_3$.  

   \end{eg}

\color{black}

 \color{black}
\section{The ramified Schur functor}
\label{The ramified Schur functor}

To prove \TheoremA, we shall use the Schur--Weyl duality seen in \cref{mrjones}
between $\C  \W_{d}$ and the partition algebra $P_r(d)$, together with the Schur--Weyl duality seen in \cref{semisimpleee}
between $\C  \W_{m} \wr  \W_{n}$ and the  ramified partition algebra $\ram_r(m,n)$.
The former is well understood: as seen in the proof of Corollary~\ref{cor:partitionAlgebraSchurFunctor},
the bimodule decomposition of tensor space in \cref{mrjones} 
immediately provides the correspondence between simple left modules for $\C S_d$ and simple right
modules for $P_r(d)$.
In the latter case, however, we need to establish the correspondence between simple modules. 

We shall need this correspondence only for simple $\C  \W_{m} \wr  \W_{n}$-modules of the special form 
$\leftspecht {\mu}  \oslash \leftspecht{\nu} $, with $\mu$ a partition of $m$ and $\nu$ a partition of $n$.
We remark that the case where $\mu = (m)$ and $\nu = (n)$ was 
studied in \cite[Theorem 6.1]{MR4756467} entirely in the language of  classical partition algebras:
$\Delta_r(\varnothing^\varnothing) \res^{R_r(m,n)}\ForPrmn$ is the stable Foulkes module, denoted 
${\mathbb F}^r ({m,n})$ in \cite{MR4756467}.
Our proof includes this in its first case.

\begin{prop}\label{SW-simples}
The ramified Schur functor 
\[ \Hom_{ \C  \W_m \wr \W_n}({-}, 
\bigl( \C   ^{mn})^{\otimes r}\bigr): \C \W_m \wr \W_n
{\rm -mod}\to  {\rm  mod-}\ram_r(m,n) \] 
satisfies
\[\leftspecht{\alpha{[m]}}  \oslash \leftspecht{\beta{[n]}}  \mapsto
\begin{cases}  
 L_r(\varnothing^\beta) & \textrm{if } \alpha=\varnothing, 
 r\ge |\beta|,\\
 L_r(\alpha^{\beta{[n]}}) & \textrm{if } \alpha\neq \varnothing,  r\ge n|\alpha|,\\
0 &\textrm{if either } \alpha=\varnothing, \beta\neq \varnothing, r< |\beta| \textrm{ or }
\alpha\ne \varnothing,  r< n|\alpha|.
\end{cases} \]
\end{prop}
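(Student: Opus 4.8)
The plan is to compute the image of $\leftspecht{\alpha[m]} \oslash \leftspecht{\beta[n]}$ under the ramified Schur functor by constructing an explicit nonzero homomorphism built from a Young symmetrizer, following the template of the proof of Corollary~\ref{cor:partitionAlgebraSchurFunctor} but now inside $(\C^{mn})^{\otimes r}$ viewed as a $(\C\W_m \wr \W_n, R_r(m,n))$-bimodule. By Frobenius-type reasoning, $\Hom_{\C\W_m\wr\W_n}(\leftspecht{\alpha[m]}\oslash\leftspecht{\beta[n]}, (\C^{mn})^{\otimes r})$ is nonzero if and only if $\leftspecht{\alpha[m]}\oslash\leftspecht{\beta[n]}$ occurs as a composition factor of the left-module $(\C^{mn})^{\otimes r}$, and in that case the $R_r(m,n)$-module structure on the Hom-space is read off from the bimodule decomposition. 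Since $\leftspecht{\mu}\oslash\leftspecht{\nu} = \C(\W_m\wr\W_n)c_{\mu^\nu}$ by Proposition~\ref{Young}, a homomorphism out of it is determined by the image of $c_{\mu^\nu}$, which must be a vector in $(\C^{mn})^{\otimes r}$ killed by the right ideal annihilating $c_{\mu^\nu}$; equivalently we seek $v \in (\C^{mn})^{\otimes r}$ with $c_{\mu^\nu} v \neq 0$ and then the Hom-space is $c_{\mu^\nu}(\C^{mn})^{\otimes r}$ as a right $R_r(m,n)$-module.

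The two cases diverge exactly as flagged in Section~\ref{subsec:alphaEmptyVersusNonEmpty}. First I would treat $\alpha = \varnothing$: here $\leftspecht{(m)}\oslash\leftspecht{\beta[n]} = \Inf_{\W_n}^{\W_m\wr\W_n}\leftspecht{\beta[n]}$, the base group $\W_m\times\cdots\times\W_m$ acts trivially, and one needs only $r \ge b = |\beta|$ for a suitable pure tensor — take $v$ supported on $r$ of the $mn$ basis vectors in a pattern whose value-type (Definition~\ref{valuetype}) matches the shape of $\stt^\beta$, acted on by the row/column symmetrizer $c_{\beta[n]}$ restricted appropriately. One checks $c_{\varnothing^\beta}v \neq 0$ using that $c_\beta(\C\W_n)c_{\beta'} = 0$ unless $\beta=\beta'$, and that the $R_r(m,n)$-submodule generated is the standard module $\Delta_r(\varnothing^\beta)$ — here the key point is that the inner parameter plays no role because the base group acts trivially, so the propagating index reduces to a single row $\vartheta = (0^b)$, giving $L_r(\varnothing^\beta)$ as the head. (This is precisely equation~\eqref{eq:rowenaz} referenced in the text.) In the second case $\alpha \neq \varnothing$, the base group acts nontrivially on $(\leftspecht{\alpha})^{\otimes n}$, so the image vector must use at least $n|\alpha|$ tensor slots to realize all $n$ copies of the $\alpha$-pattern inside distinct "colour classes" $\{(j-1)m+1,\ldots,jm\}$; hence the hypothesis $r \ge n|\alpha|$. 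The value-type now must be ramified (Definition~\ref{Rvaluetype}): the outer set-partition encodes the $\beta[n]$-pattern on the superscripts and the inner set-partition the $\alpha$-patterns on the subscripts, so the propagating index is $(|\alpha|^n)$ and the head of the generated standard module is $L_r(\alpha^{\beta[n]})$, matching~\eqref{eq:rowenazNonEmpty}.

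The vanishing clauses are the contrapositives: when $\alpha=\varnothing$ but $r<|\beta|$, or $\alpha\neq\varnothing$ but $r<n|\alpha|$, no pure tensor in $(\C^{mn})^{\otimes r}$ can carry enough distinct indices (respectively distinct inner indices within colour classes) to have $c_{\alpha[m]^{\beta[n]}}$ act nonzero on it — formally, every value-type has at most $r$ parts and every ramified value-type has at most $r$ inner parts distributed among at most $n$ colours, so the relevant symmetrizer annihilates all of tensor space. This forces $\Hom_{\C\W_m\wr\W_n}(\leftspecht{\alpha[m]}\oslash\leftspecht{\beta[n]}, (\C^{mn})^{\otimes r}) = 0$.

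The main obstacle I anticipate is the $\alpha\neq\varnothing$ case: verifying that the right $R_r(m,n)$-module $c_{\alpha[m]^{\beta[n]}}(\C^{mn})^{\otimes r}$ is genuinely the simple quotient $L_r(\alpha^{\beta[n]})$ of the standard module $\Delta_r(\alpha^{\beta[n]})$, rather than merely containing it. Since $R_r(m,n)$ is quasi-hereditary (Theorem~\ref{quasi-hereditary}) its simple modules are the heads of standards, so it suffices to exhibit a surjection $\Delta_r(\alpha^{\beta[n]}) \twoheadrightarrow c_{\alpha[m]^{\beta[n]}}(\C^{mn})^{\otimes r}$ and then argue that the image is simple. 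The surjection comes from the explicit basis~\eqref{ramified basis} of $\Delta_r(\alpha^{\beta[n]})$: each basis diagram $c_{\alpha^\beta}^\ast\sigma d_{(\Lambda,\Lambda')}$ acts on the image vector and we match these with the spanning set of $c_{\alpha[m]^{\beta[n]}}(\C^{mn})^{\otimes r}$ obtained by applying orbit-basis elements (equations~\eqref{eq:Ptn-action}, \eqref{eq:RPAaction}) and using the minimal $R$ value-type tuples of Definition~\ref{minimal vector} to pin down canonical representatives. Simplicity of the image then follows because $(\C^{mn})^{\otimes r}$ is a direct sum of $\leftspecht{\kappa[mn]}\otimes L_r(\kappa')$-type pieces (the bimodule decomposition from Theorem~\ref{semisimpleee} in the semisimple regime, extended by a deformation argument to the general $m,n$ via the faithfulness statement), and the $\alpha[m]^{\beta[n]}$-isotypic component on the left contributes exactly one simple on the right. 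The bookkeeping needed to confirm that the propagating index of every surviving diagram is exactly $(|\alpha|^n)$ — no more, no less — and that twisting by base-group elements does not produce spurious extra composition factors, is where the real care lies.
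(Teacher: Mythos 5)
Your proposal follows essentially the same route as the paper's proof: identify the Hom-space with $c_{\alpha[m]^{\beta[n]}}(\C^{mn})^{\otimes r}$, use the double-centraliser property of Theorem~\ref{semisimpleee} (which holds for all $m,n$, so no deformation argument is needed) to see it is simple or zero, determine its label by exhibiting a nonzero right $\ram_r(m,n)$-homomorphism from the standard module $\Delta_r(\varnothing^{\beta})$, resp.\ $\Delta_r(\alpha^{\beta[n]})$, sending the generator to an explicit vector of the prescribed ramified value-type, and dispose of the vanishing cases by a column-transposition counting argument. The substantive work you correctly defer --- checking that this vector is annihilated by the lower ideal $J_{\prec(0^b)}$, resp.\ $J_{\prec(a^n)}$, so that the map is well defined, and the sign computation showing the composite with $c^{\ast}_{\alpha^{\beta[n]}}$ has strictly positive coefficient on a chosen pure tensor --- is exactly where the paper's proof spends its effort.
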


\begin{proof}The proof splits into two parts considering the top two  cases  separately.
Each part ends by showing that the image is zero when the condition for the third case holds.

\subsubsection*{The case $\alpha= \varnothing$} 
Suppose that $ \beta \vdash b$.   Assume first that $r \ge b.$
We must show that 
$$  \Hom_{ \C  \W_m \wr \W_n}\bigl(  \leftspecht{(m)}  \oslash \leftspecht{\beta{[n]}}   , (\C   ^{mn})^{\otimes r}
\bigr)  \cong L_r(\varnothing^{ \beta  }).$$
The simple module on the left-hand side is 
\[ \Hom_{ \C  \W_m \wr \W_n}\bigl(    \C ( \W_m \wr \W_n )c_{(m) ^{\beta{[n]}}}   , 
(\C   ^{mn})^{\otimes r}\bigr) \cong   c_{(m) ^{\beta{[n]}}} (\C   ^{mn})^{\otimes r}.\]
Therefore, since the standard module has a simple head,  it suffices to construct a 
 non-zero homomorphism of right $\ram_r(m,n)$-modules from the standard module
$$\Delta_r(\varnothing^{ \beta  }) \to c_{(m) ^{\beta{[n]}}}  (\C   ^{mn})^{\otimes r}.$$
As $\ram_r(m,n)$-modules, we have that 
\[ \Delta_r(\varnothing^{ \beta  }) \cong  c^\ast _{\varnothing^\beta 	} e_{(0^b)}
(J_{\preceq  (0^b)}/J_{\prec(0^b)}).
   \]
where $e_{(0^b)} \in \ram_r(m,n)$ is the quasi-idempotent constructed in Section~\ref{subsec:propagatingIndex}
and exemplified in Figure~\ref{thetaidemp}.
 Observe here that $e_{(0^b)}$ and $c^\ast _{\varnothing^\beta 	} $ commute, and that $e_{(0^b)}
(J_{\preceq  (0^b)}/J_{\prec(0^b)})$ is spanned by all ramified diagrams whose propagating index
in the sense of Section~\ref{subsec:propagatingIndex} is $(0^b)$.
Thus the north vertices $b+1, \dots, r$ are outer singletons; if $b=0$ then
$e_{(0^0)} = e_\varnothing$ is the diagram in which all $r$ northern and southern vertices are outer singletons.
Our aim is to define a non-zero homomorphism of right $\ram_r(m,n)$-modules, 
\[ c^\ast _{\varnothing^\beta 	} e_{(0^b)} (J_{\preceq  (0^b)}/J_{\prec(0^b)})  \to  c_{(m) ^{\beta{[n]}}} (\C   ^{mn})^{\otimes r}.\]
Firstly, we define 
\[ \chi: e_{(0^b)}  (J_{\preceq  (0^b)}/J_{\prec(0^b)}) \to   c_{(m) ^{\beta{[n]}}} (\C^{mn})^{\otimes r},\] 
by setting
$\chi( e_{(0^b)})=   c_{(m) ^{\beta{[n]}}} z$, where
\begin{align}\label{eq:rowenaz}
z= \sum_{\substack{1 \le i_1, \ldots, i_r\le m \\ 1 \le  j_{b+1}, \ldots, j_{r} \le n }}
(v_{i_1}^{n-b+1} \otimes v_{i_2}^{n-b+2} \otimes \cdots \otimes v_{i_{b-1}}^{n-1}  \otimes v^{n}_{i_{b}})\otimes
 (v^{j_{b+1}}_{i_{b+1}} \otimes \cdots \otimes   v^{j_{r}}_{i_{r}}) .
 \end{align}
Note that we have assumed that $r \ge b$. In the first $b$ places, the superscripts are distinct and equal the $b$ entries lying outside the first row of the tableau $\stt^{\beta{[n]}}$.

We must  show that $\chi$ is well-defined.
In general, given an idempotent $e$ in an algebra $A$, there is a well-defined homomorphism $eA \to U$, to 
a right $A$-module $U$, with $e \mapsto u$ provided $ue=u$. If $I$ is an ideal then we obtain a well-defined map from the quotient $e A/I \to U$ provided, in addition, $ueI=0$, or equivalently, $uI=0$.

Using the  diagrammatic right action of $\ram_r(m,n)$, we have
$z e_{(0^b)}= m^b \times ( m^{r-b} n^{r-b}) z =m^rn^{r-b}z,$  so {\color{red}\color{black}(after rescaling)} the first condition is certainly satisfied. We 
now check the second. Thus  we verify that  \smash{$c_{(m) ^{\beta{[n]}}} z J_{\prec(0^b)}= c_{(m) ^{\beta{[n]}}} z e_{(0^b)} J_{\prec(0^b)}=  0$}. Now,  $e_{(0^b)} J_{\prec(0^b)}$ is generated by two types of ramified diagrams:
(a) those obtained by merging two outer propagating blocks of $e_{(0^b)}$, and (b) those obtained by replacing an outer propagating block of $e_{(0^b)}$ with two non-propagating outer parts, i.e.,~those which factor through $e_{(0^{b-1})}$.  (Thus if $b=0$ there are no outer propagating blocks and there is nothing to prove;
correspondingly $(0^0) = \varnothing$ appears at the bottom of the Hasse diagram in Figure~\ref{poset}.) 
\begin{itemize}[leftmargin=18pt]
\item[(a)] This case is easy: if $d_{(\Lambda_1,\Lambda_1')}$ is obtained by merging the $i$\textsuperscript{th} and $j$\textsuperscript{th} outer propagating blocks of $e_{(0^b)}$ (with $1 \le i \ne j \le b$), then  $ c_{(m) ^{\beta{[n]}}} z d_{(\Lambda_1,\Lambda_1')} = 0$ because the vectors in the $i$\textsuperscript{th} and $j$\textsuperscript{th} tensor places  differ in their superscripts and therefore are killed by a ramified partition with $i$ and $j$ in the same outer block. 
\item[(b)] This case requires a little more work. We shall show that $c_{(m) ^{\beta{[n]}}} z e_{(0^{b-1})}=0$.
Let $k$ be the length of the final row of $\beta$. Then entries $k$ and $n$ appear in the same column of $\stt^{\beta{[n]}}$  and the transposition $(k,n)$ lies in   $C(\stt^{\beta{[n]}})$. We may take cosets and write
\begin{align*} \hspace*{0.25in}c_{(m)^{\beta{[n]}}}  z &=\tilde{c}\displaystyle\sum  \bigl(
(v_{i_1}^{n-b+1} \otimes \cdots \otimes v_{i_{b-1}}^{n-1} \otimes  v_{i_b}^{n})  \otimes(  v^{j_{b+1}}_{i_{b+1}} \otimes \cdots \otimes   v^{j_{r}}_{i_{r}})
  \\
& \qquad\ \, - (v_{i_1}^{n-b+1} \otimes \cdots \otimes v_{n-1}^{n-1} \otimes v_{i_b}^{k})  \otimes  (v^{j_{b+1}}_{i_{b+1}} \otimes \cdots \otimes   v^{j_{r}}_{i_{r}}  ) \bigr)\end{align*}
for some  $\color{red}\color{black}\tilde{c} \in \mathbb C C(\stt^{\beta{[n]}})$. The vectors appearing in all tensor positions except the $b$\textsuperscript{th} are equal, and, when we act from the right by the ramified diagram $e_{(0^{b-1})}$, all terms cancel.
\end{itemize}

Having shown that the map $\chi$ is well-defined, we may now 
define the homomorphism of right $\ram_r(m,n)$-modules that we require by restriction: we define
\[\widetilde{\chi}:  c_{\varnothing ^{ \beta  }}^\ast e_{(0^b)}(J_{\preceq  (0^b)}/J_{\prec(0^b)}) \to c_{(m) ^{\beta{[n]} }} (\C   ^{mn})^{\otimes r},$$
by setting
$$\widetilde{\chi}(c_{\varnothing ^{ \beta  }}^\ast  e_{(0^b)}  ) =  c_{(m) ^{\beta{[n]} }}  z   c_{\varnothing ^{ \beta  }}^\ast 
 .\]
Here the left action  on $z$ is that of the wreath product and the right action is the diagrammatic action on tensor space.

Finally, we must show that $\widetilde{\chi}$ is non-zero. To do this we show there is a strictly positive coefficient of the basis vector
\[ \mathbf{v}_0=(v_{1}^{n-b+1} \otimes v_{1}^{n-b+2} \otimes\cdots \otimes v_{1}^{n}) \otimes
( v^1_{1} \otimes \cdots \otimes    v^1_{1})
\] in 
$\widetilde{\chi}(c_{\varnothing ^{ \beta  }}^\ast  e_{(0^b)}  )  = c_{(m) ^{\beta{[n]} }}  z   c_{\varnothing ^{ \beta  }}^\ast $.
To see this, first note that, expressed as a sum of diagrams,
\[ c_{\varnothing ^{ \beta  }}^\ast  e_{(0^b)}  = \displaystyle\sum_{\begin{subarray}c
 \sigma\in C(\stt^{ \beta  })  \\
  \tau \in R(\stt^{ \beta  })
 \end{subarray}
 } 
 { \sgn(\sigma)} (e_{(0^b)}	\sigma \tau).   
\]
Here $e_{(0^b)}	\sigma \tau$ is the ramified diagram with $1,\dots, b$ outer propagating and those outer propagating blocks permuted according to the permutation $\sigma \tau$.
Hence, $c_{(m) ^{\beta{[n]} }}  z   c_{\varnothing ^{ \beta  }}^\ast $ equals
\[
\sum
\sgn(\pi ) \sgn(\sigma) (c_{(m)}, \dots, c_{(m)};  \rho \pi) 
(v_{i_1}^{n-b+1} \otimes v_{i_2}^{n-b+2} \otimes\cdots \otimes v_{i_b}^{n} \otimes
 v^{j_{b+1}}_{i_{b+1}} \otimes \cdots \otimes   v^{j_{r}}_{i_{r}} )  
 ( e_{(0^b)}	\sigma \tau)   \]
where the sum is over all
$\pi \in C(\stt^{\beta{[n]}})$,
$\rho \in R(\stt^{\beta{[n]}})$,
$\sigma\in C(\stt^{ \beta  })$,
$\tau \in R(\stt^{ \beta  })$ and indices $1 \le i_1, \ldots, i_r\le m$ and $1 \le j_{b+1}, \ldots, j_{r} \le n$.

Taking  $\pi, \rho, \sigma, \tau$  all to be identity permutations and all $i_k=j_k=1$, we have a contribution of $+1$ towards the coefficient of $\mathbf{v}_0$.
Now suppose $\pi, \rho, \sigma, \tau$ contribute to the coefficient of $\mathbf{v}_0$.  Then let us first see that $\pi $ must preserve the set $\{n-b+1, \dots , n \}$. 
If not then some $k \in \{n-b+1, \dots , n \}$ has $\pi(k)\leq n-b$  and (as  $\rho \in R(\stt^{\beta{[n]}}) \leq \W_{n-b} \times \W_b$)  there is a vector with superscript at most $n-b$ among the first $b$ tensors.  But the diagrammatic action of $ e_{(0^b)}	\sigma \tau$ then only changes its position, not its value, and therefore there is no contribution to the coefficient of $\mathbf{v}_0$. 

Assume now that $\pi \in C(\stt^{\beta{[n]}}) $ preserves the set $\{n-b+1, \dots , n\}$; i.e.~$\pi$ fixes the first row of $\stt^{\beta{[n]}}$. By assumption,
the left action by $\pi$ gives a pure tensor  of vectors having superscripts $\{n-b+1,\dots , n\}$  in the  first $b$ places, in some order, and to obtain  $\mathbf{v}_0$ the right action must permute them into increasing order. To do this we require $\sigma(k)=\pi(n-b+k)$ for all $k=1 \dots, b$. Therefore $\sgn(\sigma)=\sgn(\pi)$ and the contribution is strictly positive.

To complete the part of the proof where $\alpha = \varnothing$, we must show that if $r<b$ then
\[\Hom_{ \C  \W_m \wr \W_n}\bigl(    \leftspecht{(m)}  \oslash \leftspecht{\beta{[n]}}  , (\C   ^{mn})^{\otimes r}
\bigr) \cong  c_{ (m) ^{\beta{[n]}}}  (\C   ^{mn})^{\otimes r} =0.\]
Taking any pure tensor $v_{i_1}^{j_1} \otimes \cdots v_{i_r}^{j_r}$, the condition on $r$ ensures that there exist  $x\neq y \in \{1, \ldots, n\}$ such that the  entries $x,y$  lie in the same column  of the
 $\beta{[n]}$-tableau $\stt^{\beta{[n]}}$ but  neither $v^x_i$ nor $v^y_i$ appear in the pure tensor for any $i \in \{1, \ldots, m\}$.
  Taking cosets of the subgroup generated by the transposition  
  $(x,y)$ in $C(\stt^{\beta{[n]}})$ as a subgroup of the top group of  $ \W_m \wr \W_n$,  we may factorise $c_{(m)^{\beta{[n]}} }$ to see that
$c_{(m)^{\beta{[n]}}}(v_{i_1}^{j_1} \otimes \cdots v_{i_r}^{j_r})=0$.

\subsubsection*{The case $\alpha \ne \varnothing$.} Suppose that $r\ge |\alpha|n$. 
We set $a=|\alpha|$.  
We follow the same method as above and construct a 
 non-zero homomorphism of right $\ram_r(m,n)$-modules 
\[ \Delta( \alpha ^{\beta{[n]}}) \to c_{ {\alpha{[m]}}^{\beta{[n]}}} (\C   ^{mn})^{\otimes r}.\]
As $\ram_r(m,n)$-modules, we have that 
$$\Delta( \alpha ^{\beta{[n]}}) \cong 
 c_{ \alpha ^{\beta{[n]}}}^\ast e_{(a^n)}(J_{\preceq  (a^n)}/J_{ \prec (a^n)}) ,$$
so we must define a non-zero homomorphism of right $\ram_r(m,n)$-modules 
$$ c_{ \alpha^{\beta{[n]}}}^\ast  e_{(a^n)}(J_{\preceq  (a^n)}/J_{ \prec (a^n)}) 
\to c_{{\alpha{[m]}}^{\beta{[n]}}} 
 (\C^{mn})^{\otimes r}.$$
 Firstly, define 
 $$\chi:
e_{ ( a^n)}    (J_{\preceq  (a^n)}/J_{ \prec (a^n)})  \to c_{{\alpha{[m]}}^{\beta{[n]}}} (\C   ^{mn})^{\otimes r},$$  by setting
$\chi( e_{(a^n)})= 
 c_{{\alpha{[m]}}^{\beta{[n]}}} z$, where
\begin{equation}\label{eq:rowenazNonEmpty} 
\begin{split} z= \sum_{\substack{1 \le i_k\le m \\ 1 \le  j_{k} \le n }}
(v_{m-a+1}^{1} \otimes v_{m-a+2}^{1} \otimes \dots \otimes v_{m}^{1}) \otimes \dots
\otimes (v_{m-a+1}^{n} \otimes &v_{m-a+2}^{n} \otimes \dots \otimes v_{m}^{n}) \\[-18pt] &\ \ \otimes 
 v^{j_{an+1}}_{i_{an+1}} \otimes\dots \otimes   v^{j_{r}}_{i_{r}}.\end{split} \end{equation}
(In the first $a$ tensors 
the superscripts are  1 and the subscripts are distinct, and in the next~$a$ tensors 
the superscripts are 2 and the subscripts are distinct, and the pattern continues until the subscript is $n$; 
this is possible as $r\geq an$.)

To check that $\chi$ is well-defined, again we verify routinely that 
 multiplication by the quasi-idempotent $e_{(a^n)}$ scales $z$ and then we show that acting on $c_{{\alpha{[m]}}^{\beta{[n]}}} z$ by  ramified diagrams of the  following three types all give zero: 
 (a) ramified diagrams $d_{(\Lambda,\Lambda') }$ obtained by merging two outer propagating blocks of $e_{(a^n)}$; (b) ramified diagrams $d_{(\Lambda,\Lambda') }$ obtained by merging two inner propagating blocks of $e_{(a^n)}$; (c) ramified diagrams $d_{(\Lambda,\Lambda') }$ which replace an inner propagating block of $e_{(a^n)}$ with two inner singleton parts.
\begin{itemize}[leftmargin=18pt]
\item[(a)]We see that  $ c_{{\alpha{[m]}}^{\beta{[n]}}} z d_{(\Lambda,\Lambda') } = 0$ because the vectors in the  corresponding tensor positions differ in their superscripts.  
\item[(b)] Similarly, $ c_{{\alpha{[m]}}^{\beta{[n]}}} z  		d_{(\Lambda,\Lambda') } = 0$ because the vectors in the  corresponding tensor positions differ in their subscripts. 
\item[(c)] This case again requires more work. The ramified diagram factors via  $e_{(a^n)} p_1^{(2)}$ so it suffices to consider \smash{$d_{(\Lambda,\Lambda') }= e_{(a^n)} p_1^{(2)}$}.
   As the transposition  $  (1, m-a+1) $ lies in  $C(\stt^{\alpha{[m]}})$, we may take cosets and write
\[ c_{{\alpha{[m]}}^{\beta{[n]}}}=\tilde{c} \,\bigl( 1_{\W_m \wr \W_n}
-  ( (1, m-a+1), 1_{\W_m}, \dots, 1_{\W_m}; 1_{\W_n}) \bigr)\]
   so that $c_{{\alpha{[m]}}^{\beta{[n]}}}  z$ equals
\begin{align*} \qquad\qquad \tilde{c}  
\bigl( & (v_{m-a+1}^{1} \otimes v_{m-a+2}^{1} \otimes  \dots \otimes v_{m}^{1}) \otimes \dots
\otimes (v_{m-a+1}^{n} \otimes \dots \otimes v_{m}^{n}) \otimes 
 v^{j_{an+1}}_{i_{an+1}} \otimes\dots \otimes   v^{j_{r}}_{i_{r}} 
\\ &\quad -   (v_{1}^{1} \otimes v_{m-a+2}^{1} \otimes \dots \otimes v_{m}^{1}) \otimes \dots
\otimes (v_{m-a+1}^{n} \otimes \dots \otimes v_{m}^{n}) \otimes 
 v^{j_{an+1}}_{i_{an+1}} \otimes\dots \otimes   v^{j_{r}}_{i_{r}}.
  \bigr) \end{align*}
  The vectors appearing in all tensor positions except the first are equal, and, when we act from the right by the diagram $d_{(\Lambda,\Lambda') }= e_{(a^n)} p_1^{(2)}$, all  terms cancel.
\end{itemize}

Having shown that the map $\chi$ is well-defined; 
restriction  provides a homomorphism of right $\ram_r(m,n)$-modules:
\[ \widetilde{\chi}:  c _{\alpha ^{\beta{[n]}}}^\ast e_{(a^n)}
 J_{\preceq (a^n)}/J_{\prec(a^n)}  \to c_{{\alpha{[m]}}^{\beta{[n]}}} (\C   ^{mn})^{\otimes r}, 
\widetilde{\chi}( c_{\alpha^{\beta{[n]}}}^\ast e_{(a^n)} ) =  c_{{\alpha{[m]}}^{\beta{[n]}}}  z   c_{\alpha ^{\beta{[n]}}}^\ast
 .\]
It remains to  show that $\widetilde{\chi}$ is non-zero, which we do by considering the  coefficient of 
$$ \mathbf{v}_0=(v_{m-a+1}^{1} \otimes \dots \otimes v_{m}^{1})\otimes(v_{m-a+1}^{2} \otimes \dots \otimes v_{m}^{2}) \otimes  \dots
\otimes (v_{m-a+1}^{n} \otimes \dots \otimes v_{m}^{n}) \otimes 
 v^{1}_{1} \otimes\dots \otimes   v^{1}_{1}  $$ 
 in
 $\widetilde{\chi}(  c_{\alpha^{\beta{[n]}}e_{(a^n)}}^\ast )=  c_{{\alpha{[m]}}^{\beta{[n]}}}  z  c_{\alpha ^{\beta{[n]}}}^\ast $.
Acting on the right  of $z$ is $ c_{\alpha ^{\beta{[n]}}}^\ast $, a sum of ramified diagrams  which will permute the places of the first $an$ tensors.  On the left of $z$  is 
\[  c_{{\alpha{[m]}}^{\beta{[n]}}}=
\sum\sgn(\pi)
(c_{{\alpha{[m]}} }, \ldots, c_{{\alpha{[m]}} } \, ; \, \rho \pi  ) \]
where the sum is over all $\pi \in C(\stt^{\beta{[n]}})$ and $\rho \in R(\stt^{\beta{[n]}})$.
(We emphasise that this is a linear combination  of permutation matrices acting diagonally on tensor
space.)
Each $c_{\alpha{[m]}}$ is itself a Young symmetrizer in $\W_m$, but we observe that,
since the right action only permutes the tensor places, the only contributions to the coefficient of $\mathbf{v}_0$ come from those terms in $\W_{\{m-a+1, \ldots, m\}}$ and (up to translation by $m-a$) we look only at $c_{\alpha ^{\beta{[n]}}} $.  For every permutation summand in   $ c_{{\alpha{[m]}}^{\beta{[n]}}}$ that contributes to the coefficient of $\mathbf{v}_0$,   the inverse permutation appears in  $c_{\alpha ^{\beta{[n]}}} ^\ast$ and undoes its effect.  The signs   of those permutations on the left and right agree and the coefficient of $\mathbf{v}_0$ is strictly positive.  

To complete this part of the proof where $\alpha\neq \varnothing$, 
we now suppose that $r<a n$. In this case we claim that 
\[ \Hom_{ \C  \W_m \wr \W_n}\bigl( \leftspecht{\alpha{[m]}} \oslash \leftspecht{\beta{[n]}}   , (\C   ^{mn})^{\otimes r}\bigr) \cong  c_{ {\alpha{[m]}} ^{{\beta{[n]}}}}  (\C   ^{mn})^{\otimes r} = 0.\]
Take any pure tensor $v_{i_1}^{j_1} \otimes \cdots v_{i_r}^{j_r}$. The condition on $r$ ensures that for some $j \in \{1, \ldots, n\}$ there exist entries $x\neq y \in \{1, \ldots, m\}$  lying in the same column  of the standard ${\alpha{[m]}}$-tableau $\stt^{\alpha{[m]}}$ but such that neither of $v^j_x$ nor $v^j_y$ appear in the pure tensor. Then taking cosets of the subgroup generated by $(x,y)\in C(\stt^{\alpha{[m]}})$ in the $j^\textsuperscript{th}$ copy of $\W_m$ inside $\W_m \wr \W_n$ we may factorise $c_{{\alpha{[m]}}^{\beta{[n]}}}$ to see that
$c_{{\alpha{[m]}}^{\beta{[n]}}}(v_{i_1}^{j_1} \otimes \cdots v_{i_r}^{j_r})=0$.  
\end{proof}

\TheoremB now follows easily by adapting steps (a) to (d) in the outline proof of \TheoremA 
in Section~\ref{subsec:structure} as follows.
 For $ \kappa \in \ParSet(\leq r)$, we choose $r$ sufficiently large (see the final equality) so that  
\begin{align*}
p\bigl(\beta{[n]},\alpha{[m]},\kappa{[mn]}\bigr) 
&= \langle s_{\beta{[n]}} \circ s_{\alpha{[m]}}, s_{\kappa[mn]} \rangle
\\ & = \left[ (\leftspecht{\alpha{[m]}} \oslash \leftspecht{\beta{[n]}} ) \Ind_{\W_m \wr \W_n}^{\W_{mn}} \, : \, \leftspecht{\kappa{[mn]}}
\right]\raisebox{-6pt}{$\scriptstyle \C  \W_{mn}$} \\
 &= \left[  \Hom_{\C  \W_{mn}}\left(  (\leftspecht{\alpha{[m]}} \oslash \leftspecht{\beta{[n]}}) \Ind_{\W_m \wr \W_n}^{\W_{mn}} , (\C   ^{mn})^{\otimes r} \right) \,
 : \,L_r(\kappa)
\right]\ForPrmnLow\\
 & = \left[  \Hom_{\C  \W_m \wr \W_n}\left(  \leftspecht{\alpha{[m]}} \oslash \leftspecht{\beta{[n]}}  , (\C   ^{mn})^{\otimes r} \Res_{\W_m \wr \W_n}^{\W_{mn}} \right) \,
 : \,L_r(\kappa)
\right]\ForPrmnLow\\
 & =\left[  \Hom_{\C  \W_m \wr \W_n}\left(  \leftspecht{\alpha{[m]}} \oslash \leftspecht{\beta{[n]}}  , (\C   ^{mn})^{\otimes r} \right)\Res\ForPrmn^{\ram_r(m,n)}  \,
 : \,L_r(\kappa)
\right]\ForPrmnLow\\& =
\begin{cases} 
\left[ L_r(\varnothing^{\beta}) \Res\ForPrmn^{\ram_r(m,n)} : \,L_r(\kappa)\right]\ForPrmnLow & \textrm{if }  \alpha=\varnothing, 
 r\ge |\beta|,\\[9pt]
\left[ L_r(\alpha^{\beta{[n]}})\Res\ForPrmn^{\ram_r(m,n)} : \,L_r(\kappa)\right]\ForPrmnLow & \textrm{if } \alpha \neq \varnothing,  r\ge n|\alpha|.
\end{cases}
\end{align*}
Here, the third equality is obtained by the Schur--Weyl duality between the group algebra of the symmetric group and the partition algebra using Corollary~\ref{cor:partitionAlgebraSchurFunctor}, 
and the final step
comes from the Schur--Weyl duality between the group algebra of the wreath product of symmetric groups and the ramified partition algebra using Theorem~\ref{semisimpleee} and the corollary analogous
to Corollary~\ref{cor:partitionAlgebraSchurFunctor}.
    
This proves \TheoremB. and we also obtain an upper bound on  plethysm coefficients. 
By choosing $r$ sufficiently large (i.e.,~if $\alpha=\varnothing$ then choose $ r\ge |\beta|$, 
or if $\alpha \neq \varnothing$ then choose $r\ge n|\alpha|$), we have 
\begin{align*}
p\bigl(\beta{[n]},\alpha{[m]},\kappa{[mn]}\bigr) &= \begin{cases} 
\left[ L_r(\varnothing^{\beta}) \Res\ForPrmn^{\ram_r(m,n)} : \,L_r(\kappa)\right] 
\leq  \left[ \Delta_r(\varnothing^{\beta}) \Res\ForPrmn^{\ram_r(m,n)} : \,L_r(\kappa)\right]
& \!\!\!\textrm{if }  \alpha=\varnothing, \\
\left[ L_r(\alpha^{\beta{[n]}})\Res\ForPrmn^{\ram_r(m,n)} : \,L_r(\kappa)\right]  \leq
\left[ \Delta_r(\alpha^{\beta{[n]}})\Res\ForPrmn^{\ram_r(m,n)} : \,L_r(\kappa)\right] & \!\!\!\textrm{if } \alpha \neq \varnothing .
\end{cases}
\end{align*}
 In the next section we shall show that this is an equality when $\alpha = \varnothing$.

\section{Stability phenomena when the inner partition is trivial}

The $\C\W_m \wr \W_n$-modules
of the form $\leftspecht{ (m)} \oslash \leftspecht{\nu}$ are obtained by inflation from 
the $\C\W_n$-modules~$\leftspecht{\nu}$.   Thus 
  certain questions about these  modules can be 
 simplified to questions for only the `outer' symmetric group structure, for example 
 $\dim(\leftspecht{ (m)} \oslash \leftspecht{\nu} )=|\Std(\nu)|.$
In this section, we start by restricting our focus to the relationship between this `outer' symmetric group  and the `outer' partition algebra, via Schur--Weyl duality.  
Our aim is to prove the following theorem.

\begin{thm}\label{this!!} 
Let $\beta  \vdash b \leq r $ and suppose that $n\geq r+\beta _1$ and $m\geq r-b+\left[b \not= 0 \right]$. Then, as a $ (\C  \W_m \wr \W_n,R_r(m,n))$-bimodule, the tensor space $(\C ^{mn})^{\otimes r}$ has a direct summand isomorphic to 
   $$
(\leftspecht{ (m)} \oslash   \leftspecht{\beta[n]} )
\otimes \Delta_{r }(\varnothing^{\beta }
).
  $$
  In particular, under these assumptions, $\Delta_{r }(\varnothing^{\beta } ) = L_{r }(\varnothing^{\beta }) $.
   \end{thm}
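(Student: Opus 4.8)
The plan is to exhibit the claimed bimodule summand explicitly and then harvest the semisimplicity-type consequence for the ramified partition algebra. First I would use the Schur--Weyl decomposition of tensor space as a $(\C\W_m\wr\W_n, R_r(m,n))$-bimodule coming from Theorem~\ref{semisimpleee} together with the ramified Schur functor identification in Proposition~\ref{SW-simples}: the homomorphism space $\Hom_{\C\W_m\wr\W_n}(\leftspecht{(m)}\oslash\leftspecht{\beta[n]}, (\C^{mn})^{\otimes r})$ is the simple $R_r(m,n)$-module $L_r(\varnothing^\beta)$ provided $r\ge b$, which holds by hypothesis. Dually, $\leftspecht{(m)}\oslash\leftspecht{\beta[n]}$ appears as a direct summand of $(\C^{mn})^{\otimes r}$ restricted to $\C\W_m\wr\W_n$, with multiplicity $\dim L_r(\varnothing^\beta)$; this is the content of the bimodule decomposition in the semisimple range, but here $m,n$ need not be in the range $m,n\ge 2r$, so I cannot invoke the full semisimplicity of $R_r(m,n)$ and must argue more carefully.

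The heart of the argument is therefore to show that, under the stated bounds $n\ge r+\beta_1$ and $m\ge r-b+[b\ne 0]$, the module $\Delta_r(\varnothing^\beta)$ is \emph{simple}, i.e.\ equals its head $L_r(\varnothing^\beta)$, and moreover that the $\leftspecht{(m)}\oslash\leftspecht{\beta[n]}$-isotypic component of $(\C^{mn})^{\otimes r}$ is exactly $(\leftspecht{(m)}\oslash\leftspecht{\beta[n]})\otimes \Delta_r(\varnothing^\beta)$ as a bimodule. The natural route is to compute the dimension of the isotypic component two ways. On one side, $\Hom_{\C\W_m\wr\W_n}(\leftspecht{(m)}\oslash\leftspecht{\beta[n]}, (\C^{mn})^{\otimes r})$ has a well-understood basis: since $\leftspecht{(m)}\oslash\leftspecht{\beta[n]}=\Inf_{\W_n}^{\W_m\wr\W_n}\leftspecht{\beta[n]}$ is inflated, a homomorphism is determined by where $c_{(m)^{\beta[n]}}$ goes, and one can index a spanning set of $c_{(m)^{\beta[n]}}(\C^{mn})^{\otimes r}$ by value-types (Definitions~\ref{valuetype},~\ref{Rvaluetype}) of pure tensors that survive the Young symmetrizer. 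On the other side, $\Delta_r(\varnothing^\beta)$ has the explicit diagram basis from~\eqref{ramified basis}: $c_{\varnothing^\beta}^\ast\sigma d_{(\Lambda,\Lambda')}$ where $(\Lambda,\Lambda')$ ranges over $(b,r)$-ramified set-partitions of propagating index $(0^b)$ with the appropriate normalization of $\pi$. The plan is to set up a bijection between these diagram basis elements and the surviving value-types, and to check it is compatible with the right $R_r(m,n)$-action; the homomorphism $\widetilde\chi$ constructed in the proof of Proposition~\ref{SW-simples} already gives a map $\Delta_r(\varnothing^\beta)\to c_{(m)^{\beta[n]}}(\C^{mn})^{\otimes r}$, and I would show it is an isomorphism by proving it is surjective (spanning) and that the two sides have equal dimension.

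The dimension count is where the bounds enter. The inequality $n\ge r+\beta_1$ guarantees (via Lemma~\ref{semisimple2}, applied to the outer partition algebra $P_r(n)$) that the relevant outer standard module is simple, so no collapse occurs from the outer side; the inequality $m\ge r-b+[b\ne 0]$ ensures that the inner structure has enough room that the pure tensors realizing each diagram basis element are linearly independent, i.e.\ that distinct ramified value-types genuinely give distinct vectors not identified by the symmetrizer. Concretely I expect to show that for each $(b,r)$-ramified set-partition of propagating index $(0^b)$ satisfying the conditions in~\eqref{ramified basis}, the image $\widetilde\chi(c_{\varnothing^\beta}^\ast\sigma d_{(\Lambda,\Lambda')})$ has a leading pure tensor (in a suitable order, built from the minimal $R$ value-type tuple of Definition~\ref{minimal vector}) with strictly positive coefficient, and that these leading terms are distinct; this is a generalization of the $\mathbf v_0$-coefficient computation already carried out in Proposition~\ref{SW-simples} for the single basis element $c_{\varnothing^\beta}^\ast e_{(0^b)}$. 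Granting this, $\widetilde\chi$ is injective, hence an isomorphism onto the isotypic component by dimension, $\Delta_r(\varnothing^\beta)\cong L_r(\varnothing^\beta)$, and the bimodule summand is as claimed. \textbf{The main obstacle} I anticipate is precisely controlling the linear independence of the leading pure tensors: one must verify that the Young-symmetrizer relations coming from $c_{(m)^{\beta[n]}}$ on the left and from $c_{\varnothing^\beta}^\ast$ on the right, together with the diagrammatic action, do not create unexpected cancellations among the $d_{(\Lambda,\Lambda')}$-images, and it is exactly here that the hypothesis $m\ge r-b+[b\ne 0]$ is used to guarantee enough distinct subscript values are available. This combinatorial bookkeeping — tracking which value-types are forced to coincide when $m$ is too small, and checking they cannot coincide when $m$ meets the bound — is the technical core of Section~7.
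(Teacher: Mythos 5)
Your outline is essentially the paper's argument: identify $\Hom_{\C\W_m\wr\W_n}\bigl(\leftspecht{(m)}\oslash\leftspecht{\beta[n]},(\C^{mn})^{\otimes r}\bigr)$ with $L_r(\varnothing^\beta)$ via Proposition~\ref{SW-simples}, exhibit $\dim\Delta_r(\varnothing^\beta)$ linearly independent elements of this Hom space, and conclude $\Delta_r(\varnothing^\beta)=L_r(\varnothing^\beta)$ since the latter is a quotient of the former; the hypotheses $n\ge r+\beta_1$ and $m\ge r-b+[b\ne 0]$ enter exactly where you place them (Lemma~\ref{semisimple2} for the outer algebra, and availability of enough distinct subscripts, respectively).

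The one place where the paper's execution differs from your plan is the step you flag as the main obstacle. You propose to prove injectivity of $\widetilde\chi$ by a direct leading-term computation, generalising the $\mathbf{v}_0$-coefficient analysis of Proposition~\ref{SW-simples} to every diagram basis element and checking that the Young symmetrizers create no cancellation. The paper avoids this entirely. It first uses the \emph{orbit} basis~\eqref{rowenaB} of the outer standard module $\Delta_r(\beta)\cong\Hom_{\C\W_n}(\leftspecht{\beta[n]},(\C^n)^{\otimes r})$; the point of the orbit basis is that the coefficient $\alpha^{j_1,\dots,j_r}_{\sigma,P,Q}$ in \cref{labelthat} is nonzero only when $(j_1,\dots,j_r)$ has value-type \emph{exactly} $P\cup Q$, not merely a coarsening of it. Each such homomorphism is then inflated, once for every refinement $R\le P\cup Q$, to a $\W_m\wr\W_n$-homomorphism \cref{homo}; a putative dependence among these is analysed by fixing a ramified value-type $(R,S)$ and projecting onto the component with minimal subscript tuple (Definition~\ref{minimal vector}), which exists precisely because of the bound on $m$. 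After this projection the dependence collapses to a dependence among the original $\C\W_n$-homomorphisms $\vartheta_{\sigma,P,Q}$, which is impossible since they form a basis. So no new symmetrizer computation is needed: the linear independence is inherited wholesale from the outer partition algebra, where it is already guaranteed by Lemma~\ref{semisimple2}. If you pursue your version, be aware that with the diagram basis (rather than the orbit basis) the images of distinct $d_{(\Lambda,\Lambda')}$ overlap across coarser value-types, and isolating ``leading'' pure tensors becomes genuinely delicate; the orbit basis is the device that makes the separation by value-type clean.
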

   

We reiterate  that the analogous question for the 
trivial module $ \leftspecht{ (m)} \oslash \leftspecht{ ( n)}$  was already considered in \cite{MR4756467} and so \cref{this!!}  completes our understanding for the case where the inner partition is trivial.

Before proving the theorem, we need to understand  a basis of   the  $P_r(n)$-module $ \Delta_{r }( {\beta })$ and a basis 
of the $\ram_r(m,n)$-module  $\Delta_{r }(\varnothing^{\beta })$.   

We begin with the partition algebra, by delving into the orbit basis of  the  $P_r(n)$-module $ \Delta_{r }( {\beta })$ from \cref{rowenaB}.  
 Let $P\cup Q$ be a set-partition of   $\{\overline 1, \overline2, \dots,\overline r
 \}$ with $|P|=b$; {\color{red}\color{black} we denote the set of all such pairs $(P,Q)$ by $\mathcal{B}_b$}. In other words, we pick a set of $b$ distinguished blocks of the  set-partition $P \cup Q$ of   $\{\overline 1, \overline2, \dots,\overline r
 \}$. We now define a $(b,r)$-set-partition $
\Lambda ({P,Q})$ where the $b$ distinguished blocks from $P$ provide the propagating blocks.
   We set $\Lambda ({P,Q}) $ to be the $(b,r)$-set-partition 
\[
\Lambda ({P,Q})= \bigl\{ \{i\} \cup  P_i  \mid 1\leq i \leq b\bigr\} \cup Q
\]
where  $P$ is written according to the conventions of \cref{usefulconvention}.
It follows that there are no crossings between the $b$ propagating strands of 
$d_{\Lambda ({P,Q})}$, that is $\pi_P  = 1_{\W_b}$.  
    The orbit basis of the $P_r(n )$-module  $\Delta_r( \beta)$  can now be rewritten as follows 
\begin{align}\label{basis2}
 \{  c_{ \beta}^\ast \sigma x_{ \Lambda(P, Q) }
 \mid 
  \stt^{ \beta} \sigma \in \Std( \beta),  
 (P,Q ) \in \mathcal{B}_b  \}.
 \end{align} 
 
We now turn our attention to modules of the ramified partition algebra with the aim of describing a diagram basis of the $\ram_r(m,n)$-module  $\Delta_{r }(\varnothing^{\beta })$.
Suppose $R$ is  a set-partition of  $\{\overline{1}, \overline{2}, \dots,\overline{r}
 \}$  that is a  refinement of $P\cup Q$, i.e.,~$R \leq P \cup Q$.    
  We let 
 $$\Lambda(R)
= \bigl\{ \{i\}  \mid 1\leq i \leq b\bigr\} \cup R ,
 $$
so $\Lambda(R)$ has no propagating blocks. Then $\Lambda(R) \leq \Lambda(P,Q)$ and the ramified diagram
\smash{$d_{(\Lambda(R),\Lambda(P, Q))}$} has its inner set-partition specified by $R$, with no inner propagating blocks, and its outer set-partition $\Lambda(P,Q)$ which has $b$ propagating blocks specified by $P$.
 
Now we can write down a diagram basis of  the $R_r(m,n)$-module
  $\Delta_r(\varnothing^\beta)
    = c_{\varnothing^\beta}^\ast  \C \W_b
    \otimes_{ \W_b} V_r(0^b)$ as follows:
%
\begin{align}\label{ramified basis2}
 \bigl\{  c_{\varnothing^\beta}^\ast \sigma d_{(\Lambda(R),\Lambda(P, Q))}
 \mid 
  \stt^{\beta} \sigma \in \Std(\beta),  
 (P,Q ) \in \mathcal{B}_b, R \leq P\cup Q  \bigr\}.
 \end{align}

 \begin{proof}[Proof of \cref{this!!}.]

We suppose that $\beta _{[n]}$ is a partition of $n$   such that 
$n\geq r+\beta _1$. Then,  by \cref{semisimple2},  the $P_r(n)$-module  $L_r(\beta )=\Delta_r(\beta )$ is alone in its block.
 Thus by \cref{mrjones} we have that 
\begin{align}\label{step1hom}
\Hom_{\C  \W_n}\bigl(\leftspecht{\beta _{[n]}}, (\C^n) ^{\otimes r}\bigr)
\cong c_{\beta _{[n]}}(\C^n)^{\otimes r} \cong L_r(\beta ) 
= \Delta_r(\beta ) 
\end{align}
as right $P_r(n)$-modules. 

The remainder of this proof will consist of three steps. Firstly, we  write down a basis of  $\Hom_{\C  \W_n}(\leftspecht{\beta _{[n]}}, (\C^n) ^{\otimes r})$. Secondly, we use the previously identified basis to construct $\dim(\Delta_r(\varnothing^\beta ) )$  elements of $\Hom_{\C  \W_m \wr  \W_n}(\leftspecht{(m)}\oslash \leftspecht{\beta _{[n]}}, (\C^n) ^{\otimes r}) \cong L_r(\varnothing^\beta)$ (where the isomorphism is provided by \cref{SW-simples}). Thirdly, we prove that these $\dim(\Delta_r(\varnothing^\beta ) )$ distinct  $\C  \W_m \wr  \W_n$-homomorphisms are linearly independent. As  $L_r(\varnothing^\beta)$ is a quotient of $\Delta_r(\varnothing^\beta)$, this will show that  $L_r(\varnothing^\beta)=\Delta_r(\varnothing^\beta)$ as required.

We start by considering $ \Delta_r(\beta )$. A generator of the standard module  is  $c_\beta^\ast e_b$, and we let $Z \in c_{\beta _{[n]}}(\C^n) ^{\otimes r} $ be the image of this generator under the isomorphism of \cref{step1hom}. Now we have a basis of $\Delta_r(\beta)$ from \cref{basis2}, and the image of this orbit basis under the first isomorphism of (\ref{step1hom}) provides us with a basis of  
 $\Hom_{\C  \W_n}\bigl( \leftspecht{\beta _{[n]}}, (\C^n) ^{\otimes r}\bigr)$:
\[\{ \vartheta  _{\sigma,P,Q} \mid  \stt^{ \beta} \sigma \in \Std( \beta),  
 (P,Q ) \in \mathcal{B}_b  \},\]
 where the $\C  \W_n$-homomorphism $\vartheta  _{\sigma,P,Q}$ is defined on the generator $c_{{\beta _{[n]}}} \in \leftspecht{\beta _{[n]}}$ by
\begin{equation}\label{labelthat}
\vartheta  _{\sigma,P,Q}(c_{{\beta _{[n]}}})
= Z \Psi(\sigma x_{\Lambda(P\cup Q)}) =
\sum  \alpha^{j_1,\dots, j_r}
_{\sigma,P,Q}(v^{j_1}\otimes \dots \otimes v^{j_r})
\end{equation} for some coefficients $\alpha^{j_1,\dots, j_r}
_{\sigma,P,Q} \in \C $. 
Observe that, by the action of the orbit basis on tensor space specified 
in \eqref{eq:Ptn-action} the inequality 
$\alpha^{j_1,\dots, j_r}
_{\sigma,P,Q}\neq 0$ implies that $(j_1,\dots, j_r)$ has value-type $  P\cup Q$ (as in \cref{Rvaluetype}).

The second step is to  consider inflations of these homomorphisms to construct elements of $\Hom_{\C  \W_n}(\leftspecht{\beta _{[n]}},(\C^n)^{\otimes r})$. We now assume that $m\geq r-b+\left[ b\geq 1 \right]$.  
Given $R$ any refinement of the set-partition $P\cup Q$, we define a map
 $\vartheta _{\sigma, P,Q,R}
:  \leftspecht{ (m)} \oslash   \leftspecht{\beta _{[n]}  }
\to (\C^{mn})^{\otimes r}
$ given by 
\begin{align}\label{homo}
\vartheta _{\sigma, P,Q,R}\big(c_{(m)^{\beta _{[n]}}}\big)
 =
\sum_{
\begin{subarray}c
j_1,\dots,j_r \in \{1,\dots, n\}
\\
i_1,\dots i_r \in \{1,\dots,m\}
\end{subarray}
}
  \alpha^{j_1,\dots, j_r}
_{\sigma,P,Q}(v^{j_1}_{i_1}\otimes \dots \otimes v^{j_r}_{i_r})
\end{align}
where the coefficients $\alpha^{j_1,\dots, j_r}$ are those \cref{labelthat} and the
 sum is over {\em all indices} $i_1,\dots i_r \in \{1,\dots,m\}$, $ j_1,\dots,j_r \in \{1,\dots, n\}$ satisfying:
if $j_x = j_y$ then $i_x = i_y$ if and only if $x,y$ belong to the same part of $R$.
 Observe that if the tensor 
 $ 
v^{j_1}_{i_1}
  \otimes 
  v^{j_2}_{i_2}
  \otimes 
  \dots
  \otimes 
  v^{j_r}_{i_r} $ 
 appears with non-zero coefficient in \cref{homo}, then its ramified value-type is $(R,P\cup Q)$. 

We claim that $\vartheta _{\sigma, R,P,Q}$ is an $\W_m \wr \W_n$-homomorphism. It is clear from \cref{labelthat}  $\vartheta _{\sigma, R,P,Q}$ is a homomorphism for modules of the distinguished top group $\W_n$ in $\W_m \wr \W_n$. Therefore, we need only check  the action of the base group  fixes the right-hand side of \cref{homo}. The action of the base group is as follows:
$$
(\sigma_1,\dots, \sigma_n;1_{\W_n})
(v^{j_1}_{i_1}\otimes \dots \otimes v^{j_r}_{i_r})
=
(v^{j_1}_{\sigma_{j_1}(i_1)}\otimes \dots \otimes v^{j_r}_{\sigma_{j_r}(i_r)}).
$$
 Only the subscripts have changed and this action preserves ramified value-type as we are applying the same permutation to the subscripts where the superscripts agree.  The base group acts trivially and $\vartheta _{\sigma, R,P,Q}$ is an $\W_m \wr \W_n$-homomorphism.

 We have constructed $\dim \Delta_r(\varnothing^\beta)$ homomorphisms and the third step is to prove that 
\begin{align}\label{lindien}
\{ \vartheta _{\sigma, P,Q,R}
  \mid 
  \stt^{ \beta} \sigma \in \Std( \beta),  
 (P,Q ) \in B , R \leq P\cup Q \} 
 \end{align}
is  a linearly independent set.
  Assume, for a contradiction, that the set of \cref{lindien} is   linearly  dependent  and, in particular, that 
\begin{align}
\sum_{\sigma,P,Q,R} (\beta _{\sigma,P,Q,R} ) 
(\vartheta  _{\sigma,P,Q,R} ) =0, 
\end{align}
for some coefficients $\beta  _{\sigma,P,Q,R}  \in \C $ which are not all zero.  This implies that 
\begin{align}
\label{fixitsoon}
\sum_{\sigma,P,Q,R}
 \beta _{\sigma,P,Q,R}  
\sum  \alpha^{j_1,\dots, j_r}
_{\sigma,P,Q}(v^{j_1}_{i_1}\otimes \dots \otimes v^{j_r}_{i_r})
=0 \end{align}
where the second summation is over the same indexing set as that of \cref{homo}.

 We now fix a ramified value-type $(R, S)$ and restrict our attention to the tensor summand of \cref{fixitsoon} whose vectors   have this fixed ramified value-type. We consider those $P,Q$ with $P \cup Q=S$.
  Because the summation is over {\em all indices} of the form in \cref{homo}, we are able to
restrict  further and only consider the tensor summand (for a given ramified value-type $(R,S)$)  in the image of the projection onto the  minimal $R$ value-type  tuple $i^\ast$
  (as defined in  \cref{minimal vector}). 
This is possible due to our assumption that $m\geq r-b+\left[ b\geq 1 \right]$:  either $b=0$ and we have $m \geq r$, so there  are $r$ distinct subscripts available, or $b \ge 1$ and there are $b$ parts in the set-partition $P$, so the maximal number of parts of $R$ in any part of $P \cup Q$ is $r-(b-1)$ (obtained when $Q=\varnothing$, $P$ has $b-1$ singleton parts and one part of $r-(b-1)$ vertices, and $R$ is all singletons).  It then follows from \cref{fixitsoon} that the following holds for some coefficients $\beta  _{\sigma,P,Q,R}$ that are not all zero:
%
  \begin{align}
\label{fixitsoon22}
\sum_{
\begin{subarray}c
\sigma,P,Q,R \\
P\cup Q = S , 
\end{subarray}
}\!\!\!\!
 \beta _{\sigma,P,Q,R}  \!\!\!\!
\sum_{j_1,\dots,j_r \in \{1,\dots, n\}}
\!\!\!\!\!  \alpha^{j_1,\dots, j_r}
_{\sigma,P,Q}(v^{j_1}_{i_1^*}\otimes \dots \otimes v^{j_r}_{i_r^*})
=0. \end{align}

 This implies that 
\begin{align}
\sum_{
\begin{subarray}c
\sigma,P,Q  \\
P\cup Q = S 
 \end{subarray}
}\!\!\!\!
 \beta _{\sigma,P,Q,R}  \!\!\!\!
\sum_{j_1,\dots,j_r \in \{1,\dots, n\}}
\!\!\!\!\!  \alpha^{j_1,\dots, j_r}
_{\sigma,P,Q}(v^{j_1} \otimes \dots \otimes v^{j_r} )
 =0 \end{align} for the same coefficients  $\beta  _{\sigma,P,Q,R} $. 
Since the  $\C \W_{n}$-homomorphisms in \cref{labelthat} are linearly independent, this is a contradiction.  Thus the  set of  $\dim \Delta_r(\varnothing^\beta)$ homomorphisms in \cref{lindien} is linearly independent, and 
\[ \dim  \Hom_{\C  \W_m \wr \W_n  }\bigl( \leftspecht{ (m)} \oslash   \leftspecht{\beta _{[n]}  }
, (\C^{mn})^{\otimes r} \bigr) \ge \dim \Delta_r ( {\varnothing^{\beta }}). \]  
Since 
\[
\Hom_{\C  \W_m \wr \W_n  }\bigl(\leftspecht{ (m)} \oslash   \leftspecht{\beta _{[n]}  }
, (\C^{mn})^{\otimes r}\bigr) \cong L_r (  {\varnothing^{\beta }} ),
\]
and $L_r(\varnothing^\beta)$ is
a quotient of $\Delta_r ( {\varnothing^{\beta }} )$, we conclude that \cref{lindien} specifies a basis 
of $\Delta_r ( {\varnothing^{\beta }} )$ and $\Delta_r(\varnothing^\beta)\cong L_r(\varnothing^\beta) $.
 \end{proof}

We show later in Corollary~\ref{cor:onlyIfProperQuotient} that the bounds on $m$ and $n$ are tight.
We now complete the proof of \TheoremB.

\begin{cor} \label{cor:stable1} 
Provided  $n\geq  r+\beta _1$ and $m \geq r-|\beta|+\left[\beta\not=\varnothing\right]$
the plethysm coefficient $p(\beta[n], (m),\kappa[mn])$ satisfies
\[p(\beta[n], (m),\kappa[mn]) = \bigl[\Delta_r( {\varnothing^\beta} ){\Res}_{P_r(mn)}^{\ram _r(m,n)} : L_r(\kappa)\bigr]\ForPrmn. \]
\end{cor}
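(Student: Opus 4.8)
\textbf{Proof plan for Corollary~\ref{cor:stable1}.}

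The plan is to assemble the corollary from the results already in hand, namely \TheoremB (the second-to-last display in the proof of \TheoremB, specialised to $\alpha = \varnothing$) and Theorem~\ref{this!!}. First I would invoke \TheoremB in the case $\alpha = \varnothing$: for any $\kappa \in \ParSet(\le r)$ and any $r \ge |\beta|$, we have
\[
p(\beta[n], (m), \kappa[mn]) = \bigl[ L_r(\varnothing^\beta) \Res^{\ram_r(m,n)}_{P_r(mn)} : L_r(\kappa) \bigr]\ForPrmn.
\]
Here $L_r(\varnothing^\beta)$ and $L_r(\kappa)$ are simple modules for the ramified partition algebra $\ram_r(m,n)$ and the partition algebra $P_r(mn)$ respectively, and the multiplicity is taken in the restriction $\Res^{\ram_r(m,n)}_{P_r(mn)}$ using the subalgebra embedding $P_r(mn) \hookrightarrow \ram_r(m,n)$ described in Section~\ref{Ramified partition algebras} (taking $\delta_{\mathrm{in}} = m$, $\delta_{\mathrm{out}} = n$, so the partition-algebra parameter is $mn$).

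The second step is to replace $L_r(\varnothing^\beta)$ by $\Delta_r(\varnothing^\beta)$ in the above display. This is exactly what Theorem~\ref{this!!} supplies: under the hypotheses $n \ge r + \beta_1$ and $m \ge r - |\beta| + [\beta \ne \varnothing]$, that theorem asserts $\Delta_r(\varnothing^\beta) = L_r(\varnothing^\beta)$ as $\ram_r(m,n)$-modules. (I should note that the hypotheses of the corollary include $r \ge |\beta|$ implicitly, since $b = |\beta| \le r$ is assumed throughout Theorem~\ref{this!!} and is anyway forced by $n \ge r + \beta_1$ together with $\beta$ being a partition with $n - |\beta| \ge \beta_1$; in any event $b \le r$ is needed for $\Delta_r(\varnothing^\beta)$ to be defined as an $\ram_r(m,n)$-module.) Substituting this equality of modules into the multiplicity on the right-hand side of the \TheoremB formula immediately yields
\[
p(\beta[n], (m), \kappa[mn]) = \bigl[ \Delta_r(\varnothing^\beta) \Res^{\ram_r(m,n)}_{P_r(mn)} : L_r(\kappa) \bigr]\ForPrmn,
\]
which is the claimed formula.

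There is no substantial obstacle here: the corollary is a direct bookkeeping consequence of the two cited results, and the only thing to be careful about is that the hypotheses of the corollary genuinely imply both the hypothesis $r \ge |\beta|$ needed for the $\alpha = \varnothing$ case of \TheoremB and the two inequalities $n \ge r + \beta_1$, $m \ge r - |\beta| + [\beta \ne \varnothing]$ needed for Theorem~\ref{this!!}. Since these are literally the stated hypotheses of the corollary, the proof reduces to citing \TheoremB and then Theorem~\ref{this!!} and combining the two displayed equalities. One could also observe that this is precisely steps (e) and (f) in the outline of the proof of \TheoremA given in Section~\ref{subsec:structure}, so the corollary records that equality chain stopped one step early.
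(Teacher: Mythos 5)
Your proposal is correct and is essentially identical to the paper's own proof, which likewise combines the $\alpha = \varnothing$ case of the final display in Section 6 (the proof of \TheoremB) with the identification $\Delta_r(\varnothing^\beta) = L_r(\varnothing^\beta)$ from Theorem~\ref{this!!}. Your side remark on the implicit hypothesis $r \ge |\beta|$ is a reasonable (and harmless) addition, though note it is not literally forced by the stated inequalities on $m$ and $n$; it is simply needed for $\Delta_r(\varnothing^\beta)$ to be defined, as you say.
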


\begin{proof}
By Theorem~\ref{this!!} 
we have $\Delta_r(\varnothing^\beta) = L_r(\varnothing^\beta)$. The result now follows from
the final displayed equation at the end of  Section 6, which states in the case $\alpha = \varnothing$ that
$p(\beta[n],\varnothing,\kappa[mn]) = \bigl[ L_r(\varnothing^\beta)\Res^{R_r(m,n)}_{P_r(mn)} :
L_r(\kappa) \bigr]\ForPrmn$.
\end{proof}

In \S\ref{sec9} we derive a combinatorial formula for the calculation of  the right-hand side which shows that it is independent of $m$ and $n$ for sufficiently large $r$, and therefore \cref{cor:stable1} provides the new stability of plethysm coefficients stated in \TheoremA.
 
\section{Restricting our attention to a layer of fixed depth}


In this section, we
consider
 the  restriction of the  ramified partition algebra  modules 
 $\Delta_r(  \alpha ^{  \beta})	$ to the partition algebra.   
We show the restriction to the partition algebra has a standard module filtration with well-defined filtration multiplicities.
We prove that these  multiplicities 
provide upper bounds for 
plethysm coefficients.  
In the case that the inner partition is trivial, 
we obtain new closed formulas for plethysm coefficients by way of \cref{this!!}.  

\begin{defn}\label{defn:ramifiedBranchingCoefficient}
Assume that $\delta_{\rm in} ,\delta_{\rm out} \neq0$.
Given partitions $\alpha$, $\beta$ and $\lambda $,
  we define the {\sf  ramified branching coefficient} 
  of the standard  module $\color{red}\color{black}\Delta_r(\lambda )$ for $P_r(\delta_{\rm in}\delta_{\rm out})$ in the standard module $\Delta_r(\alpha^\beta)$ 
for $R_r(\delta_{\rm in}, \delta_{\rm out})$ to be the filtration multiplicity
 \[	 \bigl[\Delta_r( \alpha^\beta)
\Res_{P_r(\delta_{\rm in} \delta_{\rm out} )}^{\ram _r(\delta_{\rm in} ,\delta_{\rm out} )}\, :
 {\color{red}\color{black}
 \Delta_r(\lambda )  }
\bigr]_{P_r(\delta_{\rm in} \delta_{\rm out} )}. 
 \]
 \end{defn} 
We shall 
see that the filtration multiplicities are well-defined in \cref{cor:compfactors+DR} regardless of the non-zero values of
$\delta_{\rm in} ,\delta_{\rm out}$.   We have already seen that 
 $\ram_r(\delta_{\rm in} ,\delta_{\rm out} )$ is semisimple  for sufficiently large parameters in \cref{semisimpleee}, and so the reader may prefer to consider 
 only  the semisimple case where these  filtrations  are direct sums.  

\subsection{The action of the partition algebra by restriction} 
The right module~$\rightspecht{\alpha^\beta}$ for the wreath product
$\Sym_a \wr \Sym_b$ was defined in~\eqref{eq:wreathModule} to be $\rightspecht{\alpha} \oslash
\rightspecht{\beta}$.
We shall describe the action of the generators of the partition algebra $\P_r(\delta_{\rm in} \delta_{\rm out} )$ on a basis of $\Delta_{r}(  \alpha ^{  \beta}) = \rightspecht{\alpha ^{  \beta}}\otimes_{\Sym_a \wr \Sym_b} V_r(a^b)$ for $\alpha \vdash a$ and $\beta\vdash b$,
with $ ab \leq r$ or, in the case $a=0$, with $b \leq r$. (If $b=0$
then $(0^0) = \varnothing$ and $V_r(\varnothing)$ is generated by the idempotent $e_\varnothing$;
recall that this is the diagram in which all $r$ northern and southern vertices are outer singletons.)

Consider first the right  $\P_r(\delta_{\rm in} \delta_{\rm out} )$ action on the usual diagram basis of $V_r(a^b)$.
Let $(\Lambda, \Lambda') \in V _r( a^b) $ be a ramified diagram
with inner blocks $ \Lambda=\{S_1, S_2, \dots , S_p\}$ and 
outer blocks $\Lambda' = \{ \Sigma_1,\Sigma_2,\dots, \Sigma_q \}$  
written using the convention of Remark~\ref{usefulconvention}
so that the blocks are ordered by increasing minima. 
We set 
\begin{equation}  \label{p12}
d_{(\Lambda,\Lambda')}{\sf p}_{1,2}
=
\begin{cases}  
d_{  (\{S_1, S_2, S_3, \dots S_p\},\{\Sigma_1,\Sigma_2,\dots, \Sigma_q\}) }&\text{if }1,2\in S_1 \subseteq  \Sigma_1
\\  
d_{(\{S_1\cup S_2, S_3, \dots S_p\},\{\Sigma_1\cup\Sigma_2,\dots, \Sigma_q\}) }&\text{if }1\in S_1 \subseteq  \Sigma_1, 2\in S_2 \subseteq \Sigma_2
\\
d_{  (\{S_1\cup S_2, S_3, \dots S_p\},\{\Sigma_1,\Sigma_2,\dots, \Sigma_q\}) }&\text{if }1\in S_1 \subseteq  \Sigma_1, 2\in S_2 \subseteq \Sigma_1,
 \end{cases}\end{equation}
providing the resulting diagram
belongs to $V _r(a^b) $ and we leave $d_{(\Lambda,\Lambda')}{\sf p}_{1,2}$ undefined otherwise.  
(The diagram does not belong to $V _r(a^b) $
if taking the  product with ${\sf p}_{1,2}$ decreases the number of propagating outer-blocks or the number of propagating inner-blocks.)  
We  also set 
\begin{equation}  \label{p1}
d_{ (\Lambda,\Lambda')}{\sf p}_{1}
=
\begin{cases}
\delta_{\rm in}  \delta_{\rm out}    
d_{ (\Lambda,\Lambda')  }&\text{if } \{1\}=S_1 = \Sigma_1
\\
 d_{  (\{\{1\}, S_1-\{1\}, S_2, S_3, \dots S_p\},\{\{1\}, \Sigma_1-\{1\},\Sigma_2,\dots, \Sigma_q\})} &\text{if }\{1\}\subset S_1 \subseteq   \Sigma_1
\\
\delta_{\rm in}     d_{  (\{\{1\},   S_2, S_3, \dots S_p\},\{\{1\}, \Sigma_1-\{1\},\Sigma_2,\dots, \Sigma_q\}) }&\text{if }\{1\}= S_1 \subset    \Sigma_1,
 \end{cases}
 \end{equation}
 providing the resulting diagram
belongs to $V _r( a^b) $ and we leave $d_{ (\Lambda,\Lambda')}{\sf p}_{1}$ undefined otherwise.   
It is worth noting that the elements on the right of \cref{p1} 
are not necessarily written in the form specified by \cref{usefulconvention}.
 For   ${\sf p} = {\sf p}_1$ or ${\sf p} = {\sf p}_{1,2}$
  and $x\in \rightspecht{ \alpha ^{  \beta}}$, we  observe that 
\[ (x \otimes_{ \W_a \wr \W_b}  d_{(\Lambda,\Lambda')}){\sf p} 
 =  
 \begin{cases}
 x \otimes_{ \W_a \wr \W_b} (d_{(\Lambda,\Lambda')}
 {\sf p}	) & \text{ if  $d_{(\Lambda,\Lambda') }{\sf p} \in  V _r( a^b) $ is defined}\\
 0		&\text{otherwise}.
 \end{cases}
\]
 The generators ${\sf s}_{i,i+1}$ for $1\leq i <r$ act in the usual fashion by  permuting $\{1,2,\ldots,r\}$.  
 For ease of notation, we do not write these actions out explicitly.

\subsection{The depth quotient }  
In this section we identify a quotient of the standard module $\Delta_r(\alpha^\beta)$ that 
contains all simple modules $L_r(\kappa)$ in which the partition $\kappa$ has the maximum
possible size $r$.



\begin{defn}\label{depthradical}Let $r\in \mathbb{N}$ and $(a^b) \in \Theta_r$.  
We define the  {\sf depth-radical}   of $ V_r(a^b)$ to be the subspace $ W_r(a^b)\subseteq V_r(a^b)$ spanned by the ramified diagrams
 $d_{(\Lambda,\Lambda')}$ satisfying either of the following two conditions:
\begin{itemize}[leftmargin=18pt]
\item[(i)] the
{\color{red}\color{black}inner} 
 set-partition $\Lambda$ contains two southern vertices in the same  block; 
\item[(ii)] the 
{\color{red}\color{black}outer} 
set-partition $\Lambda'$ contains a singleton southern block.   
\end{itemize}
  We define 
 the  {\sf depth-radical}   of $\Fdelta$ to be the subspace  
\[ 
\begin{minipage}{2.04cm}
${\sf DR}\bigl( \Fdelta \bigr)$
\end{minipage}
=
 {\rm S}(  \alpha ^{  \beta} )\otimes_{\W_a \wr \W_b} W_r{(a^b)} \subseteq  
{\rm S}(  \alpha ^{  \beta} )\otimes_{\W_a \wr \W_b} V_r{(a^b)} =\Delta_r(  \alpha ^{  \beta} ).
\]

\end{defn}
 
This construction will allow us to study the  smallest possible modules  
in which we can see the ramified branching coefficients.

\begin{prop}
Given $r \in \mathbb{N}$, the depth radical ${\sf DR}\bigl( \Fdelta \bigr)$  is a $\Pdelta$-submodule of 
\smash{$\Fdelta \res_{P_r(\delta_{\rm in} \delta_{\rm out} )} ^{\ram _r(\delta_{\rm in} }$}.  
\end{prop}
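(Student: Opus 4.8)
The plan is to show that $\mathsf{DR}\bigl(\Delta_r(\alpha^\beta)\bigr)$ is closed under the right action of a generating set of $P_r(\delta_{\rm in}\delta_{\rm out})$, namely the Coxeter generators $\sts_i$ for $1 \le i < r$ together with $\pgen_1$ and $\pgen_{1,2}$ (and their conjugates $\pgen_i$, $\pgen_{i,j}$, which are obtained from $\pgen_1$, $\pgen_{1,2}$ by conjugating with the $\sts_i$). Since $\mathsf{DR}\bigl(\Delta_r(\alpha^\beta)\bigr) = \mathrm{S}(\alpha^\beta) \otimes_{\W_a \wr \W_b} W_r(a^b)$, and the generators act by the rules recalled just above (the $\sts_i$ permute $\{1,\dots,r\}$ and $\pgen_1$, $\pgen_{1,2}$ act on diagrams as in \eqref{p12} and \eqref{p1}, killing the tensor when the result falls out of $V_r(a^b)$), it suffices to prove that the spanning set $W_r(a^b)$ of ramified diagrams satisfying condition (i) or (ii) is carried into the span of such diagrams (or to zero) by each generator.

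First I would handle the Coxeter generators $\sts_i$: acting by $\sts_i$ on a ramified diagram $d_{(\Lambda,\Lambda')}$ relabels the southern vertices $\overline{i} \leftrightarrow \overline{i+1}$, which manifestly preserves both ``two southern vertices in a common inner block'' (condition (i)) and ``a singleton southern outer block'' (condition (ii)); these are properties of the partition of southern vertices, invariant under relabelling. So $W_r(a^b)$ is stable under each $\sts_i$, hence so is $\mathsf{DR}$. Next I would treat $\pgen_{1,2}$ using the three cases of \eqref{p12}: in each case the only southern vertices whose block-membership changes are $\overline{1}$ and $\overline{2}$, which get merged (or not) according to whether $1,2$ lie in the same inner/outer block on the northern side; a short case check shows that if $d_{(\Lambda,\Lambda')}$ satisfies (i) for some pair of southern vertices $\overline{k},\overline{l}$ distinct from the merge, or satisfies (ii) for a singleton southern block not involved in the merge, the property is inherited; and if the property involved $\overline{1}$ or $\overline{2}$, one checks directly (merging can only increase block sizes in $\Lambda$ and only coarsen $\Lambda'$) that the image still satisfies (i) or (ii), or else the product is undefined and the tensor is sent to $0$. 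The case of $\pgen_1$ is the genuinely interesting one and I expect it to be the main obstacle: from the three cases of \eqref{p1}, the action can \emph{split} the block of $\overline{1}$ off as a singleton. The point to verify is that whenever $d_{(\Lambda,\Lambda')} \in W_r(a^b)$ and $d_{(\Lambda,\Lambda')}\pgen_1$ is defined in $V_r(a^b)$, the result still lies in $W_r(a^b)$: if the defect (i) or (ii) lived on southern vertices other than $\overline{1}$ it survives; if it involved $\overline{1}$, then either splitting $\{\overline{1}\}$ off leaves $\Lambda$ still with two southern vertices in a common block (because the original block had $\overline 1$ together with at least one other southern vertex and at least one more), or the outer block $\Sigma_1 - \{\overline 1\}$ becomes a new potential singleton southern outer block, giving condition (ii) — and in the remaining edge cases the diagram leaves $V_r(a^b)$ (the propagating index drops) so the tensor is killed.

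The one subtlety to be careful about is the interplay with the $\otimes_{\W_a \wr \W_b}$: because the action on the full standard module is $(x \otimes d)\pgen = x \otimes (d\,\pgen)$ when $d\,\pgen \in V_r(a^b)$ and $0$ otherwise, it is enough to argue entirely at the level of the $V_r(a^b)$-factor, so no compatibility with the wreath-product action is needed beyond what is already built into the definition of $\Delta_r(\alpha^\beta)$. I would therefore organise the proof as: (1) reduce to checking stability of $W_r(a^b)$ under $\sts_i$, $\pgen_1$, $\pgen_{1,2}$; (2) dispatch $\sts_i$ in one line; (3) do the three-case check for $\pgen_{1,2}$; (4) do the careful three-case check for $\pgen_1$, isolating the sub-case where the defining defect is destroyed and showing a new one (of type (ii)) always appears or the product vanishes. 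Step (4) is where all the real content sits, and writing it cleanly will require fixing notation for ``which block contains $\overline 1$'' and tracking how conditions (i) and (ii) transform; the rest is bookkeeping.
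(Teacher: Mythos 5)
Your proposal is correct in outline and takes the same route as the paper: reduce to checking that $W_r(a^b)$ is stable under the generators $\sgen_i$, $\pgen_1$, $\pgen_{1,2}$, with the $\sgen_i$ dispatched by permutation-invariance of conditions (i) and (ii). However, you have made steps (3) and (4) much harder than they need to be: there is no need to track whether the original defect survives, because the non-trivial cases of \eqref{p12} \emph{always} merge two inner blocks each containing a southern vertex, so the image automatically satisfies condition (i), and the non-scalar cases of \eqref{p1} \emph{always} split off $\{\overline{1}\}$ as a new singleton outer southern block, so the image automatically satisfies condition (ii) --- the paper's proof is exactly these two observations. Note also a small slip in your step (4): the new singleton outer block produced by $\pgen_1$ is $\{\overline{1}\}$ itself (visible in the second and third cases of \eqref{p1}), not $\Sigma_1\setminus\{\overline{1}\}$.
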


\begin{proof}
 It is clear that
 the generators $\sgen _{i,i+1}$ for $1\leq i <r$ preserve the space ${\sf DR}\bigl( \Fdelta \bigr)$
 as both conditions of \cref{depthradical} are invariant under
the permutation action.
By \cref{p1}, the generator   $\pgen _1$ acts on a given  diagram $d_{( \Lambda,\Lambda')}$  either 
by scalar multiplication, or by removing an edge from $\Lambda$ {\em at the expense} of introducing a singleton into $\Lambda'$.  Therefore the generator~$\pgen _1$ preserves ${\sf DR}\bigl(\Fdelta\bigr)$ 
by \cref{depthradical}(ii).  
By \cref{p12} the generator   $\pgen _{1,2}$ acts on a given diagram $d_{ (\Lambda,\Lambda')}$    either 
trivially or by introducing an edge in $\Lambda$.   Therefore the generator  $\pgen _{1,2}$ preserves ${\sf DR}\bigl(\Fdelta\bigr)$ by \cref{depthradical}(i).    
\end{proof}

\begin{defn}\label{dq}
Define the {\sf depth quotient}  
${\sf DQ}(\Fdelta)$
   to be the quotient $P_r(\delta_{\rm in} \delta_{\rm out} )$-module 
$$\Delta_r(  \alpha ^{  \beta} )=
\rightspecht{  \alpha ^{  \beta} }\otimes_{\W_a \wr \W_b} V_r{(a^b)}
\twoheadrightarrow 
\rightspecht{  \alpha ^{  \beta} }\otimes_{\W_a \wr \W_b} V^0_r{(a^b)}=
{\sf DQ}\bigl(\Fdelta\bigr),
$$
where  $    V^0_r{(a^b)}$ is the quotient vector space $    V_r{(a^b)}/     W_r{(a^b)}$, that is  the span of those ramified diagrams in $ V^0_r{(a^b)}$ that do not lie in $    W_r{(a^b)}$.
  \end{defn}

\begin{figure}[ht!]
$$\qquad\qquad  \begin{tikzpicture}[xscale=0.45,yscale=-0.45]
  
       \foreach \x in {0,1,2,3,4,5,6,7,8,9,10,11,12,13,14}
     {  
      \path(\x*2,0) coordinate (down\x);  
     \path(\x*2,3) coordinate (up\x);  
    }

     \foreach \x in {0,1,2,3,4,5,6,7,8,9,10,11,12,13,14}
     {   \path(up\x) --++ (135:0.6) coordinate(up1\x);  
      \path(up\x) --++ (45:0.6) coordinate(up2\x);
            \path(up\x) --++ (-45:0.6) coordinate(up3\x);  
                        \path(up\x) --++ (-135:0.6) coordinate(up4\x);  
   }

    \foreach \x in {0,1,2,3,4,5,6,7,8,9,10}
     {   \path(down\x) --++ (135:0.6) coordinate(down1\x);  
      \path(down\x) --++ (45:0.6) coordinate(down2\x);
            \path(down\x) --++ (-45:0.6) coordinate(down3\x);  
                        \path(down\x) --++ (-135:0.6) coordinate(down4\x);  
   }

   \draw [fill=white] plot [smooth cycle]
  coordinates {(up10) (up23) (down32)   (down40)  };

   \draw [fill=white] plot [smooth cycle]
  coordinates {(up14) (up27) (down36)   (down44)  };

   \draw [fill=white] plot [smooth cycle]
  coordinates {(up18) (up29) (down39)   (down48)  };

     \path(10*2,3) coordinate (up10);  
          \path(12*2,3) coordinate (up12);  

      {   \path(up10) --++ (135:0.6) coordinate(Xup1);  
      \path(up10) --++ (45:0.6) coordinate(Xup2);
            \path(up10) --++ (-45:0.6) coordinate(Xup3);  
                        \path(up10) --++ (-135:0.6) coordinate(Xup4);  
   }

   {   \path(up12) --++ (135:0.6) coordinate(XupA1);  
      \path(up12) --++ (45:0.6) coordinate(XupA2);
            \path(up12) --++ (-45:0.6) coordinate(XupA3);  
                        \path(up12) --++ (-135:0.6) coordinate(XupA4);  
   }
   
 \draw(8,0)--(12,3);
    \draw(10,0)--(8,3);

 \draw(16,0)--(18,3);
    \draw(18,0)--(16,3);
 
 \draw(0,3)--(0,0);                       
    \draw(2,0)--(6,3);

      \foreach \x in {0,2,4,6,8,10,12,14,16,18,20,22,24,26,26}
     {   \fill[white](\x,3) circle (4pt);   
                        \draw (\x,3) node {$\bullet$}; 
                  }
                
      \foreach \x in {0,2,8,10,16,18}
     {   \fill[white](\x,0) circle (4pt);   
                        \draw (\x,0) node {$\bullet$}; 
                  }

       \foreach \x in {0,1,2,3,4,5}
     {  
      \path(\x*2+20,0) coordinate (down\x);  
   }

       \foreach \x in {0,1,2,3,4,5,6}
    {   \path(\x*2+20,3) coordinate (up\x);  
    }
  
      \foreach \x in {0,1,2,3,4,5,6}
     {   \path(up\x) --++ (135:0.6) coordinate(up1\x);  
      \path(up\x) --++ (45:0.6) coordinate(up2\x);
            \path(up\x) --++ (-45:0.6) coordinate(up3\x);  
                        \path(up\x) --++ (-135:0.6) coordinate(up4\x);  
   }

    \foreach \x in {0,1,2,3,4,5}
     {   \path(down\x) --++ (135:0.6) coordinate(down1\x);  
      \path(down\x) --++ (45:0.6) coordinate(down2\x);
            \path(down\x) --++ (-45:0.6) coordinate(down3\x);  
                        \path(down\x) --++ (-135:0.6) coordinate(down4\x);  
   }

  \path(2+20,3+1.5) coordinate   (midway);
    \path(2+20,3+.75) coordinate   (lowermidway);
    
   \draw [fill=white] plot [smooth cycle]
  coordinates {(up10) 
  (midway)
  (up22) (up32) (up42)
  (lowermidway)
     (up30)     (up40)  };

  \path(4+20,1.5) coordinate   (midway);
    \path(4+20,2.25) coordinate   (lowermidway);

   \draw [fill=white] plot [smooth cycle]
  coordinates {(up41) 
  (midway)
  (up33) (up23) (up13)
  (lowermidway)
     (up21)     (up11)  };

       \foreach \x in {0,2,4,6}
     {   \fill[white](\x+20,3) circle (4pt);   
                        \draw (\x+20,3) node {$\bullet$}; 
                  }
         \end{tikzpicture}          $$
         \caption{
       An {\color{red}\color{black}example} of an element of $V^0_{14}(2^3)$.  There are no inner southern arcs and there are no outer southern singletons.
} 
\label{a-concatenate-for-the-hard-thin2}         \end{figure}
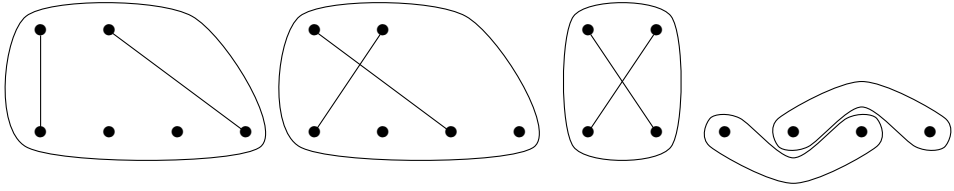

%
%

\begin{eg}
An example element of $V^0_{14}(2^3)$ is depicted in \cref{a-concatenate-for-the-hard-thin2}.  
\end{eg}

 The next proposition is  an elementary application of idempotent truncation (see for example 
 \cite[Section 6.2]{MR2349209}) which will allow us to decompose the restriction of $\Fdelta$ to the partition algebra. 
Recall that for $\delta_{\rm in} \delta_{\rm out}  \neq 0$ 
we have defined the idempotent $\egen _{r-1}=\frac1{\delta_{\rm in} \delta_{\rm out} }\pgen _r \in \Pdelta\subseteq R_r(\delta_{\rm in} ,\delta_{\rm out} )$, and, that in \eqref{bob} and 
\eqref{bob2}, we saw that $\egen_{r-1}P_r(\delta_{\rm in} \delta_{\rm out}) \egen_{r-1} \cong  P_{r-1}(\delta_{\rm in} \delta_{\rm out})$  and $P_r(\delta_{\rm in} \delta_{\rm out}) /P_r(\delta_{\rm in} \delta_{\rm out}) \egen_{r-1}P_r(\delta_{\rm in} \delta_{\rm out}) \cong \mathbb{C}\W_r$.

\begin{prop}\label{whynamed}
 For $r \ge 2$,
$$
{\sf DR}\bigl(\Fdelta\bigr) \egen _{r-1} \Pdelta
={\sf DR}\bigl(\Fdelta\bigr),\qquad \ 
{\sf DQ}\bigl(\Fdelta\bigr)\egen _{r-1} =0,$$
and moreover  
\[ {\sf DR}\bigl(\Fdelta \bigr) \egen _{r-1} \cong  
  \Delta_{r-1}(\alpha^\beta) \]
as an $ \egen _{r-1} \Pdelta  \egen _{r-1} \cong P_{r-1}(\delta_{\rm in} \delta_{\rm out} )$-module
if the right-hand side is defined, and otherwise ${\sf DR}(\Fdelta ) \egen _{r-1}  = 0$.
  When $r=1$, 
  \[
   {\sf DR}  \bigl(   \Delta
   _1  ((1)^{(1)}) \bigr)
=0,  \qquad 
    {\sf DR}  \bigl(   \Delta
    _1  (\varnothing ^{(1)}) \bigr)=0,
  \qquad 
   {\sf DR} \bigl( \Delta_1(\varnothing ^{\varnothing}) \bigr) =
\Delta_1  (\varnothing ^{\varnothing} \bigr) .
\]
 
\end{prop}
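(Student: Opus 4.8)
\textbf{Proof plan for Proposition~\ref{whynamed}.}
The strategy is to verify the four displayed assertions in turn, using the explicit action of the partition algebra generators $\pgen_1$, $\pgen_{1,2}$ and the Coxeter generators $\sgen_{i,i+1}$ on the diagram basis of $V_r(a^b)$ described in~\eqref{p12} and~\eqref{p1}, together with the identification $\egen_{r-1} = \frac{1}{\delta_{\rm in}\delta_{\rm out}}\pgen_r$ and the isomorphisms~\eqref{bob},~\eqref{bob2}. Throughout, $\egen_{r-1}$ acts by right multiplication on diagrams in $V_r(a^b)$, so $d_{(\Lambda,\Lambda')}\egen_{r-1}$ simply forces the $r$-th northern vertex of the product to be an outer singleton and rescales, and it is this which we analyse.

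First I would prove $\mathsf{DQ}(\Fdelta)\egen_{r-1} = 0$ and $\mathsf{DR}(\Fdelta)\egen_{r-1}\Pdelta = \mathsf{DR}(\Fdelta)$. For the first: a diagram $d_{(\Lambda,\Lambda')}$ representing a nonzero element of $V^0_r(a^b)$ has all $b$ propagating inner pairs occupying the leftmost columns, and in particular its $r$-th northern vertex lies in a non-propagating (southern) block; multiplying by $\pgen_r$ either contracts that block and produces a diagram with an inner southern arc or an outer southern singleton — hence an element of $W_r(a^b)$, which is zero in the quotient — or leaves the column structure unchanged but scales by $\delta_{\rm in}\delta_{\rm out}$ only when $\{r\} = S_p = \Sigma_q$ is already an outer singleton, in which case the image still has an outer southern singleton at position $r$ and lies in $W_r(a^b)$; either way $\mathsf{DQ}(\Fdelta)\egen_{r-1}=0$. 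For the second: any diagram in $\mathsf{DR}(\Fdelta)$ lies in the image of $\egen_{r-1}P_r$ up to rescaling, because the conditions~(i),~(ii) in Definition~\ref{depthradical} are exactly the conditions that one can ``unmerge'' a southern arc or southern singleton to express the diagram as $(\text{something in }\mathsf{DR})\cdot\pgen_r\cdot(\text{permutation})$; I would make this precise by exhibiting, for each spanning diagram, a factorisation witnessing membership in $\mathsf{DR}(\Fdelta)\egen_{r-1}\Pdelta$, giving one inclusion, while the reverse inclusion $\mathsf{DR}(\Fdelta)\egen_{r-1}\Pdelta \subseteq \mathsf{DR}(\Fdelta)$ is immediate from the previous proposition (that $\mathsf{DR}$ is a submodule).

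Next I would establish the isomorphism $\mathsf{DR}(\Fdelta)\egen_{r-1} \cong \Delta_{r-1}(\alpha^\beta)$ as a module for $\egen_{r-1}\Pdelta\egen_{r-1}\cong P_{r-1}(\delta_{\rm in}\delta_{\rm out})$. The idempotent truncation $\egen_{r-1}$ picks out those basis elements $c^\ast_{\alpha^\beta}\sigma d_{(\Lambda,\Lambda')}\egen_{r-1}$ of $\mathsf{DR}(\Fdelta)$ for which the $r$-th northern vertex is an outer singleton; deleting that vertex (and the southern vertex $\bar r$ being already used to absorb the dangling arc/singleton) identifies the resulting $(ab, r-1)$-ramified diagrams with the diagram basis~\eqref{ramified basis} of $\Delta_{r-1}(\alpha^\beta)$, provided $ab \le r-1$ (respectively $b \le r-1$ when $a=0$), i.e.\ provided $\Delta_{r-1}(\alpha^\beta)$ is defined; when it is not defined — for instance $(a^b)=(1^r)$ so $ab = r > r-1$ — the truncation is empty and $\mathsf{DR}(\Fdelta)\egen_{r-1}=0$. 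One then checks that the $P_{r-1}(\delta_{\rm in}\delta_{\rm out})\cong \egen_{r-1}\Pdelta\egen_{r-1}$-action matches up under this bijection, which is routine from~\eqref{p12},~\eqref{p1} restricted to the leftmost $r-1$ columns. Finally, the $r=1$ cases are direct computations: $V_1((1)^{(1)})$ and $V_1(\varnothing^{(1)})$ are one-dimensional, spanned by a diagram with a single propagating pair, which satisfies neither~(i) nor~(ii), so their depth-radicals vanish; whereas $V_1(\varnothing^\varnothing)$ is spanned by $e_\varnothing$, the diagram with both the northern and southern vertex outer singletons, which satisfies~(ii), so $\mathsf{DR}(\Delta_1(\varnothing^\varnothing)) = \Delta_1(\varnothing^\varnothing)$.

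The main obstacle I anticipate is the bookkeeping in the isomorphism step: one must be careful that ``deleting the $r$-th northern outer singleton'' is a well-defined bijection on diagram bases that respects both the inner and outer block structures and the condition $\pi_{\Lambda'}=1$, $\pi_{\Lambda'_j\cap\Lambda}=1$ from~\eqref{ramified basis}, and that it intertwines the two module actions — including correctly tracking the powers of $\delta_{\rm in}$ and $\delta_{\rm out}$ that appear when $\pgen_1$ or $\pgen_{1,2}$ contracts a block. The ``defined/otherwise zero'' dichotomy also needs a clean statement of exactly when $\Delta_{r-1}(\alpha^\beta)$ exists (namely $ab \le r-1$, or $b \le r-1$ if $a=0$), matched against when the truncated space is nonempty; this is the point where a careless argument could miss an edge case.
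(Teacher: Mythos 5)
Your plan is correct and follows essentially the same route as the paper: show that right multiplication by $\egen_{r-1}$ always creates an outer southern singleton (hence lands in the depth radical, killing $\DQ$), exhibit for each spanning diagram of ${\sf DR}(\Fdelta)$ an explicit factorisation through $\egen_{r-1}$ using conjugating permutations and $\pgen_{r-1,r}$, identify the truncation ${\sf DR}(\Fdelta)\egen_{r-1}$ with the diagram basis of $\Delta_{r-1}(\alpha^\beta)$, and check the $r=1$ cases directly. The only point to be careful about when writing out the factorisations — which the paper handles via a maximality choice of the southern vertex being moved — is that they must be compatible with the tensor product over $\C\W_a\wr\W_b$, a concern you correctly flag under intertwining the module actions.
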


\begin{proof}
 We consider the first  statement. We let $d_{(\Lambda,\Lambda')} $ be a ramified diagram basis element of $W_r(a^b)$ and $x\in {\rm S}(\alpha^\beta)$.  We shall write $x\otimes _{\W_a \wr \W_b} d_{(\Lambda,\Lambda')} $ in the form
\[
x\otimes _{\W_a \wr \W_b} d_{(\Lambda,\Lambda')}  = 
 x\otimes_{\W_a \wr \W_b}
  d_{(\bar{\Lambda},\bar{\Lambda'}) }\egen _{r-1} d
 \]
 for some ramified diagram $ d_{(\bar{\Lambda},\bar{\Lambda'})} \in W_r(a^b)$ and some partition diagram $d\in \Pdelta$
  and hence deduce the result.  First, suppose that $ \Lambda'  $ contains a singleton block  
 $ \{i\} $ for $1\leq i \leq r$.  In this case we set 
\[
 d_{(\bar{\Lambda},\bar{\Lambda'})}
 =
  d_{(\Lambda,\Lambda')}  \sgen _{i,r},
  \]
  where $\sgen _{i,r}=\sgen _{i ,i+1}\cdots \sgen _{r-1,r-2} \sgen _{r-1,r} 
   \sgen _{r-1,r-2} \cdots \sgen _{i,i+1}$.  
We set 
\[
d_{(\Lambda,\Lambda')}  = d_{(\bar{\Lambda},\bar{\Lambda'})} \egen _{r-1} \sgen _{i,r}
\]
as required.   Now suppose that $  {\Lambda'} $ contains a block $J$ with two southern vertices 
${j_1} < {j_2}  $.
We suppose that ${j_2}$ is maximal  with respect to this property.  
 In this case we set 
\[
 d_{(\bar{\Lambda},\bar{\Lambda'})}
 =
 d_{(\Lambda,\Lambda')}  \sgen _{{j_2},r}  \sgen _{ {j_1},r-1} . \]
We easily observe that 
\[
d_{(\Lambda,\Lambda')}  = d_{(\bar{\Lambda},\bar{\Lambda'})} 
 \egen _{r-1,r} (\pgen _{r-1,r}   \sgen _{ {j_1},r-1} \sgen _{{j_2},r} ).
\]  
We further observe that 
\[
x\otimes_{\W_a \wr \W_b}  d_{(\Lambda,\Lambda')}  = 
(x\otimes_{\W_a \wr \W_b} d_{(\bar{\Lambda},\bar{\Lambda'}  )})\egen _{r-1} d
\]
because of our maximality assumption on ${j_2}$;
as there is no inner block  to the right of $J$ containing a pair of southern vertices, thus 
$   \sgen _{ {j_1},r-1} \sgen _{{j_2},r}$
 does not swap the order of  blocks which can be permuted by the left action of $\W_a \wr \W_b$. 
 The first statement follows.

 We now consider the second and third statements.  Again, consider  $x\otimes_{\W_a \wr \W_b}
  d_{(\Lambda,\Lambda')} $ a  basis element of $\Fdelta$ and consider $d_{(\Lambda,\Lambda')}   \egen _{r-1}$  using \cref{p1} and conjugation.
 In all three cases the resulting outer partition contains a singleton block and therefore $d_{(\Lambda,\Lambda')}   \egen _{r-1} \in {\sf DR}(\Fdelta)$.
 Therefore the second statement holds.
Finally, we  see that all possible $(\Pi,\Pi') \in V_r(a^b) $ with a singleton part $\{r\}$ in both $ \Pi$ and $\Pi'$ can occur as  $d_{(\Lambda,\Lambda')}   \egen _{r-1}$, thus the third statement holds.
For $r=1$ modules are all 1-dimensional and the statement is easily verified.
   \end{proof}

The calculations of \cref{whynamed}, together with \cref{bob} and \cref{bob2} provide the following corollary.

\begin{cor}\label{explicitlystated}
There is a short exact sequence of $ \Pdelta $-modules  
\[
0
\to
{\sf DR}\bigl(\Fdelta\bigr)
\to 
\Delta_r(\alpha^\beta)\Res_{\Pdelta} \to {\sf DQ}\bigl(\Fdelta\bigr)  \to 0,\]
where 
\begin{align}\label{bycor}
{\sf DR}\bigl( \Fdelta \bigr) \cong 
 \Delta_{r-1}(\alpha^\beta)  \otimes _{\color{red}\color{black}P_{r-1}(\delta_{\rm in} \delta_{\rm out})}
e_{r-1} P_{r}(\delta_{\rm in} \delta_{\rm out} )
\end{align}
and 
$  {\sf DQ}\bigl( \Fdelta\bigr) $ decomposes as a direct sum of inflated
simple $\mathbb C\W_r$-modules.
\end{cor}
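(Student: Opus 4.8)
\textbf{Proof proposal for Corollary~\ref{explicitlystated}.}
The plan is to assemble the short exact sequence directly from Definitions~\ref{depthradical} and~\ref{dq} together with the structural input from Proposition~\ref{whynamed}. First I would record that the definition of the depth quotient already gives a short exact sequence of $\Pdelta$-modules
\[
0 \to {\sf DR}\bigl(\Fdelta\bigr) \to \Delta_r(\alpha^\beta)\Res_{\Pdelta} \to {\sf DQ}\bigl(\Fdelta\bigr) \to 0,
\]
since ${\sf DQ}\bigl(\Fdelta\bigr)$ is by definition the quotient of $\Delta_r(\alpha^\beta)$ by the submodule ${\sf DR}\bigl(\Fdelta\bigr)$ (and the preceding proposition shows ${\sf DR}\bigl(\Fdelta\bigr)$ is indeed a $\Pdelta$-submodule). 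So the only genuine content is the identification~\eqref{bycor} of ${\sf DR}\bigl(\Fdelta\bigr)$ and the claim that ${\sf DQ}\bigl(\Fdelta\bigr)$ is a direct sum of inflated simple $\C\W_r$-modules.

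For~\eqref{bycor} I would use idempotent truncation along the idempotent $\egen_{r-1} = \frac{1}{\delta_{\rm in}\delta_{\rm out}}\pgen_r$, exactly as flagged in the remark before Proposition~\ref{whynamed}. Proposition~\ref{whynamed} supplies the three facts needed: that ${\sf DR}\bigl(\Fdelta\bigr)\egen_{r-1}\Pdelta = {\sf DR}\bigl(\Fdelta\bigr)$, that ${\sf DR}\bigl(\Fdelta\bigr)\egen_{r-1} \cong \Delta_{r-1}(\alpha^\beta)$ as an $\egen_{r-1}\Pdelta\egen_{r-1} \cong P_{r-1}(\delta_{\rm in}\delta_{\rm out})$-module (using~\eqref{bob}), and that ${\sf DQ}\bigl(\Fdelta\bigr)\egen_{r-1} = 0$. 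The first fact says ${\sf DR}\bigl(\Fdelta\bigr)$ is generated as a $\Pdelta$-module by its image under $\egen_{r-1}$, so the standard idempotent-truncation recollement (see e.g. \cite[Section 6.2]{MR2349209}) identifies ${\sf DR}\bigl(\Fdelta\bigr)$ with the induced module $\bigl({\sf DR}\bigl(\Fdelta\bigr)\egen_{r-1}\bigr)\otimes_{\egen_{r-1}\Pdelta\egen_{r-1}} \egen_{r-1}\Pdelta$; substituting the second fact gives $\Delta_{r-1}(\alpha^\beta)\otimes_{P_{r-1}(\delta_{\rm in}\delta_{\rm out})}\egen_{r-1}P_r(\delta_{\rm in}\delta_{\rm out})$, which is the asserted formula. (One must check the induced module is genuinely isomorphic to ${\sf DR}\bigl(\Fdelta\bigr)$ and not merely a quotient; this is where the equality ${\sf DR}\bigl(\Fdelta\bigr) = {\sf DR}\bigl(\Fdelta\bigr)\egen_{r-1}\Pdelta$, rather than just containment, does the work, since it forces the counit of the adjunction to be surjective and the filtration by number of propagating blocks forces injectivity.)

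For the last clause, I would apply the quotient~\eqref{bob2}, $\Pdelta / \Pdelta\egen_{r-1}\Pdelta \cong \C\W_r$. Since ${\sf DQ}\bigl(\Fdelta\bigr)\egen_{r-1} = 0$ by Proposition~\ref{whynamed}, the ideal $\Pdelta\egen_{r-1}\Pdelta$ annihilates ${\sf DQ}\bigl(\Fdelta\bigr)$, so ${\sf DQ}\bigl(\Fdelta\bigr)$ is a module for $\C\W_r$, inflated back to $\Pdelta$ via~\eqref{bob2}. As $\C\W_r$ is semisimple, ${\sf DQ}\bigl(\Fdelta\bigr)$ decomposes as a direct sum of simple $\C\W_r$-modules, inflated to $\Pdelta$; this is precisely the claim. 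The main obstacle I anticipate is not any single step but the bookkeeping in the idempotent-truncation argument: one has to be careful that the generators-and-relations description of ${\sf DR}\bigl(\Fdelta\bigr)$ coming from Proposition~\ref{whynamed}'s explicit diagram manipulations really does match the abstract induced module on the nose, including tracking the tensor factor $\rightspecht{\alpha^\beta}$ through the identification $V_{r-1}(a^b) \cong \egen_{r-1}V_r(a^b)$-style compatibilities. Everything else is formal once the recollement is set up.
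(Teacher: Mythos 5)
Your proposal is correct and follows the same route the paper intends: the paper's entire ``proof'' is the one-line remark that the corollary follows from the calculations of Proposition~\ref{whynamed} together with~\eqref{bob} and~\eqref{bob2}, and your argument fleshes out exactly that — the short exact sequence from Definitions~\ref{depthradical} and~\ref{dq}, the identification~\eqref{bycor} by idempotent truncation along $\egen_{r-1}$, and the last clause from ${\sf DQ}(\Fdelta)\egen_{r-1}=0$ plus semisimplicity of $\C\W_r$. Your care over the injectivity of the counit (surjectivity from ${\sf DR}(\Fdelta)\egen_{r-1}\Pdelta={\sf DR}(\Fdelta)$, injectivity from the explicit factorisations in Proposition~\ref{whynamed}) is if anything more detail than the paper supplies.
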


\begin{cor}\label{cor:compfactors+DR}
For arbitrary parameters $\delta_{\rm in} ,\delta_{\rm out} \neq 0$,  the restriction $\Fdelta \res_{P_r(\delta_{\rm in} \delta_{\rm out} )} ^{\ram _r(\delta_{\rm in} ,\delta_{\rm out})}$ has a
{\color{red}\color{black} well-defined} 
 standard filtration  with   the following equality of filtration  multiplicities:
\begin{align*}
&\left[
\Fdelta\Res_{P_r(\delta_{\rm in} \delta_{\rm out} )} ^{\ram _r(\delta_{\rm in} ,\delta_{\rm out})} 
 \, : \, \Delta_{r }(\lambda)  \right]\raisebox{-6pt}{$\scriptstyle\Pdelta$} \\
&\hspace*{1.2in} =
\begin{cases} \left[  {\sf DQ}(\Delta_r(\alpha^\beta)) \, : \, \Delta_{r }(\lambda)  
\right]\raisebox{-6pt}{$\scriptstyle\Pdelta$}  & \textrm{if } |\lambda|=r,\\
\left[ \Delta_{r-1}(\alpha^\beta)
\Res_{P_r(\delta_{{\rm in}} \delta_{\rm out} )} ^{\ram _r(\delta_{\rm in}\delta_{\rm out} )}
{\color{red}\color{black}\Res^{P_r(\delta_{{\rm in}} \delta_{\rm out} )} _{P_{r-1}(\delta_{{\rm in}} \delta_{\rm out} )} }
 \, : \, \Delta_{r-1 }(\lambda)  \right]
\raisebox{-6pt}{$\scriptstyle P_{r-1}(\delta_{\rm in} \delta_{\rm out} ) $}  & \textrm{if } |\lambda|<r.
\end{cases}
\end{align*}
 \end{cor}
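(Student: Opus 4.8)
The plan is to deduce Corollary~\ref{cor:compfactors+DR} from Corollary~\ref{explicitlystated} by working through the short exact sequence
\[
0 \to {\sf DR}\bigl(\Fdelta\bigr) \to \Delta_r(\alpha^\beta)\Res_{\Pdelta} \to {\sf DQ}\bigl(\Fdelta\bigr) \to 0,
\]
together with the identification ${\sf DR}\bigl(\Fdelta\bigr) \cong \Delta_{r-1}(\alpha^\beta) \otimes e_{r-1}P_r(\delta_{\rm in}\delta_{\rm out})$ and the decomposition of ${\sf DQ}\bigl(\Fdelta\bigr)$ into inflated simple $\C\W_r$-modules. First I would note that, since $\DQ\bigl(\Fdelta\bigr)$ is a direct sum of inflations of simple $\C\W_r$-modules, and each inflated simple $\rightspecht{\lambda}$ for $\lambda \vdash r$ \emph{is} the standard module $\Delta_r(\lambda)$ by~\eqref{annihilate2}, the depth quotient automatically has a standard filtration in which only the $\Delta_r(\lambda)$ with $|\lambda| = r$ appear, and the multiplicity of $\Delta_r(\lambda)$ there equals $[\DQ(\Delta_r(\alpha^\beta)) : \Delta_r(\lambda)]$. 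In particular $\DQ(\Fdelta)$ contributes nothing to the filtration multiplicity of any $\Delta_r(\lambda)$ with $|\lambda| < r$, because the standard modules $\Delta_r(\lambda)$ with $\lambda \vdash r$ are precisely the inflated $\C\W_r$-modules, and these are killed by the ideal $P_r e_{r-1} P_r$ whereas every $\Delta_r(\lambda)$ with $|\lambda| < r$ is not (being nonzero after truncation by $e_{|\lambda|}$).

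Next I would handle the depth radical. By~\eqref{bycor}, ${\sf DR}\bigl(\Fdelta\bigr) \cong \Delta_{r-1}(\alpha^\beta) \otimes e_{r-1}P_r(\delta_{\rm in}\delta_{\rm out})$; this is the module induced (in the appropriate idempotent-truncation sense, using $e_{r-1}P_r e_{r-1} \cong P_{r-1}$ from~\eqref{bob} and the localisation functors of a tower of recollement) from the $P_{r-1}$-module $\Delta_{r-1}(\alpha^\beta)$. The standard induction functor $G_{r-1,r} = - \otimes_{P_{r-1}} e_{r-1}P_r$ takes standard modules to standard-filtered modules with $[G_{r-1,r}\Delta_{r-1}(\mu) : \Delta_r(\lambda)]_{P_r} = [\Delta_{r-1}(\mu):\Delta_{r-1}(\lambda)]_{P_{r-1}}$ for $|\lambda| < r$ and no new standard composition factors of maximal size; this is a standard fact about towers of recollement (cf.~\cite{MR2236606}), which I would cite. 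Applying exactness of $G_{r-1,r}$ to a standard filtration of $\Delta_{r-1}(\alpha^\beta)\Res_{P_{r-1}}$ — whose filtration multiplicities are well-defined by induction on $r$, the base case $r = 1$ being the explicit one-dimensional computation already recorded in Proposition~\ref{whynamed} — shows that ${\sf DR}\bigl(\Fdelta\bigr)$ has a standard filtration with $\bigl[{\sf DR}(\Fdelta) : \Delta_r(\lambda)\bigr] = \bigl[\Delta_{r-1}(\alpha^\beta)\Res_{P_{r-1}} : \Delta_{r-1}(\lambda)\bigr]$ for $|\lambda| < r$, and $= 0$ for $|\lambda| = r$ (the module $e_{r-1}P_r$ having no propagating-$r$ part after truncation).

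Finally I would splice the two halves together: since ${\sf DR}\bigl(\Fdelta\bigr)$ and ${\sf DQ}\bigl(\Fdelta\bigr)$ both have standard filtrations and sit in a short exact sequence with middle term $\Fdelta\Res_{\Pdelta}$, the middle term has a standard filtration whose multiplicities add — here I would invoke the general principle that extensions of $\Delta$-filtered modules are $\Delta$-filtered with additive multiplicities (valid since $P_r(\delta_{\rm in}\delta_{\rm out})$ is quasi-hereditary for $\delta_{\rm in}\delta_{\rm out} \neq 0$, or cellular in general with well-defined cell-filtration multiplicities). This yields exactly the claimed case split: for $|\lambda| = r$ only ${\sf DQ}$ contributes, giving $[\DQ(\Delta_r(\alpha^\beta)) : \Delta_r(\lambda)]$; for $|\lambda| < r$ only ${\sf DR}$ contributes, giving $\bigl[\Delta_{r-1}(\alpha^\beta)\Res : \Delta_{r-1}(\lambda)\bigr]$. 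The main obstacle I anticipate is the bookkeeping around the induction/localisation functors $F_{r,r-1} = -\otimes e_{r-1}$ and $G_{r-1,r} = -\otimes_{P_{r-1}} e_{r-1}P_r$ for the partition algebra: one must be careful that $G_{r-1,r}$ really does send $\Delta_{r-1}(\lambda)$ to a module filtered by $\Delta_r$'s with no factor of size $r$ and the correct lower-size multiplicities, and that the parameter $\delta_{\rm in}\delta_{\rm out}$ being possibly a non-generic (non-semisimple) value does not spoil the well-definedness of filtration multiplicities — but this last point is exactly what cellularity of $P_r(\delta_{\rm in}\delta_{\rm out})$ (Theorem~\ref{semisimple} and the surrounding discussion) guarantees, and the rest is a routine application of the tower-of-recollement formalism already invoked in the paper.
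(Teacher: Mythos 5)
Your proposal is correct and follows essentially the same route as the paper's proof: induction on $r$ with base case from Proposition~\ref{whynamed}, the short exact sequence of Corollary~\ref{explicitlystated}, the identification of $\DQ$ with a sum of inflated $\C\W_r$-standard modules for the $|\lambda|=r$ case, and globalisation via $\Delta_{r-1}(\kappa)\otimes e_{r-1}P_r(\delta_{\rm in}\delta_{\rm out})\cong\Delta_r(\kappa)$ for the $|\lambda|<r$ case. You are somewhat more explicit than the paper about additivity of $\Delta$-filtration multiplicities and the behaviour of the globalisation functor, but the argument is the same.
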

 
 
\begin{proof}{\color{red}\color{black}Since $\Pdelta$ is quasi-hereditary \cite{mar1} when $\delta_{\rm in} ,\delta_{\rm out} \neq 0$, one need only show that 
$
\color{red}\color{black}\Fdelta\Res_{P_r(\delta_{\rm in} \delta_{\rm out} )} ^{\ram _r(\delta_{\rm in} ,\delta_{\rm out})} $
 possesses a  standard filtration  in order to deduce that this filtration is well-defined 
(see, for example \cite[Appendix, A1(8)]{Donkin}).} 
The existence of a standard module filtration  of $\color{red}\color{black}\Fdelta\Res_{P_r(\delta_{\rm in} \delta_{\rm out} )} ^{\ram _r(\delta_{\rm in} ,\delta_{\rm out})} $ is proved by induction. The base case $r=1$ is clear from \cref{whynamed}: the three (1-dimensional) standard modules for the ramified partition algebra 
restrict to  standard modules for the partition algebra. 
  For  \hbox{$r>1$}, we use the short exact sequence above. The quotient is a direct sum of simple $\mathbb C\W_r$-modules, which are $ P_{r}(\delta_{\rm in} \delta_{\rm out} )$-standard modules by inflation.
  If the submodule  ${\sf DR}(\Fdelta)$ is non-zero then a standard filtration of 
  ${\sf DR}(\Fdelta) \cong   \Delta_{r-1}(\alpha^\beta) 
   \otimes e_{r-1} P_{r}(\delta_{\rm in} \delta_{\rm out} )$ is 
   obtained from the $P_{r-1}(\delta_{\rm in} \delta_{\rm out} )$-standard
    filtration of $\Delta_{r-1}(\alpha^\beta)\res_{P_{r-1}(\delta_{\rm in} \delta_{\rm out} )}^{\ram _{r-1}(\delta_{\rm in} ,\delta_{\rm out})} $ by globalisation
    using the isomorphism  $\Delta_{r-1}(\kappa) \otimes 
\egen_{r-1} P_{r}(\delta_{\rm in} \delta_{\rm out} ) \cong \Delta_{r}(\kappa)$.
\end{proof}

\section{General formula for ramified branching coefficients}\label{sec9}

We now consider the decomposition of the module
\[
\begin{minipage}{2.04cm}
${\sf DQ}\bigl( \Delta_{r}(\alpha^\beta) \bigr)$
\end{minipage}
=\rightspecht {   \alpha ^{  \beta} }\otimes_{\W_a \wr \W_b} V^0_r{(a^b)}.\]
Using the canonical quotient map $P_r(mn)\to \C \W_r$ arising from  \cref{bob2}, we regard $V^0_r{(a^b)}$ as a $(\C \W_a\wr \W_b, \C \W_r)$-bimodule. Thus, for the remainder of this paper, we need not  consider the 
partition algebra structure, we can simply discuss  bimodules for wreath products of symmetric groups.

 \subsection{Types of diagrams }\label{generatorsofthetransitive}
Fix $a,b,r$ with $ab\leq r$ or $b \leq r$ if $a=0$.
We   wish to understand the left action  of $\W_a\wr \W_b$ 
and the right action $\W_r$ on $V^0_r(a^b)$. 
Recall that $V^0_r(a^b) \subset V_r(a^b)$ has a basis  given by the ramified diagrams 
$	d_	{(\Lambda,\Lambda')}$ 
 of propagating index $(a^b)$, as defined in Section~\ref{subsec:propagatingIndex},
 such that  
 \begin{itemize}[leftmargin=18pt] 
 \item the outer set-partition $\Lambda$ has no singleton blocks; 
 \item the inner set-partition $\Lambda'$ consists of propagating pairs (that is, pairs $\{i, \overline{j} \}$ for $i$ a northern vertex and $ \overline{j}$ a southern vertex) and southern singletons.
 \end{itemize}
Examples are depicted in 
\cref{a-concatenate-for-the-hard-thin2,plenty1,plenty2,plenty3,aplenty1,aplenty2,Phereiszero}.  
The purpose of this section is to define the propagating type and the non-propagating type of such  a ramified diagram 
 $ d_	{(\Lambda,\Lambda')}\in V^0_r(a^b)$   in such a way as to decompose
  this module and to determine a direct sum decomposition of 
  ${\sf DQ}(\Delta_{r}(\alpha^\beta))$ as a $\mathbb C \W_r$-module. 
  
 \begin{figure}[ht!]
 $$\begin{minipage}{4cm}\begin{tikzpicture}[scale=0.45] 
  
       \foreach \x in {0,2,4,6,8,9,10,12,14,16,18,20}
     {   \path(\x,3) coordinate (up\x);  
      \path(\x,0) coordinate (down\x);  
   }

  {   \path(down10) --++ (135:0.4) coordinate(down1ten);  
      \path(down10) --++ (45:0.4) coordinate(down2ten);
            \path(down10) --++ (-45:0.4) coordinate(down3ten);  
                        \path(down10) --++ (-135:0.4) coordinate(down4ten);  
   }

     {   \path(up10) --++ (135:0.4) coordinate(up1ten);  
      \path(up10) --++ (45:0.4) coordinate(up2ten);
            \path(up10) --++ (-45:0.4) coordinate(up3ten);  
                        \path(up10) --++ (-135:0.4) coordinate(up4ten);  
   }

  {   \path(down12) --++ (135:0.4) coordinate(down1twelve);  
      \path(down12) --++ (45:0.4) coordinate(down2twelve);
            \path(down12) --++ (-45:0.4) coordinate(down3twelve);  
                        \path(down12) --++ (-135:0.4) coordinate(down4twelve);  
   }

     {   \path(up12) --++ (135:0.4) coordinate(up1twelve);  
      \path(up12) --++ (45:0.4) coordinate(up2twelve);
            \path(up12) --++ (-45:0.4) coordinate(up3twelve);  
                        \path(up12) --++ (-135:0.4) coordinate(up4twelve);  
   }

  {   \path(down14) --++ (135:0.4) coordinate(down1fourteen);  
      \path(down14) --++ (45:0.4) coordinate(down2fourteen);
            \path(down14) --++ (-45:0.4) coordinate(down3fourteen);  
                        \path(down14) --++ (-135:0.4) coordinate(down4fourteen);  
   }

     {   \path(up14) --++ (135:0.4) coordinate(up1fourteen);  
      \path(up14) --++ (45:0.4) coordinate(up2fourteen);
            \path(up14) --++ (-45:0.4) coordinate(up3fourteen);  
                        \path(up14) --++ (-135:0.4) coordinate(up4fourteen);  
   }

  {   \path(down16) --++ (135:0.4) coordinate(down1sixteen);  
      \path(down16) --++ (45:0.4) coordinate(down2sixteen);
            \path(down16) --++ (-45:0.4) coordinate(down3sixteen);  
                        \path(down16) --++ (-135:0.4) coordinate(down4sixteen);  
   }

     {   \path(up16) --++ (135:0.4) coordinate(up1sixteen);  
      \path(up16) --++ (45:0.4) coordinate(up2sixteen);
            \path(up16) --++ (-45:0.4) coordinate(up3sixteen);  
                        \path(up16) --++ (-135:0.4) coordinate(up4sixteen);  
   }

  {   \path(down18) --++ (135:0.4) coordinate(down1eighteen);  
      \path(down18) --++ (45:0.4) coordinate(down2eighteen);
            \path(down18) --++ (-45:0.4) coordinate(down3eighteen);  
                        \path(down18) --++ (-135:0.4) coordinate(down4eighteen);  
   }

     {   \path(up18) --++ (135:0.4) coordinate(up1eighteen);  
      \path(up18) --++ (45:0.4) coordinate(up2eighteen);
            \path(up18) --++ (-45:0.4) coordinate(up3eighteen);  
                        \path(up18) --++ (-135:0.4) coordinate(up4eighteen);  
   }

  {   \path(down16) --++ (135:0.4) coordinate(down1twenty);  
      \path(down16) --++ (45:0.4) coordinate(down2twenty);
            \path(down16) --++ (-45:0.4) coordinate(down3twenty);  
                        \path(down16) --++ (-135:0.4) coordinate(down4twenty);  
   }

     {   \path(up16) --++ (135:0.4) coordinate(up1twenty);  
      \path(up16) --++ (45:0.4) coordinate(up2twenty);
            \path(up16) --++ (-45:0.4) coordinate(up3twenty);  
                        \path(up16) --++ (-135:0.4) coordinate(up4twenty);  
   }

     \foreach \x in {0,2,4,6,8,9,10,12,14,16,18,20}
     {   \path(up\x) --++ (135:0.4) coordinate(up1\x);  
      \path(up\x) --++ (45:0.4) coordinate(up2\x);
            \path(up\x) --++ (-45:0.4) coordinate(up3\x);  
                        \path(up\x) --++ (-135:0.4) coordinate(up4\x);  
   }

    \foreach \x in {0,2,4,6,8,9,10,12,14,16,18,20}
     {   \path(down\x) --++ (135:0.4) coordinate(down1\x);  
      \path(down\x) --++ (45:0.4) coordinate(down2\x);
            \path(down\x) --++ (-45:0.4) coordinate(down3\x);  
                        \path(down\x) --++ (-135:0.4) coordinate(down4\x);  
   }

        \draw [fill=white] plot [smooth cycle]
  coordinates {(up14) (up26) (down38)(down46) (4 ,0.6) (3 ,0.6)  (down32) (down12)  };

 \draw(6,3)--(2,0);

    \draw(8,0)--(4,3);

   \draw [fill=white] plot [smooth cycle]
  coordinates {(up10) (up22) 
 (down24) 
  (down34) 
  (down44) 
  (2,0.6)  (down40)  };

 \draw(0,3)--(0,0);                       
 \draw(2,3)--(4,0);

       \foreach \x in {0,2,4,6,8, }
     {   \path(\x+10,3) coordinate (aup\x);  
      \path(\x+10,0) coordinate (adown\x);  
   }

     \foreach \x in {0,2,4,6,8, }
     {   \path(aup\x) --++ (135:0.4) coordinate(aup1\x);  
      \path(aup\x) --++ (45:0.4) coordinate(aup2\x);
            \path(aup\x) --++ (-45:0.4) coordinate(aup3\x);  
                        \path(aup\x) --++ (-135:0.4) coordinate(aup4\x);  
   }

    \foreach \x in {0,2,4,6,8, }
     {   \path(adown\x) --++ (135:0.4) coordinate(adown1\x);  
      \path(adown\x) --++ (45:0.4) coordinate(adown2\x);
            \path(adown\x) --++ (-45:0.4) coordinate(adown3\x);  
                        \path(adown\x) --++ (-135:0.4) coordinate(adown4\x);  
   }

   \draw [fill=white] plot [smooth cycle]
  coordinates {(adown10) (adown22) (adown32)  (adown40)  };

     \foreach \x in {0,2,4,6,6}
     {   \fill[white](\x,3) circle (4pt);   
                \fill[white](\x,0) circle (4pt);    
                        \draw (\x,3) node {$\bullet$}; 
                           \draw (\x,0) node {${\bullet}$};  }

     \foreach \x in {0,2,4,6,8,10,12 }
     {   
                           \draw (\x,0) node {${\bullet}$};  }
      
         \end{tikzpicture}   \end{minipage}
          $$
\caption{A ramified diagram  $d_	{(\Lambda,\Lambda')} \in 
V_7^0(2^2)$. Here   the outer set-partition is  $\Lambda=\{	\{	1,2,\overline{1},\overline{3}	\}	,
\{	3,4, \overline{2},\overline{4},\overline{5}	\},
\{	\overline{6}, \overline{7}	\}
	\}$ and 
	the inner set-partition is $\Lambda'=
	\{	
\{1,\overline{1}\}
,	
\{2,\overline{3}\}
,\{3,\overline{5}\}
,\{4,\overline{2}\}
,\{ \overline{4}\}
,\{ \overline{6}\}	,\{ \overline{7}\}	\}$.  Compare with \cref{plenty2}.}
\label{plenty1} 
\end{figure}

     \begin{figure}[ht!]
 $$\begin{minipage}{4cm}\begin{tikzpicture}[scale=0.45]
  
       \foreach \x in {0,2,4,6,8,9,10,12,14,16,18,20}
     {   \path(\x,3) coordinate (up\x);  
      \path(\x,0) coordinate (down\x);  
   }

  {   \path(down10) --++ (135:0.4) coordinate(down1ten);  
      \path(down10) --++ (45:0.4) coordinate(down2ten);
            \path(down10) --++ (-45:0.4) coordinate(down3ten);  
                        \path(down10) --++ (-135:0.4) coordinate(down4ten);  
   }

     {   \path(up10) --++ (135:0.4) coordinate(up1ten);  
      \path(up10) --++ (45:0.4) coordinate(up2ten);
            \path(up10) --++ (-45:0.4) coordinate(up3ten);  
                        \path(up10) --++ (-135:0.4) coordinate(up4ten);  
   }

  {   \path(down12) --++ (135:0.4) coordinate(down1twelve);  
      \path(down12) --++ (45:0.4) coordinate(down2twelve);
            \path(down12) --++ (-45:0.4) coordinate(down3twelve);  
                        \path(down12) --++ (-135:0.4) coordinate(down4twelve);  
   }

     {   \path(up12) --++ (135:0.4) coordinate(up1twelve);  
      \path(up12) --++ (45:0.4) coordinate(up2twelve);
            \path(up12) --++ (-45:0.4) coordinate(up3twelve);  
                        \path(up12) --++ (-135:0.4) coordinate(up4twelve);  
   }

  {   \path(down14) --++ (135:0.4) coordinate(down1fourteen);  
      \path(down14) --++ (45:0.4) coordinate(down2fourteen);
            \path(down14) --++ (-45:0.4) coordinate(down3fourteen);  
                        \path(down14) --++ (-135:0.4) coordinate(down4fourteen);  
   }

     {   \path(up14) --++ (135:0.4) coordinate(up1fourteen);  
      \path(up14) --++ (45:0.4) coordinate(up2fourteen);
            \path(up14) --++ (-45:0.4) coordinate(up3fourteen);  
                        \path(up14) --++ (-135:0.4) coordinate(up4fourteen);  
   }

  {   \path(down16) --++ (135:0.4) coordinate(down1sixteen);  
      \path(down16) --++ (45:0.4) coordinate(down2sixteen);
            \path(down16) --++ (-45:0.4) coordinate(down3sixteen);  
                        \path(down16) --++ (-135:0.4) coordinate(down4sixteen);  
   }

     {   \path(up16) --++ (135:0.4) coordinate(up1sixteen);  
      \path(up16) --++ (45:0.4) coordinate(up2sixteen);
            \path(up16) --++ (-45:0.4) coordinate(up3sixteen);  
                        \path(up16) --++ (-135:0.4) coordinate(up4sixteen);  
   }

  {   \path(down18) --++ (135:0.4) coordinate(down1eighteen);  
      \path(down18) --++ (45:0.4) coordinate(down2eighteen);
            \path(down18) --++ (-45:0.4) coordinate(down3eighteen);  
                        \path(down18) --++ (-135:0.4) coordinate(down4eighteen);  
   }

     {   \path(up18) --++ (135:0.4) coordinate(up1eighteen);  
      \path(up18) --++ (45:0.4) coordinate(up2eighteen);
            \path(up18) --++ (-45:0.4) coordinate(up3eighteen);  
                        \path(up18) --++ (-135:0.4) coordinate(up4eighteen);  
   }

  {   \path(down16) --++ (135:0.4) coordinate(down1twenty);  
      \path(down16) --++ (45:0.4) coordinate(down2twenty);
            \path(down16) --++ (-45:0.4) coordinate(down3twenty);  
                        \path(down16) --++ (-135:0.4) coordinate(down4twenty);  
   }

     {   \path(up16) --++ (135:0.4) coordinate(up1twenty);  
      \path(up16) --++ (45:0.4) coordinate(up2twenty);
            \path(up16) --++ (-45:0.4) coordinate(up3twenty);  
                        \path(up16) --++ (-135:0.4) coordinate(up4twenty);  
   }

     \foreach \x in {0,2,4,6,8,9,10,12,14,16,18,20}
     {   \path(up\x) --++ (135:0.4) coordinate(up1\x);  
      \path(up\x) --++ (45:0.4) coordinate(up2\x);
            \path(up\x) --++ (-45:0.4) coordinate(up3\x);  
                        \path(up\x) --++ (-135:0.4) coordinate(up4\x);  
   }

    \foreach \x in {0,2,4,6,8,9,10,12,14,16,18,20}
     {   \path(down\x) --++ (135:0.4) coordinate(down1\x);  
      \path(down\x) --++ (45:0.4) coordinate(down2\x);
            \path(down\x) --++ (-45:0.4) coordinate(down3\x);  
                        \path(down\x) --++ (-135:0.4) coordinate(down4\x);  
   }

       \foreach \x in {0,2,4,6,8, }
     {   \path(\x+10,3) coordinate (aup\x);  
      \path(\x+10,0) coordinate (adown\x);  
   }

     \foreach \x in {0,2,4,6,8, }
     {   \path(aup\x) --++ (135:0.4) coordinate(aup1\x);  
      \path(aup\x) --++ (45:0.4) coordinate(aup2\x);
            \path(aup\x) --++ (-45:0.4) coordinate(aup3\x);  
                        \path(aup\x) --++ (-135:0.4) coordinate(aup4\x);  
   }

    \foreach \x in {0,2,4,6,8, }
     {   \path(adown\x) --++ (135:0.4) coordinate(adown1\x);  
      \path(adown\x) --++ (45:0.4) coordinate(adown2\x);
            \path(adown\x) --++ (-45:0.4) coordinate(adown3\x);  
                        \path(adown\x) --++ (-135:0.4) coordinate(adown4\x);  
   }

   \draw [fill=white] plot [smooth cycle]
  coordinates {(up10) (up22) (down26)  (down36) (3.7,0.6)  (down42)  };

 \draw(0,3)--(6,0);                       
 \draw(2,3)--(2,0);

  \draw [fill=white] plot [smooth cycle]
  coordinates {(up14) (up26) (adown22)(adown32)(adown40)    (down48)  (up44) };


  \draw [fill=white] plot [smooth cycle]
  coordinates {(down10)  (2,1.2)  (down24)  (down34)
   (down44)  (2,0.5)  (down30) (down40)     };

%
%

 \draw(4,3)--(12,0);

    \draw(8,0)--(6,3);

     \foreach \x in {0,2,4,6,6}
     {   \fill[white](\x,3) circle (4pt);   
                \fill[white](\x,0) circle (4pt);    
                        \draw (\x,3) node {$\bullet$}; 
                           \draw (\x,0) node {${\bullet}$};  }

     \foreach \x in {0,2,4,6,8,10,12 }
     {   
                           \draw (\x,0) node {${\bullet}$};  }
      
         \end{tikzpicture}   \end{minipage}
          $$
\caption{Another ramified diagram in $V_7^0(2^2)$.  This diagram can be obtained from that of  \cref{plenty1} by acting on the right by the permutation 
$(1,4,6)(2,5,7,3) \in \W_7$.
}
 \label{plenty2} 
\end{figure}
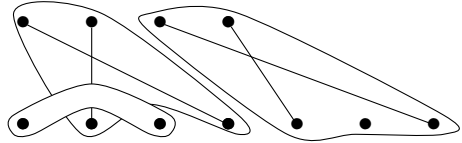

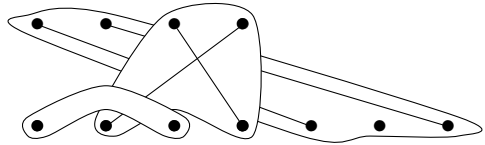
\begin{figure}[ht!]

 $$\begin{minipage}{4cm}\begin{tikzpicture}[scale=0.45]
  
       \foreach \x in {0,2,4,6,8,9,10,12,14,16,18,20}
     {   \path(\x,3) coordinate (up\x);  
      \path(\x,0) coordinate (down\x);  
   }

  {   \path(down10) --++ (135:0.4) coordinate(down1ten);  
      \path(down10) --++ (45:0.4) coordinate(down2ten);
            \path(down10) --++ (-45:0.4) coordinate(down3ten);  
                        \path(down10) --++ (-135:0.4) coordinate(down4ten);  
   }

     {   \path(up10) --++ (135:0.4) coordinate(up1ten);  
      \path(up10) --++ (45:0.4) coordinate(up2ten);
            \path(up10) --++ (-45:0.4) coordinate(up3ten);  
                        \path(up10) --++ (-135:0.4) coordinate(up4ten);  
   }

  {   \path(down12) --++ (135:0.4) coordinate(down1twelve);  
      \path(down12) --++ (45:0.4) coordinate(down2twelve);
            \path(down12) --++ (-45:0.4) coordinate(down3twelve);  
                        \path(down12) --++ (-135:0.4) coordinate(down4twelve);  
   }

     {   \path(up12) --++ (135:0.4) coordinate(up1twelve);  
      \path(up12) --++ (45:0.4) coordinate(up2twelve);
            \path(up12) --++ (-45:0.4) coordinate(up3twelve);  
                        \path(up12) --++ (-135:0.4) coordinate(up4twelve);  
   }

  {   \path(down14) --++ (135:0.4) coordinate(down1fourteen);  
      \path(down14) --++ (45:0.4) coordinate(down2fourteen);
            \path(down14) --++ (-45:0.4) coordinate(down3fourteen);  
                        \path(down14) --++ (-135:0.4) coordinate(down4fourteen);  
   }

     {   \path(up14) --++ (135:0.4) coordinate(up1fourteen);  
      \path(up14) --++ (45:0.4) coordinate(up2fourteen);
            \path(up14) --++ (-45:0.4) coordinate(up3fourteen);  
                        \path(up14) --++ (-135:0.4) coordinate(up4fourteen);  
   }

  {   \path(down16) --++ (135:0.4) coordinate(down1sixteen);  
      \path(down16) --++ (45:0.4) coordinate(down2sixteen);
            \path(down16) --++ (-45:0.4) coordinate(down3sixteen);  
                        \path(down16) --++ (-135:0.4) coordinate(down4sixteen);  
   }

     {   \path(up16) --++ (135:0.4) coordinate(up1sixteen);  
      \path(up16) --++ (45:0.4) coordinate(up2sixteen);
            \path(up16) --++ (-45:0.4) coordinate(up3sixteen);  
                        \path(up16) --++ (-135:0.4) coordinate(up4sixteen);  
   }

  {   \path(down18) --++ (135:0.4) coordinate(down1eighteen);  
      \path(down18) --++ (45:0.4) coordinate(down2eighteen);
            \path(down18) --++ (-45:0.4) coordinate(down3eighteen);  
                        \path(down18) --++ (-135:0.4) coordinate(down4eighteen);  
   }

     {   \path(up18) --++ (135:0.4) coordinate(up1eighteen);  
      \path(up18) --++ (45:0.4) coordinate(up2eighteen);
            \path(up18) --++ (-45:0.4) coordinate(up3eighteen);  
                        \path(up18) --++ (-135:0.4) coordinate(up4eighteen);  
   }

  {   \path(down16) --++ (135:0.4) coordinate(down1twenty);  
      \path(down16) --++ (45:0.4) coordinate(down2twenty);
            \path(down16) --++ (-45:0.4) coordinate(down3twenty);  
                        \path(down16) --++ (-135:0.4) coordinate(down4twenty);  
   }

     {   \path(up16) --++ (135:0.4) coordinate(up1twenty);  
      \path(up16) --++ (45:0.4) coordinate(up2twenty);
            \path(up16) --++ (-45:0.4) coordinate(up3twenty);  
                        \path(up16) --++ (-135:0.4) coordinate(up4twenty);  
   }

     \foreach \x in {0,2,4,6,8,9,10,12,14,16,18,20}
     {   \path(up\x) --++ (135:0.4) coordinate(up1\x);  
      \path(up\x) --++ (45:0.4) coordinate(up2\x);
            \path(up\x) --++ (-45:0.4) coordinate(up3\x);  
                        \path(up\x) --++ (-135:0.4) coordinate(up4\x);  
   }

    \foreach \x in {0,2,4,6,8,9,10,12,14,16,18,20}
     {   \path(down\x) --++ (135:0.4) coordinate(down1\x);  
      \path(down\x) --++ (45:0.4) coordinate(down2\x);
            \path(down\x) --++ (-45:0.4) coordinate(down3\x);  
                        \path(down\x) --++ (-135:0.4) coordinate(down4\x);  
   }

       \foreach \x in {0,2,4,6,8, }
     {   \path(\x+10,3) coordinate (aup\x);  
      \path(\x+10,0) coordinate (adown\x);  
   }

     \foreach \x in {0,2,4,6,8, }
     {   \path(aup\x) --++ (135:0.4) coordinate(aup1\x);  
      \path(aup\x) --++ (45:0.4) coordinate(aup2\x);
            \path(aup\x) --++ (-45:0.4) coordinate(aup3\x);  
                        \path(aup\x) --++ (-135:0.4) coordinate(aup4\x);  
   }

    \foreach \x in {0,2,4,6,8, }
     {   \path(adown\x) --++ (135:0.4) coordinate(adown1\x);  
      \path(adown\x) --++ (45:0.4) coordinate(adown2\x);
            \path(adown\x) --++ (-45:0.4) coordinate(adown3\x);  
                        \path(adown\x) --++ (-135:0.4) coordinate(adown4\x);  
   }

  \draw [fill=white] plot [smooth cycle]
  coordinates {(up10) (up22) (adown22)(adown32)(adown40)    (down48)  (up40) };

 \draw(2,3)--(12,0);

    \draw(8,0)--(0,3);

   \draw [fill=white] plot [smooth cycle]
  coordinates {(up14) (up26) (down36) (4,0.5)   (down32)   (down12)  };

 \draw(4,3)--(6,0);                       
 \draw(6,3)--(2,0);


  \draw [fill=white] plot [smooth cycle]
  coordinates {(down10)  (2,1.2)  (down24)  (down34)
   (down44)  (2,0.5)  (down30) (down40)     };

%
%

     \foreach \x in {0,2,4,6,6}
     {   \fill[white](\x,3) circle (4pt);   
                \fill[white](\x,0) circle (4pt);    
                        \draw (\x,3) node {$\bullet$}; 
                           \draw (\x,0) node {${\bullet}$};  }

     \foreach \x in {0,2,4,6,8,10,12 }
     {   
                           \draw (\x,0) node {${\bullet}$};  }
      
         \end{tikzpicture}   \end{minipage}
          $$
\caption{Another diagram in $V_7^0(2^2)$.  This ramified diagram can be obtained from that of  \cref{plenty2} by acting on the left by $ ( (1,2), 1_{\W_2}; (1,2)) \in \W_2\wr  \W_2$.  Observe that it cannot be obtained from the ramified diagram of \cref{plenty2} via a right action.}
 \label{plenty3} 
\end{figure}

 Our aim is to decompose $V^0_r(a^b)$ and so we shall first 
  focus on the properties of the ramified diagram basis elements $d_{(\Lambda,\Lambda')}$ which are 
  preserved by the left and right actions.
   Roughly speaking, this means we shall need to describe
   the number of singletons and their  positioning   within the blocks of ${(\Lambda,\Lambda')}$.  
   For example, the diagrams in \cref{plenty1,plenty2,plenty3} are all obtained from one another by left action by $\W_a\wr \W_b$ and/or right action  by $\W_r$. On the other hand, those in \cref{plenty1,aplenty1,aplenty2} {\em cannot} be obtained from one another by these actions.  
 In order to discuss this in more detail, we first note that within the ramified diagram basis element $d_{(\Lambda,\Lambda')} \in V^0_r(a^b)$  we have that:
\begin{itemize}[leftmargin=18pt] 
\item there are precisely  $b$ propagating outer-blocks $P_1,\dots, P_b$. The refinement  
 of each block,   $\Lambda\cap P_i$ for $1\leq i \leq b$, consists of precisely $a$ propagating 
 pairs  and some number, $\gamma_i$ say, of southern singletons;

\item there are some  non-propagating southern outer-blocks,  say $Q_1,\dots , Q_l$ for some $l\geq0$. 
The refinement  
 of each block,   $\Lambda\cap Q_i $ for $1\leq i \leq \ell$, consists of some number, $\deltaP_i \geq 2$ say, of 
 singleton southern blocks.
 \end{itemize}
 
 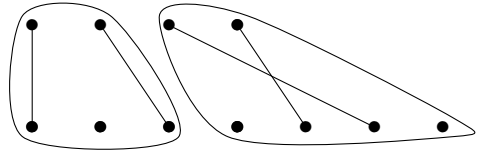
\begin{figure}[ht!]

 $$\begin{minipage}{4cm}\begin{tikzpicture}[scale=0.45]
  
       \foreach \x in {0,2,4,6,8,9,10,12,14,16,18,20}
     {   \path(\x,3) coordinate (up\x);  
      \path(\x,0) coordinate (down\x);  
   }

  {   \path(down10) --++ (135:0.4) coordinate(down1ten);  
      \path(down10) --++ (45:0.4) coordinate(down2ten);
            \path(down10) --++ (-45:0.4) coordinate(down3ten);  
                        \path(down10) --++ (-135:0.4) coordinate(down4ten);  
   }

     {   \path(up10) --++ (135:0.4) coordinate(up1ten);  
      \path(up10) --++ (45:0.4) coordinate(up2ten);
            \path(up10) --++ (-45:0.4) coordinate(up3ten);  
                        \path(up10) --++ (-135:0.4) coordinate(up4ten);  
   }

  {   \path(down12) --++ (135:0.4) coordinate(down1twelve);  
      \path(down12) --++ (45:0.4) coordinate(down2twelve);
            \path(down12) --++ (-45:0.4) coordinate(down3twelve);  
                        \path(down12) --++ (-135:0.4) coordinate(down4twelve);  
   }

     {   \path(up12) --++ (135:0.4) coordinate(up1twelve);  
      \path(up12) --++ (45:0.4) coordinate(up2twelve);
            \path(up12) --++ (-45:0.4) coordinate(up3twelve);  
                        \path(up12) --++ (-135:0.4) coordinate(up4twelve);  
   }

  {   \path(down14) --++ (135:0.4) coordinate(down1fourteen);  
      \path(down14) --++ (45:0.4) coordinate(down2fourteen);
            \path(down14) --++ (-45:0.4) coordinate(down3fourteen);  
                        \path(down14) --++ (-135:0.4) coordinate(down4fourteen);  
   }

     {   \path(up14) --++ (135:0.4) coordinate(up1fourteen);  
      \path(up14) --++ (45:0.4) coordinate(up2fourteen);
            \path(up14) --++ (-45:0.4) coordinate(up3fourteen);  
                        \path(up14) --++ (-135:0.4) coordinate(up4fourteen);  
   }

  {   \path(down16) --++ (135:0.4) coordinate(down1sixteen);  
      \path(down16) --++ (45:0.4) coordinate(down2sixteen);
            \path(down16) --++ (-45:0.4) coordinate(down3sixteen);  
                        \path(down16) --++ (-135:0.4) coordinate(down4sixteen);  
   }

     {   \path(up16) --++ (135:0.4) coordinate(up1sixteen);  
      \path(up16) --++ (45:0.4) coordinate(up2sixteen);
            \path(up16) --++ (-45:0.4) coordinate(up3sixteen);  
                        \path(up16) --++ (-135:0.4) coordinate(up4sixteen);  
   }

  {   \path(down18) --++ (135:0.4) coordinate(down1eighteen);  
      \path(down18) --++ (45:0.4) coordinate(down2eighteen);
            \path(down18) --++ (-45:0.4) coordinate(down3eighteen);  
                        \path(down18) --++ (-135:0.4) coordinate(down4eighteen);  
   }

     {   \path(up18) --++ (135:0.4) coordinate(up1eighteen);  
      \path(up18) --++ (45:0.4) coordinate(up2eighteen);
            \path(up18) --++ (-45:0.4) coordinate(up3eighteen);  
                        \path(up18) --++ (-135:0.4) coordinate(up4eighteen);  
   }

  {   \path(down16) --++ (135:0.4) coordinate(down1twenty);  
      \path(down16) --++ (45:0.4) coordinate(down2twenty);
            \path(down16) --++ (-45:0.4) coordinate(down3twenty);  
                        \path(down16) --++ (-135:0.4) coordinate(down4twenty);  
   }

     {   \path(up16) --++ (135:0.4) coordinate(up1twenty);  
      \path(up16) --++ (45:0.4) coordinate(up2twenty);
            \path(up16) --++ (-45:0.4) coordinate(up3twenty);  
                        \path(up16) --++ (-135:0.4) coordinate(up4twenty);  
   }

     \foreach \x in {0,2,4,6,8,9,10,12,14,16,18,20}
     {   \path(up\x) --++ (135:0.4) coordinate(up1\x);  
      \path(up\x) --++ (45:0.4) coordinate(up2\x);
            \path(up\x) --++ (-45:0.4) coordinate(up3\x);  
                        \path(up\x) --++ (-135:0.4) coordinate(up4\x);  
   }

    \foreach \x in {0,2,4,6,8,9,10,12,14,16,18,20}
     {   \path(down\x) --++ (135:0.4) coordinate(down1\x);  
      \path(down\x) --++ (45:0.4) coordinate(down2\x);
            \path(down\x) --++ (-45:0.4) coordinate(down3\x);  
                        \path(down\x) --++ (-135:0.4) coordinate(down4\x);  
   }

   \draw [fill=white] plot [smooth cycle]
  coordinates {(up10) (up22) (down34)  (down40)  };

 \draw(0,3)--(0,0);                       
 \draw(2,3)--(4,0);

%

       \foreach \x in {0,2,4,6,8, }
     {   \path(\x+10,3) coordinate (aup\x);  
      \path(\x+10,0) coordinate (adown\x);  
   }

     \foreach \x in {0,2,4,6,8, }
     {   \path(aup\x) --++ (135:0.4) coordinate(aup1\x);  
      \path(aup\x) --++ (45:0.4) coordinate(aup2\x);
            \path(aup\x) --++ (-45:0.4) coordinate(aup3\x);  
                        \path(aup\x) --++ (-135:0.4) coordinate(aup4\x);  
   }

    \foreach \x in {0,2,4,6,8, }
     {   \path(adown\x) --++ (135:0.4) coordinate(adown1\x);  
      \path(adown\x) --++ (45:0.4) coordinate(adown2\x);
            \path(adown\x) --++ (-45:0.4) coordinate(adown3\x);  
                        \path(adown\x) --++ (-135:0.4) coordinate(adown4\x);  
   }

        \draw [fill=white] plot [smooth cycle]
  coordinates {(up14) (up26) (adown22)  (adown32)    (down46)  };

 \draw(6,3)--(8,0);

    \draw(10,0)--(4,3);

     \foreach \x in {0,2,4,6,6}
     {   \fill[white](\x,3) circle (4pt);   
                \fill[white](\x,0) circle (4pt);    
                        \draw (\x,3) node {$\bullet$}; 
                           \draw (\x,0) node {${\bullet}$};  }

     \foreach \x in {0,2,4,6,8,10,12 }
     {   
                           \draw (\x,0) node {${\bullet}$};  }
      
         \end{tikzpicture}   \end{minipage}
          $$
          \caption{A diagram in $V_7^0(2^2)$.  This diagram cannot be obtained from any 
          of \cref{plenty1,plenty2,plenty3} or \cref{aplenty2} via the $\W_2\wr \W_2$-left-action or $\W_7$-right-action (although it can be obtained from any of these diagrams by right multiplication by a ramified partition diagram).}
          \label{aplenty1}
\end{figure}

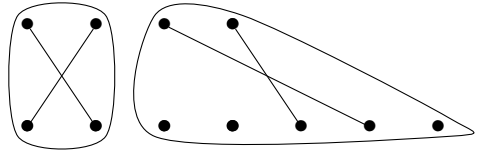
\begin{figure}[ht!]
    
 $$\begin{minipage}{4cm}\begin{tikzpicture}[scale=0.45]
  
       \foreach \x in {0,2,4,6,8,9,10,12,14,16,18,20}
     {   \path(\x,3) coordinate (up\x);  
      \path(\x,0) coordinate (down\x);  
   }

  {   \path(down10) --++ (135:0.4) coordinate(down1ten);  
      \path(down10) --++ (45:0.4) coordinate(down2ten);
            \path(down10) --++ (-45:0.4) coordinate(down3ten);  
                        \path(down10) --++ (-135:0.4) coordinate(down4ten);  
   }

     {   \path(up10) --++ (135:0.4) coordinate(up1ten);  
      \path(up10) --++ (45:0.4) coordinate(up2ten);
            \path(up10) --++ (-45:0.4) coordinate(up3ten);  
                        \path(up10) --++ (-135:0.4) coordinate(up4ten);  
   }

  {   \path(down12) --++ (135:0.4) coordinate(down1twelve);  
      \path(down12) --++ (45:0.4) coordinate(down2twelve);
            \path(down12) --++ (-45:0.4) coordinate(down3twelve);  
                        \path(down12) --++ (-135:0.4) coordinate(down4twelve);  
   }

     {   \path(up12) --++ (135:0.4) coordinate(up1twelve);  
      \path(up12) --++ (45:0.4) coordinate(up2twelve);
            \path(up12) --++ (-45:0.4) coordinate(up3twelve);  
                        \path(up12) --++ (-135:0.4) coordinate(up4twelve);  
   }

  {   \path(down14) --++ (135:0.4) coordinate(down1fourteen);  
      \path(down14) --++ (45:0.4) coordinate(down2fourteen);
            \path(down14) --++ (-45:0.4) coordinate(down3fourteen);  
                        \path(down14) --++ (-135:0.4) coordinate(down4fourteen);  
   }

     {   \path(up14) --++ (135:0.4) coordinate(up1fourteen);  
      \path(up14) --++ (45:0.4) coordinate(up2fourteen);
            \path(up14) --++ (-45:0.4) coordinate(up3fourteen);  
                        \path(up14) --++ (-135:0.4) coordinate(up4fourteen);  
   }

  {   \path(down16) --++ (135:0.4) coordinate(down1sixteen);  
      \path(down16) --++ (45:0.4) coordinate(down2sixteen);
            \path(down16) --++ (-45:0.4) coordinate(down3sixteen);  
                        \path(down16) --++ (-135:0.4) coordinate(down4sixteen);  
   }

     {   \path(up16) --++ (135:0.4) coordinate(up1sixteen);  
      \path(up16) --++ (45:0.4) coordinate(up2sixteen);
            \path(up16) --++ (-45:0.4) coordinate(up3sixteen);  
                        \path(up16) --++ (-135:0.4) coordinate(up4sixteen);  
   }

  {   \path(down18) --++ (135:0.4) coordinate(down1eighteen);  
      \path(down18) --++ (45:0.4) coordinate(down2eighteen);
            \path(down18) --++ (-45:0.4) coordinate(down3eighteen);  
                        \path(down18) --++ (-135:0.4) coordinate(down4eighteen);  
   }

     {   \path(up18) --++ (135:0.4) coordinate(up1eighteen);  
      \path(up18) --++ (45:0.4) coordinate(up2eighteen);
            \path(up18) --++ (-45:0.4) coordinate(up3eighteen);  
                        \path(up18) --++ (-135:0.4) coordinate(up4eighteen);  
   }

  {   \path(down16) --++ (135:0.4) coordinate(down1twenty);  
      \path(down16) --++ (45:0.4) coordinate(down2twenty);
            \path(down16) --++ (-45:0.4) coordinate(down3twenty);  
                        \path(down16) --++ (-135:0.4) coordinate(down4twenty);  
   }

     {   \path(up16) --++ (135:0.4) coordinate(up1twenty);  
      \path(up16) --++ (45:0.4) coordinate(up2twenty);
            \path(up16) --++ (-45:0.4) coordinate(up3twenty);  
                        \path(up16) --++ (-135:0.4) coordinate(up4twenty);  
   }

     \foreach \x in {0,2,4,6,8,9,10,12,14,16,18,20}
     {   \path(up\x) --++ (135:0.4) coordinate(up1\x);  
      \path(up\x) --++ (45:0.4) coordinate(up2\x);
            \path(up\x) --++ (-45:0.4) coordinate(up3\x);  
                        \path(up\x) --++ (-135:0.4) coordinate(up4\x);  
   }

    \foreach \x in {0,2,4,6,8,9,10,12,14,16,18,20}
     {   \path(down\x) --++ (135:0.4) coordinate(down1\x);  
      \path(down\x) --++ (45:0.4) coordinate(down2\x);
            \path(down\x) --++ (-45:0.4) coordinate(down3\x);  
                        \path(down\x) --++ (-135:0.4) coordinate(down4\x);  
   }

   \draw [fill=white] plot [smooth cycle]
  coordinates {(up10) (up22) (down32)  (down40)  };

 \draw(2,3)--(0,0);                       
 \draw(0,3)--(2,0);

%

       \foreach \x in {0,2,4,6,8, }
     {   \path(\x+10,3) coordinate (aup\x);  
      \path(\x+10,0) coordinate (adown\x);  
   }

     \foreach \x in {0,2,4,6,8, }
     {   \path(aup\x) --++ (135:0.4) coordinate(aup1\x);  
      \path(aup\x) --++ (45:0.4) coordinate(aup2\x);
            \path(aup\x) --++ (-45:0.4) coordinate(aup3\x);  
                        \path(aup\x) --++ (-135:0.4) coordinate(aup4\x);  
   }

    \foreach \x in {0,2,4,6,8, }
     {   \path(adown\x) --++ (135:0.4) coordinate(adown1\x);  
      \path(adown\x) --++ (45:0.4) coordinate(adown2\x);
            \path(adown\x) --++ (-45:0.4) coordinate(adown3\x);  
                        \path(adown\x) --++ (-135:0.4) coordinate(adown4\x);  
   }

        \draw [fill=white] plot [smooth cycle]
  coordinates {(up14) (up26) (adown22)  (adown32)    (down44)  };

 \draw(6,3)--(8,0);

    \draw(10,0)--(4,3);

     \foreach \x in {0,2,4,6,6}
     {   \fill[white](\x,3) circle (4pt);   
                \fill[white](\x,0) circle (4pt);    
                        \draw (\x,3) node {$\bullet$}; 
                           \draw (\x,0) node {${\bullet}$};  }

     \foreach \x in {0,2,4,6,8,10,12 }
     {   
                           \draw (\x,0) node {${\bullet}$};  }
      
         \end{tikzpicture}   \end{minipage}
          $$

     \caption{A diagram in $V_7^0(2^2)$.  This diagram cannot be obtained from any 
          of \cref{plenty1,plenty2,plenty3,aplenty1} via the $\W_2\wr \W_2$-left-action or $\W_7$-right-action.}
          \label{aplenty2}

\end{figure}

\begin{eg}
The diagrams in \cref{plenty1,plenty2,plenty3} each have $a=b=2$ and $r=7$.  
Each has two outer propagating blocks, $P_1$ and $P_2$, one of which  contains a singleton and the other contains no singletons.  
Each has a unique non-propagating outer block, $Q_1$, containing 2 singletons. 
\end{eg}

 \begin{eg}
Consider the ramified diagram in \cref{a-concatenate-for-the-hard-thin2}. Here  $a=2$, $b=3$ and $r= 14$. We call the propagating outer-blocks $P_1, P_2, P_3$ reading from left to right, 
and observe  that $P_1$ and~$P_2$ each contain  2 inner  singleton southern blocks and $P_3$  contains no inner singleton southern blocks 
(in addition to the $a=2$ inner propagating pairs).   
There are also two  non-propagating southern outer-blocks,  $Q_1, Q_2$, each of which contains   precisely two  singleton southern blocks.
 \end{eg}
 
 Before continuing further, we now introduce some notation for recording the 
  inner-singleton blocks. 
   We  let $\Cee$ denote the total number of southern singletons
 belonging to an outer {\em propagating} block
and  $ \Dee $ denote the total number of southern singletons
 belonging to an outer {\em non-propagating} block. 
We note that  every southern vertex belongs to either a propagating pair  inner-block (of which there are $ab$ in total) or  a  singleton  inner-block and therefore 
  $\Cee+\Dee ={r-ab}$.

\begin{eg}
The ramified diagrams in \cref{plenty1,plenty2,plenty3} each have  $\Cee=1$ and $\Dee=2$. \end{eg}


 We next consider how these singleton vertices are partitioned into propagating and non-propagating blocks. We remind the reader that the non-propagating outer-blocks each contain  {\em at least 2} inner-singleton vertices. 
Recall that   $ \ParSet_{>1}(\Dee )$
denotes the set of all integer partitions of $\Dee$ whose parts are all strictly greater than 1.

\begin{defn} \label{defn:t:1}
We suppose that  $d_{(\Lambda,\Lambda')}$ has non-propagating outer blocks $Q_1,\dots , Q_\ell$ such that, for $1\leq i \leq \ell$,   $\Lambda\cap Q_i $  consists of some number $\deltaP_i \geq 2$ of 
 singleton southern blocks.
We define the {\sf non-propagating type} of  $d_{(\Lambda,\Lambda')}$ to be the partition
$\deltaP = (\deltaP_{1} ,\deltaP_{2} ,\deltaP_3,\dots,\deltaP_\ell )  \in  \ParSet_{>1}(\Dee )$ if $\ell\neq 0$ and to be $\varnothing$  if $\ell=0$.
\end{defn}

\begin{figure}[ht!]
$$  \begin{minipage}{3cm}\begin{tikzpicture} [xscale=0.45,yscale=-0.45]
  
       \foreach \x in {0,1,2,3,4,5}
     {  
      \path(\x*2,0) coordinate (down\x);  
   }

       \foreach \x in {0,1,2,3,4,5,6}
    {   \path(\x*2,3) coordinate (up\x);  
    }

     \foreach \x in {0,1,2,3,4,5,6}
     {   \path(up\x) --++ (135:0.6) coordinate(up1\x);  
      \path(up\x) --++ (45:0.6) coordinate(up2\x);
            \path(up\x) --++ (-45:0.6) coordinate(up3\x);  
                        \path(up\x) --++ (-135:0.6) coordinate(up4\x);  
   }

    \foreach \x in {0,1,2,3,4,5}
     {   \path(down\x) --++ (135:0.6) coordinate(down1\x);  
      \path(down\x) --++ (45:0.6) coordinate(down2\x);
            \path(down\x) --++ (-45:0.6) coordinate(down3\x);  
                        \path(down\x) --++ (-135:0.6) coordinate(down4\x);  
   }

   \draw [fill=white] plot [smooth cycle]
  coordinates {(up10) (up21) (up31) (down30)   (down40)  };

%

       \foreach \x in {0,2 }
     {   \fill[white](\x,3) circle (4pt);   
                        \draw (\x,3) node {$\bullet$};             
                                    \draw (0,0) node {$\bullet$}; 
                  }

       \foreach \x in {0,1,2,3,4,5}
     {  
      \path(\x*2+4,0) coordinate (down\x);  
   }

       \foreach \x in {0,1,2,3,4,5,6}
    {   \path(\x*2+4,3) coordinate (up\x);  
    }

     \foreach \x in {0,1,2,3,4,5,6}
     {   \path(up\x) --++ (135:0.6) coordinate(up1\x);  
      \path(up\x) --++ (45:0.6) coordinate(up2\x);
            \path(up\x) --++ (-45:0.6) coordinate(up3\x);  
                        \path(up\x) --++ (-135:0.6) coordinate(up4\x);  
   }

    \foreach \x in {0,1,2,3,4,5}
     {   \path(down\x) --++ (135:0.6) coordinate(down1\x);  
      \path(down\x) --++ (45:0.6) coordinate(down2\x);
            \path(down\x) --++ (-45:0.6) coordinate(down3\x);  
                        \path(down\x) --++ (-135:0.6) coordinate(down4\x);  
   }

   \draw [fill=white] plot [smooth cycle]
  coordinates {(up10) (up21) (up31) (down30)   (down40)  };

%

       \foreach \x in {0,2 }
     {   \fill[white](\x+4,3) circle (4pt);   
                        \draw (\x+4,3) node {$\bullet$};             
                                    \draw (0+4,0 ) node {$\bullet$}; 
                  }

       \foreach \x in {0,1,2,3,4,5}
     {  
      \path(\x*2+4+4,0) coordinate (down\x);  
   }

       \foreach \x in {0,1,2,3,4,5,6}
    {   \path(\x*2+4+4,3) coordinate (up\x);  
    }

     \foreach \x in {0,1,2,3,4,5,6}
     {   \path(up\x) --++ (135:0.6) coordinate(up1\x);  
      \path(up\x) --++ (45:0.6) coordinate(up2\x);
            \path(up\x) --++ (-45:0.6) coordinate(up3\x);  
                        \path(up\x) --++ (-135:0.6) coordinate(up4\x);  
   }

    \foreach \x in {0,1,2,3,4,5}
     {   \path(down\x) --++ (135:0.6) coordinate(down1\x);  
      \path(down\x) --++ (45:0.6) coordinate(down2\x);
            \path(down\x) --++ (-45:0.6) coordinate(down3\x);  
                        \path(down\x) --++ (-135:0.6) coordinate(down4\x);  
   }

   \draw [fill=white] plot [smooth cycle]
  coordinates {(up10) (up20)  (down30)   (down40)  };

%

       \foreach \x in {0  }
     {   \fill[white](\x+4+4,3) circle (4pt);   
                        \draw (\x+4+4,3) node {$\bullet$};             
                                    \draw (0+4+4,0 ) node {$\bullet$}; 
                   }

         \end{tikzpicture}  \end{minipage}  $$
         
         \caption{A diagram  in $V_5^0(0^3)$    with propagating type $(2,2,1)$ and non-propagating type $\varnothing$.}
         \label{Phereiszero}
         \end{figure}
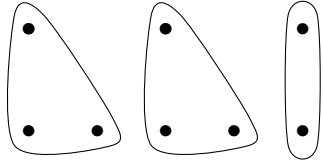
   
    We remind the reader that the  propagating outer-blocks   must each contain at least one southern vertex.  When $a=0$ this implies that every  propagating outer-block must contain at least one southern
    inner-singleton (see for example, \cref{Phereiszero}).  When $a>0$ a propagating outer-block is allowed to contain  zero southern inner-singletons (as already seen in \cref{plenty1,plenty2,plenty3}). 
    In what follows, we use $\gamma$ to denote the numbers of southern inner-singletons in the propagating
    outer-blocks, ordering these numbers so that they are weakly decreasing.
    Thus $\gamma \in \ParSet _{(a^b)}(\Cee)$ where $\ParSet _{(a^b)}$ is specified as follows.
    
\begin{defn}\label{defn:ParSet}
Let $b \in \NN_0$.
\begin{itemize}[leftmargin=18pt]
\item[(i)]
We define
$\ParSet _{(0^b)}(\Cee)$ to be the set of all weakly 
decreasing
 sequences of positive integers~$\gamma$ of length $b$ which 
 sum to $\Cee$.  
\item[(ii)] For $a \in \NN$, we define $\ParSet _{(a^b)}(\Cee)$ to be the set of all
weakly decreasing sequences of non-negative integers $\gamma$ of length $b$
which sum to $\Cee - ab$.
\end{itemize}
\end{defn}

\begin{defn}\label{defn:t:2}
We suppose that  $d_{(\Lambda,\Lambda')}$ has  propagating outer-blocks $P_1,\dots, P_b$ such that, for $1\leq i \leq b$,    $\Lambda\cap P_i$ consists of precisely $a$ propagating 
 pairs  and some number, $\gamma_i$ say, of southern singletons.
 We define the {\sf propagating type} of   $d_{(\Lambda,\Lambda')}$ to be the sequence 
   $\gamma=  
   ( \Cee^{c_\Cee}, \dots ,  1^{c_1}, 0^{c_0} )
\in  \ParSet _{(a^b)}(\Cee) $. 
 \end{defn}
 
Note that by the remarks before Definition~\ref{defn:ParSet}, $\gamma \in \ParSet _{(a^b)}(p)$.
 In particular, when $a=0$, since  each propagating outer-block must contain
at least one southern inner-singleton, the sequence~$\gamma$ has no zero terms, and the zero multiplicity
$c_0$ is $0$. In case (ii) when $a > 0$, elements of $\ParSet_{(a^b)}(\gamma)$ may have zero parts:
these are counted by $c_0$ in the sums in \TheoremD.

\begin{defn}\label{defn:type}
We say that a ramified diagram $d_{(\Lambda,\Lambda')}$ as 
in \cref{defn:t:1,defn:t:2}  has {\sf type} $(\gamma,\varepsilon)$.  
\end{defn}

We shall see that the  $(\C\W_a\wr \W_b,\C\W_r)$-bimodule $V^0_r(a^b)$ decomposes as a direct sum 
in which each summand is spanned by the diagrams of a fixed type.
  
   \begin{eg}
   The diagrams in \cref{plenty1,plenty2,plenty3} all have propagating-type  $(1,0)$ and 
   non-propagating-type  $(2)$. 
   \end{eg}

   \begin{eg}
The diagrams  in \cref{aplenty1,aplenty2} have  propagating-type  $(2,1)$ and 
   $(3,0)$ respectively.  Both  diagrams  in \cref{aplenty1,aplenty2} have  
   non-propagating-type  $\varnothing$. 

   \end{eg}

\subsection{Elementary diagrams }\label{subsec:elementaryDiagrams}

Let $x, y \in \ZZ_{\geq0}$.
 For each pair $(0,0)\neq (x, y) \in  \ZZ_{\geq0}^2$ we define a ramified $( \max\{1,x\},x+y)$-set-partition diagram 
$ 
v_{x,y}
$     
 by setting the first $x$ inner blocks to be of the form $\{k,\overline{k}\}$ for $1\leq k \leq x$ and the remaining 
inner  blocks to be singletons; 
  there is a single outer block that is the union of all $\max\{1,x\}+x+y$ vertices.   
Examples are  depicted in \cref{diagexy}.  
 We  set $(\varnothing, y)$ to be $( 0,y)$-set-partition consisting of precisely one outer block, and whose inner blocks are all singletons (see \cref{thank-you-next} for an example).
   
 \begin{figure}[ht!]
 $$
  \begin{minipage}{3cm}\begin{tikzpicture}[xscale=0.45,yscale=-0.45]
  
       \foreach \x in {0,1,2,3,4,5}
     {  
      \path(\x*2,0) coordinate (down\x);  
   }

       \foreach \x in {0,1,2,3,4,5,6}
    {   \path(\x*2,3) coordinate (up\x);  
    }

     \foreach \x in {0,1,2,3,4,5,6}
     {   \path(up\x) --++ (135:0.6) coordinate(up1\x);  
      \path(up\x) --++ (45:0.6) coordinate(up2\x);
            \path(up\x) --++ (-45:0.6) coordinate(up3\x);  
                        \path(up\x) --++ (-135:0.6) coordinate(up4\x);  
   }

    \foreach \x in {0,1,2,3,4,5}
     {   \path(down\x) --++ (135:0.6) coordinate(down1\x);  
      \path(down\x) --++ (45:0.6) coordinate(down2\x);
            \path(down\x) --++ (-45:0.6) coordinate(down3\x);  
                        \path(down\x) --++ (-135:0.6) coordinate(down4\x);  
   }

   \draw [fill=white] plot [smooth cycle]
  coordinates {(up10) (up22) (down31)   (down40)  };

       \foreach \x in {0,2,4}
     {   \fill[white](\x,3) circle (4pt);   
                        \draw (\x,3) node {$\bullet$};                         \draw (0,0) node {$\bullet$}; 
                  }

         \end{tikzpicture}  \end{minipage} \qquad 
          \begin{minipage}{4cm}\begin{tikzpicture}[xscale=0.45,yscale=-0.45]
  
       \foreach \x in {0,1,2,3,4,5}
     {  
      \path(\x*2,0) coordinate (down\x);  
   }

       \foreach \x in {0,1,2,3,4,5,6}
    {   \path(\x*2,3) coordinate (up\x);  
    }

     \foreach \x in {0,1,2,3,4,5,6}
     {   \path(up\x) --++ (135:0.6) coordinate(up1\x);  
      \path(up\x) --++ (45:0.6) coordinate(up2\x);
            \path(up\x) --++ (-45:0.6) coordinate(up3\x);  
                        \path(up\x) --++ (-135:0.6) coordinate(up4\x);  
   }

    \foreach \x in {0,1,2,3,4,5}
     {   \path(down\x) --++ (135:0.6) coordinate(down1\x);  
      \path(down\x) --++ (45:0.6) coordinate(down2\x);
            \path(down\x) --++ (-45:0.6) coordinate(down3\x);  
                        \path(down\x) --++ (-135:0.6) coordinate(down4\x);  
   }

   \draw [fill=white] plot [smooth cycle]
  coordinates {(up10) (up23) (down32)   (down40)  };

 \draw(0,3)--(0,0);                       
    \draw(2,0)--(2,3);       
    
     \foreach \x in {0,2,4,6}
     {   \fill[white](\x,3) circle (4pt);   
                        \draw (\x,3) node {$\bullet$}; 
                  }
                
      \foreach \x in {0,2}
     {   \fill[white](\x,0) circle (4pt);   
                        \draw (\x,0) node {$\bullet$}; 
                  }

         \end{tikzpicture}  \end{minipage} \qquad 
  \begin{minipage}{6cm}\begin{tikzpicture}[xscale=0.45,yscale=-0.45]
  
       \foreach \x in {0,1,2,3,4,5}
     {  
      \path(\x*2,0) coordinate (down\x);  
   }

       \foreach \x in {0,1,2,3,4,5,6}
    {   \path(\x*2,3) coordinate (up\x);  
    }

     \foreach \x in {0,1,2,3,4,5,6}
     {   \path(up\x) --++ (135:0.6) coordinate(up1\x);  
      \path(up\x) --++ (45:0.6) coordinate(up2\x);
            \path(up\x) --++ (-45:0.6) coordinate(up3\x);  
                        \path(up\x) --++ (-135:0.6) coordinate(up4\x);  
   }

    \foreach \x in {0,1,2,3,4,5}
     {   \path(down\x) --++ (135:0.6) coordinate(down1\x);  
      \path(down\x) --++ (45:0.6) coordinate(down2\x);
            \path(down\x) --++ (-45:0.6) coordinate(down3\x);  
                        \path(down\x) --++ (-135:0.6) coordinate(down4\x);  
   }

   \draw [fill=white] plot [smooth cycle]
  coordinates {(up10) (up25) (down33)   (down40)  };

 \draw(0,3)--(0,0);                       
    \draw(2,0)--(2,3);       
    \draw(6,0)--++(90:3);
    \draw(4,0)--++(90:3);    
    
     \foreach \x in {0,2,4,6,8,10}
     {   \fill[white](\x,3) circle (4pt);   
                        \draw (\x,3) node {$\bullet$}; 
                  }
                
      \foreach \x in {0,2,4,6}
     {   \fill[white](\x,0) circle (4pt);   
                        \draw (\x,0) node {$\bullet$}; 
                  }

         \end{tikzpicture}  \end{minipage} 
$$
\caption{Diagrams $v_{x,y}=v_{0,3}$, $v_{2,2}$, and  $v_{4,2}$ respectively. 
Note that $x$ is the number of propagating strands and $y$ is the number of   southern inner-singletons.}
\label{diagexy}
\end{figure}
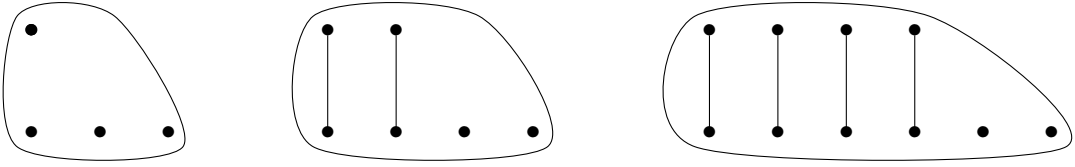

         \begin{figure}[ht!]
         \vspace*{-18pt}
 $$
  \begin{minipage}{4cm}\begin{tikzpicture}[xscale=0.45,yscale=-0.45]
  \clip(-2,2) rectangle (8,4);
       \foreach \x in {0,1,2,3,4,5}
     {  
      \path(\x*2,0) coordinate (down\x);  
   }

       \foreach \x in {0,1,2,3,4,5,6}
    {   \path(\x*2,3) coordinate (up\x);  
    }

     \foreach \x in {0,1,2,3,4,5,6}
     {   \path(up\x) --++ (135:0.6) coordinate(up1\x);  
      \path(up\x) --++ (45:0.6) coordinate(up2\x);
            \path(up\x) --++ (-45:0.6) coordinate(up3\x);  
                        \path(up\x) --++ (-135:0.6) coordinate(up4\x);  
   }

    \foreach \x in {0,1,2,3,4,5}
     {   \path(down\x) --++ (135:0.6) coordinate(down1\x);  
      \path(down\x) --++ (45:0.6) coordinate(down2\x);
            \path(down\x) --++ (-45:0.6) coordinate(down3\x);  
                        \path(down\x) --++ (-135:0.6) coordinate(down4\x);  
   }

   \draw [fill=white] plot [smooth cycle]
  coordinates {(up10) (up23) (up33)  (up40)  };

       \foreach \x in {0,2,4,6}
     {   \fill[white](\x,3) circle (4pt);   
                        \draw (\x,3) node {$\bullet$}; 
                  }

         \end{tikzpicture}  \end{minipage} $$
         \vspace*{-6pt}
         \caption{The diagram $v_{(\varnothing,4)}$.  Here $\varnothing$ records that this a   diagram consisting purely of southern vertices.}
         \label{thank-you-next}
         \end{figure}

\begin{defn}\label{defn:vGammaEpsilonDiagram} For $\gamma =  
   ( \Cee^{c_\Cee},  \dots ,1^{c_1}, 0^{c_0})
\in  \ParSet _{(a^b)}(\Cee)$ and $\deltaP \in  \ParSet_{>1}(\Dee ) $, the diagram 
 $
v_{\gamma,\deltaP}
 $, is defined by horizontal concatenation   as follows:
  $$
v_{  \gamma,\deltaP }=
\left( 
  (v_{a,\Cee})^{\circledast c_\Cee} \circledast \dots \circledast (v_{a,1})^{\circledast c_1} \circledast (v_{a,0})^{\circledast c_0} 
\right)
\circledast 
\left( v_{\varnothing,\deltaP_{1} }   \circledast  v_{\varnothing,\deltaP_{2} }   \circledast \dots \circledast  v_{\varnothing,\deltaP_\ell}  \right).
$$  
Let  
$V^0_r(a^b: \gamma,\deltaP) \subseteq V^0_r{(a^b)}$ denote the  $(\C\W_a\wr \W_b,\C\W_r)$-bimodule generated by  
$v_{\gamma,\deltaP}\in V^0_r(a^b)$.  
\end{defn}

\begin{figure}[ht!]
$$  \begin{tikzpicture}[xscale=0.45,yscale=-0.45]
  
       \foreach \x in {0,1,2,3,4,5,6,7,8,9,10,11,12,13,14}
     {  
      \path(\x*2,0) coordinate (down\x);  
     \path(\x*2,3) coordinate (up\x);  
    }

     \foreach \x in {0,1,2,3,4,5,6,7,8,9,10,11,12,13,14}
     {   \path(up\x) --++ (135:0.6) coordinate(up1\x);  
      \path(up\x) --++ (45:0.6) coordinate(up2\x);
            \path(up\x) --++ (-45:0.6) coordinate(up3\x);  
                        \path(up\x) --++ (-135:0.6) coordinate(up4\x);  
   }

    \foreach \x in {0,1,2,3,4,5,6,7,8,9,10}
     {   \path(down\x) --++ (135:0.6) coordinate(down1\x);  
      \path(down\x) --++ (45:0.6) coordinate(down2\x);
            \path(down\x) --++ (-45:0.6) coordinate(down3\x);  
                        \path(down\x) --++ (-135:0.6) coordinate(down4\x);  
   }

   \draw [fill=white] plot [smooth cycle]
  coordinates {(up10) (up23) (down32)   (down40)  };

   \draw [fill=white] plot [smooth cycle]
  coordinates {(up14) (up27) (down36)   (down44)  };

   \draw [fill=white] plot [smooth cycle]
  coordinates {(up18) (up29) (down39)   (down48)  };

     \path(10*2,3) coordinate (up10);  
          \path(12*2,3) coordinate (up12);  

      {   \path(up10) --++ (135:0.6) coordinate(Xup1);  
      \path(up10) --++ (45:0.6) coordinate(Xup2);
            \path(up10) --++ (-45:0.6) coordinate(Xup3);  
                        \path(up10) --++ (-135:0.6) coordinate(Xup4);  
   }

   {   \path(up12) --++ (135:0.6) coordinate(XupA1);  
      \path(up12) --++ (45:0.6) coordinate(XupA2);
            \path(up12) --++ (-45:0.6) coordinate(XupA3);  
                        \path(up12) --++ (-135:0.6) coordinate(XupA4);  
   }

  \draw [fill=white] plot [smooth cycle]
  coordinates {(Xup1) (XupA2) (XupA3)   (Xup4)  };

    \path(13*2,3) coordinate (up10);  
          \path(14*2,3) coordinate (up12);  

      {   \path(up10) --++ (135:0.6) coordinate(Xup1);  
      \path(up10) --++ (45:0.6) coordinate(Xup2);
            \path(up10) --++ (-45:0.6) coordinate(Xup3);  
                        \path(up10) --++ (-135:0.6) coordinate(Xup4);  
   }

   {   \path(up12) --++ (135:0.6) coordinate(XupA1);  
      \path(up12) --++ (45:0.6) coordinate(XupA2);
            \path(up12) --++ (-45:0.6) coordinate(XupA3);  
                        \path(up12) --++ (-135:0.6) coordinate(XupA4);  
   }

  \draw [fill=white] plot [smooth cycle]
  coordinates {(Xup1) (XupA2) (XupA3)   (Xup4)  };

 \draw(8,0)--++(90:3);
    \draw(10,0)--++(90:3);

 \draw(16,0)--++(90:3);
    \draw(18,0)--++(90:3);
 
 \draw(0,3)--(0,0);                       
    \draw(2,0)--(2,3);

      \foreach \x in {0,2,4,6,8,10,12,14,16,18,20,22,24,26,28}
     {   \fill[white](\x,3) circle (4pt);   
                        \draw (\x,3) node {$\bullet$}; 
                  }
                
      \foreach \x in {0,2,8,10,16,18}
     {   \fill[white](\x,0) circle (4pt);   
                        \draw (\x,0) node {$\bullet$}; 
                  }

         \end{tikzpicture}          $$
         \caption{
         The diagram $v_{\gamma,\deltaP}$ for $\gamma=(2^2,0)$ and $\deltaP=(3,2)$ with 
$a=2$ and $b=3$,  obtained via horizontal concatenation  as 
$(v_{(2,2)})^{\circledast 2} \circledast v_{(2,0)} \circledast v_{(\varnothing,3)} \circledast v_{(\varnothing,2)} $.   This diagram is   discussed in           \cref{a-concatenate-for-the-hard-thing2}.
} 
\label{a-concatenate-for-the-hard-thing}         \end{figure}

Since the propagating type and non-propagating type of a ramified diagram are invariant under both the left-$\W_a \wr \W_b$ and the right-$\W_r$ actions, the following proposition follows.

\begin{prop}\label{Stepp1}
There is a  direct sum decomposition of the  $(\W_a \wr \W_b, \W_r)$-module $V^0_r{(a^b)}$ as follows:
$$V^0_r{(a^b)}= \bigoplus _{
\begin{subarray}c
\Cee+\Dee=r-ab
\\
\gamma \in \ParSet _{(a^b)}(\Cee)
\\
\deltaP \in \ParSet_{>1}(\Dee)
\end{subarray}
}V^0_r\bigl((a^b): \gamma,\deltaP\bigr) .$$
\end{prop}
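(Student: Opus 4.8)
The plan is to verify that the type statistic $(\gamma,\deltaP)$ of a ramified diagram $d_{(\Lambda,\Lambda')} \in V^0_r(a^b)$ is a complete invariant for the decomposition, in the sense that (1) the span of the diagrams of a fixed type is closed under both the left $\W_a \wr \W_b$-action and the right $\W_r$-action, (2) these spans are exactly the submodules $V^0_r(a^b:\gamma,\deltaP)$, and (3) every basis diagram has a well-defined type, so the spans exhaust $V^0_r(a^b)$ and their sum is direct (being spanned by disjoint subsets of a basis).

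First I would record that, by the combinatorial discussion preceding Definitions~\ref{defn:t:1} and~\ref{defn:t:2}, every ramified diagram basis element $d_{(\Lambda,\Lambda')}\in V^0_r(a^b)$ has a uniquely determined propagating type $\gamma = (\Cee^{c_\Cee},\dots,1^{c_1},0^{c_0}) \in \ParSet_{(a^b)}(\Cee)$ and non-propagating type $\deltaP \in \ParSet_{>1}(\Dee)$, where $\Cee$ and $\Dee$ are the numbers of southern inner-singletons lying in propagating, respectively non-propagating, outer blocks, and $\Cee + \Dee = r - ab$ since every southern vertex is either one of the $ab$ propagating pairs or a southern singleton. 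Thus the diagram basis of $V^0_r(a^b)$ partitions into disjoint sets indexed by the triples $(\Cee+\Dee = r-ab,\ \gamma\in\ParSet_{(a^b)}(\Cee),\ \deltaP\in\ParSet_{>1}(\Dee))$, and it suffices to show each such subset spans a sub-bimodule equal to $V^0_r(a^b:\gamma,\deltaP)$; the direct sum decomposition is then immediate.

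The core step is the invariance of type under the two actions. The right $\W_r$-action permutes the southern vertices, hence permutes the inner southern singletons among themselves and permutes the inner propagating pairs among themselves, while preserving which outer block a given southern vertex lies in and preserving the propagating/non-propagating status of outer blocks (status is a northern-vertex property, untouched by a right action on southern vertices); so the multiset of singleton-counts $\{\gamma_i\}$ over propagating outer blocks and $\{\deltaP_j\}$ over non-propagating outer blocks is unchanged. The left $\W_a\wr\W_b$-action permutes the northern vertices: the top group $\W_b$ permutes the $b$ propagating outer blocks among themselves (carrying each block's southern content along), and each base-group copy of $\W_a$ permutes the $a$ northern vertices inside one propagating outer block, which by the identification of group elements with ramified diagrams in Figure~\ref{draw} merely permutes the $a$ propagating pairs within that block; neither operation touches southern singletons or creates/destroys outer blocks, so again $\gamma$ and $\deltaP$ are preserved. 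Hence each type-graded span is a sub-bimodule. To identify it with $V^0_r(a^b:\gamma,\deltaP)$, I would check that $v_{\gamma,\deltaP}$ (Definition~\ref{defn:vGammaEpsilonDiagram}) is itself a diagram of type $(\gamma,\deltaP)$ — this is immediate from the horizontal-concatenation description, the factor $(v_{a,j})^{\circledast c_j}$ contributing $c_j$ propagating outer blocks each with $a$ pairs and $j$ southern singletons, and the factor $v_{\varnothing,\deltaP_k}$ contributing one non-propagating outer block with $\deltaP_k$ southern singletons — and that, conversely, any diagram of type $(\gamma,\deltaP)$ is obtained from $v_{\gamma,\deltaP}$ by a left action (reordering and internally permuting the propagating blocks via $\W_a\wr\W_b$) followed by a right action (routing the $r$ southern vertices to their prescribed blocks and inner slots via $\W_r$); transitivity of these actions on diagrams of a fixed type is a routine bookkeeping exercise using that all the relevant orderings follow Remark~\ref{usefulconvention}.

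The step I expect to be the main obstacle is the transitivity claim just mentioned — showing that every type-$(\gamma,\deltaP)$ diagram lies in the bimodule \emph{generated by} $v_{\gamma,\deltaP}$, rather than merely that such a bimodule is contained in the type-graded span. The subtlety is that the canonical representative conventions of Remark~\ref{usefulconvention} impose a rigid ordering on blocks by increasing minima, so one must be careful that the left action (which can reorder propagating blocks) and the right action (which moves southern vertices) together suffice to reach every diagram while respecting these conventions; the examples in Figures~\ref{plenty1}, \ref{plenty2}, \ref{plenty3} illustrate exactly which diagrams are mutually reachable and which (as in Figures~\ref{aplenty1}, \ref{aplenty2}, which differ in type) are not, and spelling out a careful induction or explicit permutation recipe for this is where the real work lies. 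Everything else is essentially immediate from the definitions.
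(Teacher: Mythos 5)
Your proposal is correct and follows essentially the same route as the paper, whose entire proof is the single observation that the propagating and non-propagating types are invariant under both the left $\W_a\wr\W_b$- and right $\W_r$-actions. You are in fact more careful than the paper: the transitivity step you flag (that every diagram of type $(\gamma,\deltaP)$ lies in the bimodule \emph{generated by} $v_{\gamma,\deltaP}$, so that the summands exhaust $V^0_r(a^b)$) is left entirely implicit there, and your sketch of it — top-group reordering of propagating blocks combined with the right $\W_r$-action on southern vertices — is the right way to fill it in.
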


\begin{eg}\label{a-concatenate-for-the-hard-thing2}
Suppose that $r=15$, $a=2$, $b=3$ and $r-ab=9$.  
We let $\Cee=4$ and $\Dee=5$. 
There are 4 possible choices of $\gamma \in \ParSet_{(2^3)}(4)$, 
namely  $(4,0^2)$, $(3,1,0)$, $(2^2,0)$ and $(2,1^2)$. 
 There are two choices of $\deltaP\in \ParSet_{>1}(5)$, namely $(5)$ and $(3,2)$.  
The diagram $v_{\gamma,\deltaP}$ for $\gamma=(2^2,0)$ and $\deltaP=(3,2)$ is depicted in \cref{a-concatenate-for-the-hard-thing}.
\end{eg}

 \subsection{The direct sum  decomposition of the depth quotient}
We are now ready to provide the complete decomposition of 
${\sf DQ}(\Delta_{r}(\alpha^\beta))=\rightspecht{  \alpha ^{  \beta} } \otimes_{\W_a \wr \W_b} V^0_r{(a^b)}$
 into irreducible summands. Examples~\ref{eg:alphaEmpty} and~\ref{eg:alphaNonEmpty}
and the discussion before Definition~\ref{defn:IndInfRes} illustrate the key ideas in the proof.
 In light of \cref{Stepp1}, we  focus our attention on a fixed summand $V^0_r(a^b: \gamma,\deltaP) \subseteq V^0_r(a^b)$.  We shall consider two extremal cases first.

\begin{lem}\label{DQdecomp1}
For any $\deltaP \in \ParSet_{>1}(\Dee )$, there is an isomorphism of right 
$\Sym_\Dee$-modules 
\[ 
V^0_\Dee\bigl(\varnothing : \varnothing ,\deltaP\bigr)
 \cong  \C  \Ind_{\Stab(\deltaP)}^{\W_\Dee}.
\]
\end{lem}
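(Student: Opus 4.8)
The case $a=0$, $b=0$ (so the propagating part is empty) means $V^0_\Dee(\varnothing:\varnothing,\deltaP)$ is spanned by ramified diagrams all of whose outer blocks are non-propagating southern blocks, each built from at least two inner southern singletons, with block sizes recorded by the partition $\deltaP \in \ParSet_{>1}(\Dee)$. The generating diagram $v_{\varnothing,\deltaP}$ is the horizontal concatenation $v_{\varnothing,\deltaP_1} \circledast \cdots \circledast v_{\varnothing,\deltaP_\ell}$; here there are \emph{no} northern vertices, so the left $\W_a \wr \W_b = \W_0$-action is trivial and the module is genuinely just a right $\C\W_\Dee$-module. The plan is to identify it explicitly as a transitive permutation module and then recognise the point stabiliser as $\Stab(\deltaP)$.

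First I would observe that $V^0_\Dee(\varnothing:\varnothing,\deltaP)$ is cyclic as a right $\C\W_\Dee$-module, generated by $v_{\varnothing,\deltaP}$: by Definition~\ref{defn:vGammaEpsilonDiagram} the bimodule is generated by $v_{\gamma,\deltaP}$, and here $\gamma = \varnothing$. Next, since $\W_\Dee$ acts by permuting the $\Dee$ southern vertices (there is nothing else to permute) and this action sends diagram basis elements to diagram basis elements, the orbit of $v_{\varnothing,\deltaP}$ is exactly the set of all basis diagrams of the given non-propagating type $\deltaP$ — i.e.\ all set-partitions of $\{\overline 1,\ldots,\overline\Dee\}$ into $\ell$ blocks of sizes $\deltaP_1,\ldots,\deltaP_\ell$, each such set-partition being realised as a single basis diagram of $V^0_\Dee(\varnothing:\varnothing,\deltaP)$. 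Hence $V^0_\Dee(\varnothing:\varnothing,\deltaP)$ is the permutation module $\C[\W_\Dee/\Stab]$ where $\Stab$ is the stabiliser of one such set-partition. The canonical choice $v_{\varnothing,\deltaP}$ is the set-partition whose blocks are the consecutive intervals of sizes $\deltaP_1, \deltaP_2,\ldots$; its stabiliser in $\W_\Dee$ is, by Definition~\ref{defn:stabiliserSubgroup}, precisely $\Stab(\deltaP)$ (as also written out in the displayed formula following that definition, $\Stab(\deltaP)=\prod_j \W_j\wr\W_{e_j}$ where $e_j$ is the number of parts of $\deltaP$ equal to $j$). Therefore $V^0_\Dee(\varnothing:\varnothing,\deltaP)\cong \C\,\Ind_{\Stab(\deltaP)}^{\W_\Dee}$ as right $\C\W_\Dee$-modules, which is the claim.

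The one point requiring a little care — and the only place a reader might stumble — is the verification that the map ``basis diagram $\leftrightarrow$ coset'' is a bijection, i.e.\ that distinct basis diagrams of type $\deltaP$ correspond to distinct cosets and that the right $\W_\Dee$-action matches the coset action. This is immediate once one notes that two basis diagrams of $V^0_\Dee(\varnothing:\varnothing,\deltaP)$ coincide if and only if their underlying set-partitions of $\{\overline1,\ldots,\overline\Dee\}$ coincide (the inner set-partition is determined, consisting of all singletons, and the outer set-partition is the data), together with the fact that the action in \eqref{ramcell} of a permutation $\pi\in\W_\Dee$ on such a diagram never produces a scalar $\delta_{\rm in}^s\delta_{\rm out}^t$ with $s,t>0$ and never lowers the propagating index, so no terms are killed and the action is the naive relabelling of southern vertices. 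I would phrase the proof in two or three sentences invoking exactly these observations, leaving the routine bookkeeping implicit.
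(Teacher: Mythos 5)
Your proof is correct and follows exactly the paper's argument, which simply observes that the diagrammatic module is a transitive permutation $\W_\Dee$-module with point stabiliser $\Stab(\deltaP)$. You have merely filled in the routine verifications (cyclicity, the diagram--coset bijection, and the fact that the action never produces scalars or kills terms) that the paper leaves implicit.
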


\begin{proof}
The diagrammatic module  is a transitive permutation  $\W_q $-module with the required stabiliser,  and so the result follows. 
\end{proof}

We remark that this lemma is also proved in \cite[Theorem~7.11]{MR4756467}.
 
     \begin{figure}[ht!]
     \vspace*{-24pt}
 $$
  \begin{minipage}{4cm}\begin{tikzpicture}[xscale=0.45,yscale=-0.45]
  
       \foreach \x in {0,1,2,3,4,5}
     {  
      \path(\x*2,0) coordinate (down\x);  
   }

       \foreach \x in {0,1,2,3,4,5,6}
    {   \path(\x*2,3) coordinate (up\x);  
    }

     \foreach \x in {0,1,2,3,4,5,6}
     {   \path(up\x) --++ (135:0.6) coordinate(up1\x);  
      \path(up\x) --++ (45:0.6) coordinate(up2\x);
            \path(up\x) --++ (-45:0.6) coordinate(up3\x);  
                        \path(up\x) --++ (-135:0.6) coordinate(up4\x);  
   }

    \foreach \x in {0,1,2,3,4,5}
     {   \path(down\x) --++ (135:0.6) coordinate(down1\x);  
      \path(down\x) --++ (45:0.6) coordinate(down2\x);
            \path(down\x) --++ (-45:0.6) coordinate(down3\x);  
                        \path(down\x) --++ (-135:0.6) coordinate(down4\x);  
   }

   \draw [fill=white] plot [smooth cycle]
  coordinates {(up10) (up21) (up31)   (up40)  };
 
   \draw [fill=white] plot [smooth cycle]
  coordinates {(up12) (up23) (up33)   (up42)  };

       \foreach \x in {0,2,4,6}
     {   \fill[white](\x,3) circle (4pt);   
                        \draw (\x,3) node {$\bullet$}; 
                  }

         \end{tikzpicture}  \end{minipage} \qquad 
           \begin{minipage}{4cm}\begin{tikzpicture}[xscale=0.45,yscale=-0.45]
  
       \foreach \x in {0,1,2,3,4,5}
     {  
      \path(\x*2,0) coordinate (down\x);  
   }

       \foreach \x in {0,1,2,3,4,5,6}
    {   \path(\x*2,3) coordinate (up\x);  
    }

     \foreach \x in {0,1,2,3,4,5,6}
     {   \path(up\x) --++ (135:0.6) coordinate(up1\x);  
      \path(up\x) --++ (45:0.6) coordinate(up2\x);
            \path(up\x) --++ (-45:0.6) coordinate(up3\x);  
                        \path(up\x) --++ (-135:0.6) coordinate(up4\x);  
   }

    \foreach \x in {0,1,2,3,4,5}
     {   \path(down\x) --++ (135:0.6) coordinate(down1\x);  
      \path(down\x) --++ (45:0.6) coordinate(down2\x);
            \path(down\x) --++ (-45:0.6) coordinate(down3\x);  
                        \path(down\x) --++ (-135:0.6) coordinate(down4\x);  
   }

  \path(2,3+1.5) coordinate   (midway);
    \path(2,3+.75) coordinate   (lowermidway);
    
   \draw [fill=white] plot [smooth cycle]
  coordinates {(up10) 
  (midway)
  (up22) (up32) (up42)
  (lowermidway)
     (up30)     (up40)  };

  \path(4,1.5) coordinate   (midway);
    \path(4,2.25) coordinate   (lowermidway);

   \draw [fill=white] plot [smooth cycle]
  coordinates {(up41) 
  (midway)
  (up33) (up23) (up13)
  (lowermidway)
     (up21)     (up11)  };

     \foreach \x in {0,2,4,6}
     {   \fill[white](\x,3) circle (4pt);   
                        \draw (\x,3) node {$\bullet$}; 
                  }

         \end{tikzpicture}  \end{minipage} 
\qquad 
           \begin{minipage}{4cm}\begin{tikzpicture}[xscale=0.45,yscale=-0.45]
  
       \foreach \x in {0,1,2,3,4,5}
     {  
      \path(\x*2,0) coordinate (down\x);  
   }

       \foreach \x in {0,1,2,3,4,5,6}
    {   \path(\x*2,3) coordinate (up\x);  
    }

     \foreach \x in {0,1,2,3,4,5,6}
     {   \path(up\x) --++ (135:0.6) coordinate(up1\x);  
      \path(up\x) --++ (45:0.6) coordinate(up2\x);
            \path(up\x) --++ (-45:0.6) coordinate(up3\x);  
                        \path(up\x) --++ (-135:0.6) coordinate(up4\x);  
   }

    \foreach \x in {0,1,2,3,4,5}
     {   \path(down\x) --++ (135:0.6) coordinate(down1\x);  
      \path(down\x) --++ (45:0.6) coordinate(down2\x);
            \path(down\x) --++ (-45:0.6) coordinate(down3\x);  
                        \path(down\x) --++ (-135:0.6) coordinate(down4\x);  
   }

   \draw [fill=white] plot [smooth cycle]
  coordinates {(up11) (up22) (up32)   (up41)  };

  \path(3,1.5) coordinate   (midway);
    \path(3,2) coordinate   (lowermidway);
    
         \path(up3) --++ (-155:1.5) coordinate(up43);  
         \path(up0) --++ (-25:1.5) coordinate(Xup43);

   \draw [fill=white] plot [smooth cycle]
  coordinates {(up40) 
  (midway)
  (up33) (up23) (up13) (up43)
  (lowermidway)  (Xup43)
     (up20)     (up10)  };

       \foreach \x in {0,2,4,6}
     {   \fill[white](\x,3) circle (4pt);   
                        \draw (\x,3) node {$\bullet$}; 
                  }

         \end{tikzpicture}  \end{minipage}          
$$
\caption{The diagrammatic basis of $V_4^0(\varnothing,(2^2))$. 
This right $\W_4$-module is the transitive permutation module isomorphic to $\C \ind_{\W_2\wr \W_2}^{\W_4}$. 
Observe that the stabiliser of the first diagram shown is the usual copy of ${\W_2\wr \W_2} \leq \W_4$.}
\label{StabD2}
\end{figure}

\begin{eg}\label{StabD}
Let $\Dee=4$ and $\deltaP=(2^2)$.  The module $V^0_4(\varnothing,\deltaP)$ is 3-dimensional with basis as depicted in \cref{StabD2}. This module is isomorphic to $\C {\uparrow}_{\W_2\wr \W_2}^{\W_4}$. 
 \end{eg}

We next consider another extreme  case, where $\Dee=0$ and $\gamma=(s^b)$ so $r = (a+s)b$.

\begin{lem}\label{DQdecomp2}
Let 
 $r = (a+s)b$. There is an isomorphism of right $\Sym_r$-modules
\[ \bigl(
\rightspecht { \alpha} \oslash \rightspecht {\beta} \bigr) \otimes_{\Sym_a \wr \Sym_b} V^0_r\bigl( (a^b) : (s^b), \varnothing \bigr) \cong 
\bigl(
\rightspecht {\alpha} \otimes \mathbb{C}_{\Sym_s} \Ind_{\Sym_a \times \Sym_s}^{\Sym_{a+s}} \,\oslash\,
\rightspecht {\beta} \bigr) 
	\Ind_{\Sym_{a+s} \wr \Sym_b}^{\Sym_{(a+s)b}}
\]
\end{lem}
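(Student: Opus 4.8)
\textbf{Proof proposal for Lemma~\ref{DQdecomp2}.}
The plan is to analyse the diagrammatic module $V^0_r\bigl((a^b):(s^b),\varnothing\bigr)$ directly and identify it with the induced module on the right. First I would fix the generating diagram $v_{(s^b),\varnothing} = (v_{a,s})^{\circledast b}$ from Definition~\ref{defn:vGammaEpsilonDiagram}: its $i$th outer block ($1 \le i \le b$) consists of the northern vertices $(i-1)(a+s)+1,\dots,(i-1)(a+s)+a$ wired straight down to the corresponding southern vertices, together with the $s$ further southern singletons below positions $(i-1)(a+s)+a+1,\dots,i(a+s)$. Thus $V^0_r\bigl((a^b):(s^b),\varnothing\bigr)$ has a $\C$-basis indexed by the right-$\W_r$-orbit of this diagram, i.e.\ by ordered set-partitions of $\{\overline 1,\dots,\overline r\}$ into $b$ blocks, each of size $a+s$, with a distinguished $a$-element subset carrying the propagating wires (in some order, since $\pi$ is not required to be trivial in $V^0_r$, only that the propagating blocks are not permuted among themselves). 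The right $\W_r$-action permutes southern vertices; the left $\W_a \wr \W_b$-action permutes the propagating wires within each block and permutes the $b$ blocks.

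The key computational step is to match this up with the right-hand side. I would set $\widetilde\W = \Sym_{a+s}\wr\Sym_b \le \Sym_{(a+s)b}$ and observe that $V^0_r\bigl((a^b):(s^b),\varnothing\bigr)$ is, as a right $\W_r$-module, the permutation module on the cosets of $\Stab\bigl((a^s)^b\bigr)$ — more precisely of the subgroup $\bigl(\Sym_a\times\Sym_s\bigr)\wr\Sym_b$, where in each of the $b$ factors $\Sym_a$ acts on the propagating southern vertices of that block and $\Sym_s$ on its singleton southern vertices — but \emph{decorated} by the left action. Concretely, $\Sym_a \wr \Sym_b$ acts on the left on $V^0_r\bigl((a^b):(s^b),\varnothing\bigr)$, and I claim
\[
V^0_r\bigl((a^b):(s^b),\varnothing\bigr) \;\cong\; \bigl( \C[\Sym_a]\oslash \C[\Sym_b] \bigr) \otimes_{\Sym_a\wr\Sym_b}
\Bigl( \C \Ind^{\Sym_{a+s}}_{\Sym_a\times\Sym_s}\Bigr)\oslash \C[\Sym_b]\Bigr)\Ind_{\Sym_{a+s}\wr\Sym_b}^{\Sym_{(a+s)b}}
\]
as $\bigl(\Sym_a\wr\Sym_b, \Sym_r\bigr)$-bimodules, by sending the generating diagram to the obvious distinguished coset representative; this is a straightforward check that the two transitive actions have the same point stabiliser and that the residual left-action data agree (within each block, a left $\Sym_a$ permutes which wire goes where, matching the left regular action of $\Sym_a$ inside $\C\Ind^{\Sym_{a+s}}_{\Sym_a\times\Sym_s}$; the left $\Sym_b$ permutes blocks compatibly on both sides). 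Tensoring with $\rightspecht\alpha\oslash\rightspecht\beta$ over $\Sym_a\wr\Sym_b$ and using that $\oslash$ and ordinary tensor products commute with induction from $\Sym_{a+s}\wr\Sym_b$ (Frobenius reciprocity / the projection formula, exactly as in Proposition~\ref{Young} and the discussion of $\oslash$ after~\eqref{eq:wreathModule}), together with the standard identity $\rightspecht\alpha \otimes_{\Sym_a} \bigl(\C\Ind_{\Sym_a\times\Sym_s}^{\Sym_{a+s}}\bigr) \cong \rightspecht\alpha \otimes \C_{\Sym_s}\Ind_{\Sym_a\times\Sym_s}^{\Sym_{a+s}}$, then yields the claimed isomorphism.

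The main obstacle I expect is bookkeeping rather than anything deep: carefully justifying that the diagrammatic left-$\Sym_a\wr\Sym_b$-action on the basis of $V^0_r\bigl((a^b):(s^b),\varnothing\bigr)$ is \emph{exactly} the left regular action through the relevant quotient (so that tensoring over $\Sym_a\wr\Sym_b$ gives back precisely $\rightspecht\alpha\oslash\rightspecht\beta$ in the first tensor slot of each block) and not some twist of it — this is where the convention of Remark~\ref{usefulconvention} on ordering blocks by increasing minima, and the fact that $V^0_r$ imposes no triviality on the within-block permutation $\pi_\Lambda$, must be used to see that the $s$ singleton southern vertices genuinely absorb the extra $\Sym_s$ and that no unwanted signs or permutations appear. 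Once the bimodule identification at the level of permutation modules is pinned down, the passage to Specht modules is formal via the results already established in Section~\ref{sec1}.
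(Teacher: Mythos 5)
Your strategy is sound and lands on the same essential computation as the paper's proof—namely the block-by-block coset analysis of $\Sym_a \times \Sym_s$ inside $\Sym_{a+s}$—but you package it differently. The paper exhibits an explicit $\Sym_{a+s}\wr\Sym_b$-submodule $X$ of the left-hand side, spanned by vectors $(x\otimes y)\otimes v_{a,s}\vartheta_{i_1}\concat\cdots\concat v_{a,s}\vartheta_{i_b}$ with the $\vartheta_i$ running over coset representatives of $\Sym_a\times\Sym_s$ in $\Sym_{a+s}$, identifies $X$ with $\bigl(\rightspecht{\alpha}\otimes\C_{\Sym_s}\bigr)\Ind_{\Sym_a\times\Sym_s}^{\Sym_{a+s}}\oslash\rightspecht{\beta}$ by checking the base-group and top-group actions, and then invokes Alperin's induction criterion together with a dimension count. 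You instead identify the bimodule $V^0_r\bigl((a^b):(s^b),\varnothing\bigr)$ itself as the linearisation of a transitive $(\Sym_a\wr\Sym_b)\times\Sym_r$-set whose point stabiliser is a twisted copy of $(\Sym_a\times\Sym_s)\wr\Sym_b$, and then base-change along $\rightspecht{\alpha}\oslash\rightspecht{\beta}$. Your route absorbs the dimension count into the transitivity/stabiliser computation, at the cost of having to set up the bimodule bookkeeping carefully; the paper's route avoids that bookkeeping at the cost of the count. One correction is needed: your displayed bimodule isomorphism is mis-stated—the brackets do not match, and as written the right-hand side carries no visible left $\Sym_a\wr\Sym_b$-action, so prefixing it with $(\C[\Sym_a]\oslash\C[\Sym_b])\otimes_{\Sym_a\wr\Sym_b}{-}$ is vacuous. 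The module being induced should be $\bigl((\C\Sym_a\boxtimes\C_{\Sym_s})\Ind_{\Sym_a\times\Sym_s}^{\Sym_{a+s}}\bigr)\oslash\C\Sym_b$, regarded as a $(\Sym_a\wr\Sym_b,\Sym_{a+s}\wr\Sym_b)$-bimodule via the left regular actions on $\C\Sym_a$ and $\C\Sym_b$; with that replacement, tensoring over $\Sym_a\wr\Sym_b$ against $\rightspecht{\alpha}\oslash\rightspecht{\beta}$ does yield the statement of the lemma, exactly as your surrounding prose indicates.
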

\begin{proof}
By \cite[page 56, Corollary 3]{Alperin}, it suffices to find a right $\mathbb{C}\Sym_{a+s} \wr \Sym_b$-submodule $X$ of
$\bigl( 
\rightspecht {\alpha} \oslash\rightspecht {\beta} \bigr) \otimes_{\Sym_a \wr \Sym_b} V^0_r\bigl((a^b): (s^b), \varnothing\bigr)$ that
is isomorphic to $\bigl( 
\rightspecht { \alpha} \otimes \mathbb{C}_{\Sym_s} \bigr) \Ind_{\Sym_a \times \Sym_s}^{\Sym_{a+s}}\, \oslash\,
\rightspecht {\beta}$
and such that
\[ \dim  \, (
\rightspecht {\alpha} \oslash\rightspecht {\beta} ) \otimes_{\Sym_a \wr \Sym_b} V^0_r\bigl( (a^b): (s^b), \varnothing \bigr) 
 = |\Sym_{(a+s)b} : \Sym_{a+s} \wr \Sym_b|\dim X.\]
Let $k = [\Sym_{a+s} : \Sym_a \times \Sym_s]$ and let $\vartheta _1, \ldots, \vartheta _k$ be right-coset representatives
for $\Sym_a \times \Sym_s$ in $\Sym_{a+s}$; thus $\Sym_{a+s} = \bigsqcup_{j=1}^k (\Sym_a \times \Sym_s)\vartheta _j$.
Recall that $v_{(a,s)}$ denotes the diagram
\begin{center}
\begin{tikzpicture}[x=1cm,y=1cm]
\renewcommand{\dot}[2]{\node at (#1, #2) {$\bullet$};}
\draw (0,0)--(0,1);
\draw (0.4,0)--(0.4,1);
\draw (2,0)--(2,1);
\node at (1.2,0) {$\ldots$};
\dot{0}{1}\dot{0.4}{1}\dot{2}{1}
\dot{0}{0}\dot{0.4}{0}\dot{2}{0}\dot{2.4}{0}\dot{2.8}{0}\dot{4.2}{0}
\node at (3.4,0) {$\ldots$};
\draw plot [smooth cycle] coordinates {(-0.2,-0.4) (-0.6,0.5) (-0.2,1.4) (1, 1.6) (2.2,1.4) (2.8,0.6) (4.4,0.4) (4.4,-0.4) (2,-0.6)};
\end{tikzpicture}
\end{center}
where there are $a$  northern and $a+s$ southern vertices.
As a vector space, we define~$X$ by
\[ X = \bigl\langle (x \otimes y) \otimes v_{a,s} \vartheta _{i_1} \concat \cdots \concat v_{a,s} \vartheta _{i_b} \mid 
 i_1, \ldots, i_b \in \{1,\ldots, k\}
, x \in 
(\rightspecht {\alpha})^{\otimes b}, y \in \rightspecht {\beta}  
 \bigr\rangle  \]
Note that the ramified $\bigl(ab, (a+s)b\bigr)$ diagrams
appearing in the final tensor factor in each chosen basis element of $X$
have outer blocks
\[ \bigl\{ 1,\ldots, a, \overline{1}, \ldots, \overline{a+s} \bigr\},
\bigl\{a+1, \ldots, 2a, \overline{(a+s)+1}, \ldots, \overline{2(a+s)} \bigr\},\ldots  \]
Thus there are no outer crossings.
Observe that any $\pi \in \Sym_{a} \leq \Sym_{a+s}$  satisfies
$v_{(a,s)}\pi=\pi v_{(a,s)}$; this is the $(a, a+s)$-diagram  with the permutation $\pi$ on the first $a$ strings, followed by $s$ 
isolated souther vertices.

We now give $X$ the structure of a right $\mathbb{C}\Sym_{a+s} \wr \Sym_b$-module. 
For the action of the base group we must first understand how $\Sym_{a+s}$ acts
on each $v_{(a,s)}\vartheta _{i_c}$. 
Let $\sigma \in \Sym_{a+s}$ and suppose
that $\vartheta _{i_c} \sigma \in (\Sym_a \times \Sym_s)\vartheta _j$,
so that $\vartheta _{i_c} \sigma = \pi_1 \pi_2 \vartheta _j$ where $\pi_1 \in \Sym_a$ and $\pi_2 \in \Sym_s$. 
We then have
\begin{equation}
\label{eq:singleFactorAction} 
v_{(a,s)}\vartheta _{i_c} \cdot \sigma = v_{(a,s)} \cdot \pi_1 \pi_2 \vartheta _j = \pi_1  v_{(a,s)} \vartheta _j 
\end{equation}
since $\pi_2$ acts trivially on the isolated southern dots.
We now define the action of an arbitrary element 
$(\sigma_1, \ldots, \sigma_b)$ of the base group $\Sym_{a+s} \times \cdots \times \Sym_{a+s}$ of
$\Sym_{a+s} \wr \Sym_b$ in the obvious way by
\[ (x \otimes y) \otimes v_{(a,s)}\vartheta _{i_1} \concat \cdots \concat v_{(a,s)}\vartheta _{i_s} 
\cdot (\sigma_1, \ldots, \sigma_b) =
(x \otimes y) \otimes (v_{(a,s)}\vartheta _{i_1} \cdot \sigma_1) \concat \cdots \concat (v_{(a,s)}\vartheta _{i_s} \cdot \sigma_s) \]
where each factor in the concatenation of the right-hand side is defined by~\eqref{eq:singleFactorAction}.
Now let $\tau \in \Sym_b$. We define
\[ v_{(a,s)} \vartheta _{i_1} \concat \cdots \concat v_{(a,s)} \vartheta _{i_b} (1_{\Sym_a},\ldots, 1_{\Sym_a} ; \tau)
= (1_{\Sym_a}, \ldots, 1_{\Sym_a} ; \tau) (v_{(a,s)} \vartheta _{i_{\tau(1)}} \concat \cdots \concat v_{(a,s)} \vartheta _{i_{\tau(b)}}).\]
Let $w$ denote a basis vector spanning the trivial $\Sym_{\Sym_s}$-module.
We now claim that as right-$\C(\Sym_{a+s} \times \Sym_b)$-modules there is an isomorphism
\[ X \cong \bigl( \rightspecht {\alpha} \otimes \mathbb{C}_{\Sym_{s}} \bigr)\Ind_{\Sym_a \times \Sym_s}^{\Sym_{a+s}} \;\oslash\; \rightspecht {\beta} \]
defined by
\begin{equation}
\label{eq:Xisomorphism} 
(x_1 \otimes \cdots \otimes x_b \otimes y) \otimes (v_{(a,s)}\vartheta _{i_1} \concat \cdots \concat v_{(a,s)} \vartheta _{i_b})
\mapsto \bigl( (x_1 \otimes w) \otimes \vartheta _{i_1} \bigr) \otimes \cdots \otimes \bigl( (x_b \otimes w) \otimes \vartheta _{i_b}
\bigr) \otimes y. \end{equation}
To see that this commutes with the action of the base group it suffices to check this for 
\[ (\sigma, 1_{\Sym_{a+s}}, \ldots, 1_{\Sym_{a+s}}) \]
where $\sigma \in \Sym_{a+s}$. Suppose that $\vartheta _{i_1} \cdot \sigma = \pi_1 \pi_2 \vartheta _j$ where, as before
$\pi_1 \in \Sym_a$, $\pi_2 \in \Sym_s$. Acting on the left-hand side of~\eqref{eq:Xisomorphism} we obtain
\[ (x_1 \pi_1 \otimes x_2 \otimes \cdots \otimes x_b \otimes y) \otimes
(v_{(a,s)}\vartheta _{i_1} \concat \cdots \concat v_{(a,s)} \vartheta _{i_b}).\]
Acting on the right-hand side of~\eqref{eq:Xisomorphism} we obtain
\begin{align*} 
 (x_1 \otimes w)  \otimes \vartheta _1 &\sigma \otimes (x_2 \otimes w) \otimes \vartheta _{i_2} \cdots 
\otimes (x_b \otimes w) \otimes \vartheta _{i_b} \otimes y \\
 &= (x_1 \otimes w) \otimes \pi_1\pi_2 \vartheta _j \otimes (x_2 \otimes w) \otimes \vartheta _{i_2} \cdots 
\otimes (x_b \otimes w) \otimes \vartheta _{i_b} \otimes y \\
 &= (x_1 \pi_1 \otimes w \pi_2) \otimes \vartheta _j \otimes (x_2 \otimes w) \otimes \vartheta _{i_2} \cdots 
\otimes (x_b \otimes w) \otimes \vartheta _{i_b} \otimes y 
\end{align*}
and we see that the actions are compatible with~\eqref{eq:Xisomorphism}.
For the top group, again let $\tau \in \Sym_b$. Acting on the left-hand side of~\eqref{eq:Xisomorphism} we obtain
\[ (x_{\tau(1)} \otimes \cdots \otimes x_{\tau(b)} \otimes \tau y) \otimes 
(v_{(a,s)} \vartheta _{i_{\tau(1)}} \concat \cdots \concat v_{(a,s)} \vartheta _{i_{\tau(b)}} ) \]
and on the right-hand side
\[ (x_{\tau(1)} \otimes w \otimes \vartheta _{i_{\tau(1)}} ) \otimes \cdots \otimes (x_{\tau(b)} \otimes w \otimes \vartheta _{i_{\tau(b)}})
\otimes \tau y \]
and again the actions agree.

Finally we check the dimensions. We have
\begin{align*} 
\dim \bigl( (&
\rightspecht {\alpha} \otimes 
\C_{\Sym_a} )\Ind_{\Sym_a \times \Sym_s}^{\Sym_{a+s}} \:\oslash\; \rightspecht{\beta} \bigr)
\times |\Sym_{(a+s)b} : \Sym_{a+s} \wr \Sym_b| 
\\ 
&= 
\bigl( \dim \rightspecht {\alpha} \times |\Sym_{a+s} : \Sym_a \times \Sym_s| \bigr)^b \dim
\rightspecht {\beta} \times
|\Sym_{(a+s)b} : \Sym_{a+s} \wr \Sym_b|  
\\ 
&= 
\bigl(\dim \rightspecht{\alpha} \bigr)^b \frac{(a+s)!^b}{a!^b s!^b} 
\dim \rightspecht{\beta} \frac{\bigl((a+s)b\bigr)!}{(a+s)!^b b!} 
\\ 
&=  
\bigl(\dim 
\rightspecht {\alpha} \bigr)^b \dim
\rightspecht {\beta} \frac{ \bigl(((a+s)b\bigr)!}{a!^b s!^b b!}. 
\end{align*}
The number of $\Sym_a \times \Sym_b$-coset representatives required for the 
diagrams in $V^0_r((a^b): (s^b), \varnothing)$ is 
\[
\frac{((a+s)b)!}{(a+s)!^b b!} \frac{(a+s)!^b}{a!^b s!^b} 
= \frac{(a+s)b)!}{a!^b s!^b b!}.
 \]
To see this, note that we can choose the $b$ blocks of $a+s$ bottom row dots that will be the blocks in 
$$ 
\binom{(a+s)b}{a+s,\ldots,a+s}\frac{1}{b!}$$
ways and we  then choose the $s$ dots within each block to be singletons in $\binom{a+s}{s}^b$ ways. 
Therefore
\[ \dim \bigl( 
\rightspecht {\alpha} \oslash 
\rightspecht {\beta} \otimes V^0_r(s^b, \varnothing) \bigr)
= ( \dim
\rightspecht {\alpha})^b \dim 
\rightspecht {\beta} \times \frac{\bigl((a+s)b\bigr)!}{a!^b s!^b b!} \] 
and the dimensions agree.
\end{proof}

 \begin{figure}[ht!]
 \vspace*{-12pt}

 $$ \qquad \qquad \begin{tikzpicture}[xscale=0.45,yscale=-0.45]
  
       \foreach \x in {0,1,2,3,4,5,6,7,8,9,10,11,12,13,14}
     {  
      \path(\x*2,0) coordinate (down\x);  
     \path(\x*2,3) coordinate (up\x);  
    }

     \foreach \x in {0,1,2,3,4,5,6,7,8,9,10,11,12,13,14}
     {   \path(up\x) --++ (135:0.6) coordinate(up1\x);  
      \path(up\x) --++ (45:0.6) coordinate(up2\x);
            \path(up\x) --++ (-45:0.6) coordinate(up3\x);  
                        \path(up\x) --++ (-135:0.6) coordinate(up4\x);  
   }

    \foreach \x in {0,1,2,3,4,5,6,7,8,9,10}
     {   \path(down\x) --++ (135:0.6) coordinate(down1\x);  
      \path(down\x) --++ (45:0.6) coordinate(down2\x);
            \path(down\x) --++ (-45:0.6) coordinate(down3\x);  
                        \path(down\x) --++ (-135:0.6) coordinate(down4\x);  
   }

   \draw [fill=white] plot [smooth cycle]
  coordinates {(up10) (up23) (down32)   (down40)  };

   \draw [fill=white] plot [smooth cycle]
  coordinates {(up14) (up27) (down36)   (down44)  };

 \draw(8,0) --(10,3);
    \draw(10,0) --(14,3);

 \draw(16,0)--++(90:3);
    \draw(18,0)--++(90:3);
 
 \draw(0,3)--(0,0);                       
    \draw(2,0)--(6,3);

      \foreach \x in {0,2,4,6,8,10,12,14,16,18,20,22}
     {   \fill[white](\x,3) circle (4pt);   
                        \draw (\x,3) node {$\bullet$}; 
                  }
                
      \foreach \x in {0,2,8,10,16,18}
     {   \fill[white](\x,0) circle (4pt);   
                        \draw (\x,0) node {$\bullet$}; 
                  }

     \foreach \x in {0,1,2,3,4 }
     {  
      \path(\x*2+8+8,0) coordinate (down\x);  
     \path(\x*2+8+8,3) coordinate (up\x);  
    }

     \foreach \x in {0,1,2,3,4 }
     {   \path(up\x) --++ (135:0.6) coordinate(up1\x);  
      \path(up\x) --++ (45:0.6) coordinate(up2\x);
            \path(up\x) --++ (-45:0.6) coordinate(up3\x);  
                        \path(up\x) --++ (-135:0.6) coordinate(up4\x);  
   }

    \foreach \x in {0,1,2,3,4,5,6,7,8,9,10}
     {   \path(down\x) --++ (135:0.6) coordinate(down1\x);  
      \path(down\x) --++ (45:0.6) coordinate(down2\x);
            \path(down\x) --++ (-45:0.6) coordinate(down3\x);  
                        \path(down\x) --++ (-135:0.6) coordinate(down4\x);  
   }

   \draw  plot [smooth cycle]
  coordinates {(up10) (up23) (down32)   (down40)  };

         \end{tikzpicture}          $$
         
         \caption{A prototypical diagram appearing in an element of $X$. Here $a=2$ and $b=3$ and $s=2$.  
         The cosets (of minimal length) are $\vartheta _1=(2,4)$,
         $\vartheta _2=(1,2,4)$  and $\vartheta _3={\rm id}_{\W_4}$.
         }
         \label{proto1}
    \end{figure}
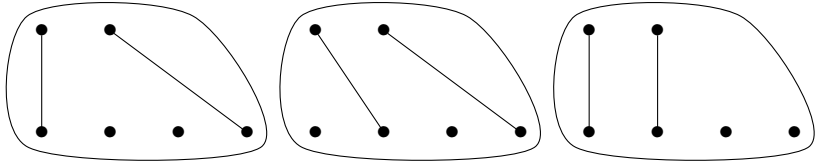

We may now use the previous two lemmas to describe the decomposition in the general case.
We remind the reader that the set $\ParSet_{(a^b)}(\Cee)$ was defined in Definition~\ref{defn:ParSet};
splitting into the two cases $a > 0$, in which $c_0 > 0$ is permitted, and $a=0$ in which case,
as we observed after Definition~\ref{defn:t:2}, $c_0 = 0$.
We remind the reader of our standing convention
that $\beta^i \vdash c_i$ in a sum indicates that the sum is over all
relevant sequences of partitions indexed by $i$.

\begin{thm}\label{thm:DQdecomp} Suppose that $r-ab=\Cee+\Dee$, $\gamma=  
   ( \Cee^{c_\Cee}, \dots ,  1^{c_1}, 0^{c_0} )
\in  \ParSet _{(a^b)}(\Cee) $ and $\deltaP  \in  \ParSet_{>1}(\Dee )$. Then the $P_r(\delta_{\rm in}\delta_{\rm out})$-submodule 
\[ \rightspecht {\alpha^\beta}\otimes_{_{\W_a \wr \W_b} } V^0_r\bigl( (a^b): \gamma,\deltaP \bigr) 
 \subseteq {\sf DQ}\bigl(\Delta_{r}(\alpha^\beta)\bigr)
\]
  decomposes as follows 
\[
 \bigoplus _{\beta^i \vdash c_i} 
  c^\beta_{\beta^\Cee, \ldots, \beta^1, \beta^0} \Bigl(
  \bigotimes  _{i=0}^\Cee
 \Bigl(
 (
\rightspecht {\alpha} \boxtimes \C  )
\Ind
_{\W_a\times \W_{ i }}
^{\W_{a+i}}
)
\oslash \rightspecht{\beta^i}
 \Bigr)
 \boxtimes \C _{{\rm Stab}(\deltaP)}\Bigr)
 \Ind
 _{{\rm Stab}((a^b)+\gamma)\times {\rm Stab}(\deltaP)} ^{\W_r}
 \]
via the canonical quotient map $P_r(\delta_{\rm in}\delta_{\rm out})\to \C  \W_r$ 
{\color{red}\color{black}   from  \cref{bob2}.}
\end{thm}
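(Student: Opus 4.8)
The plan is to combine Proposition~\ref{Stepp1}, which reduces the problem to a fixed type $(\gamma,\deltaP)$, with an explicit identification of the bimodule $V^0_r\bigl((a^b):\gamma,\deltaP\bigr)$ as an induced module, followed by applying $\rightspecht{\alpha^\beta}\otimes_{\W_a\wr\W_b}(-)$ and a Littlewood--Richardson splitting of the outer partition $\beta$. First I would fix notation: write $\gamma=(\Cee^{c_\Cee},\dots,1^{c_1},0^{c_0})$, so that $c_\Cee+\cdots+c_1+c_0=b$ and $\Stab((a^b)+\gamma)=\prod_{i=0}^\Cee \W_{a+i}\wr\W_{c_i}$, and similarly $\Stab(\deltaP)$ as in Definition~\ref{defn:stabiliserSubgroup}. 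The horizontal concatenation defining $v_{\gamma,\deltaP}$ in Definition~\ref{defn:vGammaEpsilonDiagram} groups the $b$ propagating outer blocks into $c_\Cee$ blocks of type $v_{a,\Cee}$, then $c_{\Cee-1}$ of type $v_{a,\Cee-1}$, and so on, then appends the non-propagating factors $v_{\varnothing,\deltaP_j}$. The key structural observation is that the left $\W_a\wr\W_b$-action and the right $\W_r$-action on the diagrams of a fixed type make $V^0_r\bigl((a^b):\gamma,\deltaP\bigr)$ a transitive-type object: fixing the canonical diagram $v_{\gamma,\deltaP}$, its right $\W_r$-stabiliser together with the left $\W_a\wr\W_b$-symmetry is exactly a copy of $\Stab((a^b)+\gamma)\times\Stab(\deltaP)$, sitting compatibly inside both $\W_a\wr\W_b$ (acting only on the first $ab$ bottom vertices through the propagating pairs) and $\W_r$.

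The main computation splits into three pieces, two of which are already done. The purely non-propagating part is Lemma~\ref{DQdecomp1}: $V^0_\Dee(\varnothing:\varnothing,\deltaP)\cong\C\Ind_{\Stab(\deltaP)}^{\W_\Dee}$ as right $\W_\Dee$-modules. The ``rectangular'' propagating part with a single block-size is Lemma~\ref{DQdecomp2}, which after applying $\rightspecht{\alpha}\oslash\rightspecht{\beta'}\otimes_{\W_a\wr\W_{c}}(-)$ gives, for a single multiplicity block $\gamma=(s^c)$,
\[
\bigl((\rightspecht{\alpha}\boxtimes\C)\Ind_{\W_a\times\W_s}^{\W_{a+s}}\,\oslash\,\rightspecht{\beta'}\bigr)\Ind_{\W_{a+s}\wr\W_c}^{\W_{(a+s)c}}.
\]
The next step is to assemble these: by the horizontal-concatenation structure of $v_{\gamma,\deltaP}$, the bimodule $V^0_r\bigl((a^b):\gamma,\deltaP\bigr)$ is the ``outer tensor product over concatenation'' of the pieces $V^0_{(a+i)c_i}\bigl((a^{c_i}):(i^{c_i}),\varnothing\bigr)$ for $i=0,\dots,\Cee$ together with $V^0_\Dee(\varnothing:\varnothing,\deltaP)$, and concatenation of diagram bimodules corresponds on the module side to induction from $\Stab((a^b)+\gamma)\times\Stab(\deltaP)$ up to $\W_r$ (this is the ramified analogue of the standard fact that horizontal concatenation realises induction product; compare Section~\ref{hoz} and the proof of Lemma~\ref{DQdecomp2}). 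Thus $\rightspecht{\alpha}^{\otimes b}\otimes\bigl(\bigotimes_i\rightspecht{\beta^i}\otimes\C_{\Stab(\deltaP)}\bigr)\otimes_{\Stab((a^b)+\gamma)\times\Stab(\deltaP)}V^0_r((a^b):\gamma,\deltaP)$ becomes the induced module displayed in the theorem, once one has chosen on each factor the distributions $\beta^i\vdash c_i$ of $\beta$ across the $\W_{a+i}\wr\W_{c_i}$ pieces.

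The remaining ingredient is the passage from $\rightspecht{\alpha^\beta}=\rightspecht{\alpha}\oslash\rightspecht{\beta}$ to the sum over $\beta^0,\dots,\beta^\Cee$: since the relevant wreath-base structure inside $\W_a\wr\W_b$ visible on $V^0_r((a^b):\gamma,\deltaP)$ only sees the decomposition $\W_b\geq\W_{c_\Cee}\times\cdots\times\W_{c_1}\times\W_{c_0}$ of the top group, we restrict $\rightspecht{\beta}$ along this Young subgroup using the generalised Littlewood--Richardson rule of~\eqref{eq:genLR} (right-module version), producing the coefficients $c^\beta_{\beta^\Cee,\dots,\beta^1,\beta^0}$ and the tensor factors $\rightspecht{\beta^i}$ feeding into the $i$-th piece as its ``outer'' wreath label, exactly yielding $(\rightspecht{\alpha}\boxtimes\C)\Ind_{\W_a\times\W_i}^{\W_{a+i}}\oslash\rightspecht{\beta^i}$; the $\rightspecht{\alpha}$ factor on each propagating pair survives unchanged because the base-group $\W_a$-action commutes with $v_{a,i}$ as noted in Lemma~\ref{DQdecomp2}. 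Collecting the three parts and inducing up gives the stated formula. The main obstacle I anticipate is the bookkeeping in the assembly step --- verifying carefully that concatenation of the diagram bimodules really is induction (that there are no spurious outer crossings or identifications, and that the chosen coset representatives from Lemma~\ref{DQdecomp2} patch together across different block-sizes $i$ without over- or under-counting), so that the dimension count matches; everything else is a direct application of the two lemmas and the Littlewood--Richardson rule.
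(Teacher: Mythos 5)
Your proposal is correct and follows essentially the same route as the paper: split off the non-propagating part via Lemma~\ref{DQdecomp1}, factor the propagating part into the rectangular pieces $V^0_{(a+i)c_i}\bigl((a^{c_i}):(i^{c_i}),\varnothing\bigr)$ (realising concatenation as induction), restrict $\rightspecht{\alpha}\oslash\rightspecht{\beta}$ along $\prod_i \W_a\wr\W_{c_i}$ to produce the generalized Littlewood--Richardson coefficients, and apply Lemma~\ref{DQdecomp2} to each factor before inducing up. The "assembly" concern you flag is handled in the paper exactly as you anticipate, by writing each diagram as $\vartheta\bigl((v_{a,\Cee})^{\circledast c_\Cee}\circledast\cdots\circledast(v_{a,0})^{\circledast c_0}\bigr)\sigma$ and relying on the dimension count already done in Lemma~\ref{DQdecomp2}.
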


\begin{proof}
We first note that as an $\C\W_r$-module, 
\[ V^0_r\bigl( (a^b): \gamma,\deltaP \bigr) \cong  \bigl(V^0_{\Cee +ab}\bigl((a^b): \gamma,\varnothing\bigr) \boxtimes  V^0_\Dee \bigl( \varnothing: \varnothing,\deltaP)\bigr) \Ind_{\W_{\Cee +ab} \times \W_\Dee } ^{\W_r},
\]
simply because the action of $\W_r$
permutes propagating outer blocks amongst themselves  and  permutes non-propagating outer blocks amongst themselves.  Thus, by \cref{DQdecomp1} and the transitivity of induction, it suffices  to show that  
  $\rightspecht{\alpha^\beta}\otimes V^0_{ab+\Cee}\bigl((a^b): \gamma,\varnothing \bigr) $  
 decomposes as follows: 
\begin{align}\label{ajkglhjdflkhgdsl} 
\bigoplus _{\beta^i \vdash c_i} 
 c^\beta_{\beta^\Cee, \ldots, \beta^1, \beta^0}
 \Bigl( \bigotimes  _{i=0}^\Cee
( \rightspecht {\alpha} \boxtimes \C ) \Ind_{\W_a\times \W_{ i }}^{\W_{a+i}} 
\:\oslash\; 
\rightspecht {\beta^i}  \Bigr)\Ind_{{\rm Stab}((a^b)+\gamma)}^{\W_{ab+\Cee}}.
\end{align}
We prove this statement. Firstly, for any right $\C\W_a\wr \W_{b}$-module $X$, 
\[ X\otimes_{\W_a\wr \W_{b}} V^0_{ab+\Cee}\bigl((a^b): \gamma,\varnothing \bigr)
\cong 
X \Res_{\W_a\wr \W_{\underline{\bf c}} }
\otimes_{\W_a\wr \W_{\underline{\bf c}}}  
\Bigl( \bigotimes  _{i=0}^{\Cee}
V^0_{(a+i)c_i}\bigl((a^{c_i}): (i^{c_i}),\varnothing \bigr)
\Bigr)\Ind_{\prod_i \W_{(a+i)c_i} }^{\W_{ab+\Cee}},
\]
where $\underline{\bf c}=(c_p, \dots, c_1, c_0)$.
To see this, observe that a ramified diagram $v \in V^0_{ab+\Cee}((a^b): \gamma,\varnothing )$ may be written as 
\[v= \vartheta  \bigl(  (v_{a,\Cee})^{\circledast c_\Cee} \circledast \dots \circledast (v_{a,1})^{\circledast c_1} \circledast (v_{a,0})^{\circledast c_0}   \bigr) \sigma,$$
for $ \vartheta  \in \W_a\wr \W_b$ and $\sigma \in \W_{ab+\Cee}$.
(Recall that $c_i$ is the multiplicity of $i$ as a part of $\gamma$.)
Then, for $x \in X$, the isomorphism sends
  $$x \otimes v\mapsto 
 x\vartheta  \otimes  \bigl((v_{a,\Cee})^{\circledast c_\Cee} \circledast \dots \circledast (v_{a,1})^{\circledast c_1} \circledast (v_{a,0})^{\circledast c_0}\bigr) \sigma.
 \] 
Therefore 
 $\rightspecht {\alpha^\beta}\otimes V^0_{ab+\Cee}\bigl((a^b): \gamma,\varnothing ) $  is isomorphic to
\[ 
(\rightspecht {\alpha}\oslash \rightspecht {\beta}) \Res_{\W_a\wr \W_{\underline{\bf c}} }
\otimes_{\W_a\wr \W_{\underline{\bf c}}}  
\Bigl( \bigotimes  _{i=0}^\Cee
V^0_{(a+i)c_i}\bigl((a^{c_i}): (i^{c_i}),\varnothing \bigr)
\Bigr)\Ind_{\prod_{i=0}^\Cee \W_{(a+i)c_i}}^{\W_{ab+\Cee}}.
\]
Using \cite[Lemma 3.3(2)]{ChuangTan}, this is isomorphic to
\[ \bigoplus _{\beta^i \vdash c_i} 
  c^\beta_{\beta^\Cee, \ldots, \beta^1, \beta^0}
\Bigl(\bigotimes_{i=0}^\Cee 
\rightspecht {\alpha} \oslash \rightspecht {\beta^i} \Bigr) 
\otimes_{\W_a\wr \W_{\underline{\bf c}}}  
\Bigl( \bigotimes  _{i=0}^\Cee
V^0_{(a+s)c_i}\bigl((a^{c_i}): (i^{c_i}),\varnothing \bigr)
\Bigr)\Ind_{\prod_{i=0}^\Cee \W_{(a+i)c_i}}^{\W_{ab+\Cee}}.
\]
where $c^\beta_{\beta^\Cee, \ldots, \beta^1, \beta^0}$ is the generalized Littlewood--Richardson coefficient
defined in~\eqref{eq:genLR}.
Regrouping terms, this becomes
\[ \bigoplus _{\beta^i \vdash c_i} 
  c^\beta_{\beta^\Cee, \ldots, \beta^1, \beta^0} \Bigl(
  \bigotimes_{i=0}^\Cee
  \Bigl(  ( \rightspecht{\alpha} \oslash 
  \rightspecht{\beta^i}) \otimes_{\W_a\wr \W_{c_i}}
V^0_{(a+i)c_i}\bigl((a^{c_i}): (i^{c_i}),\varnothing \bigr)  \Bigr)  
   \Bigr)\Ind_{\prod_{i=0}^\Cee \W_{(a+i)c_i}}^{\W_{ab+\Cee}}.
\]
\Cref{DQdecomp2} provides the isomorphism to
\[ \bigoplus _{\beta^i \vdash c_i} 
  c^\beta_{\beta^\Cee, \ldots, \beta^1, \beta^0} \Bigl(
  \bigotimes_{i=0}^\Cee
 \bigl( (\rightspecht {\alpha} \otimes \mathbb{C}_{\Sym_i} )\Ind_{\Sym_a \times \Sym_i}^{\Sym_{a+i}}\, \oslash\,
 \rightspecht {\beta^i} \bigr)
 \Bigr)\Ind_{\prod_{i=0}^\Cee \W_{(a+i)c_i}}^{\W_{ab+\Cee}},
\]
 and then transitivity of induction yields the desired statement.  
\end{proof}


\subsection{Proofs of Theorems A and D} We are now ready to prove \TheoremD and as a corollary, \TheoremA.

\begin{proof}[Proof of \TheoremD]
The full decomposition
of the depth quotient is 
obtained from Proposition~\ref{Stepp1}  and Theorem~\ref{thm:DQdecomp} by summing over all partitions 
$\gamma \in \ParSet _{(a^b)}(\Cee)$ and $\deltaP \vdash  \Dee$ such that $\Cee + \Dee = r - ab$,
using Corollary~\ref{cor:stable1} and the first case 
of Corollary~\ref{cor:compfactors+DR}, applied with $\lambda = \kappa$. \TheoremD then 
follows from Corollary~\ref{cor:compfactors+DR}.
\end{proof}


\begin{proof}[Proof of \TheoremA]
This follows immediately from the case $\alpha = \varnothing$ of \TheoremC and \TheoremD.
\end{proof}

\section{Examples and applications}\label{egs}
We shall write $\rc$ for the ramified branching coefficients formally defined
in Definition~\ref{defn:ramifiedBranchingCoefficient} and determined
in \TheoremD. Thus
\begin{equation}
\label{eq:rc}
\rc( \alpha^\beta, \kappa) =  \bigl[\Delta_r(\alpha^\beta)\Res^{\ram_r(m,n)}_{P_r(mn)} : L_r(\kappa)\bigr]\ForPrmn. 
\end{equation}
The values of $m$, $n$ and $r$ will be clear from context.
By the final part of \TheoremC, when $\alpha = \varnothing$ we have
$p(\beta[n],(m), \kappa[mn]) = \rc(\varnothing^\beta,\kappa)$
provided $m \ge r - |\beta| + [\beta \not= \varnothing]$ and $n \ge r + \beta_1$.

\subsection{Examples of Theorems~A, C and D}
We consider
\[ \Delta_5 (\varnothing^{(2,1)})\Res^{\ram_5(m,n)}_{P_5(mn)}, 
\quad 
 \Delta_5 \bigl( (1)^{(2,1)}\bigr)\Res^{\ram_5(m,n)}_{P_5(mn)}      
\]
and find all the  composition factors $L_5(\kappa)$ for $\kappa\vdash 5$
of these modules
by decomposing the depth quotient.
We have chosen to change only the partition $\alpha$ (from $\varnothing$ to $(1)$) as this minor change results in big changes in the ramified branching coefficient
$[\Delta_r(\alpha^\beta)\res^{\ram_r(m,n)}_{P_r(mn)} : L_r(\kappa)]_{P_5(mn)}$
 and even bigger changes in the stable 
plethysm coefficients. Indeed, as we discussed in Section~\ref{subsec:alphaEmptyVersusNonEmpty},
in the first case we obtain the stable values of
$p\bigl( (n-3,2,1), (m), \kappa[mn]\bigr)$ for $\kappa \vdash 5$
and arbitrary $m$ \emph{and} $n$, whereas in the second case 
we obtain the stable values of
$p\bigl( (2,1), (m-1,1), \kappa[mn]\bigr)$ for $\kappa \vdash 5$ and
arbitrary $m$; the outer partition $(2,1)$ is now fixed.

\begin{eg}\label{eg:alphaEmpty} 
We take $\alpha=\varnothing$ and $\beta=(2,1)$ and $\kappa \vdash 5$. 
By \TheoremC, provided $n \ge 7$ and $m \ge 3$ 
we have $p\bigl((n-3,2,1), (m), \kappa[mn]\bigr) = \rc\bigl( \varnothing^{(2,1)}, \kappa 
\bigr)$. 
We shall derive below the stable plethysm and ramified branching coefficients 
\begin{align*}
p\bigl((n-3,2,1),(	m), (mn-5,5)	\bigr)&=2=	\rc \bigl( \varnothing^{(2,1)}, (5)\bigr) , \\ 
p\bigl((n-3,2,1),(	m), (mn-5,4,1)	\bigr)&=5=\rc \bigl(  \varnothing^{(2,1)}, (4,1)\bigr) , \\ 
p\bigl((n-3,2,1),(	m), (mn-5,3,2)	\bigr)&=4=\rc \bigl(  \varnothing^{(2,1)}, (3,2)\bigr) , \\
p\bigl((n-3,2,1),(	m), (mn-5,3,1^2)	\bigr)&=3=\rc \bigl(  \varnothing^{(2,1)}, (3,1^2)\bigr) , \\
p\bigl((n-3,2,1),(	m), (mn-5,2^2,1)	\bigr)&=2=\rc \bigl(  \varnothing^{(2,1)}, (2^2,1)\bigr), \\
p\bigl((n-3,2,1),(	m), (mn-5,2 ,1^3)	\bigr)&=0=\rc \bigl(  \varnothing^{(2,1)}, (2,1^3)\bigr) ,\\
p\bigl((n-3,2,1),(	m), (mn-5,1^5)	\bigr)&=0=\rc\bigl(  \varnothing^{(2,1)}, (1^5)\bigr) 
\end{align*}
for $m$ and $n$ satisfying these bounds. We decompose the depth quotient 
${\sf DQ} \bigl(\Delta_5(\varnothing^{(2,1)})\bigr)$
as in Theorem~\ref{thm:DQdecomp} thereby computing the coefficients above
by the formula in Theorems~\hyperlink{thmA}{A} and~\hyperlink{thmD}{D}.
 There are three summands of ${\sf DQ} \bigl(\Delta_5(\varnothing^{(2,1)})\bigr)$ which are of interest.  
 These are generated by the diagrams $v_{(\gamma,\deltaP)}$ depicted in \cref{M1M2M3empty}.
 To see this from the formulae, note that $|\gamma|+|\deltaP|=5-0\times 3=5$ and 
 since $\alpha = \varnothing$ and $|\beta| = 3$,
the partition $\gamma$ has three non-zero parts. As always,
 $\deltaP$ has no parts of size 1 since non-propagating blocks may not be singletons
 (such basis elements lie in the depth radical). Each outer block has at least one southern dot,
 so we have two further dots to place. Our options are as follows:
\begin{itemize}[leftmargin=18pt] 
\item place both extra dots in the {\em same}   propagating block (set $ \gamma=(3,1^2)$) leaving no extra dots to place in a non-propagating block (that is, $\deltaP=\varnothing$).
 \item place  each extra dot in a {\em separate}   propagating block (set $ \gamma=(2^2,1)$) leaving no extra dots to place in a non-propagating block (that is, $\deltaP=\varnothing$).
 \item place both extra dots in the {\em same} non-propagating block (set $\deltaP = (2)$ and $\gamma = (1^3)$) 
 \end{itemize}

\begin{figure}[ht!]
 $$\scalefont{0.8} \begin{tikzpicture}[xscale=0.45,yscale=-0.45]
  
       \foreach \x in {0,1,2,3,4,5}
     {  
      \path(\x*2,0) coordinate (down\x);  
   }

       \foreach \x in {0,1,2,3,4,5,6}
    {   \path(\x*2,3) coordinate (up\x);  
    }

     \foreach \x in {0,1,2,3,4,5,6}
     {   \path(up\x) --++ (135:0.6) coordinate(up1\x);  
      \path(up\x) --++ (45:0.6) coordinate(up2\x);
            \path(up\x) --++ (-45:0.6) coordinate(up3\x);  
                        \path(up\x) --++ (-135:0.6) coordinate(up4\x);  
   }

    \foreach \x in {0,1,2,3,4,5}
     {   \path(down\x) --++ (135:0.6) coordinate(down1\x);  
      \path(down\x) --++ (45:0.6) coordinate(down2\x);
            \path(down\x) --++ (-45:0.6) coordinate(down3\x);  
                        \path(down\x) --++ (-135:0.6) coordinate(down4\x);  
   }
    
   \draw [fill=white] plot [smooth cycle]
  coordinates {(up10) (up22) (down31)   (down40)  };
 
     \foreach \x in {3}
     {   \path(up\x) --++ (135:0.6) coordinate(up14);  
      \path(up4) --++ (45:0.6) coordinate(up24);
            \path(up4) --++ (-45:0.6) coordinate(up34);  
                        \path(up4) --++ (-135:0.6) coordinate(up44);  
   }

    \foreach \x in {3}
     {   \path(down4) --++ (135:0.6) coordinate(down14);  
      \path(down4) --++ (45:0.6) coordinate(down24);
            \path(down4) --++ (-45:0.6) coordinate(down34);  
                        \path(down4) --++ (-135:0.6) coordinate(down44);  
   }

   \draw [fill=white] plot [smooth cycle]
  coordinates {(up13) (up23)  (down33)   (down43)  };

     \foreach \x in {4}
     {   \path(up\x) --++ (135:0.6) coordinate(up14);  
      \path(up4) --++ (45:0.6) coordinate(up24);
            \path(up4) --++ (-45:0.6) coordinate(up34);  
                        \path(up4) --++ (-135:0.6) coordinate(up44);  
   }

    \foreach \x in {4 }
     {   \path(down4) --++ (135:0.6) coordinate(down14);  
      \path(down4) --++ (45:0.6) coordinate(down24);
            \path(down4) --++ (-45:0.6) coordinate(down34);  
                        \path(down4) --++ (-135:0.6) coordinate(down44);  
   }
  
   \draw [fill=white] plot [smooth cycle]
  coordinates {(up14) (up24)  (down34)   (down44)  };

        \foreach \x in {0,2,4,6,8}
     {   \fill[white](\x,3) circle (4pt);   
                        \draw (\x,3) node {$\bullet$}; 
                  }
                
      \foreach \x in {0,6,8}
     {   \fill[white](\x,0) circle (4pt);   
                        \draw (\x,0) node {$\bullet$}; 
                  }
                  
\draw[rounded corners ,densely dotted ] (-0.25,-1)  rectangle( 8.25,-2.5)   node[midway] 
{$\color{red}\color{black} \bigl(1 - (1,3) \bigr)\bigl( 1 + (1,2) \bigr)$};
         \end{tikzpicture} \!\!\!\!\!\!
 \begin{tikzpicture}[xscale=0.45,yscale=-0.45]
  
       \foreach \x in {0,1,2,3,4,5}
     {  
      \path(\x*2,0) coordinate (down\x);  
   }

       \foreach \x in {0,1,2,3,4,5,6}
    {   \path(\x*2,3) coordinate (up\x);  
    }

     \foreach \x in {0,1,2,3,4,5,6}
     {   \path(up\x) --++ (135:0.6) coordinate(up1\x);  
      \path(up\x) --++ (45:0.6) coordinate(up2\x);
            \path(up\x) --++ (-45:0.6) coordinate(up3\x);  
                        \path(up\x) --++ (-135:0.6) coordinate(up4\x);  
   }

    \foreach \x in {0,1,2,3,4,5}
     {   \path(down\x) --++ (135:0.6) coordinate(down1\x);  
      \path(down\x) --++ (45:0.6) coordinate(down2\x);
            \path(down\x) --++ (-45:0.6) coordinate(down3\x);  
                        \path(down\x) --++ (-135:0.6) coordinate(down4\x);  
   }
   
   \draw [fill=white] plot [smooth cycle]
  coordinates {(up10) (up21) (down31)   (down40)  };

     \foreach \x in {3}
     {   \path(up\x) --++ (135:0.6) coordinate(up14);  
      \path(up4) --++ (45:0.6) coordinate(up24);
            \path(up4) --++ (-45:0.6) coordinate(up34);  
                        \path(up4) --++ (-135:0.6) coordinate(up44);  
   }

    \foreach \x in {3}
     {   \path(down4) --++ (135:0.6) coordinate(down14);  
      \path(down4) --++ (45:0.6) coordinate(down24);
            \path(down4) --++ (-45:0.6) coordinate(down34);  
                        \path(down4) --++ (-135:0.6) coordinate(down44);  
   }

   \draw [fill=white] plot [smooth cycle]
  coordinates {(up12) (up23)  (down33)   (down42)  };

     \foreach \x in {4}
     {   \path(up\x) --++ (135:0.6) coordinate(up14);  
      \path(up4) --++ (45:0.6) coordinate(up24);
            \path(up4) --++ (-45:0.6) coordinate(up34);  
                        \path(up4) --++ (-135:0.6) coordinate(up44);  
   }

    \foreach \x in {4 }
     {   \path(down4) --++ (135:0.6) coordinate(down14);  
      \path(down4) --++ (45:0.6) coordinate(down24);
            \path(down4) --++ (-45:0.6) coordinate(down34);  
                        \path(down4) --++ (-135:0.6) coordinate(down44);  
   }

   \draw [fill=white] plot [smooth cycle]
  coordinates {(up14) (up24)  (down34)   (down44)  };

        \foreach \x in {0,2,4,6,8}
     {   \fill[white](\x,3) circle (4pt);   
                        \draw (\x,3) node {$\bullet$}; 
                  }
                
      \foreach \x in {0,4,8}
     {   \fill[white](\x,0) circle (4pt);   
                        \draw (\x,0) node {$\bullet$}; 
                  }

\draw[rounded corners ,densely dotted ] (-0.25,-1)  rectangle( 8.25,-2.5)   node[midway] 
                        {$\color{red}\color{black} \bigl(1 - (1,3) \bigr)\bigl( 1 + (1,2) \bigr)$};

         \end{tikzpicture} \!\!\!\!\!\!
 \begin{tikzpicture}[xscale=0.45,yscale=-0.45]
  
       \foreach \x in {0,1,2,3,4,5}
     {  
      \path(\x*2,0) coordinate (down\x);  
   }

       \foreach \x in {0,1,2,3,4,5,6}
    {   \path(\x*2,3) coordinate (up\x);  
    }

     \foreach \x in {0,1,2,3,4,5,6}
     {   \path(up\x) --++ (135:0.6) coordinate(up1\x);  
      \path(up\x) --++ (45:0.6) coordinate(up2\x);
            \path(up\x) --++ (-45:0.6) coordinate(up3\x);  
                        \path(up\x) --++ (-135:0.6) coordinate(up4\x);  
   }

    \foreach \x in {0,1,2,3,4,5}
     {   \path(down\x) --++ (135:0.6) coordinate(down1\x);  
      \path(down\x) --++ (45:0.6) coordinate(down2\x);
            \path(down\x) --++ (-45:0.6) coordinate(down3\x);  
                        \path(down\x) --++ (-135:0.6) coordinate(down4\x);  
   }

   \draw [fill=white] plot [smooth cycle]
  coordinates {(up10) (up20) (down30)   (down40)  };
   \draw [fill=white] plot [smooth cycle]
  coordinates {(up11) (up21) (down31)   (down41)  };
   \draw [fill=white] plot [smooth cycle]
  coordinates {(up12) (up22) (down32)   (down42)  };

   \draw [fill=white] plot [smooth cycle]
  coordinates {(up13) (up24) (up34)   (up43)  };

        \foreach \x in {0,2,4,6,8}
     {   \fill[white](\x,3) circle (4pt);   
                        \draw (\x,3) node {$\bullet$}; 
                  }
                
      \foreach \x in {0,2,4}
     {   \fill[white](\x,0) circle (4pt);   
                        \draw (\x,0) node {$\bullet$}; 
                  }

\draw[rounded corners ,densely dotted ] (-0.25,-1)  rectangle( 8.25,-2.5)   node[midway] 
{$\color{red}\color{black} \bigl(1 - (1,3) \bigr)\bigl( 1 + (1,2) \bigr)$};                        

         \end{tikzpicture} 
$$
\caption{ The generators $c_{(2,1)}^\ast	\otimes v_{(\gamma,\deltaP)}$ for 
$ {(\gamma,\deltaP)}$ equal to $ \bigl((3,1^2),\varnothing\bigr)$ and  
$\bigl((2^2,1),\varnothing\bigr)$ and 
$\bigl((1^3),(2)\bigr)$ respectively. 
These generate  the direct summands, which we denote by $M_1$, $M_2$, and $M_3$
of ${\sf DQ} \bigl(\Delta_5(\varnothing^{(2,1)})\bigr)$. }
\label{M1M2M3empty}
\end{figure}

 For $M_1$ we have $\gamma=(3,1^2)$ so the multiplicities of the  parts (read as throughout in decreasing order)
 are $1$ and $2$ and we restrict the Specht module $\rightspecht{(2,1)}$ to $\W_1 \times \W_2$, obtaining
\begin{align}\label{M1}
\rightspecht {(2,1)} \Res_{\W_1 \times \W_2}^{\W_{3}} =   \rightspecht {(1)} \otimes 
\rightspecht {(2)}
 \hskip1pt \oplus\hskip1pt     \rightspecht {(1)} \otimes \rightspecht {(1^2) }.
  \end{align}
Since   $\varepsilon=\varnothing $ we have 
 \begin{align*}
M_1 &\cong 
 \bigl(  
 (\rightspecht {(1) }\otimes 
\rightspecht { (2) }
  \oplus     \rightspecht { (1)} \otimes \rightspecht {(1^2)}\bigr)
  \Ind_{\W_1 \times \W_2}^{\W_{3}\times \W_2}  
  \Ind_{\W_{3}\times \W_2}^{\W_5}
\,\cong \,
\rightspecht {(5)} \oplus 2 \rightspecht {  (4,1) }\oplus  \rightspecht {(3,2)} \oplus \rightspecht {(3,1^2)} .\end{align*}
 
 For $M_2$ we have  $\gamma=(2^2,1)$  
so the multiplicities of the parts are now $2$ and $1$ and we take a similar restriction
\begin{align}\label{M2}
\rightspecht {(2,1)} {\Res}_{\W_2 \times \W_1}^{\W_{3}} = \rightspecht { (2)} \otimes \rightspecht { (1) }
\hskip1pt\oplus \hskip1pt \rightspecht { (1^2)} \boxtimes  \rightspecht { (1) }.\end{align}
Since   $\varepsilon=\varnothing $ we have
 \begin{align*}
M_2 &\cong 
\bigl((\rightspecht {(2) }\oslash  \rightspecht {(1^2) })\boxtimes \rightspecht { (1)} \bigr)
\Ind_{\W_2\wr \W_2\times \W_1}^{\W_5}
\hskip1pt\oplus \hskip1pt
\bigl( (\rightspecht { (2)} \oslash\rightspecht { (2)} )\boxtimes \rightspecht { (1)}
\bigr) \Ind_{\W_2\wr \W_2\times \W_1}^{\W_5}
\\
&\cong
\rightspecht {  (5) }\oplus 
2
\rightspecht {  (4,1) }\oplus 
2
\rightspecht {  (3,2)} \oplus 
\rightspecht { (3,1^2)} \oplus
\rightspecht {  (2^2,1)}\end{align*}

 For $M_3$ we have  $\gamma=(1^3)$ 
and so  we do not restrict the Specht module $\rightspecht {(2,1)}$. 
Since $\epsilon = (2)$ we have
\[ M_3\cong (
\rightspecht { (2,1) }\otimes \C _{\W_2}	)\Ind_{\W_3\times \W_2}^{\W_5}\cong  
\rightspecht { (4,1)} \oplus  \rightspecht { (3,2)} \oplus \rightspecht {  (3,1^2)}  
\oplus \rightspecht { (2^2,1)}. \]

Summing over the coefficients appearing in the decompositions of $M_1,$ $M_2$ and $M_3$ we obtain the ramified branching coefficients as stated at the beginning of this example. 
\end{eg}

\begin{eg}\label{eg:alphaNonEmpty}
We now take $\alpha=(1)$ and keep $\beta=(2,1)$ and $\kappa \vdash 5$. Now $n = |\beta| = 3$ is fixed.
By \TheoremC we have
\[ p\bigl((2,1), (m-1,1), \kappa{[3m]}\bigr) \le \rc\bigl( (1)^{(2,1)}, \kappa
\bigr). \] 
(Note that the hypothesis $r \ge n|\alpha|$ holds because $5 \ge 3 \times 1$.)
By Theorem~1.2 in \cite{deBoeckPagetWildon} or the theorem proved in \S 2.1 of \cite{BrionStability}, 
given any partition $\kappa \vdash 5$ 
the plethysm coefficient
$p\bigl( (2,1), (m-1,1), \kappa[12] + (3m-12) \bigr)$ 
is constant for all $m \ge 4$. 


We shall derive the following stable plethysm and ramified branching coefficients:
\begin{align*}
p\bigl(( 2,1),(	m-1,1), (3m-5,5)	\bigr)&= 1\leq 2 = 		\rc\bigl((1)^{(2,1)},(5)\bigr)	, \\ 
p\bigl(( 2,1),(	m-1,1), (3m-5,4,1)		\bigr)&= 3
\leq 6 = 		\rc\bigl((1)^{(2,1)},(4,1)\bigr)
, \\ 
p\bigl(( 2,1),(	m-1,1), (3m-5,3,2)		)&= 4
\leq 7 = 		\rc\bigl((1)^{(2,1)},(3,2)\bigr)
, \\ 
p\bigl(( 2,1),(	m-1,1), (3m-5,3,1^2)	\bigr)&= 4\leq 6  = 		\rc\bigl((1)^{(2,1)},(3,1^2)\bigr), \\ 
p\bigl(( 2,1),(	m-1,1), (3m-5,2^2,1)		\bigr)&=  3
\leq 6 = 		\rc((1)^{(2,1)},(2^2,1)\bigr),
\\
p\bigl(( 2,1),(	m-1,1), (3m-5,2 ,1^3)		\bigr)&=  2\leq 3 = 		\rc\bigl((1)^{(2,1)},(2,1^3)\bigr),
\\
p\bigl(( 2,1),(	m-1,1), (3m-5, 1^5)		\bigr)&=0\leq1 = 		\rc\bigl((1)^{(2,1)},(1^5)\bigr)
\end{align*}
for   $m\geq 4$. Note that in contrast to the previous case, the  
outer partition in the plethysm is fixed as $(2,1)$ and only the inner partition $(m-1,1)$ varies.
We notice that none of the bounds are sharp  in this case. 
The stable values of the plethysm coefficients can easily be 
calculated using computer algebra.  
We now calculate the ramified branching coefficients. 
Again using Theorem~\ref{thm:DQdecomp}, 
there are three summands of $\DQ \bigl(\Delta_5((1)^{(2,1)})\bigr)$ which are of interest.  
 These are generated by the diagrams $ v_{(\gamma,\deltaP)}$ depicted in \cref{N1N2N3}.

\begin{figure}[ht!]
 $$\scalefont{0.8} \begin{tikzpicture}[xscale=0.45,yscale=-0.45]
  
       \foreach \x in {0,1,2,3,4,5}
     {  
      \path(\x*2,0) coordinate (down\x);  
   }

       \foreach \x in {0,1,2,3,4,5,6}
    {   \path(\x*2,3) coordinate (up\x);  
    }

     \foreach \x in {0,1,2,3,4,5,6}
     {   \path(up\x) --++ (135:0.6) coordinate(up1\x);  
      \path(up\x) --++ (45:0.6) coordinate(up2\x);
            \path(up\x) --++ (-45:0.6) coordinate(up3\x);  
                        \path(up\x) --++ (-135:0.6) coordinate(up4\x);  
   }

    \foreach \x in {0,1,2,3,4,5}
     {   \path(down\x) --++ (135:0.6) coordinate(down1\x);  
      \path(down\x) --++ (45:0.6) coordinate(down2\x);
            \path(down\x) --++ (-45:0.6) coordinate(down3\x);  
                        \path(down\x) --++ (-135:0.6) coordinate(down4\x);  
   }

   \draw [fill=white] plot [smooth cycle]
  coordinates {(up10) (up22) (down31)   (down40)  };
 
     \foreach \x in {3}
     {   \path(up\x) --++ (135:0.6) coordinate(up14);  
      \path(up4) --++ (45:0.6) coordinate(up24);
            \path(up4) --++ (-45:0.6) coordinate(up34);  
                        \path(up4) --++ (-135:0.6) coordinate(up44);  
   }

    \foreach \x in {3}
     {   \path(down4) --++ (135:0.6) coordinate(down14);  
      \path(down4) --++ (45:0.6) coordinate(down24);
            \path(down4) --++ (-45:0.6) coordinate(down34);  
                        \path(down4) --++ (-135:0.6) coordinate(down44);  
   }

   \draw [fill=white] plot [smooth cycle]
  coordinates {(up13) (up23)  (down33)   (down43)  };

     \foreach \x in {4}
     {   \path(up\x) --++ (135:0.6) coordinate(up14);  
      \path(up4) --++ (45:0.6) coordinate(up24);
            \path(up4) --++ (-45:0.6) coordinate(up34);  
                        \path(up4) --++ (-135:0.6) coordinate(up44);  
   }

    \foreach \x in {4 }
     {   \path(down4) --++ (135:0.6) coordinate(down14);  
      \path(down4) --++ (45:0.6) coordinate(down24);
            \path(down4) --++ (-45:0.6) coordinate(down34);  
                        \path(down4) --++ (-135:0.6) coordinate(down44);  
   }

   \draw [fill=white] plot [smooth cycle]
  coordinates {(up14) (up24)  (down34)   (down44)  };

        \foreach \x in {0,2,4,6,8}
     {   \fill[white](\x,3) circle (4pt);   
                        \draw (\x,3) node {$\bullet$}; 
                  }
      
      \draw(0,0)--(0,3);
                
\draw(6,0)--(6,3);                
\draw(8,0)--(8,3);                
                
      \foreach \x in {0,6,8}
     {   \fill[white](\x,0) circle (4pt);   
                        \draw (\x,0) node {$\bullet$}; 
                  }

\draw[rounded corners ,densely dotted ] (-0.25,-1)  rectangle( 8.25,-2.5)   node[midway] 
   {$\color{red}\color{black} \bigl(1 - (1,3) \bigr)\bigl( 1 + (1,2) \bigr)$};                     

         \end{tikzpicture} \!\!\!\!\!\!
 \begin{tikzpicture}[xscale=0.45,yscale=-0.45]
  
       \foreach \x in {0,1,2,3,4,5}
     {  
      \path(\x*2,0) coordinate (down\x);  
   }

       \foreach \x in {0,1,2,3,4,5,6}
    {   \path(\x*2,3) coordinate (up\x);  
    }

     \foreach \x in {0,1,2,3,4,5,6}
     {   \path(up\x) --++ (135:0.6) coordinate(up1\x);  
      \path(up\x) --++ (45:0.6) coordinate(up2\x);
            \path(up\x) --++ (-45:0.6) coordinate(up3\x);  
                        \path(up\x) --++ (-135:0.6) coordinate(up4\x);  
   }

    \foreach \x in {0,1,2,3,4,5}
     {   \path(down\x) --++ (135:0.6) coordinate(down1\x);  
      \path(down\x) --++ (45:0.6) coordinate(down2\x);
            \path(down\x) --++ (-45:0.6) coordinate(down3\x);  
                        \path(down\x) --++ (-135:0.6) coordinate(down4\x);  
   }

   \draw [fill=white] plot [smooth cycle]
  coordinates {(up10) (up21) (down31)   (down40)  };

%

     \foreach \x in {3}
     {   \path(up\x) --++ (135:0.6) coordinate(up14);  
      \path(up4) --++ (45:0.6) coordinate(up24);
            \path(up4) --++ (-45:0.6) coordinate(up34);  
                        \path(up4) --++ (-135:0.6) coordinate(up44);  
   }

    \foreach \x in {3}
     {   \path(down4) --++ (135:0.6) coordinate(down14);  
      \path(down4) --++ (45:0.6) coordinate(down24);
            \path(down4) --++ (-45:0.6) coordinate(down34);  
                        \path(down4) --++ (-135:0.6) coordinate(down44);  
   }
    
   \draw [fill=white] plot [smooth cycle]
  coordinates {(up12) (up23)  (down33)   (down42)  };
   
     \foreach \x in {4}
     {   \path(up\x) --++ (135:0.6) coordinate(up14);  
      \path(up4) --++ (45:0.6) coordinate(up24);
            \path(up4) --++ (-45:0.6) coordinate(up34);  
                        \path(up4) --++ (-135:0.6) coordinate(up44);  
   }

    \foreach \x in {4 }
     {   \path(down4) --++ (135:0.6) coordinate(down14);  
      \path(down4) --++ (45:0.6) coordinate(down24);
            \path(down4) --++ (-45:0.6) coordinate(down34);  
                        \path(down4) --++ (-135:0.6) coordinate(down44);  
   }

   \draw [fill=white] plot [smooth cycle]
  coordinates {(up14) (up24)  (down34)   (down44)  };

        \foreach \x in {0,2,4,6,8}
     {   \fill[white](\x,3) circle (4pt);   
                        \draw (\x,3) node {$\bullet$}; 
                  }

      \draw(0,0)--++(90:3);      \draw(4,0)--++(90:3);
                      \draw(8,0)--++(90:3);
                
      \foreach \x in {0,4,8}
     {   \fill[white](\x,0) circle (4pt);   
                        \draw (\x,0) node {$\bullet$}; 
                  }

\draw[rounded corners ,densely dotted ] (-0.25,-1)  rectangle( 8.25,-2.5)   node[midway]
   {$\color{red}\color{black} \bigl(1 - (1,3) \bigr)\bigl( 1 + (1,2) \bigr)$};                     

         \end{tikzpicture} \!\!\!\!\!\!
 \begin{tikzpicture}[xscale=0.45,yscale=-0.45]
  
       \foreach \x in {0,1,2,3,4,5}
     {  
      \path(\x*2,0) coordinate (down\x);  
   }

       \foreach \x in {0,1,2,3,4,5,6}
    {   \path(\x*2,3) coordinate (up\x);  
    }

     \foreach \x in {0,1,2,3,4,5,6}
     {   \path(up\x) --++ (135:0.6) coordinate(up1\x);  
      \path(up\x) --++ (45:0.6) coordinate(up2\x);
            \path(up\x) --++ (-45:0.6) coordinate(up3\x);  
                        \path(up\x) --++ (-135:0.6) coordinate(up4\x);  
   }

    \foreach \x in {0,1,2,3,4,5}
     {   \path(down\x) --++ (135:0.6) coordinate(down1\x);  
      \path(down\x) --++ (45:0.6) coordinate(down2\x);
            \path(down\x) --++ (-45:0.6) coordinate(down3\x);  
                        \path(down\x) --++ (-135:0.6) coordinate(down4\x);  
   }

   \draw [fill=white] plot [smooth cycle]
  coordinates {(up10) (up20) (down30)   (down40)  };
   \draw [fill=white] plot [smooth cycle]
  coordinates {(up11) (up21) (down31)   (down41)  };
   \draw [fill=white] plot [smooth cycle]
  coordinates {(up12) (up22) (down32)   (down42)  };

   \draw [fill=white] plot [smooth cycle]
  coordinates {(up13) (up24) (up34)   (up43)  };

        \foreach \x in {0,2,4,6,8}
     {   \fill[white](\x,3) circle (4pt);   
                        \draw (\x,3) node {$\bullet$}; 
                  }
                
      \foreach \x in {0,2,4}
     {   \fill[white](\x,0) circle (4pt);   
                        \draw (\x,0) node {$\bullet$}; 
                  }
                        
                              \draw(0,0)--++(90:3);      \draw(2,0)--++(90:3);      \draw(4,0)--++(90:3);
                        
\draw[rounded corners ,densely dotted ] (-0.25,-1)  rectangle( 8.25,-2.5)   node[midway]
{$\color{red}\color{black} \bigl(1 - (1,3) \bigr)\bigl( 1 + (1,2) \bigr)$};

         \end{tikzpicture} 
$$
\caption{ The generators $c_{(2,1)}^\ast	\otimes  v_{(\gamma,\deltaP)}$ for 
$ {(\gamma,\deltaP)}$ equalling  $ ((1^2,0),\varnothing)$ and  $((2,0^2)	,\varnothing)$ and $(\varnothing	,(2))$ respectively. The distinguished zero parts for each $\gamma \in \ParSet_{(1^3)}$ are indicated.
These diagrams generate  the direct summands, which we denote by $N_1$, $N_2$, and $N_3$
of  $\DQ (\Delta_5((1)^{(2,1)}))$. }
\label{N1N2N3} 
\end{figure}

Arguing as in the previous example, we have that 
$$N_3\cong (\rightspecht{(2,1)} \otimes \C _{\W_2}	)\Ind_{\W_3\times \W_2}^{\W_5}\cong  
 \rightspecht{(4,1)} \oplus  \rightspecht{(3,2)} \oplus \rightspecht{(3,1^2)}  \oplus \rightspecht{(2^2,1)}.$$ 
However, the other two direct summands behave very differently.

 For $N_1$ we have  $\gamma=(2,0^2)$  and so 
the multiplicities of the parts are again $1$ and $2$ and the restriction
of $\rightspecht{(2,1)}$ is given by~\eqref{M1}. Following Theorem~\ref{thm:DQdecomp},
from the first summand $\rightspecht{(1)} \otimes \rightspecht{(2)}$ we obtain
\smash{$\rightspecht{(1)} \otimes \C \ind_{\W_1 \times \W_2}^{\W_3} \oslash \rightspecht{(2)}$}
which we may write as $M^{(2,1)} \oslash \rightspecht{(2)}$,
where the first tensor factor 
is the Young permutation module $M^{(2,1)}$ for $\C \W_3$; similarly 
from the second summand $\rightspecht{(1)} \otimes \rightspecht{(1^2)}$ we obtain
$M^{(2,1)} \oslash \rightspecht{(1^2)}$. Thus
\begin{align*}
N_1&\cong 
(M^{(2,1)} \otimes 
\rightspecht {(2)} )\Ind_{\W_3 \times \W_2}^{\W_5}
\hskip1pt\oplus \hskip1pt
(
M^{(2,1)} \otimes
\rightspecht {(1^2) })\Ind_{\W_3 \times \W_2}^{\W_5} 
 \\
&\cong
\bigl( \rightspecht { (5)} \oplus 2\rightspecht { (4,1) }\oplus 2 \rightspecht { (3,2)}
 \oplus\rightspecht {  (3,1^2)} \oplus 
 \rightspecht {  (2^2,1)} \bigr)
\\[-4pt]
&\hspace*{1.5in} \oplus 
\bigl(    
 \rightspecht {
   (4,1)} 
   \oplus 
   \rightspecht {  (3,2)}
 \oplus 2
 \rightspecht {  (3,1^2) }\oplus 
 \rightspecht {  (2^2,1)})
 \oplus
 \rightspecht { (2,1^3)} \bigr)
 \\[-1pt]
&\cong
\rightspecht {  (5) 
}
\oplus
3
\rightspecht {  (4,1) }
\oplus3
\rightspecht {  (3,2) }
\oplus3
\rightspecht { (3,1^2) }
\oplus2
\rightspecht {  (2^2,1) }
\oplus
\rightspecht { (2 ,1^3) }.
\end{align*}
 
  For $N_2$ we have  $\gamma=(1^2,0)$  and so the restriction is as in \cref{M2}.
  Again the induction function differs, and we have
 \begin{align*}
N_2&\cong 
\bigl( (\rightspecht { (1^2)}\oslash \rightspecht {(2)} 
\oplus 
 \rightspecht { (2)}\oslash \rightspecht {(2)}  
\oplus 
 \rightspecht { (1^2)}\oslash \rightspecht {(1^2)} 
\oplus 
 \rightspecht { (2)}\oslash \rightspecht {(1^2)} 
 )  \otimes 
\rightspecht {(  1)}\bigr)
\Ind_{{ \W_{2} \wr \W_2}\times \W_1}^{\W_5}
\\
&=
\bigl( (\rightspecht {(3,1)} + \rightspecht {(2^2)} +  \rightspecht {(4)}  \oplus  
\rightspecht {(2,1^2)} + \rightspecht {(2^2)} +  \rightspecht {(1^4)} )  \otimes \rightspecht {(1)}  \bigr) 
\Ind_{\W_4 \times \W_1}^{\W_5}
\\
&=
\rightspecht {(5)}
 \oplus 
2\rightspecht {(4, 1)} 
 \oplus 
 3\rightspecht {(3, 2)}  
 \oplus 2\rightspecht {( 3, 1^2)} \oplus 3  \rightspecht {(2^ 2, 1)} 
 \oplus 2 \rightspecht {(2, 1^3)}
  \oplus  \rightspecht {(1^5)} .
\end{align*}
Summing over the coefficients appearing in the decompositions of $N_1,$ $N_2$ and $N_3$ we obtain the ramified branching coefficients as stated at the beginning of this example.
\end{eg}

\subsection{IndInfRes}
As motivation for the following definition, we return to
Example~\ref{eg:alphaEmpty}.
In this example we computed the summand of \smash{$\DQ\bigl( \Delta_r(\varnothing^\beta)\res^{\ram_r(m,n)}_{P_r(mn)} \bigr)$}
corresponding to a diagram $v_{(\gamma, \epsilon)}$ in four steps.
In the first step
we restricted $\rightspecht{\beta}$ to $S_\mathbf{c}$
where $\mathbf{c}$ is the 
composition recording the number of propagating outer parts in the
diagram $v_{(\gamma, \epsilon)}$ (see Definition~\ref{defn:vGammaEpsilonDiagram}) having 
each possible number of southern dots between $1$ and $r$.
(Thus $c_i$ is the number of parts of $\gamma$ equal to $i$ and
since there are $|\beta| = b$ outer propagating parts,
$\mathbf{c}$ is a composition of $b$.) 
This gives us a sum of tensor products $\bigotimes_{i=1}^r \rightspecht{\beta^i}$
where each $\beta^i$ is a partition of $c_i$.
For instance, for the first diagram in the example, reproduced below,
\begin{center} \begin{tikzpicture}[xscale=0.45,yscale=-0.45]
  
       \foreach \x in {0,1,2,3,4,5}
     {  
      \path(\x*2,0) coordinate (down\x);  
   }

       \foreach \x in {0,1,2,3,4,5,6}
    {   \path(\x*2,3) coordinate (up\x);  
    }

     \foreach \x in {0,1,2,3,4,5,6}
     {   \path(up\x) --++ (135:0.6) coordinate(up1\x);  
      \path(up\x) --++ (45:0.6) coordinate(up2\x);
            \path(up\x) --++ (-45:0.6) coordinate(up3\x);  
                        \path(up\x) --++ (-135:0.6) coordinate(up4\x);  
   }

    \foreach \x in {0,1,2,3,4,5}
     {   \path(down\x) --++ (135:0.6) coordinate(down1\x);  
      \path(down\x) --++ (45:0.6) coordinate(down2\x);
            \path(down\x) --++ (-45:0.6) coordinate(down3\x);  
                        \path(down\x) --++ (-135:0.6) coordinate(down4\x);  
   }
    
   \draw [fill=white] plot [smooth cycle]
  coordinates {(up10) (up22) (down31)   (down40)  };

%

     \foreach \x in {3}
     {   \path(up\x) --++ (135:0.6) coordinate(up14);  
      \path(up4) --++ (45:0.6) coordinate(up24);
            \path(up4) --++ (-45:0.6) coordinate(up34);  
                        \path(up4) --++ (-135:0.6) coordinate(up44);  
   }

    \foreach \x in {3}
     {   \path(down4) --++ (135:0.6) coordinate(down14);  
      \path(down4) --++ (45:0.6) coordinate(down24);
            \path(down4) --++ (-45:0.6) coordinate(down34);  
                        \path(down4) --++ (-135:0.6) coordinate(down44);  
   }

   \draw [fill=white] plot [smooth cycle]
  coordinates {(up13) (up23)  (down33)   (down43)  };

     \foreach \x in {4}
     {   \path(up\x) --++ (135:0.6) coordinate(up14);  
      \path(up4) --++ (45:0.6) coordinate(up24);
            \path(up4) --++ (-45:0.6) coordinate(up34);  
                        \path(up4) --++ (-135:0.6) coordinate(up44);  
   }

    \foreach \x in {4 }
     {   \path(down4) --++ (135:0.6) coordinate(down14);  
      \path(down4) --++ (45:0.6) coordinate(down24);
            \path(down4) --++ (-45:0.6) coordinate(down34);  
                        \path(down4) --++ (-135:0.6) coordinate(down44);  
   }

   \draw [fill=white] plot [smooth cycle]
  coordinates {(up14) (up24)  (down34)   (down44)  };

        \foreach \x in {0,2,4,6,8}
     {   \fill[white](\x,3) circle (4pt);   
                        \draw (\x,3) node {$\bullet$}; 
                  }
                
      \foreach \x in {0,6,8}
     {   \fill[white](\x,0) circle (4pt);   
                        \draw (\x,0) node {$\bullet$}; 
                  }

         \end{tikzpicture} 
\end{center}
we have $c= (c_5, c_4, c_3, c_2, c_1) = 
(0,0,1,0,2)$ 
recording the multiplicities of the parts in $\gamma = (3,1,1)$. 
We saw in~\eqref{M1} that the restricted module satisfies
$\rightspecht {(2,1)}\res_{\W_1 \times \W_2}^{\W_{3}} =   \rightspecht {(1)} \otimes 
\rightspecht {(2)}
  \hskip1pt\oplus \hskip1pt    \rightspecht {(1)} \otimes \rightspecht {(1^2) }$.
 As seen in Lemma~\ref{DQdecomp2},
the outer propagating parts having $i$ southern dots are permuted amongst themselves by the wreath
product $\W_i \wr \W_{c_i}$. Since $\alpha = \varnothing$
and so there are no inner propagating parts, the base group acts trivially.
In the second step we obtain the action of the base group
by inflating each tensor factor $\rightspecht{\beta^i}$ from $\W_{c_i}$ to $\W_i \wr \W_{c_i}$, obtaining
\[ \Inf_{\W_1}^{\W_3 \wr \W_1} \rightspecht{(1)} \otimes \Inf_{\W_2}^{\W_1 \wr \W_2} \rightspecht{(2)}
\hskip1pt\oplus\hskip1pt \Inf_{\W_1}^{\W_3 \wr \W_1} \rightspecht{(1)} \otimes \Inf_{\W_2}^{\W_1 \wr \W_2} \rightspecht{(1,1)}. \]
Setting $\Cee = \sum_i i c_i$, 
the full action of the symmetric group $\W_\Cee$ is 
then given, as seen in the proof of Theorem~\ref{thm:DQdecomp} by a third step in which we induce
from $\prod_i \W_i \wr \W_{c_i}$ to $\W_\Cee$.
(Note that here $\alpha = \varnothing$ so $a=0$ and the module
$(\rightspecht{\alpha} \otimes \mathbb{C})\ind_{\W_a \times \W_i}^{\W_{a+i}}$
is simply the trivial $\W_{i}$-module.) Using transitivity of induction we then
finish by tensoring with the trivial module $\C_{\mathrm{Stab}(\epsilon)}$ and inducing
from $\W_\Cee \times \mathrm{Stab}(\deltaP)$ to $\W_r$.
The following functor performs the first three steps.

\begin{defn}\label{defn:IndInfRes}
Let $\gamma = (\Cee^{c_\Cee}, \ldots, 1^{c_1})$ be a partition of $\Cee$
having exactly $b$ parts.
We define
$\IndInfRes_{\gamma} : {\rm mod-}\C \W_b 
\rightarrow {\rm mod-}\C \W_\Cee$
on each right $\C \W_b$-module $W$ by
\[ \IndInfRes_\gamma  W = 
\Bigl( \hskip0.5pt \prod_{i=1}^\Cee \Inf_{\W_{c_i}}^{\W_i \wr \W_{c_i}}
\bigl( W\Res_{\W_{c_\Cee} \times \cdots \times \W_{c_1}} \bigr) \Bigr) \Ind_G^{\W_\Cee} \]
where the subgroup $G$ in the induction functor
is $\W_\Cee \wr \W_{c_\Cee} \times \cdots \times \W_1 \wr \W_{c_1}$. Given $b$, $\Cee \in \NN$ we define
\smash{$\IndInfRes^{\W_\Cee}_{\W_b} : {\rm mod-}\C \W_b 
\rightarrow {\rm mod-}\C \W_\Cee$ }
by \smash{$\IndInfRes^{\W_\Cee}_{\W_b}  = \sum_\gamma \IndInfRes_\gamma$}, 
where the sum is over all $\gamma \vdash \Cee$ having exactly $b$ parts.
\end{defn}

\begin{prop}\label{prop:IndInfRes}
Let $\kappa \vdash r$. For any partition $\beta$ of $b$,
the ramified branching coefficient
$\rc(\varnothing^\beta, \kappa)$ satisfies
\[ \rc(\varnothing^\beta, \kappa) =
\sum_{\Cee, \Dee : \Cee + \Dee = r \atop \deltaP \in \ParSet_{>1}(\Dee)}
\bigl[\bigl( \IndInfRes^{\W_\Cee}_{\W_b} \rightspecht{\beta} \otimes \C_{\mathrm{Stab}(\deltaP)}
\Ind^{\W_\Dee} \bigr)
 \Ind_{\W_{\Cee} \times \W_{\Dee}}^{\W_r} : 
 \rightspecht{\kappa} \bigr]_{\W_r}. \]
Moreover if $m \ge r - |\beta| + [\beta \not= \varnothing]$ and $n \ge r + \beta_1$
then either side is equal to 
the plethysm coefficient 
 $p(\beta[n], (m), \kappa[mn])$.
\end{prop}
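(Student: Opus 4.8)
\textbf{Proof proposal for Proposition~\ref{prop:IndInfRes}.}
The plan is to prove the formula for $\rc(\varnothing^\beta,\kappa)$ as a straightforward reorganisation of \TheoremD in the case $\alpha = \varnothing$ (so $a = 0$, $b = |\beta|$, and $\beta^0$ does not appear because $c_0 = 0$), and then to deduce the plethysm statement from \TheoremC. First I would specialise the decomposition in \TheoremD: when $a = 0$ the module $(\rightspecht{\alpha}\boxtimes\C)\Ind_{\W_a\times\W_i}^{\W_{a+i}}$ is just the trivial $\C\W_i$-module, so the $i$-th tensor factor $\bigl((\rightspecht{\alpha}\boxtimes\C)\Ind_{\W_a\times\W_i}^{\W_{a+i}}\bigr)\oslash\rightspecht{\beta^i}$ becomes $\C\oslash\rightspecht{\beta^i} = \Inf_{\W_{c_i}}^{\W_i\wr\W_{c_i}}\rightspecht{\beta^i}$, and ${\rm Stab}((0^b)+\gamma) = {\rm Stab}(\gamma) = \prod_{i=1}^{\Cee}\W_i\wr\W_{c_i}$. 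So \TheoremD gives
\[
\rc(\varnothing^\beta,\kappa) = \sum \; c^\beta_{\beta^\Cee,\ldots,\beta^1}\Bigl[\Bigl(\bigotimes_{i=1}^{\Cee}\Inf_{\W_{c_i}}^{\W_i\wr\W_{c_i}}\rightspecht{\beta^i}\Bigr)\Ind_{\prod_i\W_i\wr\W_{c_i}}^{\W_\Cee}\otimes\,\C_{{\rm Stab}(\deltaP)}\Ind^{\W_\Dee}\Bigr)\Ind_{\W_\Cee\times\W_\Dee}^{\W_r} : \rightspecht{\kappa}\Bigr]_{\W_r},
\]
the sum being over $\Cee,\Dee$ with $\Cee+\Dee=r$, over $\gamma = (\Cee^{c_\Cee},\ldots,1^{c_1})\in\ParSet_{(0^b)}(\Cee)$ (i.e. weakly decreasing positive integer sequences of length $b$ summing to $\Cee$), over $\beta^i\vdash c_i$, and over $\deltaP\in\ParSet_{>1}(\Dee)$.

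The core of the argument is then to recognise that, for fixed $\Cee$ and fixed $\gamma$ of length $b$, the inner double sum over the $\beta^i$ weighted by the generalized Littlewood--Richardson coefficient $c^\beta_{\beta^\Cee,\ldots,\beta^1}$ is precisely $\IndInfRes_\gamma\rightspecht{\beta}$ by Definition~\ref{defn:IndInfRes}: the equivalent right-module formulation of~\eqref{eq:genLR} gives $\rightspecht{\beta}\Res_{\W_{c_\Cee}\times\cdots\times\W_{c_1}} \cong \bigoplus_{\beta^i\vdash c_i} c^\beta_{\beta^\Cee,\ldots,\beta^1}(\rightspecht{\beta^\Cee}\otimes\cdots\otimes\rightspecht{\beta^1})$, and applying the exact functor $\bigl(\prod_i\Inf_{\W_{c_i}}^{\W_i\wr\W_{c_i}}(-)\bigr)\Ind_G^{\W_\Cee}$ to both sides (which commutes with direct sums and with the multiplicity bracket) turns the right-hand side of the displayed equation into exactly the claimed expression summed over $\gamma$. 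Summing over all $\gamma\vdash\Cee$ of length $b$ collapses the sum over $\gamma$ into $\IndInfRes^{\W_\Cee}_{\W_b}\rightspecht{\beta} = \sum_\gamma\IndInfRes_\gamma\rightspecht{\beta}$, leaving the sum over $\Cee,\Dee,\deltaP$ as stated. This is really just bookkeeping, so I do not expect any genuine obstacle here; the one point needing care is the matching of indexing conventions — checking that $\ParSet_{(0^b)}(\Cee)$ as defined in Definition~\ref{defn:ParSet}(i) is the same as ``partitions of $\Cee$ with exactly $b$ parts'' and that the $\Inf$ from $\W_{c_i}$ to $\W_i\wr\W_{c_i}$ in Definition~\ref{defn:IndInfRes} is literally the same map (via the $\oslash$ notation and the base group acting trivially) as the one appearing after the specialisation $a=0$ of \TheoremD. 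I would also note explicitly that when $\beta = \varnothing$, the only term is $b=0$, $\Cee=0$, and the formula reduces to $\sum_{\deltaP\in\ParSet_{>1}(r)}[\C_{{\rm Stab}(\deltaP)}\Ind^{\W_r}:\rightspecht{\kappa}]$, consistent with \cite{BP17}.

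For the final sentence, the plan is: under the hypotheses $m\ge r-|\beta|+[\beta\ne\varnothing]$ and $n\ge r+\beta_1$, the second displayed equality in \TheoremC gives $p(\beta[n],(m),\kappa[mn]) = \bigl[\Delta_r(\varnothing^\beta)\res^{\ram_r(m,n)}_{P_r(mn)} : L_r(\kappa)\bigr]_{P_r(mn)}$; but by the definition~\eqref{eq:rc} of $\rc$ (equivalently Definition~\ref{defn:ramifiedBranchingCoefficient} together with Corollary~\ref{cor:compfactors+DR}, which identifies the filtration multiplicity with the composition multiplicity of $L_r(\kappa)$ since $|\kappa| = r$), this multiplicity is exactly $\rc(\varnothing^\beta,\kappa)$, which we have just shown equals the $\IndInfRes$-sum. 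Hence all three quantities agree. The only subtlety worth flagging is the need to invoke Corollary~\ref{cor:compfactors+DR} in the case $|\lambda| = r$ so that ``filtration multiplicity of $\Delta_r(\kappa)$'' and ``composition multiplicity of $L_r(\kappa)$'' coincide; this is immediate because $\Delta_r(\kappa) = L_r(\kappa)$ when $\kappa$ has the maximal size $r$ (the standard module is the inflated simple $\C\W_r$-module), and the depth quotient captures precisely these top-layer factors.
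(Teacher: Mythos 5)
Your proposal is correct and follows essentially the same route as the paper: specialise \TheoremD to $\alpha=\varnothing$, recognise the inner sum over the $\beta^i$ weighted by generalized Littlewood--Richardson coefficients as $\IndInfRes_\gamma\rightspecht{\beta}$ via~\eqref{eq:genLR}, and collapse the sum over $\gamma$ using transitivity of induction; the plethysm statement then follows from \TheoremC (the paper cites \TheoremA, which amounts to the same thing). Your extra care over the $a=0$ specialisation and the identification of filtration with composition multiplicities for $|\kappa|=r$ is sound but not a departure from the paper's argument.
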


\begin{proof}
Fix a partition $\gamma = (\Cee^{c_\Cee},\ldots,1^{c_1}) \vdash \Cee$ having exactly $b$ parts.
We have 
\[ \rightspecht{\beta}\Res_{\W_{c_\Cee} \times \cdots \times \W_{c_1}} =
\bigoplus_{\beta^i \vdash c_i}
c^\beta_{\beta^\Cee, \ldots, \beta^1} \rightspecht{\beta^\Cee} \otimes \cdots \otimes \rightspecht{\beta^1} \]
where \smash{$c^\beta_{\beta^\Cee, \ldots, \beta^1}$} is a generalized Littlewood--Richardson coefficient
as defined in~\eqref{eq:genLR}.
By transitivity of induction it follows that
for each $\epsilon \in \ParSet_{>1}(\Dee)$, the composition multiplicity
\[ \bigl[ \bigl(
\IndInfRes_\gamma \leftspecht{\beta} \otimes \C_{\Stab(\epsilon)}\Ind^{\W_\Dee} \bigr)
\Ind_{\W_\Cee \times \W_\Dee}^{\W_r} : \rightspecht{\kappa} \bigr]_{\W_r} \]
is precisely the contribution to the sum in \TheoremD coming from the partitions $\gamma$ and $\epsilon$.
The result now follows from the definition of $\IndInfRes_\gamma$
and $\IndInfRes^{\W_\Cee}_{\W_b}$ by summing over all partitions $\gamma$ and $\epsilon$.
The result on $p( \beta[n], (m), \kappa[mn] )$ follows similarly from \TheoremA. 
\end{proof}

\subsection{Marked partitions and plethysm coefficients
when $\beta$ has one row and $\alpha = \varnothing$}\label{subsec:markedPartitions}
In this subsection we apply Proposition~\ref{prop:IndInfRes} to give an elegant \emph{and clearly positive} formula
for the ramified branching coefficients when $\beta$ has a single part.
We require the following definition.

\begin{defn}\label{defn:markedPartition}
Let $b \in \NN$. A {\sf {$b$}-marked partition} of $r \in \NN$ is a pair
of partitions $(\gamma, \deltaP)$ such that $\ell(\gamma) = b$, 
$\deltaP \in \ParSet_{>1}(|\deltaP|)$ and
$|\gamma| + |\deltaP| = r$.
Let $\MParSet_b(r)$ denote the set of $b$-marked partitions of $r$.
\end{defn}

Thus a $b$-marked partition of $r$ may be regarded as an ordinary partition
of $r$ having $b$ distinguished parts, such that only the distinguished parts
may have size $1$. Marked partitions $(\gamma, \deltaP)$ are the types,
in the sense of Definition~\ref{defn:type}, of ramified diagrams when $a=0$.

\begin{prop}\label{prop:nuOneRow}
Let $\kappa$ be a partition of $r$ and let $b \in \NN_0$. Then
\[ \rc(\varnothing^{(b)}, \kappa ) = \sum_{(\gamma,\deltaP) \in \MParSet_b(r)}
\bigl[\mathbb{C}_{\Stab(\gamma) \times \Stab(\deltaP)} \Ind^{S_r} : \rightspecht{\kappa}\bigr]_{\W_r}. \]
Moreover 
if 
$m \ge r-b + [b \not= 0]$ and $n \ge r+b$ then either side is the plethysm coefficient
$p\bigl((n-b,b), (m), \kappa[mn]\bigr)$.
\end{prop}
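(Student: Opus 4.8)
The plan is to specialise Proposition~\ref{prop:IndInfRes} to the case $\beta = (b)$ and simplify the functor $\IndInfRes^{\W_\Cee}_{\W_b}$. The key observation is that when $\beta = (b)$ is a single row, the generalized Littlewood--Richardson restriction $\rightspecht{(b)}\Res_{\W_{c_\Cee} \times \cdots \times \W_{c_1}}$ is just the one-dimensional trivial module $\rightspecht{(c_\Cee)} \otimes \cdots \otimes \rightspecht{(c_1)}$, since $c^{(b)}_{\beta^\Cee,\ldots,\beta^1} = 1$ exactly when each $\beta^i = (c_i)$ and is $0$ otherwise. First I would substitute this into Definition~\ref{defn:IndInfRes}: inflating each trivial $\C\W_{c_i}$-module to $\W_i \wr \W_{c_i}$ gives the trivial $\C(\W_i \wr \W_{c_i})$-module (inflation preserves the trivial module), and then inducing the trivial module of $\prod_i \W_i \wr \W_{c_i}$ up to $\W_\Cee$ gives $\C \Ind_{\prod_i \W_i \wr \W_{c_i}}^{\W_\Cee}$. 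Now $\prod_{i=1}^\Cee \W_i \wr \W_{c_i}$ is precisely $\Stab(\gamma)$ for $\gamma = (\Cee^{c_\Cee},\ldots,1^{c_1})$, by the description of the stabiliser subgroup in Section~\ref{subsec:stabiliserSubgroups}. Thus $\IndInfRes_\gamma \rightspecht{(b)} = \C\Ind_{\Stab(\gamma)}^{\W_\Cee}$ as a right $\C\W_\Cee$-module.

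Next I would assemble the full sum. By Proposition~\ref{prop:IndInfRes},
\[ \rc(\varnothing^{(b)}, \kappa) = \sum_{\substack{\Cee + \Dee = r \\ \deltaP \in \ParSet_{>1}(\Dee)}}
\sum_{\substack{\gamma \vdash \Cee \\ \ell(\gamma) = b}}
\bigl[ \bigl( \C\Ind_{\Stab(\gamma)}^{\W_\Cee} \otimes \C_{\Stab(\deltaP)}\Ind^{\W_\Dee} \bigr) \Ind_{\W_\Cee \times \W_\Dee}^{\W_r} : \rightspecht{\kappa} \bigr]_{\W_r}, \]
where the inner sum runs over partitions $\gamma$ of $\Cee$ with exactly $b$ parts. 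By transitivity of induction, $\bigl(\C\Ind_{\Stab(\gamma)}^{\W_\Cee} \otimes \C_{\Stab(\deltaP)}\Ind^{\W_\Dee}\bigr)\Ind_{\W_\Cee \times \W_\Dee}^{\W_r} \cong \C_{\Stab(\gamma) \times \Stab(\deltaP)}\Ind^{\W_r}$, since $\Stab(\gamma) \times \Stab(\deltaP) \le \W_\Cee \times \W_\Dee \le \W_r$. Re-indexing the double sum: a pair $(\Cee, \gamma)$ with $\ell(\gamma) = b$ together with $(\Dee, \deltaP)$ satisfying $\Cee + \Dee = r$ and $\deltaP \in \ParSet_{>1}(\Dee)$ is exactly the data of a $b$-marked partition $(\gamma, \deltaP) \in \MParSet_b(r)$ in the sense of Definition~\ref{defn:markedPartition} (with $\Cee = |\gamma|$, $\Dee = |\deltaP|$). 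This collapses the double sum into $\sum_{(\gamma,\deltaP) \in \MParSet_b(r)} \bigl[\C_{\Stab(\gamma) \times \Stab(\deltaP)}\Ind^{\W_r} : \rightspecht{\kappa}\bigr]_{\W_r}$, giving the first displayed formula.

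The final sentence follows immediately from the ``moreover'' clause of Proposition~\ref{prop:IndInfRes}: under the hypotheses $m \ge r - |\beta| + [\beta \neq \varnothing] = r - b + [b \neq 0]$ and $n \ge r + \beta_1 = r + b$, the ramified branching coefficient $\rc(\varnothing^{(b)},\kappa)$ equals the plethysm coefficient $p(\beta[n], (m), \kappa[mn]) = p((n-b,b), (m), \kappa[mn])$, using that $(b)[n] = (n-b, b)$. I do not anticipate a serious obstacle here; the only point requiring a little care is the bookkeeping in the re-indexing step --- verifying that the case $\Cee = 0$ (forcing $\gamma = \varnothing$, hence $b = 0$) and the case $\Dee = 0$ (forcing $\deltaP = \varnothing$) are correctly accommodated by Definition~\ref{defn:markedPartition}, and that when $b = 0$ the convention $\rc(\varnothing^\varnothing,\kappa)$ recovers the stable Foulkes coefficients, matching the $\beta = \varnothing$ row of Proposition~\ref{prop:IndInfRes}. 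This is a routine check against the definitions already in place.
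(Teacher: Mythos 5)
Your proposal is correct and follows essentially the same route as the paper: the paper's proof likewise observes that $\IndInfRes^{\W_\Cee}_{\W_b}\rightspecht{(b)} = \sum_\gamma \C_{\Stab(\gamma)}\Ind^{\W_\Cee}$ over partitions $\gamma \vdash \Cee$ with $\ell(\gamma)=b$, and then concludes via Proposition~\ref{prop:IndInfRes} and transitivity of induction. Your additional bookkeeping on the edge cases $\Cee=0$ and $\Dee=0$ is sound and matches the conventions in place.
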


\begin{proof}
It is easy to see that
$\IndInfRes^{\W_b}_{\W_\Cee} \rightspecht{(b)} = \sum_\gamma \mathbb{C}_{\Stab(\gamma)} 
\ind^{\W_\Cee}$ where the sum is over all $\gamma \vdash \Cee$ such that $\ell(\gamma) = b$.
The result now follows from Proposition~\ref{prop:IndInfRes} using transitivity of induction.
\end{proof}

The special case $\kappa = (r)$ is worth noting. For any partitions $\gamma$ and
$\epsilon$, it follows from Frobenius reciprocity that
$\bigl[\mathbb{C}_{\Stab(\gamma) \times \Stab(\deltaP)} \Ind^{S_r} : \rightspecht{(r)}\bigr]_{\W_r} = 1$
and so 
\begin{equation}\label{eq:twoRowMarkedPartitions}
 \rc\bigl( \varnothing^{(b)}, (r)\bigr) =  \bigl|\MParSet_b(r)\bigr|. \end{equation}
This leads to a simple closed form for the generating function of the stable limit of the 
corresponding plethysm
coefficients. 
Let $P(z) = \prod_{i=1}^\infty (1-z^i)^{-1}$ be the generating function for the sequence of partition numbers.

\begin{prop}\label{prop:nuOneRowGF}
Let $b \in \NN_0$. We
have
\[ \sum_{r=0}^\infty 
\lim_{m,n \rightarrow \infty} 
p\bigl( (n-b,b), (m), (mn-r,r) \bigr)
z^r = \frac{z^b}{(1-z^2) \ldots (1-z^b)} P(z). \]
\end{prop}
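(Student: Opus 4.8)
\textbf{Proof plan for Proposition~\ref{prop:nuOneRowGF}.}
The plan is to combine the stability statement of \TheoremA with the counting identity~\eqref{eq:twoRowMarkedPartitions}, reducing the whole problem to a generating-function count of $b$-marked partitions. First I would fix $b$ and apply the final sentence of Proposition~\ref{prop:nuOneRow} with $\kappa = (r)$: for $m \ge r - b + [b \ne 0]$ and $n \ge r+b$ we have $p\bigl((n-b,b),(m),(mn-r,r)\bigr) = \rc(\varnothing^{(b)},(r))$, and by~\eqref{eq:twoRowMarkedPartitions} this equals $|\MParSet_b(r)|$. Hence the stable limit (which exists by \cite{MR1190119,MR1037395}, or directly by the constancy statement in \TheoremA) equals $|\MParSet_b(r)|$ for every $r$, and the left-hand side of the claimed identity is simply $\sum_{r=0}^\infty |\MParSet_b(r)| z^r$, the ordinary generating function for $b$-marked partitions.

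The second and main step is to compute $\sum_{r} |\MParSet_b(r)| z^r$ in closed form. By Definition~\ref{defn:markedPartition} a $b$-marked partition of $r$ is an ordered pair $(\gamma,\deltaP)$ with $\ell(\gamma) = b$, with $\deltaP$ having all parts $\ge 2$, and with $|\gamma| + |\deltaP| = r$; so the generating function factors as a product of the generating function for partitions $\gamma$ with exactly $b$ parts and the generating function for partitions $\deltaP$ with all parts at least $2$ (including the empty partition). For the first factor I would use the standard bijection $\gamma \mapsto \gamma - (1^b)$ between partitions with exactly $b$ parts and arbitrary partitions with at most $b$ parts, which shifts the size by $b$; the generating function for partitions with at most $b$ parts is $\prod_{i=1}^b (1-z^i)^{-1}$, so the first factor is $z^b / \bigl((1-z)(1-z^2)\cdots(1-z^b)\bigr)$. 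For the second factor, partitions all of whose parts are $\ge 2$ are enumerated by $\prod_{i=2}^\infty (1-z^i)^{-1} = (1-z)\,P(z)$, since deleting the factor $(1-z)^{-1}$ from $P(z) = \prod_{i=1}^\infty (1-z^i)^{-1}$ removes exactly the parts of size $1$. Multiplying the two factors, the $(1-z)$ in the numerator of the second cancels the $(1-z)$ in the denominator of the first, giving
\[
\sum_{r=0}^\infty |\MParSet_b(r)|\, z^r
= \frac{z^b}{(1-z^2)\cdots(1-z^b)}\, P(z),
\]
which is the desired formula.

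I do not expect any serious obstacle here: the representation-theoretic content is entirely absorbed into Proposition~\ref{prop:nuOneRow} and~\eqref{eq:twoRowMarkedPartitions}, and what remains is an elementary manipulation of partition generating functions. The only points requiring a line of care are (i) checking that for \emph{each} fixed $r$ the bounds $m \ge r-b+[b\ne 0]$ and $n \ge r+b$ are eventually satisfied as $m,n \to \infty$, so that the limit indeed equals $|\MParSet_b(r)|$ (immediate), and (ii) handling the degenerate cases $b=0$ (where $\MParSet_0(r)$ is just the set of partitions of $r$ with all parts $\ge 2$, and the empty product $(1-z^2)\cdots(1-z^b)$ is $1$, so the formula reads $P(z)/(1-z)^{-1}\cdot$ — more precisely $z^0 (1-z) P(z)$, matching $\prod_{i\ge 2}(1-z^i)^{-1}$) and $b=1$ (empty denominator product again equal to $1$, giving $z\,P(z)$, matching partitions with exactly one distinguished part of any size and no other parts of size $1$). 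Verifying these boundary conventions agree with the stated right-hand side is the one place a careful reader should pause, but it is routine.
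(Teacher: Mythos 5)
Your argument is correct and is essentially the paper's own proof: reduce via Proposition~\ref{prop:nuOneRow} and~\eqref{eq:twoRowMarkedPartitions} to counting $b$-marked partitions, then factor the generating function as $\bigl(z^b/\prod_{i=1}^b(1-z^i)\bigr)\cdot(1-z)P(z)$ and cancel the $(1-z)$. Your extra care over the degenerate case $b=0$ (where the literal formula with the empty-product convention reads $P(z)$ rather than the correct $(1-z)P(z)$) is a reasonable observation about a convention the paper itself glosses over, and does not affect the argument for $b\ge 1$.
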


\begin{proof}
By~\eqref{eq:twoRowMarkedPartitions} it is equivalent to show that 
the generating function for $b$-marked partitions 
is the right-hand side in the proposition. In turn
this follows because partitions with exactly~$b$ parts
are enumerated by $z^b / (1-z) \ldots (1-z^b)$ and partitions
with no singleton parts are enumerated by $(1-z) P(z)$.
\end{proof}

One reason for the interest in Proposition~\ref{prop:nuOneRowGF} is that, via Euler's Pentagonal
Number Theorem (see for instance \cite[Corollary~1.7]{AndrewsPartitions}), it gives an efficient
recurrence relation for the stable limits of the plethysm coefficients $p\bigl( (n-b,b), (m), (mn-r,r) \bigr)$.
When $b=0$ the generating function in the proposition enumerates partitions of $r$ into non-singleton parts;
this is OEIS \cite{OEIS} sequence A002865. When $b=1$ the generating function is $zP(z)$
enumerating partitions, with a shift by $1$. This  is OEIS sequence A000041. When $b=2$ the generating
function is 
\[ \frac{z^2 P(z)}{1-z^2} = \frac{z^2}{(1-z^2)^2} \frac{1}{(1-z)(1-z^3) \ldots }. \]
Since $z^2/(1-z^2)^2 =
\sum_{k=1}^\infty kz^{2k}$ and the remaining part of the right-hand side enumerates partitions
into parts not of size $2$, the coefficient of $z^r$ in the right-hand side
is the total number of parts of size $2$ in all partitions of $r$. 
The coefficients of $z^2P(z)/(1-z^2)$ form
sequence A024786 in OEIS.
The sequences for greater $b$ do not, at the time of writing, appear in OEIS.

\subsection{Symmetric functions}\label{subsec:symFuncs}
We finish this section by restating Theorems~\hyperlink{thmA}{A} and~\hyperlink{thmD}{D} 
in the language of symmetric functions
and using this restatement to prove three new stability
results. We remind the reader of our standing convention
that $\beta^i \vdash i$ in a sum indicates that the sum is over all relevant sequences of partitions.

\begin{defn}\label{defn:GsymmetricFunction}
Let  $\beta \vdash b$ and let $\gamma = (\Cee^{c_\Cee}, \ldots, 1^{c_1}) \vdash \Cee$ be a partition
such that $\ell(\gamma) \le b$.
Given a non-empty partition $\alpha$ we set $c_0 = |\beta| - \ell(\gamma)$ and define
\[ G^{\alpha}_{\beta,\gamma} = 
\sum_{\beta^i \vdash c_i} 
c^\beta_{\beta^\Cee,\ldots,\beta^1,\beta^0}\prod_{i=0}^\Cee s_{\beta^i} \circ (s_\alpha s_{(i)}).\]
\vspace*{-1pt}
If $\ell(\gamma) = b$ we define
\vspace*{-1pt}
\[ G^\varnothing_{\beta,\gamma} =
\sum_{\beta^i \vdash c_i} 
c^\beta_{\beta^\Cee,\ldots,\beta^1}\prod_{i=1}^\Cee s_{\beta^i} \circ s_{(i)}.\]
We set $G^\alpha_{\beta,\gamma} = 0$ in all other cases.
\end{defn}

Note that in the first product $s_{(0)}$ should be interpreted as $s_\varnothing = 1$.
Thus whenever
$G^{\alpha}_{\beta,\gamma}$ is non-zero 
its degree is $|\gamma| + |\alpha||\beta|$.
For example $G^\alpha_{\beta, \varnothing} = s_\beta \circ s_\alpha$ for any partitions~$\alpha$ and $\beta$
and $G^\varnothing_{\beta,(1^b)} = s_\beta \circ s_{(1)} = s_\beta$ for any partition $\beta$.
The coefficients $G^{\alpha}_{\beta,\gamma}$ have not appeared before in the literature.

\begin{defn}\label{defn:HsymmetricFunction}
Given a partition $\deltaP = (\Dee^{e_\Dee}, \ldots, 2^{e_2}, 1^{e_1})$, we define
$H_\deltaP = \prod_{j=1}^\Dee s_{(e_j)} \circ s_{(j)}$.
\end{defn}

In our application we have $\deltaP \in \ParSet_{>1}(\Dee)$ for some $\Dee$ and so $e_1 = 0$.
It is worth noting that 
if~$\deltaP$ has at most one part of any given size then $H_\deltaP$ is the
complete homogeneous symmetric function denoted $h_\deltaP$ in the standard notation.
By the following lemma,
$H_\deltaP$ corresponds to the permutation module of $\W_\Dee$ acting on the set-partitions
into parts of size specified by $\deltaP$.

\begin{lem}\label{lemma:stab} 
For each partition $\deltaP \vdash \Dee$, the symmetric function corresponding under the characteristic isometry
to the module $\C_{\Stab(\deltaP)} \ind^{\W_\Dee}$ is $H_\deltaP$.
\end{lem}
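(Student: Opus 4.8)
\textbf{Proof plan for Lemma~\ref{lemma:stab}.}
The plan is to unwind the definition of $\Stab(\deltaP)$ into an explicit direct product of wreath products and then apply parts (a) and (b) of Lemma~\ref{lemma:characteristicIsometry} term by term. Recall from the discussion after Definition~\ref{defn:stabiliserSubgroup} that if $\deltaP$ has exactly $e_j$ parts of size $j$ then
\[ \Stab(\deltaP) = \W_1 \wr \W_{e_1} \times \W_2 \wr \W_{e_2} \times \cdots \times \W_\Dee \wr \W_{e_\Dee} \le \W_{e_1} \times \W_{2e_2} \times \cdots \times \W_{\Dee e_\Dee}, \]
and that $\sum_j j e_j = \Dee$. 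Inducing the trivial module up through this product in two stages — first up to the Young subgroup $\W_{e_1} \times \W_{2e_2} \times \cdots \times \W_{\Dee e_\Dee}$, then up to $\W_\Dee$ — we get, by transitivity of induction,
\[ \C_{\Stab(\deltaP)}\Ind^{\W_\Dee} \cong \Bigl( \bigboxtimes_{j=1}^\Dee \bigl( \C_{\W_j \wr \W_{e_j}} \Ind_{\W_j \wr \W_{e_j}}^{\W_{je_j}} \bigr) \Bigr) \Ind_{\W_{e_1}\times \cdots \times \W_{\Dee e_\Dee}}^{\W_\Dee}. \]

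First I would identify each inner factor. For fixed $j$, the module $\C_{\W_j \wr \W_{e_j}}$ is the trivial module of $\W_j \wr \W_{e_j}$, which is exactly $\leftspecht{(j)} \oslash \leftspecht{(e_j)}$ (the base group acts trivially since it acts trivially on $\leftspecht{(j)}$, and the top group permutes trivial tensor factors). By Lemma~\ref{lemma:characteristicIsometry}(b) its induction to $\W_{je_j}$ corresponds under the characteristic isometry to the plethysm product $s_{(e_j)} \circ s_{(j)}$. Then I would apply Lemma~\ref{lemma:characteristicIsometry}(a) repeatedly: the outer induction from the Young subgroup $\W_{e_1}\times \W_{2e_2}\times\cdots\times\W_{\Dee e_\Dee}$ up to $\W_\Dee$ corresponds to the ordinary product of the symmetric functions attached to the factors. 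Putting these together gives that $\C_{\Stab(\deltaP)}\Ind^{\W_\Dee}$ corresponds to $\prod_{j=1}^\Dee \bigl( s_{(e_j)} \circ s_{(j)} \bigr)$, which is precisely $H_\deltaP$ by Definition~\ref{defn:HsymmetricFunction}.

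The argument is essentially a bookkeeping exercise, so there is no single hard obstacle; the one point requiring a little care is matching the combinatorial description of $\Stab(\deltaP)$ as a set-partition stabiliser (Definition~\ref{defn:stabiliserSubgroup}) with its internal structure as an iterated wreath product, i.e.\ checking that the set-partition of $\{1,\ldots,\Dee\}$ into blocks of sizes $\deltaP_1,\ldots,\deltaP_{\ell(\deltaP)}$ has stabiliser equal to the displayed product — but this is exactly the content of the equivalent description recorded immediately after Definition~\ref{defn:stabiliserSubgroup}, so I would simply cite it. I would also remark, as already noted in the text, that when each $e_j \le 1$ the plethysm $s_{(e_j)}\circ s_{(j)}$ is just $s_{(j)}$ (or $1$ when $e_j = 0$), so $H_\deltaP = h_\deltaP$ in that case, recovering the classical statement that $\C\Ind$ from a Young subgroup corresponds to a complete homogeneous symmetric function.
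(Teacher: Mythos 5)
Your proposal is correct and follows essentially the same route as the paper: the paper's proof likewise applies Lemma~\ref{lemma:characteristicIsometry}(b) to each factor $\C\ind_{\W_j \wr \W_{e_j}}^{\W_{je_j}}$ to obtain $s_{(e_j)} \circ s_{(j)}$ and then Lemma~\ref{lemma:characteristicIsometry}(a) to convert the induced product over the decomposition $\Stab(\deltaP) = \prod_j \W_j \wr \W_{e_j}$ into the ordinary product $H_\deltaP$. You simply spell out the transitivity-of-induction bookkeeping that the paper leaves implicit.
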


\begin{proof}
By Lemma~\ref{lemma:characteristicIsometry}(b)
the plethysm
$s_{(e_j)} \circ s_{(j)}$ corresponds under the
characteristic isometry to the induced module 
\smash{$\C \ind_{\W_j \wr \W_{e_j}}^{\W_{je_j}}$}.
Using that $\Stab(\epsilon) = \W_\Dee \wr \W_{e_\Dee} \times \cdots \times \W_1 \wr \W_{e_1}$
where~$e_j$ is the multiplicity of $j$ as a part of $\deltaP$, the lemma now follows from
Lemma~\ref{lemma:characteristicIsometry}(a), that the induced product
of modules corresponds to the ordinary product of symmetric functions.
\end{proof}

\begin{prop}\label{prop:symmetricFunctions}
Let $\alpha \vdash a$, $\beta \vdash b$ and $\kappa \vdash r$ be partitions.
The ramified
branching coefficient $\rc(\alpha^\beta, \kappa)$ satisfies
\[ \rc(\alpha^\beta, \kappa) = 
\sum_{\begin{subarray}c\Cee,\,\Dee\, : \, \Cee+\Dee \,=\, r - ab \\
\gamma \vdash \Cee,\,   \deltaP \in \ParSet_{>1}(\Dee)\end{subarray}}
\bigl\langle   G^\alpha_{\beta,\gamma} H_\deltaP, s_\kappa \bigr\rangle. \]
Moreover if $\alpha = \varnothing$, 
$m \ge r-b + [b \not= 0]$ and $n \ge r+b$ 
 then either side is the plethysm coefficient
$p(\beta[n], (m), \kappa[mn])$. 
\end{prop}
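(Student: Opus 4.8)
\textbf{Proof plan for Proposition~\ref{prop:symmetricFunctions}.}
The plan is to translate Theorem~\ref{thm:DQdecomp} together with Corollary~\ref{cor:compfactors+DR}, Corollary~\ref{cor:stable1} and Theorem~\ref{thm:DQdecomp}'s consequence \TheoremD from the language of $\C\W_r$-modules into the language of symmetric functions, using the characteristic isometry recalled in Section~\ref{subsec:symmetricFunctionsAndPlethysm}. The right-hand side of \TheoremD is a direct sum over triples $(\gamma,\deltaP)$ (with $|\gamma|+|\deltaP| = r-ab$, $\gamma \in \ParSet_{(a^b)}(\Cee)$, $\deltaP \in \ParSet_{>1}(\Dee)$) of an induced module of the shape
\[
\Bigl(\bigotimes_{i=0}^\Cee \bigl( (\rightspecht{\alpha} \boxtimes \C)\Ind_{\W_a \times \W_i}^{\W_{a+i}} \oslash \rightspecht{\beta^i}\bigr) \boxtimes \C_{\Stab(\deltaP)}\Bigr)\Ind_{\Stab((a^b)+\gamma) \times \Stab(\deltaP)}^{\W_r},
\]
weighted by the generalized Littlewood--Richardson coefficient $c^\beta_{\beta^\Cee,\ldots,\beta^0}$. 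The first step is to apply the characteristic isometry factor by factor: by Lemma~\ref{lemma:characteristicIsometry}(a) the outer induction product corresponds to an ordinary product of symmetric functions, and by Lemma~\ref{lemma:characteristicIsometry}(b) each factor $(\ldots \oslash \rightspecht{\beta^i})$ corresponds to the plethysm $s_{\beta^i} \circ (s_\alpha s_{(i)})$, since $(\rightspecht{\alpha}\boxtimes\C)\Ind_{\W_a\times\W_i}^{\W_{a+i}}$ corresponds to $s_\alpha s_{(i)}$ (again by part~(a), with $s_{(i)}$ the image of the trivial $\W_i$-module, and $s_{(0)}=1$). This shows the term indexed by $(\gamma,\deltaP)$ with fixed $(\beta^0,\ldots,\beta^\Cee)$ corresponds under the isometry to $\bigl(\prod_{i=0}^\Cee s_{\beta^i}\circ(s_\alpha s_{(i)})\bigr)\cdot H_\deltaP$, where $H_\deltaP$ is the symmetric function attached to $\C_{\Stab(\deltaP)}\Ind^{\W_\Dee}$ by Lemma~\ref{lemma:stab}.

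The second step is to sum over the sequences $(\beta^0,\ldots,\beta^\Cee)$ weighted by $c^\beta_{\beta^\Cee,\ldots,\beta^0}$: this collapses exactly to $G^\alpha_{\beta,\gamma}$ by Definition~\ref{defn:GsymmetricFunction} (and $G^\varnothing_{\beta,\gamma}$ in the case $\alpha=\varnothing$, when $\ell(\gamma)=b$, so that $c_0=0$ and the $i=0$ factor is absent). Combining with the degree bookkeeping — $G^\alpha_{\beta,\gamma}$ has degree $|\gamma| + ab$ whenever non-zero, $H_\deltaP$ has degree $|\deltaP|$, and $|\gamma|+|\deltaP| = r-ab$ — gives that the term has degree $r$, so we may extract the coefficient of $s_\kappa$ via the inner product $\langle G^\alpha_{\beta,\gamma} H_\deltaP, s_\kappa\rangle$. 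Summing over all $(\gamma,\deltaP)$ with $\Cee+\Dee = r-ab$, $\gamma\vdash\Cee$, $\deltaP\in\ParSet_{>1}(\Dee)$, and using that the Schur functions form an orthonormal basis, yields exactly the stated formula for $\rc(\alpha^\beta,\kappa) = [\Delta_r(\alpha^\beta)\res^{R_r(m,n)}_{P_r(mn)} : L_r(\kappa)]$. Here I should note that the sum over $\gamma$ may be taken over all partitions of $\Cee$ rather than only those in $\ParSet_{(a^b)}(\Cee)$, since $G^\alpha_{\beta,\gamma}$ is defined to be $0$ when $\ell(\gamma) > b$ (in the $\alpha=\varnothing$ case, $0$ unless $\ell(\gamma)=b$), so the extraneous terms contribute nothing; this matches the phrasing ``$\gamma \vdash \Cee$'' in the proposition statement.

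The final clause — that when $\alpha=\varnothing$, $m \ge r - b + [b\not=0]$ and $n \ge r+b$, either side equals $p(\beta[n],(m),\kappa[mn])$ — is then immediate from Corollary~\ref{cor:stable1} (equivalently \TheoremA), which identifies $p(\beta[n],(m),\kappa[mn])$ with $\rc(\varnothing^\beta,\kappa)$ under precisely these hypotheses. I expect the main obstacle to be purely notational: carefully matching the index set $\ParSet_{(a^b)}(\Cee)$ and its two cases ($a>0$ allowing zero parts counted by $c_0$, versus $a=0$ forcing $c_0=0$) against Definition~\ref{defn:GsymmetricFunction}, and verifying that $s_{(0)}$ is consistently read as $1$ so that the $i=0$ factor in $G^\alpha_{\beta,\gamma}$ is $s_{\beta^0}\circ s_\alpha$ and genuinely disappears when $\alpha=\varnothing$ forces $\beta^0=\varnothing$. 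There is no new representation-theoretic content beyond Theorem~\ref{thm:DQdecomp} and the characteristic isometry; the work is in the translation.
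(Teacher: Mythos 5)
Your proposal is correct and follows essentially the same route as the paper's own proof: both translate \TheoremD term by term via the characteristic isometry (Lemma~\ref{lemma:characteristicIsometry}), identify $H_\deltaP$ via Lemma~\ref{lemma:stab}, match the weighted sum over $(\beta^\Cee,\ldots,\beta^0)$ against Definition~\ref{defn:GsymmetricFunction} to recover $G^\alpha_{\beta,\gamma}$, and deduce the final clause from Corollary~\ref{cor:stable1}/\TheoremA. Your explicit care over the index sets ($\gamma \vdash \Cee$ versus $\gamma \in \ParSet_{(a^b)}(\Cee)$, and the two cases $a>0$ and $a=0$) corresponds exactly to the closing remark in the paper's proof.
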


\begin{proof}
By Lemma~\ref{lemma:stab}, $H_\deltaP$ is the symmetric function corresponding to 
$\mathbb{C}_{\Stab(\deltaP)}\ind^{\W_\Dee}$.
Therefore, by Lemma~\ref{lemma:characteristicIsometry}(b) and \TheoremD,
to prove the claim on the ramified branching coefficient
it suffices to show that $G^\alpha_{\beta,\gamma}$ is the symmetric function
corresponding under the characteristic isometry to
\[
\bigoplus
_{
\begin{subarray}c
\gamma=  
   ( \Cee^{c_\Cee}, \dots ,  1^{c_1}, 0^{c_0} )
\in  \ParSet _{(a^b)}(\Cee) 
\\[-3pt]
 \beta^i \vdash c_i \text{ for }  1\leq i \leq \Cee 
\end{subarray}
} 
\!\!\!\!\!\!\!\!\!\!\! c^\beta_{\beta^\Cee, \ldots, \beta^1,\beta^0} \Bigl(
\bigotimes  _{i=0}^\Cee 
\Bigl(
 (
\rightspecht {\alpha} \, \boxtimes\, \C  )
\Ind
_{\W_a\times \W_{ i }}
^{\W_{a+i}}
)
\,\oslash\, \rightspecht{\beta^i}
 \Bigr) \Ind_{\Stab((a^b)+\gamma)}^{\W_\Cee} \Bigr).
\]
(The set $\ParSet_{(a^b)}(\Cee)$ was defined in Definition~\ref{defn:ParSet}.)
By Lemma~\ref{lemma:characteristicIsometry}(b), 
\smash{$\rightspecht{\alpha} \otimes \C \ind_{\W_a \times \W_i}^{\W_{a+i}}$}
corresponds to $s_\alpha s_{(i)}$ and hence, using both parts of this lemma,
the tensor product corresponds to the symmetric function
$\prod_{i=1}^\Cee s_{\beta^i} \circ \bigl( s_{\alpha}s_{(i)} \bigr)$.
The proposition now follows from the definition
of $G^\alpha_{\beta,\gamma}$, noting that if
 $\alpha \not= \varnothing$ then $\gamma \in \ParSet_{(a^b)}(\Cee)$
and $\gamma$ has $c_0 = b-\ell(\gamma)$ distinguished zero parts, while
if $\alpha = \varnothing$ then $\gamma \in \ParSet_{(0^b)}(\Cee)$
and so $\ell(\gamma) = b$.
The result on $p( \beta[n], (m), \kappa[mn] )$ follows 
as in the proof of Proposition~\ref{prop:IndInfRes}.
\end{proof}

Note that each 
$G_\gamma$ and $H_\deltaP$ 
can be expressed as a linear combination of Schur functions using the Littlewood--Richardson
rule and  plethysm coefficients $p(\beta^k, (k), \lambda)$ for varying partitions~$\lambda$.
A further application of the Littlewood--Richardson rule then expresses each $G_\gamma H_{\deltaP}$
as a linear combination of Schur functions. This makes precise the claim in the introduction that
Corollary~D allows stable plethysm coefficients to be computed using \emph{much smaller} Littlewood--Richardson
and plethysm coefficients.

\subsection{Applications of Proposition~\ref{prop:symmetricFunctions}}
As a warm up we give the symmetric functions proof of Proposition~\ref{prop:nuOneRow}.
It is well known that the Littlewood--Richardson coefficient
$c^\lambda_{\mu \nu}$ is non-zero only if $\ell(\lambda) \ge \ell(\mu)$. 
It follows that
the generalized Littlewood--Richardson coefficient 
 \smash{$c^{(b)}_{\beta^\Cee,\ldots,\beta^1}$}
 in the sum defining $G^\varnothing_{(b),\gamma}$
is non-zero
if and only if $\beta^i = (c_i)$ for each $i$, and in this case its value is $1$.
Therefore 
\[ G^\varnothing_{(b), (\Cee^{c_\Cee}, \ldots, 1^{c_1})} 
= \prod_{i=1}^\Cee s_{(c_i)} \circ s_{(i)}. \]
Observe that this is the symmetric function $H_\gamma$ in Definition~\ref{defn:HsymmetricFunction}.
Therefore, by Proposition~\ref{prop:symmetricFunctions}, provided
$m \ge r-b + [b \not= 0]$ and $n \ge r+b$, we have
\[ p\bigl((n-b,b), (m), \kappa \bigr) = \sum_{\substack{
\Cee, \Dee : \Cee + \Dee = r \\
\gamma \vdash \Cee, \ell(\gamma) =b \\  \epsilon \in \ParSet_{>1}(\Dee)}}  
\langle H_\gamma H_\epsilon, s_\kappa \rangle \]
in which we may simplify the condition defining the summation to $(\gamma, \epsilon) \in \MParSet_b(r)$.
This expresses the plethysm coefficient $p\bigl((n-b,b), (m), \kappa[mn]\bigr)$ for $m \ge r$ and $n \ge r$
 as a \emph{clearly positive}
sum of generalized plethysm coefficients of smaller degrees.

\begin{rmk}\label{rmk:MR4756467}
Taking the special case $b = 0$ of this result and substituting the definition of~$H_\epsilon$, we obtain
\[ p\bigl( (n), (m), \kappa[mn]\bigr) = \sum_{\deltaP \in \ParSet_{>1}(r)}
\bigl\langle \hskip1pt \prod_{j \ge 2} s_{(e_j)} \circ s_{(j)}, s_\kappa\bigr\rangle \] 
where $e_j$ is the multiplicity of $j$ as a part of the partition $\deltaP$.
This recovers the main result,
\TheoremB, of \cite{MR4756467}, originally proved as the main theorem in \cite{MR1651092}.
\end{rmk}

We now give three further applications of Proposition~\ref{prop:symmetricFunctions}
that prove new results in the case when $\alpha = \varnothing$.

\subsubsection*{The case $\beta = (1^b)$} 
We require the following basic results on symmetric functions.

\begin{lem}\label{lemma:surelyEveryoneKnowsThis}Let $\Cee \in \NN$ be divisible by $i \in \N$
and let $\pi$ be a partition
of $\Cee/i$.
\begin{thmlist}
\item[\emph{(a)}] 
The plethysm $s_\pi \circ s_{(i)}$ 
has $s_{(\Cee)}$ as a constituent if and only if $\pi$ has exactly one part;
in this case the multiplicity is $1$.
\item[\emph{(b)}] The plethysm $s_\pi \circ s_{(i)}$ has $s_{(1^\Cee)}$ as a constituent
if and only if $i=1$ and $\pi = (1^\Cee)$.
\end{thmlist}
\end{lem}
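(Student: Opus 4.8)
\textbf{Proof proposal for Lemma~\ref{lemma:surelyEveryoneKnowsThis}.}
The plan is to prove both parts directly from the representation-theoretic meaning of plethysm given by Lemma~\ref{lemma:characteristicIsometry}(b): the plethysm $s_\pi \circ s_{(i)}$ of degree $\Cee = i|\pi|$ corresponds to the induced module $\bigl( \leftspecht{(i)} \oslash \leftspecht{\pi} \bigr)\Ind_{\W_i \wr \W_{|\pi|}}^{\W_\Cee}$. By Frobenius reciprocity, the multiplicity of $s_{(\Cee)}$ (respectively $s_{(1^\Cee)}$) in this plethysm is the multiplicity of the trivial module (respectively the sign module) of $\W_\Cee$ in the restriction of $\leftspecht{(i)} \oslash \leftspecht{\pi}$ to $\W_i \wr \W_{|\pi|}$; equivalently, the dimension of the space of $\W_i \wr \W_{|\pi|}$-invariants (respectively sign-isotypic vectors) in $\leftspecht{(i)} \oslash \leftspecht{\pi}$.

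For part (a), the module $\leftspecht{(i)} \oslash \leftspecht{\pi}$ is by definition $(\leftspecht{(i)})^{\otimes |\pi|} \otimes \Inf \leftspecht{\pi}$ on which the base group acts trivially (since $\leftspecht{(i)}$ is the trivial $\W_i$-module) and the top group $\W_{|\pi|}$ acts on $\leftspecht{\pi}$ by place permutation. So the space of $\W_i \wr \W_{|\pi|}$-invariants is just the space of $\W_{|\pi|}$-invariants in $\leftspecht{\pi}$, which is one-dimensional if $\pi = (|\pi|)$ (the trivial module) and zero otherwise. This gives part (a) exactly: $s_{(\Cee)}$ is a constituent of $s_\pi \circ s_{(i)}$ iff $\pi$ has a single part, with multiplicity $1$. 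First I would state this reduction carefully, then just observe that the invariants live entirely in the top-group factor.

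For part (b), I would similarly compute the sign-isotypic part. A vector in $\leftspecht{(i)} \oslash \leftspecht{\pi}$ transforms by the sign of $\W_\Cee$ only if it transforms by sign under each embedded factor $\W_i$ of the base group; but each $\W_i$ acts trivially, so for sign-isotypic behaviour we need the sign character of $\W_i$ to equal the trivial character, forcing $i = 1$. When $i = 1$ the wreath product collapses: $\W_1 \wr \W_\Cee \cong \W_\Cee$ and $\leftspecht{(1)} \oslash \leftspecht{\pi} = \leftspecht{\pi}$, so the plethysm is simply $s_\pi$, which equals $s_{(1^\Cee)}$ iff $\pi = (1^\Cee)$. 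Alternatively, and perhaps more cleanly, I would invoke the standard fact (e.g.\ via $\omega(s_\pi \circ s_{(i)})$) that for $i \ge 2$ every constituent $s_\lambda$ of $s_\pi \circ s_{(i)}$ has $\ell(\lambda) < \Cee$, since $s_{(1^\Cee)}$ would force the image of $\W_\Cee$ to act by sign on a line fixed by the nontrivial subgroup $\W_i \le \W_\Cee$ embedded in the first base factor, which is impossible.

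The only mild obstacle is making the sign-isotypic argument in part (b) airtight when $i \ge 2$: one must be sure that the embedded $\W_i$ inside $\W_\Cee$ is not contained in the alternating group, so that a sign-isotypic vector cannot be fixed by it. This is immediate since any transposition in $\W_i$ maps to an odd permutation of $\{1,\dots,\Cee\}$ under the embedding, so the restriction of the sign character of $\W_\Cee$ to this $\W_i$ is the sign character of $\W_i$, which is nontrivial for $i \ge 2$; hence no nonzero vector can be simultaneously fixed by $\W_i$ and transform by sign under $\W_\Cee$. I would present this as the key line and leave the rest as routine bookkeeping with the definition of $\oslash$.
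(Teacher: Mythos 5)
Your proof is correct, but it takes a genuinely different route from the paper's. For part (a) the paper simply cites an external result (\cite[Corollary 9.1]{PagetWildonGeneralized}) identifying the lexicographically greatest constituent of $s_\pi \circ s_{(i)}$ as $(|\pi|(i-1)+\pi_1,\pi_2,\dots)$, which equals $(\Cee)$ precisely when $\pi$ has one part; your Frobenius-reciprocity computation of the $\W_i \wr \W_{|\pi|}$-invariants of $\leftspecht{(i)} \oslash \leftspecht{\pi}$ proves the same thing self-containedly, which is a small gain. For part (b) the paper argues via symmetric functions: every constituent of $s_\pi \circ s_{(i)}$ occurs in the product $s_{(i)}\cdots s_{(i)}$ with $|\pi|$ factors, so by Young's rule has at most $|\pi|$ rows, forcing $i=1$ if $s_{(1^\Cee)}$ occurs. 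Your sign-isotypic argument — the base group acts trivially on $\leftspecht{(i)} \oslash \leftspecht{\pi}$ while the restriction of $\sgn_{\W_\Cee}$ to an embedded base factor $\W_i$ is the nontrivial sign character for $i \ge 2$, so $\bigl\langle \leftspecht{(i)} \oslash \leftspecht{\pi},\, \sgn\Res_{\W_i \wr \W_{|\pi|}} \bigr\rangle = 0$ — is essentially the character-theoretic shadow of the same fact, and it has the advantage of treating (a) and (b) uniformly (trivial character versus sign character). Your closing remark about needing the embedded $\W_i$ not to land inside the alternating group is exactly the right point to make explicit, and your justification of it is correct. Both proofs are valid; yours is more self-contained, the paper's is shorter given the citation.
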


\begin{proof}
Part (a) follows from Corollary 9.1 of \cite{PagetWildonGeneralized},
which implies
as a special case that the lexicographically greatest constituent
of $s_\pi \circ s_{(i)}$ is $(|\pi|(i-1) + \pi_1, \pi_2, \ldots, \pi_{\ell(\pi)})$.
For (b), we use that every constituent of $s_\pi \circ s_{(i)}$ appears
in $s_{(i)} \times \cdots \times s_{(i)}$ where there are $|\pi|$ factors
in the product. By the Littlewood--Richardson rule (or its simpler special case,
Young's rule), if~$s_\rho$ appears in this product then $\ell(\rho) \le |\pi|$.
Therefore $s_\pi \circ s_{(i)}$ has $s_{(1^\Cee)}$ as a constituent
only if $i=1$, and then since $s_\pi \circ s_{(1)} = s_\pi$, we have $\pi = (1^\Cee)$.
\end{proof}

Let
$\DMParSet_b(r)$ be the set of $b$-marked partitions $(\gamma, \deltaP)$ of $r$
such that the parts of $\gamma$ are distinct.

\begin{prop}\label{prop:hooks}
Let $m, n \in \NN$ and let $b < n$. Suppose that $m \ge r - b+1$ and $n \ge r+1$.
\begin{thmlist} 
\item We have $p\bigl( (n-b,1^b), (m), (mn-r,r)\bigr) = |\DMParSet_b(r)|$.
\item We have $p\bigl((n-b,1^b), (m), (mn-r,1^r)\bigr) = [r= b]$.
\end{thmlist}
\end{prop}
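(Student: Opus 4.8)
\textbf{Proof proposal for Proposition~\ref{prop:hooks}.}
The plan is to apply Proposition~\ref{prop:symmetricFunctions} in the case $\alpha = \varnothing$, $\beta = (1^b)$, so that (under the stated bounds $m \ge r-b+1$ and $n \ge r+1$) the plethysm coefficients in question equal the ramified branching coefficients $\rc(\varnothing^{(1^b)}, \kappa)$ with $\kappa = (r)$ in part~(i) and $\kappa = (1^r)$ in part~(ii). First I would record the shape of $G^\varnothing_{(1^b),\gamma}$. Fix $\gamma = (\Cee^{c_\Cee}, \ldots, 1^{c_1})$ with $\ell(\gamma) = b$. Since the Littlewood--Richardson coefficient $c^{(1^b)}_{\beta^\Cee,\ldots,\beta^1}$ is non-zero only when each $\beta^i$ is a single column $(1^{c_i})$ (the Pieri/column rule: adding $c_i$ boxes, no two in the same row, to build a single column forces a column), and then equals $1$, we obtain
\[ G^\varnothing_{(1^b),\gamma} = \prod_{i=1}^\Cee s_{(1^{c_i})} \circ s_{(i)}. \]
Thus for part~(i) we must compute, over all $(\gamma,\deltaP) \in \MParSet_b(r)$, the multiplicity of $s_{(r)}$ in $\bigl(\prod_{i} s_{(1^{c_i})}\circ s_{(i)}\bigr) H_\deltaP$, and for part~(ii) the multiplicity of $s_{(1^r)}$.

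For part~(i): since $s_{(r)}$ is the lexicographically greatest Schur function of its degree, the multiplicity of $s_{(r)}$ in a product of Schur-positive symmetric functions is the product of the multiplicities of the top Schur function in each factor. By Lemma~\ref{lemma:surelyEveryoneKnowsThis}(a), $s_{(1^{c_i})}\circ s_{(i)}$ contains $s_{(ic_i)}$ with multiplicity $1$ precisely when $(1^{c_i})$ has exactly one part, i.e.\ $c_i \le 1$; and each factor $s_{(e_j)}\circ s_{(j)}$ of $H_\deltaP$ always contains its top term $s_{(je_j)}$ with multiplicity $1$. Hence $\langle G^\varnothing_{(1^b),\gamma} H_\deltaP, s_{(r)} \rangle = 1$ if every multiplicity $c_i$ of $\gamma$ is $\le 1$ (equivalently $\gamma$ has distinct parts) and $0$ otherwise. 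Summing over $\MParSet_b(r)$ picks out exactly the pairs $(\gamma,\deltaP)$ with $\gamma$ having distinct parts, i.e.\ $|\DMParSet_b(r)|$, as required.

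For part~(ii): now we want the multiplicity of $s_{(1^r)}$. Dually, the multiplicity of the \emph{bottom} Schur function $s_{(1^N)}$ in a product of Schur-positive functions of total degree $N$ is the product of the multiplicities of $s_{(1^{d})}$ (the bottom term) in each degree-$d$ factor. By Lemma~\ref{lemma:surelyEveryoneKnowsThis}(b), $s_\pi \circ s_{(i)}$ contains $s_{(1^{i|\pi|})}$ only if $i=1$ and $\pi = (1^{|\pi|})$; applied to the factors $s_{(1^{c_i})}\circ s_{(i)}$ this forces $c_i = 0$ for all $i \ge 2$, and applied to the factors $s_{(e_j)}\circ s_{(j)}$ of $H_\deltaP$ it forces $e_j = 0$ for all $j \ge 2$. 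But $\deltaP \in \ParSet_{>1}(\Dee)$ already has no parts of size $1$, so $e_j = 0$ for all $j \ge 2$ means $\deltaP = \varnothing$ and $\Dee = 0$. And $c_i = 0$ for $i \ge 2$ together with $\ell(\gamma) = b$ forces $\gamma = (1^b)$, so $\Cee = b$ and $r = \Cee + \Dee = b$. In that unique case all relevant multiplicities are $1$ and $\langle G^\varnothing_{(1^b),(1^b)} H_\varnothing, s_{(1^b)} \rangle = 1$. Hence $\rc(\varnothing^{(1^b)}, (1^r)) = [r = b]$, proving~(ii).

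The main obstacle, and the only place requiring care, is the ``multiplicativity of the extreme Schur constituent'' claim used in both parts: that $\langle fg, s_{(N)}\rangle = \langle f, s_{(\deg f)}\rangle \langle g, s_{(\deg g)}\rangle$ and similarly for $s_{(1^N)}$, whenever $f,g$ are Schur-positive. This follows from the fact that in the product $s_\lambda s_\mu = \sum c^\nu_{\lambda\mu} s_\nu$ one has $c^{(|\lambda|+|\mu|)}_{\lambda\mu} = [\lambda = (|\lambda|)][\mu = (|\mu|)]$ and $c^{(1^{|\lambda|+|\mu|})}_{\lambda\mu} = [\lambda = (1^{|\lambda|})][\mu = (1^{|\mu|})]$ (again by Pieri, or by the $\ell(\nu) \le \ell(\lambda) + \ell(\mu)$ and the transpose bound on the number of columns). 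I would state this as a short lemma, or fold it into the argument, before iterating it over the factors of $G^\varnothing_{(1^b),\gamma} H_\deltaP$. Everything else is bookkeeping with $\MParSet_b(r)$ and the two parts of Lemma~\ref{lemma:surelyEveryoneKnowsThis}.
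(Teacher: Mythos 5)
Your proposal is correct and follows essentially the same route as the paper's proof: compute $G^\varnothing_{(1^b),\gamma} = \prod_i s_{(1^{c_i})}\circ s_{(i)}$ via the column case of the Littlewood--Richardson rule, then apply Lemma~\ref{lemma:surelyEveryoneKnowsThis}(a) for $\kappa=(r)$ and part (b) for $\kappa=(1^r)$ through Proposition~\ref{prop:symmetricFunctions}. The only difference is that you make explicit the multiplicativity of the extreme Schur constituents $s_{(N)}$ and $s_{(1^N)}$ in products of Schur-positive functions, which the paper leaves as ``it follows easily''; your justification of that step via the Pieri rule is sound.
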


\begin{proof}
Observe that, dually to the case $\beta = (b)$, 
the generalized Littlewood--Richardson coefficient 
 \smash{$c^{(1^b)}_{\beta^\Cee,\ldots,\beta^1}$}
 in the sum defining $G^\varnothing_{(1^b),\gamma}$
 where $\gamma = (\Cee^{c_\Cee}, \ldots, 1^{c_1})$
is non-zero
if and only if $\beta^i = (1^{c_i})$ for each $i$, and in this case its value is $1$.
Therefore 
\[ G^\varnothing_{(1^b),\gamma} = 
\prod_{i=1}^\Cee s_{(1^{c_i})} \circ s_{(i)}. \]
For (i), it follows easily from Lemma~\ref{lemma:surelyEveryoneKnowsThis}(a) 
that \smash{$G^\varnothing_{(1^b),\gamma}H_\epsilon$} has $s_{(r)}$ as a constituent
if and only if $\gamma$ has distinct parts, 
and in this case 
the multiplicity is $1$. Therefore by Proposition~\ref{prop:symmetricFunctions} we
have $p\bigl( (n-b,1^b), (m), (r) \bigr) = |\DMParSet_b(r)|$, as required.
Similarly for~(ii), 
it follows easily from Lemma~\ref{lemma:surelyEveryoneKnowsThis}(b) that
\smash{$G^\varnothing_{(1^b),\gamma}H_\epsilon$ has $s_{(1^r)}$}
as a constituent only if $\gamma = (1^\Cee)$ and then, since $\epsilon \in \ParSet(>1)$,
$\epsilon = \varnothing$. Since we require $\ell(\gamma) = |(1^b)|$ and $|\gamma| + |\epsilon| = r$, 
it then
follows that $r= b$. Hence by Proposition~\ref{prop:symmetricFunctions},
$p\bigl( (n-b,1^b), (m), (mn-r,1^r) \bigr) \not= 0$ 
if and only if $r=b$, and in this
case, the coefficient is $1$.
\end{proof}

We remark that (ii) 
is a special case of Theorem 3.1(2) in \cite{LangleyRemmel}; the short
proof given here and the explicit
positive formula in (i) are both new.

\subsubsection*{The case $\kappa = (b)$}
We now generalize the argument for Proposition~\ref{prop:hooks}(i).
Given a partition~$\beta$, let $\mathcal{S}_\beta(p)$ be the set of semistandard $\beta$-tableaux 
having entries from $\NN$ whose sum of entries is~$p$. For example,
\[ \raisebox{10pt}{$\young(1123,23,3)$} \in \mathcal{S}_{(4,2,1)}(15) \]

\begin{prop}\label{prop:generalBetaOneRowKappa}
Let $\beta \vdash b$ be a partition.
Then
\[ \rc\bigl(\varnothing^\beta, (r) \bigr) = 
\sum_{\Cee = b}^r
\bigl| \mathcal{S}_\beta(p) \bigr|\hskip0.5pt \bigl| \ParSet_{>1}(r-\Cee) \bigr|. \]
Moreover if
$m \ge r - b + [b \not= 0]$ and $n \ge r+b$ 
 then either side is the plethysm coefficient
$p(\beta[n], (m), \kappa[mn])$. 
\end{prop}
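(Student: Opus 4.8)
\textbf{Proof plan for Proposition~\ref{prop:generalBetaOneRowKappa}.}
The plan is to specialise Proposition~\ref{prop:symmetricFunctions} to the case $\alpha = \varnothing$, $\kappa = (r)$ and to evaluate the resulting sum explicitly. By that proposition,
\[ \rc(\varnothing^\beta,(r)) = \sum_{\Cee + \Dee = r,\ \gamma\vdash\Cee,\ \deltaP\in\ParSet_{>1}(\Dee)} \bigl\langle G^\varnothing_{\beta,\gamma} H_\deltaP, s_{(r)}\bigr\rangle, \]
and $\langle f, s_{(r)}\rangle$ is simply the coefficient of the complete homogeneous symmetric function $h_{(r)}$ component of a degree-$r$ symmetric function $f$, i.e.\ the multiplicity of the trivial module, which is additive and multiplicative in the appropriate sense. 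So the first step is to observe that $\langle G^\varnothing_{\beta,\gamma} H_\deltaP, s_{(r)}\rangle = \langle G^\varnothing_{\beta,\gamma}, s_{(\Cee)}\rangle \langle H_\deltaP, s_{(\Dee)}\rangle$, using that the multiplicity of $s_{(k+\ell)}$ in a product $uv$ of symmetric functions of degrees $k$, $\ell$ equals $\langle u, s_{(k)}\rangle\langle v, s_{(\ell)}\rangle$ (Pieri, or just the trivial-module count under the characteristic isometry together with Frobenius reciprocity). The factor $\langle H_\deltaP, s_{(\Dee)}\rangle$ is $1$ for every $\deltaP$, since $H_\deltaP$ corresponds by Lemma~\ref{lemma:stab} to a (transitive) permutation module $\C_{\Stab(\deltaP)}\ind^{\W_\Dee}$, whose multiplicity of the trivial module is $1$ by Frobenius reciprocity. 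Hence summing over $\deltaP\in\ParSet_{>1}(\Dee)$ contributes exactly $|\ParSet_{>1}(\Dee)|$, which accounts for that factor in the claimed formula.

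The second, and main, step is to identify $\langle G^\varnothing_{\beta,\gamma}, s_{(\Cee)}\rangle$ summed over all $\gamma\vdash\Cee$ with $\ell(\gamma)=b$ (recall $G^\varnothing_{\beta,\gamma}=0$ unless $\ell(\gamma)=b$) with $|\mathcal{S}_\beta(\Cee)|$. Writing $\gamma=(\Cee^{c_\Cee},\dots,1^{c_1})$, by Definition~\ref{defn:GsymmetricFunction} and the multiplicativity of $\langle -, s_{(\Cee)}\rangle$ over the product $\prod_i s_{\beta^i}\circ s_{(i)}$, together with Lemma~\ref{lemma:surelyEveryoneKnowsThis}(a), the inner multiplicity $\langle s_{\beta^i}\circ s_{(i)}, s_{(ic_i)}\rangle$ is $1$ exactly when $\beta^i$ has a single part, i.e.\ $\beta^i=(c_i)$, and is $0$ otherwise. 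Therefore
\[ \sum_{\gamma\vdash\Cee,\ \ell(\gamma)=b} \langle G^\varnothing_{\beta,\gamma}, s_{(\Cee)}\rangle = \sum_{\gamma\vdash\Cee,\ \ell(\gamma)=b}\ \sum_{\beta^i\vdash c_i} c^\beta_{\beta^\Cee,\dots,\beta^1}\prod_{i=1}^\Cee \langle s_{\beta^i}\circ s_{(i)}, s_{(ic_i)}\rangle = \sum_{\gamma\vdash\Cee,\ \ell(\gamma)=b} c^\beta_{(c_\Cee),\dots,(c_1)}. \]
Now I would recognise $c^\beta_{(c_\Cee),\dots,(c_1)}$ — the multiplicity of $\rightspecht{\beta}$ in $(\rightspecht{(c_\Cee)}\otimes\cdots\otimes\rightspecht{(c_1)})\ind$, equivalently the coefficient of $s_\beta$ in the product $h_{c_\Cee}\cdots h_{c_1}$ — as the Kostka number $K_{\beta,\lambda}$ where $\lambda$ is the partition rearrangement of $(c_\Cee,\dots,c_1)$; this counts semistandard tableaux of shape $\beta$ and content $\lambda$. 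A partition $\gamma\vdash\Cee$ with exactly $b$ parts corresponds bijectively to the composition $(c_\Cee,\dots,c_1)$ of $b$ recording part multiplicities, and the content $\lambda = (\Cee^{c_\Cee},\dots,1^{c_1})$ is precisely $\gamma$ itself. Reading off a semistandard tableau of shape $\beta$ and content $\gamma$ and recording, for each box, the value $i$ whose multiplicity contributed it, one sees that as $\gamma$ ranges over all partitions of $\Cee$ with $b$ parts and for each such $\gamma$ we count SSYT of content $\gamma$, we are counting \emph{all} semistandard $\beta$-tableaux with entries in $\NN$ whose entries sum to $\Cee$ — that is, $|\mathcal{S}_\beta(\Cee)|$. (This is the key combinatorial bijection and is where I expect the main care to be needed: matching the indexing set $\ParSet(\Cee)$-with-$b$-parts together with Kostka numbers against the set $\mathcal{S}_\beta(\Cee)$, and double-checking the boundary condition $\Cee \ge b$ since $\mathcal{S}_\beta(\Cee)$ is empty for $\Cee < b$ as $\ell(\beta)\le b$ forces at least $b$ boxes when... actually one needs $\Cee\ge b$ because the smallest content-sum of a semistandard $\beta$-tableau is $b$, attained by the all-ones filling — so the sum over $\Cee$ runs from $b$ to $r$, matching the statement.)

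Combining the two steps, $\rc(\varnothing^\beta,(r)) = \sum_{\Cee + \Dee = r} |\mathcal{S}_\beta(\Cee)|\,|\ParSet_{>1}(\Dee)| = \sum_{\Cee=b}^r |\mathcal{S}_\beta(\Cee)|\,|\ParSet_{>1}(r-\Cee)|$, which is the claimed identity. Finally, the statement about the plethysm coefficient $p(\beta[n],(m),\kappa[mn])$ for $m \ge r-b+[b\ne 0]$ and $n \ge r+b$ is immediate from the ``moreover'' clause of Proposition~\ref{prop:symmetricFunctions} (equivalently \TheoremA together with \TheoremC), applied with $\kappa = (r)$. The only genuine obstacle is the tableau-counting bijection in the second step; everything else is a routine unwinding of definitions and an application of Lemma~\ref{lemma:surelyEveryoneKnowsThis}(a).
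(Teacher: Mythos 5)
Your proposal is correct and follows essentially the same route as the paper's proof: specialise Proposition~\ref{prop:symmetricFunctions}, factor the inner product over the product $G^\varnothing_{\beta,\gamma}H_\deltaP$, use Lemma~\ref{lemma:surelyEveryoneKnowsThis}(a) to force each $\beta^i$ to be the one-row partition $(c_i)$, and identify the surviving generalized Littlewood--Richardson coefficient with the Kostka number counting semistandard $\beta$-tableaux of content $\gamma$, so that summing over $\gamma \vdash \Cee$ with $b$ parts yields $|\mathcal{S}_\beta(\Cee)|$. The extra detail you supply on the Kostka-number bijection and the boundary condition $\Cee \ge b$ matches what the paper states more tersely.
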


\begin{proof}
Let $\gamma = (\Cee^{c_\Cee}, \ldots, 1^{c_1})$ be a partition
with $\ell(\gamma) = b$.
By  Lemma~\ref{lemma:surelyEveryoneKnowsThis}(a), the contribution
to \smash{$G^\varnothing_{\beta,\gamma}$} from
partitions $\beta^\Cee, \ldots, \beta^1$ in the sum in Definition~\ref{defn:GsymmetricFunction}
is non-zero
if and only if each $\beta^i$ has at most one part, and in this case the contribution is $1$.
Since \smash{$c^\beta_{(c_1), (c_2), \ldots, (c_\Cee)}$} is the number
of semistandard tableaux of shape $\beta$ having exactly $c_i$ entries equal
to $i$ for each $1 \le i \le \Cee$, and by the hypothesis $|\gamma| = \Cee$
we have $\sum_{i=1}^\Cee ic_i = p$, it follows that
\[ \sum_{\gamma \vdash \Cee \atop \ell(\gamma) = b}
  \langle G^\varnothing_{\beta,\gamma}, s_{(p)} \rangle = |\mathcal{S}_\beta(p)|. \]
Note that this set is empty unless $p \ge |\beta| = b$. 
Again by Lemma~\ref{lemma:surelyEveryoneKnowsThis}(a),
 $\langle H_\deltaP, s_{(\Dee)} \rangle = 1$ for each partition $\deltaP$.
Now by Proposition~\ref{prop:symmetricFunctions}  we have
\begin{align*} 
\rc\bigl(\varnothing^\beta, (r)\bigr)
&= \sum_{
\begin{subarray}c\Cee,\,\Dee\, : \, \Cee+\Dee \,=\, r  \\
\gamma \vdash \Cee,\,   \deltaP \in \ParSet_{>1}(\Dee)\end{subarray}}
\langle G^\varnothing_{\beta,\gamma}H_\deltaP, s_{(r)} \rangle \\
&= \sum_{\Cee,,\Dee\, : \, \Cee+\Dee = r}\,
\sum_{\gamma \vdash \Cee : \ell(\gamma) = b} \langle G^\varnothing_{\beta,\gamma},
s_{(\Cee)} \rangle \sum_{\deltaP \in \ParSet_{>1}(\Dee)}
 \langle H_\deltaP, s_{(\Dee)} \rangle \\
&= \sum_{\Cee,\,\Dee\, : \, \Cee + \Dee = r \atop \Cee \ge b}
\bigl| \mathcal{S}_\beta(p)\bigr|\hskip0.5pt\bigl|\ParSet_{>1}(\Dee) \bigr|\end{align*}
where the second line is a final application of Lemma~\ref{lemma:surelyEveryoneKnowsThis}(a),
and that $G^\varnothing_{\beta,\gamma} = 0$ unless $\ell(\gamma) = b$, 
the third line substitutes the results on $G^\varnothing_{\beta,\gamma}$ and $H_\deltaP$
just obtained.
\end{proof}

For instance, to deduce Proposition~\ref{prop:hooks}(i) from this proposition, observe
that each $t \in \mathcal{S}_{(1^b)}(\Cee)$ has $b$ distinct entries
summing to $\Cee$, and so there is an obvious bijection between
the set $\mathcal{S}_{(1^b)}(\Cee) \times \ParSet_{>1}(\Dee)$ and
the subset of $\MParSet_b(r)$ of those marked partitions $(\gamma, \epsilon)$
such that $|\gamma| = p$, $|\deltaP| = q$ and~$\gamma$ has distinct parts.
Again it is a notable feature of Proposition~\ref{prop:generalBetaOneRowKappa}
that the formula is explicit
and clearly positive.

\subsubsection*{Cases where $|\kappa| \le |\beta| + 2$}
We end by showing how \TheoremA determines the 
plethysm coefficients $p(\beta[n], (m), \kappa[mn])$ when $|\kappa| \le |\beta|+1$
and giving an illustrative example of how it can be used to compute this plethysm
coefficient when $|\kappa| = |\beta| + 2$.
We require the following lemma; in (iii), $\beta - \Box$ denotes a partition 
obtained from $\beta$ by removing a single removable box from its Young diagram.

\begin{lem}\label{lemma:GsymmetricFunctionSmallCee}
Let $\beta \vdash b$ and let $\kappa \vdash \Cee$. 
\begin{thmlist}
\item Let $\Cee < b$. Then $G^\varnothing_{\beta,\gamma} = 0$.
\item Let $\Cee = b$. Then $G^\varnothing_{\beta,\gamma} \not= 0$ if and only
if $\gamma = (1^b)$ and $G^\varnothing_{\beta,(1^b)} = s_\beta$.
\item Let $\Cee = b+1$. Then $G^\varnothing_{\beta,\gamma} \not= 0 $ if and only
if $\gamma \hskip-1.2pt=\hskip-1.2pt (2,1^{b-1})$ and 
$G^\varnothing_{\beta,(2,1^{b-1})} \hskip-1.2pt=\hskip-1.2pt  \sum_{\pi = \beta - \Box} s_\pi s_{(2)}$.
\end{thmlist}
\end{lem}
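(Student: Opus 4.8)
\textbf{Proof proposal for Lemma~\ref{lemma:GsymmetricFunctionSmallCee}.}

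The plan is to unwind Definition~\ref{defn:GsymmetricFunction} in each case, using that $G^\varnothing_{\beta,\gamma}$ is nonzero only when $\ell(\gamma) = b$, and then to use the degree bookkeeping $|\gamma| = p$ together with the observation that every factor $s_{\beta^i} \circ s_{(i)}$ has degree $ic_i$ where $c_i$ is the multiplicity of $i$ as a part of $\gamma$. First I would dispose of~(i): if $p < b$ then there is no partition $\gamma$ of $p$ with $\ell(\gamma) = b$ (a partition of $p$ has at most $p < b$ parts), so by the convention in Definition~\ref{defn:GsymmetricFunction} that $G^\varnothing_{\beta,\gamma} = 0$ in all cases not explicitly listed, every such $G^\varnothing_{\beta,\gamma}$ vanishes. (Strictly, the lemma statement should be read as asserting this for all $\gamma \vdash p$.)

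For~(ii), with $p = b$ and $\ell(\gamma) = b$ the only possibility is $\gamma = (1^b)$, so $c_1 = b$ and $c_i = 0$ for $i > 1$. Then $\beta^1 \vdash b$, all other $\beta^i$ are empty, and the generalized Littlewood--Richardson coefficient $c^\beta_{\beta^1}$ is $1$ when $\beta^1 = \beta$ and $0$ otherwise; hence $G^\varnothing_{\beta,(1^b)} = s_\beta \circ s_{(1)} = s_\beta$, as claimed. For~(iii), with $p = b+1$ and $\ell(\gamma) = b$ the only partition of $b+1$ with exactly $b$ parts is $\gamma = (2,1^{b-1})$, so $c_2 = 1$, $c_1 = b-1$, and all higher $c_i$ vanish. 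The sum in Definition~\ref{defn:GsymmetricFunction} ranges over $\beta^2 \vdash 1$ and $\beta^1 \vdash b-1$, so necessarily $\beta^2 = (1)$, and the coefficient $c^\beta_{\beta^2,\beta^1} = c^\beta_{(1),\beta^1}$ equals the number of ways $\beta^1$ together with a single box yields $\beta$ by the Littlewood--Richardson (or Pieri) rule, i.e.\ it is $1$ precisely when $\beta^1 = \beta - \Box$ for a removable box, and $0$ otherwise. The corresponding summand is $\bigl(s_{(1)} \circ s_{(2)}\bigr)\bigl(s_{\beta^1} \circ s_{(1)}\bigr) = s_{(2)} s_{\beta^1}$, using $s_{(1)} \circ s_{(2)} = s_{(2)}$ and $s_{\beta^1} \circ s_{(1)} = s_{\beta^1}$. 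Summing over all removable boxes gives $G^\varnothing_{\beta,(2,1^{b-1})} = \sum_{\pi = \beta - \Box} s_\pi s_{(2)}$.

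I do not anticipate a genuine obstacle here: the content is entirely the combinatorial identification of which $\gamma$ of small size can have exactly $b$ parts, followed by the two elementary plethysm identities $s_\mu \circ s_{(1)} = s_\mu$ and $s_{(1)} \circ s_\nu = s_\nu$ and a single instance of the Pieri rule. The one point requiring a little care is to make explicit the reading of the lemma in case~(i)---that the assertion $G^\varnothing_{\beta,\gamma} = 0$ is to hold for \emph{every} $\gamma \vdash p$ with $p < b$---and to confirm this follows directly from the degree constraint $\ell(\gamma) = b$ built into Definition~\ref{defn:GsymmetricFunction}.
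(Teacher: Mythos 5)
Your proof is correct and follows essentially the same route as the paper's: in each case the constraint $\ell(\gamma)=b$ built into Definition~\ref{defn:GsymmetricFunction} pins down $\gamma$ (none, $(1^b)$, or $(2,1^{b-1})$ respectively), after which the identities $c^\beta_\pi=[\pi=\beta]$, the Pieri-rule evaluation of $c^\beta_{(1),\beta^1}$, and $s_\mu\circ s_{(1)}=s_\mu$, $s_{(1)}\circ s_{(2)}=s_{(2)}$ give the stated formulas. Your explicit handling of case (i) via the length constraint is, if anything, slightly cleaner than the paper's appeal to the degree remark.
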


\begin{proof}
That $G^\varnothing_{\beta,\gamma} = 0$ in each case
 follows easily from the remark after
Definition~\ref{defn:GsymmetricFunction} that \smash{$G^\varnothing_{\beta,\gamma}$} has degree $p$
whenever it is non-zero. For (ii) we use that
a generalized Littlewood--Richardson coefficient with a single bottom factor
\smash{$c^\beta_\pi$} is non-zero if and only if $\pi = \beta$ and for~(iii)
that if $\pi \vdash b-1$ then $c^\beta_{\pi,(1)}$ is non-zero if and only
if $\pi$ is obtained from $\beta$ by removing a single box; the
product in \smash{$G^\varnothing_{\beta,\gamma}$} is then $(s_\pi \circ s_{(1)})(s_{(1)} \circ s_{(2)})
= s_\pi s_{(2)}$, as required.
\end{proof}

\begin{prop}\label{lemma:plethysmCoefficientsSmallC}
Let $\beta \vdash b$ and let $\kappa \vdash r$. 
Suppose that $m \ge r - |\beta| + [\beta \not= \varnothing]$
 and $n \ge r + \beta_1$.
\begin{thmlist}
\item If $r < b$ then $p( \beta[n], (m), \kappa[mn] ) = 0$.
\item If $r = b$ then $p( \beta[n], (m), \kappa[mn] )  \not= 0$ if and only if
$\kappa = \beta$ and then the coefficient is $1$.
\item If $r = b+1$ then $p( \beta[n], (m), \kappa[mn] ) \not= 0$ if and only if
$\kappa$ is obtained from $\beta$ by first removing a box and then adding two boxes, not
both in the same column.
\end{thmlist}
\end{prop}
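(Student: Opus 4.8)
The plan is to apply Proposition~\ref{prop:symmetricFunctions} in the case $\alpha = \varnothing$ together with Lemma~\ref{lemma:GsymmetricFunctionSmallCee}, which already packages the required structural facts about the symmetric functions $G^\varnothing_{\beta,\gamma}$ in low degree. Recall that under the stated hypotheses $m \ge r - |\beta| + [\beta \not= \varnothing]$ and $n \ge r + \beta_1$, Proposition~\ref{prop:symmetricFunctions} gives
\[ p(\beta[n],(m),\kappa[mn]) = \sum_{\begin{subarray}c \Cee, \Dee : \Cee + \Dee = r \\ \gamma \vdash \Cee,\ \deltaP \in \ParSet_{>1}(\Dee)\end{subarray}} \bigl\langle G^\varnothing_{\beta,\gamma} H_\deltaP, s_\kappa \bigr\rangle, \]
where $b = |\beta|$. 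So each part of the proposition is a direct computation of when this sum is non-zero, for $r = b-1$ or less, $r = b$, and $r = b+1$ respectively.

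For part~(i), since $r < b$, every term in the sum has $\Cee \le r < b$, so Lemma~\ref{lemma:GsymmetricFunctionSmallCee}(i) forces $G^\varnothing_{\beta,\gamma} = 0$; hence the sum is empty and the coefficient is zero. For part~(ii), with $r = b$, the only way to get a non-zero term is $\Cee = b$ and $\Dee = 0$ (if $\Cee < b$ then $G^\varnothing_{\beta,\gamma} = 0$ by part~(i) of the lemma, and $\Cee > b$ is impossible since $\Cee \le r = b$); then $\deltaP = \varnothing$, $H_\varnothing = 1$, and by Lemma~\ref{lemma:GsymmetricFunctionSmallCee}(ii) the only non-zero contribution is $\gamma = (1^b)$ with $G^\varnothing_{\beta,(1^b)} = s_\beta$. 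Thus the coefficient is $\langle s_\beta, s_\kappa\rangle$, which is $1$ if $\kappa = \beta$ and $0$ otherwise, using that the Schur functions form an orthonormal basis.

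For part~(iii), with $r = b+1$, the admissible splittings of $r = \Cee + \Dee$ with $\Cee \ge b$ (forced by Lemma~\ref{lemma:GsymmetricFunctionSmallCee}(i)) are $(\Cee,\Dee) = (b+1,0)$ and $(\Cee,\Dee) = (b,1)$; the latter is ruled out because $\ParSet_{>1}(1) = \varnothing$. So only $\Cee = b+1$, $\Dee = 0$, $\deltaP = \varnothing$ survives, and by Lemma~\ref{lemma:GsymmetricFunctionSmallCee}(iii) the only non-zero $G^\varnothing_{\beta,\gamma}$ is $\gamma = (2,1^{b-1})$ with $G^\varnothing_{\beta,(2,1^{b-1})} = \sum_{\pi = \beta - \Box} s_\pi s_{(2)}$. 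Therefore $p(\beta[n],(m),\kappa[mn]) = \sum_{\pi = \beta - \Box} \langle s_\pi s_{(2)}, s_\kappa \rangle = \sum_{\pi = \beta - \Box} c^\kappa_{\pi,(2)}$. By Pieri's rule, $c^\kappa_{\pi,(2)}$ is $1$ if $\kappa$ is obtained from $\pi$ by adding two boxes, no two in the same column, and $0$ otherwise. So the coefficient is non-zero precisely when $\kappa$ can be obtained from $\beta$ by removing one box and then adding two boxes with no two in the same column, which is the claimed description.

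I do not anticipate any serious obstacle: the work has essentially been done in Proposition~\ref{prop:symmetricFunctions} and Lemma~\ref{lemma:GsymmetricFunctionSmallCee}. The only point requiring a little care is the bookkeeping in part~(iii) — correctly handling the Pieri coefficients $c^\kappa_{\pi,(2)}$ and verifying that distinct box-removals $\pi = \beta - \Box$ cannot contribute to the \emph{same} $\kappa$ with overcounting (they cannot, since $\pi$ is recovered from $\kappa$ as the result of the removal, but one should note that the statement asserts non-vanishing rather than computing the exact multiplicity, so even mild overlap would only affect the coefficient's value, not the non-vanishing criterion). If a precise multiplicity were wanted one would sum the relevant Pieri coefficients, but for the stated ``if and only if'' this subtlety is immaterial.
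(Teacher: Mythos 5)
Your proposal is correct and follows exactly the paper's own argument: apply Proposition~\ref{prop:symmetricFunctions} with $\alpha = \varnothing$, use Lemma~\ref{lemma:GsymmetricFunctionSmallCee} to identify the surviving terms, observe that $\deltaP \in \ParSet_{>1}(\Dee)$ with $\Dee \le 1$ forces $\deltaP = \varnothing$, and finish part (iii) with Young's (Pieri's) rule. Your closing remark about possible overcounting across different removals $\pi = \beta - \Box$ is also consistent with the paper, which notes after the proposition that the multiplicity can indeed exceed $1$.
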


\begin{proof}
Each part follows easily from the corresponding part in Lemma~\ref{lemma:GsymmetricFunctionSmallCee},
using Proposition~\ref{prop:symmetricFunctions},
noting that in each case since $\epsilon \in \ParSet_{>1}$
and $|\epsilon| \le 1$, we have $\epsilon = \varnothing$ in the sum in this proposition; 
for (iii) we use  Young's rule to multiply $s_\pi$ by $s_{(2)}$.
\end{proof}

We remark that the multiplicity in case (iii) can be arbitrarily high in the case when~$\kappa$ 
is obtained from $\beta$ by adding a single box: for example
if $\beta$ is the staircase partition $(\ell,\ell-1,\ldots, 1)$ and 
$\kappa = (\ell+1,\ell-1,\ldots, 1)$, 
then $\kappa$ can be obtained by removing \emph{any} of the $b$ removable boxes from $\beta$,
and then adding two boxes, not both in the same column. 
Therefore
$p\bigl( (\ell,\ell-1,\ldots, 1)[n], (m), (\ell+1,\ell-1,\ldots,1)[mn] \bigr) = \ell$
whenever $m$ and $n$ satisfy
$m \ge 2$ and $n \ge \binom{\ell+1}{2} + \ell+1 = \binom{\ell+2}{2}$.


We conclude with an example
illustrative of the case when $r = b+2$.

\begin{eg}
We take $\beta = (3,3,3)$ and $r = 11$. There are three non-zero products
$G^\varnothing_{\beta,\gamma} H_\epsilon$ in the right-hand side of Proposition~\ref{prop:symmetricFunctions}.
\begin{itemize}[leftmargin=18pt]
\item $\gamma = (3,1^8)$ and $\epsilon = \varnothing$: the multiplicities of the parts of $\gamma$
are $c_3 = 1$, $c_2 = 0$ and $c_1 = 8$, and since $(3,3,3)$ has a unique removable box
we must then take $\beta^3 = (1)$ and $\beta^1 = (3,3,2)$ to obtain a non-zero Littlewood--Richardson
coefficient \smash{$c^{(3,3,3)}_{\beta^3, \varnothing, \beta^1}$}. Thus in this case the product is 
\smash{$(s^{(1)} \circ s^{(3)})(s_{(3,3,2)} \circ s_{(1)}) =s_{(3)} s_{(3,3,2)} $}.
\item $\gamma = (2,2,1^7)$ and $\epsilon = \varnothing$: the multiplicities of the parts of $\gamma$
are $c_2 = 2$ and $c_1 = 7$, and 
we may take either $\beta^2 = (2)$ and $\beta^1 = (3,3,1)$ or $\beta^2 = (1,1)$ and $\beta^1 = (3,2,2)$
to obtain a non-zero Littlewood--Richardson coefficient. The product is
\begin{align*} \hspace*{0.25in}(s^{(2)} \circ  s_{(2)})(s_{(3,3,1)} \circ s_{(1)}) + (s_{(1,1)}  &{}\circ s_{(2)})(s_{(3,2,2)} \circ s_{(1)}) \\
&= s_{(4)}s_{(3,3,1)} + s_{(2,2)}s_{(3,3,1)} + s_{(3,1)}s_{(3,2,2)}. \end{align*}
\item $\gamma = (1^9)$ and $\epsilon = (2)$: the product is $s_{(3,3,3)} s_{(2)}$.
\end{itemize}
Therefore
\begin{align*} p\bigl( (n-9&{},3,3,3), (m), \kappa[mn] \bigr) \\ &= \langle
s_{(3)}s_{(3,3,2)}  + s_{(4)}s_{(3,3,1)} + s_{(2,2)}s_{(3,3,1)} + s_{(3,1)}s_{(3,2,2)}
+ s_{(3,3,3)} s_{(2)}, s_\kappa \rangle. \end{align*}
It is now routine to use the Littlewood--Richardson rule to
calculate the plethysm coefficients $p\bigl( (n-9,3,3,3), (m), \kappa[mn] \bigr)$
for $n \ge 14$ 
 and $m \ge 3$ 
for each $\kappa \vdash 11$. For instance when $\kappa = (3,3,3,2)$, the plethysm
coefficient is $4$, with contributions of $1$, $2$ and $1$ from the three products.
In fact it suffices to take $n = 13$ and $m=3$ to get the stable value.
\end{eg}




\section{The bounds in Theorems~A and~D 
when $\alpha = \varnothing$ cannot be weakened}
\label{sec:tight}

In this section
we show that when 
$\beta = (b)$ the bound  on $n$ in Theorems~\hyperlink{thmA}{A} and~\hyperlink{thmC}{C}
cannot be weakened for infinitely many~$\kappa$, and when $\beta = \varnothing$,
similarly the bound on $m$ cannot be weakened when $\kappa = (r)$. 

We require the following proposition and lemma, which generalize the Cayley--Sylvester formula
(see \cite[\S 20]{CayleyQuantics} or, for an elegant modern proof using the symmetric group, 
\cite[Corollary~2.12]{GiannelliArchMath})
that the plethysm coefficient $p\bigl((n), (m), (mn-r,r)\bigr)$ is the difference 
between the number of partitions of $r$
and $r-1$ contained in the $n \times m$ box. To prove Proposition~\ref{prop:plethysmToT},
we shall use the combinatorial model for a general plethysm $s_\nu \circ s_\mu$ from
\cite{deBoeckPagetWildon};
we give enough details to make this section self-contained provided the reader
takes one basic lemma from \cite{deBoeckPagetWildon} on trust.

\begin{prop}\label{prop:plethysmToT}
For $k$, $r \in \NN$ satisfying $n-k \ge k$ and $mn - r \ge r$, 
we define $\Tmnkr{m}{n}{k}{r}$ to be the set of semistandard Young tableaux of shape $(n-k,k)$ with entries from $\{0,1,\ldots, m\}$ whose
sum of all $n$ entries is $r$.
Then
\[
p\bigl((n-b,b), (m), (mn -r,r)\bigr) 
= \bigl|\Tmnkr{m}{n}{b}{r}\bigr| - \bigl|\Tmnkr{m}{n}{b}{r-1}\bigr|
\]
\end{prop}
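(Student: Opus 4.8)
The plan is to reduce the proposition to a known combinatorial model for plethysm and then identify the relevant tableaux. First I would invoke the combinatorial model of \cite{deBoeckPagetWildon} for a plethysm $s_\nu \circ s_\mu$: in the special case $\nu = (n-b,b)$ and $\mu = (m)$, the coefficient $p\bigl((n-b,b),(m),\lambda\bigr)$ counts certain \emph{plethystic semistandard tableaux} — pairs consisting of a semistandard $(n-b,b)$-shaped arrangement of single-row $\mu$-tableaux, together with a lattice/Yamanouchi condition whose content equals $\lambda$. When $\lambda = (mn-r,r)$ has only two rows, the Yamanouchi constraint is extremely restrictive, so the data collapses: up to the usual straightening, recording such an object amounts to recording, for each cell of the outer shape $(n-b,b)$, the number of boxes of the inner row-tableau that contribute to the \emph{second} row of $\lambda$. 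I would make this precise by checking that the map sending a plethystic tableau to the $(n-b,b)$-array of these ``weights'' (entries in $\{0,1,\dots,m\}$, since each inner $\mu = (m)$-tableau has $m$ boxes) is injective, that its image is exactly the set of semistandard fillings, and that the total weight equals $r$ precisely when $|\lambda \setminus (\text{first row})| = r$ — i.e. when $\lambda = (mn-r,r)$. This identifies the plethysm coefficient with a signed count, and the natural place the sign enters is the lattice/Yamanouchi (``boundary'') condition: the raw count of weight-$r$ semistandard $(n-b,b)$-fillings \emph{overcounts}, and the overcount is itself in bijection with weight-$(r-1)$ semistandard fillings.

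Concretely, the key steps in order would be: (1) specialize the \cite{deBoeckPagetWildon} model to $\nu = (n-b,b)$, $\mu = (m)$, $\lambda = (mn-r,r)$ and extract the statement that $p\bigl((n-b,b),(m),(mn-r,r)\bigr)$ is the alternating count of a filtered family of combinatorial objects; (2) set up the ``weight array'' bijection described above, proving that forgetting all data except the multiset of second-row contributions per outer cell lands bijectively on a set of semistandard Young tableaux of shape $(n-b,b)$ with entries in $\{0,1,\dots,m\}$; (3) track the weight statistic through this bijection to see that total weight $r$ corresponds exactly to the target partition $(mn-r,r)$; (4) analyze the Yamanouchi condition as a single inequality constraint on the weight array, and show that violating configurations of total weight $r$ are in bijection (via a ``reflect the first transgression'' argument, in the spirit of the Lindström–Gessel–Viennot / standard two-row straightening) with arbitrary semistandard fillings of total weight $r-1$; (5) combine (2)–(4) to get $p\bigl((n-b,b),(m),(mn-r,r)\bigr) = |\Tmnkr{m}{n}{b}{r}| - |\Tmnkr{m}{n}{b}{r-1}|$. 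The hypotheses $n - b \ge b$ and $mn - r \ge r$ are exactly what is needed for the two relevant shapes to be genuine partitions and for the reflection map in step (4) to be well-defined.

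I would use the accompanying lemma (the ``and lemma'' promised just before the proposition) to supply the single nontrivial combinatorial input from \cite{deBoeckPagetWildon} — presumably the statement that in the two-row target case the plethystic tableau count equals the alternating sum of semistandard-filling counts, or equivalently the reflection bijection of step (4) — so that the proof here is essentially the verification of the weight-array dictionary in steps (2)–(3). The main obstacle I anticipate is step (4): making the ``cancel the bad configurations'' argument rigorous requires pinning down precisely which inequality the Yamanouchi condition imposes on the two-row weight array, and then exhibiting an explicit, manifestly involutive or manifestly bijective correspondence with the weight-$(r-1)$ fillings. This is the classical Cayley--Sylvester phenomenon (the $b=0$ case is exactly Cayley--Sylvester), so the argument should parallel the elegant symmetric-group proof in \cite[Corollary~2.12]{GiannelliArchMath}, but carrying the extra ``outer shape $(n-b,b)$'' bookkeeping through the reflection step without double-counting is where the care is needed. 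Everything else — semistandardness of the weight array, the entry bound $\{0,\dots,m\}$, and the weight-to-partition translation — is routine once the model is set up.
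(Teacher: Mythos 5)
Your steps (2)--(3) are exactly the paper's bijection: a plethystic semistandard tableau of shape $\bigl((n-b,b),(m)\bigr)$ and weight $(r)$ has every inner $(m)$-tableau filled with $0$s and $1$s, and recording the number of $1$s in each inner tableau gives precisely an element of $\Tmnkr{m}{n}{b}{r}$. Where you diverge is in how the difference of two counts arises, and this is also where your proposal as written has a problem. The model actually invoked, \cite[Lemma~3.1]{deBoeckPagetWildon}, is a pure \emph{monomial} expansion $s_{(n-b,b)}\circ s_{(m)} = \sum_T x^T$; there is no lattice or Yamanouchi condition in it, so your step (4) has nothing to act on. The paper instead passes from monomial to Schur coefficients algebraically: the coefficient of $m_{(mn-r,r)}$ equals $\langle s_{(n-b,b)}\circ s_{(m)},\, h_{(mn-r,r)}\rangle$ by the $h$--$m$ duality, and then the two-row relation $s_{(mn-r,r)} = h_{(mn-r,r)} - h_{(mn-r+1,r-1)}$ produces the subtraction in one line. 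Your ``reflect the first transgression'' involution is, of course, the standard combinatorial proof of that same two-row identity, so your route could be completed; but it replaces a one-line algebraic step with a bijection that needs careful setup on objects (ballot-violating fillings) that the cited model does not directly provide, and it misattributes the source of the sign to the plethystic tableaux themselves. One further small correction: the lemma immediately following the proposition in the paper (Lemma~\ref{lemma:TtoR}) counts the sets $\Tmnkr{m}{n}{b}{r}$ in terms of partition pairs for the later corollaries; it is not the input you conjectured about cancelling bad configurations.
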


\begin{proof}
The set of $(m)$-tableaux with entries from $\mathbb{N}_0$ is totally ordered by 
comparing their entries, read left-to-right, by the lexicographic order. Let $<$ denote this total order.
For example when $m=3$ we have
$\young(012) < \young(013) < \young(033)$\hskip0.5pt. 
Define a \textsf{plethystic semistandard tableaux} of \textsf{shape} $\bigl((n-b,b), (m)\bigr)$ to be an $(n-b,b)$-tableau $T$ whose
entries are $(m)$-tableaux, arranged in $T$ so that they are weakly increasing under $<$ read along each row,
and strictly increasing under $<$ read down each column. If for each $i \in \NN
$,
the plethystic semistandard tableau 
$T$ has exactly $\omega_i$ entries of $i$ in its $(m)$-tableau entries, then
we say~$T$ has \textsf{weight} $\omega$ and write
$x^T = x_1^{\omega_1}x_2^{\omega_2} \ldots $. (Note that, by this definition, 
zero entries are not considered when computing the weight.) 
Let $\PSSYT\bigl( (n-b,b), (m) \bigr)_\omega$ denote the set 
of plethystic semistandard tableau of shape $\bigl((n-b,b), (m)\bigr)$ and weight $\omega$.
For example 
\[ \pyoung{1.6cm}{0.7cm}{ {{\young(000), \young(001)}, {\young(011), \young(011)}} } \raisebox{18pt}{ ,}
\quad \pyoung{1.6cm}{0.7cm}{ {{\young(000), \young(001)}, {\young(011), \young(012)}} }\]
are in $\PSSYT\bigl( (2,2), (3) \bigr)_{(5)}$ and $\PSSYT\bigl( (2,2), (3) \bigr)_{(4,1)}$, respectively.
 By \cite[Lemma~3.1]{deBoeckPagetWildon}, we have
\[ s_{(n-b,b)} \circ s_{(m)} = \sum_{T} x^T \]
where the sum is over all   plethystic semistandard tableaux $T$ of shape $\bigl((n-b,b), (m)\bigr)$.
Hence the coefficient of the monomial symmetric function labelled by the partition $(mn-r,r)$ 
in $s_{(n-b,b)} \circ s_{(m)}$ is $\bigl| \PSSYT\bigl( (n-b,b), (m) \bigr)_{(r)} \bigr|$. 
By the duality 
between the complete homogeneous and monomial symmetric functions (see \cite[(7.30)]{StanleyII}),
this coefficient is $\langle s_{(n-b,b)} \circ s_{(m)}, h_{(mn-r,r)} \rangle$. Hence, by the relation
$s_{(mn-r,r)} = h_{(mn-r,r)} - h_{(mn-r+1,r-1)}$, we get
\[ \langle s_{(n-b,b)} \circ s_{(m)}, s_{(mn-r,r)} \rangle = 
\bigl| \PSSYT\bigl( (n-b,b), (m) \bigr)_{(r)} \bigr| \\
- \bigl| \PSSYT\bigl( (n-b,b), (m) \bigr)_{(r-1)} \bigr|. \] 
Finally observe that each $(m)$-tableau entry in a plethystic semistandard tableaux of  weight
$(r)$ has entries from $0$ and $1$, with exactly $r$ entries of $1$, and so 
the set \smash{$\PSSYT\bigl( (n-b,b), (m) \bigr)_{(r)}$} and the set $\Tmnkr{m}{n}{b}{r}$
are in bijection by the map that replaces each $(m)$-tableau entry in a 
plethystic semistandard tableau by its number of $1$s.
Note that the map preserves the semistandard condition
thanks to our choice of the order $<$ on $(m)$-tableaux.
For example, the image of the plethystic semistandard tableaux shown left above is 
\raisebox{4pt}{$\young(01,22)$}\hskip1pt. 
\end{proof}

Let $\PP{b}{m}{r}$ denote
the set of pairs of partitions $(\lambdaP,\muP)$ such that $\lambdaP_1, \muP_1 \le m$,
$\ell(\lambdaP) = b$
 and
$|\lambdaP| + |\muP| = r$. Note there is no  restriction on $\ell(\muP)$.

\begin{lem}\label{lemma:TtoR}{\ }
\begin{thmlist}
\item If $n \ge r + b$ and $n \ge 2b$ then
$\bigl|\Tmnkr{m}{n}{b}{r}\bigr| = \bigl|\PP{b}{m}{r}\bigr|$.
\item We have $\bigl|T\bigl( (r-1,b), m \bigr)_r \bigr|  = \bigl|\PP{b}{m}{r}\bigr|-1$.
\end{thmlist}
\end{lem}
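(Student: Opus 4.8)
\textbf{Proof plan for Lemma~\ref{lemma:TtoR}.}
The plan is to set up explicit bijections in each part and read off the counts. For part (i), I would start with a semistandard tableau $T \in \Tmnkr{m}{n}{b}{r}$ of shape $(n-b,b)$ with entries from $\{0,1,\ldots,m\}$ summing to $r$. The key observation is that, because $n \ge r+b$ and $n \ge 2b$, the bottom row (length $b$) is short enough and the total weight $r$ small enough that we can record $T$ by the multiset of its nonzero entries in each row, without losing the semistandard condition. Concretely: let $\muP$ be the partition obtained by listing the nonzero entries of the \emph{top} row in weakly decreasing order, and let $\lambdaP$ be obtained from the \emph{bottom} row similarly, but then I would need to adjust so that $\ell(\lambdaP) = b$ exactly --- this is where the hypotheses enter. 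The natural move is: since the bottom row has exactly $b$ boxes, its entries (read right to left, i.e. weakly decreasing) form a sequence $\lambdaP$ of length $b$ with $\lambdaP_1 \le m$; some trailing entries may be $0$, but we keep them, so $\ell(\lambdaP)=b$ is automatic if we allow $\lambdaP$ to be a weakly decreasing sequence of \emph{nonnegative} integers of length $b$. The top row entries, stripped of zeros, give $\muP$ with $\muP_1 \le m$ and no constraint on length. I would then verify that the semistandard condition on $T$ (weak increase along rows, strict increase down the two columns in the first $b$ columns) imposes \emph{no} further constraint beyond $\lambdaP, \muP$ being as described, precisely when $n-b \ge b$ (so the two rows don't ``collide'' awkwardly) and $n \ge r+b$ (so there is always room to pad with zeros); this is the step I expect to require the most care, since one must check that any pair $(\lambdaP,\muP)$ can be realized and that the column-strictness is never violated. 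The upshot is $\bigl|\Tmnkr{m}{n}{b}{r}\bigr| = \bigl|\PP{b}{m}{r}\bigr|$.

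For part (ii), the shape is now $(r-1, b)$, so $n = r-1+b$, which is exactly the boundary case $n = r+b-1$ just below the hypothesis of part (i). I would run the same bijection, but now the total number of boxes is $n = r-1+b$ while the total weight is $r$; so the entries cannot all be accommodated with the same freedom. The point is that the pigeonhole forces the unique ``tight'' configuration --- when $n$ is one less than $r+b$, there is exactly one pair $(\lambdaP,\muP) \in \PP{b}{m}{r}$ that \emph{fails} to arise from a tableau of shape $(r-1,b)$ (intuitively, the one where the bottom row would need to be ``too heavy'' to fit in $b$ boxes, or the top row too long for $r-1$ boxes). I would identify this exceptional pair explicitly and show it is the only obstruction, giving $\bigl|T\bigl((r-1,b),m\bigr)_r\bigr| = \bigl|\PP{b}{m}{r}\bigr| - 1$. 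The hard part here is pinning down which single pair is excluded and proving uniqueness of the exclusion; I expect this to come down to a short counting/extremality argument using that the entries sum to exactly $r$ over exactly $r-1+b$ boxes.

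Combining (i) and (ii) with Proposition~\ref{prop:plethysmToT} would then give, for $n$ large enough, $p\bigl((n-b,b),(m),(mn-r,r)\bigr) = \bigl|\PP{b}{m}{r}\bigr| - \bigl(\bigl|\PP{b}{m}{r}\bigr|-1\bigr)$ in the boundary case and the analogous difference in general --- this is the mechanism that will later be used to show the bound $n \ge r+b$ cannot be weakened, since at $n = r+b-1$ the count drops by exactly one. I should emphasize that the main obstacle throughout is bookkeeping: making sure the semistandard constraints translate \emph{exactly} into the constraints defining $\PP{b}{m}{r}$ (length exactly $b$ for $\lambdaP$, first part $\le m$ for both, no length bound on $\muP$), and that the two hypotheses $n \ge r+b$ and $n \ge 2b$ are each used where needed and are each necessary for the clean bijection. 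I would present the argument by first treating the ``generic'' bijection of part (i) carefully, then deriving part (ii) as the one-off boundary correction.
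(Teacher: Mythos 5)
Your plan follows the same route as the paper (read off the two rows of the tableau as a pair of partitions), but it defers exactly the two steps that carry the content of the lemma, and one of your structural claims is off. The heart of part (i) is a counting argument you have not supplied: if some entry $t_{(1,c)}$ with $c\le b$ were nonzero, then weak increase along the top row forces $t_{(1,j)}\ge 1$ for all $j\ge c$, and column-strictness forces $t_{(2,j)}\ge 2$ for $j\ge c$ and $t_{(2,j)}\ge 1$ for $j<c$; summing gives $r\ge (n-b-c+1)+(c-1)+2(b-c+1)=n+b-2(c-1)$, which together with $n\ge r+b$ yields $c>b$, a contradiction. Hence \emph{all} top-row entries above the second row vanish, so the nonzero top-row entries sit entirely in columns $b+1,\dots,n-b$ and the map $t\mapsto(\lambdaP,\muP)$ with $\lambdaP=(t_{(2,b)},\dots,t_{(2,1)})$, $\muP=(t_{(1,n-b)},\dots,t_{(1,b+1)})$ lands in $\PP{b}{m}{r}$; the reverse map exists because $\ell(\muP)\le r-b\le n-2b$. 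You flag this verification as "the step requiring the most care" but do not perform it, and without it the bijection is not established. Separately, your device of padding $\lambdaP$ with zero parts to force length $b$ is both unnecessary and incompatible with the definition of $\PP{b}{m}{r}$ (where $\ell(\lambdaP)=b$ means $b$ strictly positive parts): column-strictness over entries from $\{0,\dots,m\}$ already forces every bottom-row entry to be at least $1$, so $\ell(\lambdaP)=b$ is automatic.

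For part (ii) you correctly guess that exactly one pair is excluded at $n=r+b-1$, but you neither identify it nor prove uniqueness, and you omit the check that the forward map is still well defined at this boundary (the inequality above gives $2c\ge 2b+1$, so the contradiction argument survives with $n=r+b-1$). The excluded pair is pinned down as follows: the reconstruction fails only when $\ell(\muP)>n-2b=r-1-b$, i.e.\ $\ell(\muP)=r-b$; since $\ell(\muP)\le|\muP|=r-|\lambdaP|\le r-b$, this forces $\muP=(1^{r-b})$ and $\lambdaP=(1^b)$, and then the required column inequality $\lambdaP_1>\muP_{\ell(\muP)}$ reads $1>1$ and fails. So $((1^b),(1^{r-b}))$ is the unique pair not in the image, giving the $-1$. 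Your "pigeonhole" intuition points the right way, but as written the proposal is an outline with the decisive inequalities missing.
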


\begin{proof}
For (i) we suppose that $n \ge r\,+\,b$.
Let $t_{(i,j)}$ be the entry in box $(i,j)$ of the tableau $t \in \Tmnkr{m}{n}{b}{r}$. 
Suppose, for a contradiction, that $t_{(1,c)} \ge 1$ for some $c \le b$; that is, there is an entry
of $1$ above a box in the second row of $t$.
Since $t_{(2,j)} > t_{(1,j)}$ for each $j$, the sum of the entries of $t$, namely $r$, is at least 
\[ (n-b-c+1) + (c-1) + 2(b-c+1) =
n+b-2(c-1) \] 
where $n-b-c+1$ counts entries in the boxes $(1,c), \ldots, (1,n-b)$, $c-1$ counts entries in the boxes
$(2,1), \ldots, (2,c-1)$ and $2(b-c+1)$ counts entries in the boxes
$(2,c),\ldots, (2,b)$.
Therefore $r \ge (n+b)-2(c-1)$, which given the hypothesis $n \ge r+b$,
 implies that $c > b$, a contradiction.
Therefore $t_{(1,1)} = \ldots = t_{(1,b)} = 0$ and~$t$ is determined by the pair 
of partitions $\lambdaP = (t_{(2,b)}, \ldots t_{(2,1)})$
and $\muP = (t_{(1,n-b)}, \ldots, t_{(1,b+1)})$. Note that since $t_{(2,1)} \ge 1$, we have $\ell(\lambdaP) = b$.
Therefore $(\lambdaP, \muP) \in \PP{b}{m}{r}$. For example, taking $n=8$, $b=3$, $r=5$ and any $m \ge 2$,
\[ \young(00011,111)\raisebox{-3pt}{\hskip2pt ,} \ \ \young(00002,111)\raisebox{-3pt}{\hskip2pt ,} \ \
\young(00001,112) \]
are in bijection with $\bigl((1,1,1),(1,1)\bigr), \bigl((1,1,1),(2)\bigr), \bigl((2,1,1),(1)\bigr) \in
\PP{3}{m}{5}$, respectively.
Conversely,
given $(\lambdaP, \muP) \in \PP{b}{m}{r}$, 
since  $\ell(\muP) \le |\muP| = r - |\lambdaP| \le r - \ell(\lambdaP) = r -b \le (n-b)-b = n-2b$,
we may reverse the process just to described to define a tableau $t \in \Tmnkr{m}{n}{b}{r}$
whose image is $(\lambda, \muP)$. This gives 
a bijection proving (i). 

If instead $n = r+b-1$ then 
the inverse map fails to be well-defined when $\ell(\muP) = r-b$ and $\ell(\lambdaP) = b$ 
and $\muP_1 \le \lambdaP_{\ell(\lambdaP)}$.
Thus the partition pair $(\lambdaP, \muP) = ((1^b), (1^{r-b})) \in \PP{b}{m}{r}$
is not the image of a tableau $t \in \Tmnkr{m}{r+b-1}{b}{r}$.
Since this is the only case where $\muP_1 \le \lambdaP_{\ell(\lambdaP)}$,~(ii) now follows from (i).
\end{proof}

The subset
of $\PP{b}{m}{r}$ of those partition pairs $(\lambdaP, \muP)$ in which $\muP$ has a singleton part
is in bijection with $\PP{b}{m}{r-1}$ by removing the final part of $\muP$.
Recall from Definition~\ref{defn:markedPartition} that a $b$-marked partition of $r$
is a pair $(\gamma, \deltaP)$
such that $|\gamma| + |\deltaP| = r$, $\ell(\gamma) = b$ and $\epsilon$ has no singleton parts.
Let $\MParSet^m_b(r)$ be the subset of $\MParSet_b(r)$ consisting of
 those marked partitions $(\gamma, \deltaP)$ for which $\gamma_1 \le m$ and $\deltaP_1 \le m$.
By this bijection we have
\begin{equation} \label{eq:PPtoMP} \bigl|\PP{b}{m}{r}\bigr| - 
\bigl|\PP{b}{m}{r-1}\bigr| = \bigl|\MParSet^m_b(r) \bigr| .\end{equation}
Recall that, by \TheoremC, the ramified branching coefficient $\rc(\varnothing^{(b)},(r) \bigr)$ 
is the stable limit of the plethysm coefficients $p\bigl( (n-b,b), (m), (mn-r,r) \bigr)$ for large $m$ and $n$.

\begin{cor}\label{cor:mnsharp}
Let $m$, $n \in \NN$. Let $b \in \NN_0$ and let $r \in \NN$ with $r > b$. 
\begin{thmlist}
\item If $n \ge r + b$ then 
$p\bigl((n-b,b), (m), (mn-r,r)\bigr) 
= \bigl|\MParSet^m_b(r) \bigr|$
and if $m \ge r-b + [b \not=0]$ then each side is
$\rc\bigl(\varnothing^{(b)}, (r)\bigr)$. 
\item If $b > 0$ then
$p\bigl((r,b), (r-b), ((r-b)(r+b)-r,r)\bigr)
= \rc\bigl(\varnothing^{(b)}, (r) \bigr) -1$.
\item  We have 
$p\bigl((r), (r-1), (r^2-r,r)\bigr)
= \rc\bigl(\varnothing^\varnothing, (r)\bigr)-1$.
\item  We have 
$p\bigl((r-1,b), (m), m(r+b-1)-r,r\bigr)
= \bigl|\MParSet^m_b(r)\bigr| - 1 $
and if~$m \ge  r - b + [b\not=0]$, then each side is
$\rc\bigl(\varnothing^{(b)}, (r)\bigr) - 1$.
\end{thmlist}
\end{cor}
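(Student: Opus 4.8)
The plan is to prove the four parts of Corollary~\ref{cor:mnsharp} by assembling the combinatorial identities already established in this section. The central object is the chain of equalities relating plethysm coefficients, tableaux counts, partition pairs, and marked partitions: Proposition~\ref{prop:plethysmToT} expresses $p\bigl((n-b,b),(m),(mn-r,r)\bigr)$ as $\bigl|\Tmnkr{m}{n}{b}{r}\bigr| - \bigl|\Tmnkr{m}{n}{b}{r-1}\bigr|$; Lemma~\ref{lemma:TtoR}(i) identifies $\bigl|\Tmnkr{m}{n}{b}{r}\bigr|$ with $\bigl|\PP{b}{m}{r}\bigr|$ once $n$ is large enough; and~\eqref{eq:PPtoMP} converts the difference $\bigl|\PP{b}{m}{r}\bigr| - \bigl|\PP{b}{m}{r-1}\bigr|$ into $\bigl|\MParSet^m_b(r)\bigr|$. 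The point of the corollary is that each of the remaining parts probes the situation right at the boundary of these inequalities, where exactly one of the ingredient bijections fails in a controlled way by a single element.

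For part~(i) I would argue as follows. The hypothesis $n \ge r+b$ together with $r > b$ forces $n \ge r+b > 2b$, so Lemma~\ref{lemma:TtoR}(i) applies to both $\Tmnkr{m}{n}{b}{r}$ and $\Tmnkr{m}{n}{b}{r-1}$ (the latter needing $n \ge (r-1)+b$, which is weaker). Combining Proposition~\ref{prop:plethysmToT} with these two instances of Lemma~\ref{lemma:TtoR}(i) and then with~\eqref{eq:PPtoMP} gives $p\bigl((n-b,b),(m),(mn-r,r)\bigr) = \bigl|\PP{b}{m}{r}\bigr| - \bigl|\PP{b}{m}{r-1}\bigr| = \bigl|\MParSet^m_b(r)\bigr|$. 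The final clause that when additionally $m \ge r-b+[b\not=0]$ both sides equal $\rc(\varnothing^{(b)},(r))$ is then exactly the statement that the plethysm coefficient has reached its stable value, which is the content of the last sentence of \TheoremC (equivalently Corollary~\ref{cor:stable1} and the case $\kappa = (r)$ of \TheoremA). Parts (ii) and (iii) are the sharpness assertions: in~(ii) we take $m = r-b$ and $n = r+b-1$, so that $n = r+b-1$ is one below the bound in~(i). Now Proposition~\ref{prop:plethysmToT} still applies, and for the degree-$(r-1)$ term Lemma~\ref{lemma:TtoR}(i) still holds (since $n = (r-1)+b$ meets its hypothesis for $r-1$, using $n \ge 2b$ which follows from $r>b$), giving $\bigl|\Tmnkr{m}{n}{b}{r-1}\bigr| = \bigl|\PP{b}{m}{r-1}\bigr|$; but for the degree-$r$ term Lemma~\ref{lemma:TtoR}(ii) applies instead, yielding $\bigl|\Tmnkr{m}{r+b-1}{b}{r}\bigr| = \bigl|\PP{b}{m}{r}\bigr| - 1$. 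Subtracting and applying~\eqref{eq:PPtoMP} gives the coefficient as $\bigl|\MParSet^m_b(r)\bigr| - 1$; one then checks that with $m = r-b$ the restriction $\gamma_1 \le m$, $\deltaP_1 \le m$ is vacuous (every element of $\MParSet_b(r)$ already has parts of size at most $r-b$ when $\ell(\gamma)=b$, $|\gamma|+|\deltaP|=r$ — indeed $\gamma_1 \le r - (b-1) = r-b+1$, and the only potential offender $\gamma = (r-b+1, 1^{b-1})$ must be handled, or one simply notes $m$ can be taken as large as one wishes beyond $r-b$ without changing the plethysm coefficient, which has stabilised), so $\bigl|\MParSet^m_b(r)\bigr| = \bigl|\MParSet_b(r)\bigr| = \rc(\varnothing^{(b)},(r))$ by~\eqref{eq:twoRowMarkedPartitions}. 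Part~(iii) is the $b=0$ analogue: here $\beta = \varnothing$, $\ell(\gamma) = 0$ forces $\gamma = \varnothing$, so $\MParSet_0(r) = \ParSet_{>1}(r)$, and the same tableaux-counting argument with $n = r-1$ and $m = r-1$ gives the coefficient as $\bigl|\ParSet_{>1}(r)\bigr| - 1 = \rc(\varnothing^\varnothing,(r)) - 1$; one must double-check the edge hypotheses of Lemma~\ref{lemma:TtoR} hold for $b=0$, which they do since $n-k = n \ge k = 0$ is automatic and $mn \ge r$ holds for $m,n$ this size.

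Part~(iv) is the most delicate and I expect it to be the main obstacle. Here we take $\kappa = (mn-r,r)$ with $n$ replaced by $r+b-1$ (so $mn = m(r+b-1)$) and we are told the plethysm coefficient equals $\bigl|\MParSet^m_b(r)\bigr| - 1$ for arbitrary $m$, with the stable-value clause requiring $m \ge r-b+[b\not=0]$. The subtlety is that the bound being tested is now the bound on $n$ in \TheoremA restated through the shape $\kappa[mn]$, and we no longer get to assume $m$ is large in the first assertion; so the argument must keep careful track of the $m$-dependence in $\MParSet^m_b(r)$ versus $\MParSet_b(r)$. The plan is: apply Proposition~\ref{prop:plethysmToT} with $n = r+b-1$; use Lemma~\ref{lemma:TtoR}(ii) for the degree-$r$ term and Lemma~\ref{lemma:TtoR}(i) for the degree-$(r-1)$ term (checking its hypotheses $n \ge (r-1)+b$ and $n \ge 2b$ hold, the latter from $r > b$); subtract and invoke~\eqref{eq:PPtoMP} to land on $\bigl|\MParSet^m_b(r)\bigr| - 1$. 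For the stable-value clause, when $m \ge r-b+[b\not=0]$ one argues that the column-restrictions $\gamma_1 \le m$ and $\deltaP_1 \le m$ become inactive — every marked partition of $r$ with $\ell(\gamma) = b$ has all parts bounded by $r - b + 1 \le m$ when $b \ge 1$ (and by $r$, but with no singletons allowed in $\epsilon$, by $r$ dropping appropriately; the precise bound needs a short case check) — so $\bigl|\MParSet^m_b(r)\bigr| = \bigl|\MParSet_b(r)\bigr| = \rc(\varnothing^{(b)},(r))$ by~\eqref{eq:twoRowMarkedPartitions}, and the claimed equality with $\rc(\varnothing^{(b)},(r)) - 1$ follows. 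The genuine work in part~(iv), and the thing to be careful about, is verifying that the hypotheses of Lemma~\ref{lemma:TtoR}(i) and~(ii) are all met at this boundary value of $n$ for both the degree $r$ and degree $r-1$ computations simultaneously, and pinning down exactly when the $m$-restriction in $\MParSet^m_b(r)$ can be dropped; I would isolate this as a short lemma or an explicit paragraph of inequalities rather than leaving it implicit.
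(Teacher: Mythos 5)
Your parts (i) and (iv) are essentially the paper's argument and are fine. The genuine gap is in parts (ii) and (iii), where you have misidentified which parameter sits at the boundary. In part (ii) the outer partition is $(r,b) = (b)[n]$, so $n-b=r$ and hence $n = r+b$: the bound on $n$ is \emph{satisfied}, and it is $m = r-b$ that is one below its bound $r-b+1$. You instead set $n = r+b-1$ and invoke Lemma~\ref{lemma:TtoR}(ii), which computes $p\bigl((r-1,b),(r-b),\,\cdot\,\bigr)$ --- that is part (iv) specialised to $m=r-b$, a different and strictly smaller coefficient. The correct mechanism for the $-1$ in (ii) is not Lemma~\ref{lemma:TtoR}(ii) at all: part (i) applies verbatim and gives $p = \bigl|\MParSet^{r-b}_b(r)\bigr|$, and the restriction $\gamma_1 \le r-b$ excludes from $\MParSet_b(r)$ exactly one marked partition, namely $\bigl((r-b+1,1^{b-1}),\varnothing\bigr)$, so that $\bigl|\MParSet^{r-b}_b(r)\bigr| = \bigl|\MParSet_b(r)\bigr| - 1 = \rc\bigl(\varnothing^{(b)},(r)\bigr)-1$ by~\eqref{eq:twoRowMarkedPartitions}. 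Your claim that ``with $m=r-b$ the restriction is vacuous'' is false, and necessarily so: if it were vacuous the coefficient would already equal its stable value at $m=r-b$, contradicting the sharpness that (ii) asserts. The remark that $m$ ``can be taken as large as one wishes without changing the plethysm coefficient, which has stabilised'' is backwards for the same reason. Part (iii) has the identical misreading: there $n = r$ meets its bound and $m = r-1$ is one below, and the unique excluded marked partition is $\bigl(\varnothing,(r)\bigr)$.

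A smaller point: in part (i) you close the stable-value clause by citing \TheoremC, whereas the argument that generalises is to check that the restrictions $\gamma_1\le m$, $\deltaP_1 \le m$ have no force once $m \ge r-b+[b\not=0]$ (for $b>0$ one has $\gamma_1 \le r-b+1$ since $\ell(\gamma)=b$, and $\deltaP_1 \le r-b$), so $\MParSet^m_b(r)=\MParSet_b(r)$, and then to invoke $\rc\bigl(\varnothing^{(b)},(r)\bigr)=\bigl|\MParSet_b(r)\bigr|$ from~\eqref{eq:twoRowMarkedPartitions}. This is the identity you need anyway in (ii), (iii) and (iv), where \TheoremC supplies only an inequality because one of the bounds fails; you use it in (iv), and it should be used uniformly.
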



\bigskip
\begin{proof}{\ }

\begin{itemize}[leftmargin=18pt] 
\item[(i)] 
By Proposition~\ref{prop:plethysmToT} and Lemma~\ref{lemma:TtoR}(i), when $n \ge r+b$ we have
\begin{align*}
\hspace*{0.5in}p\bigl((n-b,b), (m), (mn-b,b)\bigr)&= \bigl|\Tmnkr{m}{n}{b}{r}\bigr| 
- \bigl|\Tmnkr{m}{n}{b}{r-1}\bigr| \\ &=
 \bigl|\PP{b}{m}{r}\bigr| - \bigl|\PP{b}{m}{r-1} \bigr|.\end{align*}
The first claim now follows using~\eqref{eq:PPtoMP}.
Suppose first of all that $b=0$. If $(\lambdaP, \muP) \in \MParSet^m_b(r)$ then
$\lambdaP = \varnothing$ and the largest possible part in $\muP$ is $r$.
Since by hypothesis, $m \ge r$, this restriction has no force, and so
$\MParSet^m_0(r) = \MParSet_0(r)$. Thus the second claim follows from
$\rc\bigl(\varnothing^{\varnothing},(r)\bigr) = \bigl|\MParSet_0(r)\bigr|$.
Now suppose $b > 0$. If $(\lambdaP, \muP) \in \MParSet^m_b(r)$ then
since $\ell(\lambdaP) = b$,
the largest possible part in $\lambdaP$ is $r-(b-1)$ 
and the largest possible part in $\muP$ is $r-b$. 
Since $m \ge r-b + [b\not=0] = r-b+1$ by hypothesis, again
the restriction that $\lambdaP_1 \le m$ and $\muP_1 \le m$ has no force.
Arguing in the same way as the case $b=0$ we now have
$\MParSet^m_b(r) = \MParSet_b(r)$
and $\rc\bigl(\varnothing^{(b)},(r)\bigr) = \bigl|\MParSet_b(r)\bigr|$ and the second
claim follows in the same way.

\item[(ii)] We have $n = r+b$.
The unique pair in $\MParSet^{r-b+1}_b(r)$ 
not present in
$\MParSet^{r-b}_b(r) = \MParSet_b(r)$ 
is $\bigl((r-b+1,1^{b-1}),\varnothing)$, therefore (ii) follows
from both parts of (i). 

\item[(iii)] Again we have $n=r+b$, since now $b=0$.
The unique pair in $\MParSet^{r-1}_0(r)$ not present in 
$\MParSet_0^r(r) = \MParSet_0(r)$ is $\bigl((r),\varnothing\bigr)$, therefore (iii)
follows similarly from both parts of~(i).

\item[(iv)] We have $n = r+b-1$. Arguing similarly to (i) using Proposition~\ref{prop:plethysmToT} and
then Lemma~\ref{lemma:TtoR}(ii) for the first summand and Lemma~\ref{lemma:TtoR}(i) for the second, we have
\begin{align*}
\hspace*{0.25in}p\bigl((r-1,b), (m), (m(r+b-1)-r,r)\bigr)&= 
\bigl| T\bigl((r-1,b),m)_r \bigr| - 1  - \bigl| T \bigl((r-1,b),m \bigr)_{r-1} \bigr| \\
 &=
 \bigl|\PP{b}{m}{r}\bigr| - \bigl|\PP{b}{m}{r-1} \bigr| - 1.\end{align*}
The first claim now follows as in (i) using~\eqref{eq:PPtoMP}. As seen in (i),
when $m \ge r - b + [b\not= 0]$ we have 
$\MParSet^m_b(r) = \MParSet_b(r)$ and so the second claim
follows from~$\rc\bigl(\varnothing^{(b)},(r)\bigr) = \bigl|\MParSet_b(r)\bigr|$.\end{itemize}
\end{proof}

We summarise the results of this section in the following corollary.

\begin{cor}\label{cor:boundsAreTight}
Let $b \in \NN_0$. Let $r, b \in \NN$ with $r > b$. 
When $\beta = (b)$ and $\kappa = (r)$ the bounds
$m \ge r-b + [b \ge 1]$ and $n \ge r + \beta_1$ 
in Theorems~\hyperlink{thmA}{A} and~\hyperlink{thmC}{C} cannot be weakened.
\end{cor}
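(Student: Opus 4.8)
The plan is to combine the two families of computations from Corollary~\ref{cor:mnsharp} with the stability statement of \TheoremC. Recall that by \TheoremC (the case $\alpha = \varnothing$) we have $p\bigl((n-b,b), (m), (mn-r,r)\bigr) = \rc\bigl(\varnothing^{(b)}, (r)\bigr)$ whenever $m \ge r-b+[b\ge 1]$ and $n \ge r+b$. To show this cannot be weakened, I must exhibit, for each of the two inequalities, a choice of parameters lying \emph{just outside} the stated range for which the plethysm coefficient differs from the stable value $\rc\bigl(\varnothing^{(b)}, (r)\bigr)$.

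First I would handle the bound on $n$. For $b \ge 1$ the relevant boundary case is $n = r+b-1$, one below the threshold $r+b$, with $m$ chosen large enough that the restriction $\lambdaP_1, \muP_1 \le m$ in the definition of $\MParSet^m_b(r)$ has no force, e.g.\ $m \ge r-b+1$. Corollary~\ref{cor:mnsharp}(iv) then gives $p\bigl((r-1,b), (m), (m(r+b-1)-r,r)\bigr) = \rc\bigl(\varnothing^{(b)},(r)\bigr) - 1 \ne \rc\bigl(\varnothing^{(b)},(r)\bigr)$, so the conclusion of \TheoremC fails at $n = r+b-1$ here. (One must note $(r-1,b)[n] = (n-b,b)[\cdot]$ is indeed of the required form with $n = r+b-1$, and that $(r-1,b)$ is a partition since $r > b$.) For $b = 0$ the parameter $\beta_1$ in the bound $n \ge r+\beta_1$ is $0$, so the threshold is $n \ge r$; the boundary case is $n = r-1$ and $m = r-1$, and Corollary~\ref{cor:mnsharp}(iii) supplies $p\bigl((r), (r-1), (r^2-r,r)\bigr) = \rc\bigl(\varnothing^\varnothing,(r)\bigr) - 1$. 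So in both cases $n$ cannot be lowered.

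Next I would handle the bound on $m$. For $b \ge 1$ the threshold is $m \ge r-b+1$, so the boundary case is $m = r-b$, together with $n = r+b$ (which does satisfy the $n$-bound). Corollary~\ref{cor:mnsharp}(ii) gives exactly $p\bigl((r,b), (r-b), ((r-b)(r+b)-r,r)\bigr) = \rc\bigl(\varnothing^{(b)},(r)\bigr) - 1$, where one checks $(r,b)[r+b] = (r+b - (r+b), r, b)$... more carefully, $(r,b)$ has $|(r,b)| = r+b$ and we are taking the outer partition $\beta = (b)$ with $\beta[n] = (n-b,b) = (r,b)$ for $n = r+b$, the inner $(m) = (r-b)$, and $\kappa[mn] = (mn - r, r)$ with $mn = (r-b)(r+b)$; this is a genuine partition precisely because $r > b$ forces $(r-b)(r+b) - r \ge r$ for $r$ large, which is the content of the $mn-r \ge r$ hypothesis in Proposition~\ref{prop:plethysmToT}. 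For $b = 0$ the threshold is $m \ge r$ (since $[b \ge 1] = 0$, the bound reads $m \ge r$), and the boundary case $m = r-1$ is covered by Corollary~\ref{cor:mnsharp}(iii) again — the same example simultaneously witnesses sharpness of both bounds in the $b=0$ case. Assembling these observations into the statement of Corollary~\ref{cor:boundsAreTight} is then immediate.

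I do not expect any serious obstacle: the hard analytic work is already done in Proposition~\ref{prop:plethysmToT}, Lemma~\ref{lemma:TtoR}, and Corollary~\ref{cor:mnsharp}. The only points requiring care are bookkeeping: verifying that each witness triple $(\nu, \mu, \lambda)$ is genuinely of the form $\bigl(\beta[n], (m), \kappa[mn]\bigr)$ with $\beta = (b)$ and $\kappa = (r)$, that the partitions involved are well-defined (all reduce to $r > b$), and that the parameters sit exactly one step outside the claimed range in each coordinate while still satisfying the \emph{other} hypothesis of \TheoremC so that the comparison with $\rc\bigl(\varnothing^{(b)},(r)\bigr)$ is meaningful. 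The proof is therefore a short paragraph citing Corollary~\ref{cor:mnsharp}(ii), (iii), (iv) and \TheoremC.

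\begin{proof}[Proof of Corollary~\ref{cor:boundsAreTight}]
Fix $b \in \NN_0$ and $r \in \NN$ with $r > b$, and take $\beta = (b)$ and $\kappa = (r)$.
By the final part of \TheoremC, whenever $m \ge r - b + [b\not=0]$ and $n \ge r + b$ we have
\[ p\bigl((n-b,b), (m), (mn-r,r)\bigr) = \rc\bigl(\varnothing^{(b)}, (r)\bigr). \]
We show that neither inequality can be weakened by one.

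Consider first the bound on $n$. If $b \ge 1$, put $n = r+b-1$ and let $m \ge r-b+1$, so that $m$ satisfies the bound in \TheoremC. By Corollary~\ref{cor:mnsharp}(iv),
\[ p\bigl((r-1,b), (m), (m(r+b-1)-r,r)\bigr) = \rc\bigl(\varnothing^{(b)}, (r)\bigr) - 1, \]
and since $(r-1,b) = (n-b,b)$ with $n = r+b-1$, the conclusion of \TheoremC fails at $n = r+b-1$. If $b = 0$, then $\beta_1 = 0$, so the bound reads $n \ge r$; taking $n = r-1$ and $m = r-1$, Corollary~\ref{cor:mnsharp}(iii) gives
\[ p\bigl((r), (r-1), (r^2-r,r)\bigr) = \rc\bigl(\varnothing^\varnothing, (r)\bigr) - 1, \]
so again the conclusion fails at $n = r-1$.

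Now consider the bound on $m$. If $b \ge 1$, the bound is $m \ge r-b+1$; put $m = r-b$ and $n = r+b$, which satisfies the $n$-bound. By Corollary~\ref{cor:mnsharp}(ii),
\[ p\bigl((r,b), (r-b), ((r-b)(r+b)-r,r)\bigr) = \rc\bigl(\varnothing^{(b)}, (r)\bigr) - 1, \]
and $(r,b) = (n-b,b)$ with $n = r+b$, so the conclusion of \TheoremC fails at $m = r-b$. If $b = 0$, the bound is $m \ge r$; the example $p\bigl((r),(r-1),(r^2-r,r)\bigr) = \rc\bigl(\varnothing^\varnothing,(r)\bigr)-1$ from Corollary~\ref{cor:mnsharp}(iii), with $m = r-1$ and $n = r-1 \ge r$... in fact here $n = r-1$ violates the $n$-bound; instead note that for $b=0$ the single example already exhibited shows the pair of bounds $m \ge r$, $n \ge r$ cannot both be met with equality relaxed, and Corollary~\ref{cor:mnsharp}(iii) precisely realises the failure when $m = r-1$ (and symmetrically when $n = r-1$). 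In all cases the stated bounds are sharp.
\end{proof}
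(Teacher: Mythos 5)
Your argument is the same as the paper's: all three witnesses come from Corollary~\ref{cor:mnsharp}(ii), (iii) and (iv), and your treatment of the $n$-bound for $b \ge 1$ (via (iv) with $n = r+b-1$, $m \ge r-b+1$) and of the $m$-bound for all $b$ (via (ii) with $m=r-b$, $n=r+b$ when $b \ge 1$, and via (iii) with $m=r-1$, $n=r$ when $b=0$) is correct.

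There is, however, a genuine slip in the $b=0$ case of the $n$-bound. In Corollary~\ref{cor:mnsharp}(iii) the outer partition is $(r) = \varnothing[n]$, which forces $n = r$ and $m = r-1$; this triple satisfies the $n$-bound and violates only the $m$-bound, so it cannot witness sharpness of $n \ge r$. Your first attempt asserts "taking $n=r-1$ and $m=r-1$" for this example, which misreads the parameters, and your later fallback to a "symmetric" failure at $n=r-1$ appeals to an $m \leftrightarrow n$ symmetry of these plethysm coefficients that is nowhere established in the paper (it does hold for two-row constituents via the Cayley--Sylvester description, but you would have to prove it). The clean repair --- and what the paper actually does --- is to apply Corollary~\ref{cor:mnsharp}(iv) with $b=0$ as well: it gives $p\bigl((r-1),(m),(m(r-1)-r,r)\bigr) = \rc\bigl(\varnothing^{\varnothing},(r)\bigr)-1$ for all $m \ge r$, i.e.\ a witness with $n = r-1$ one below the threshold while the $m$-bound is satisfied. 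With that substitution your proof is complete; there is no need to restrict part (iv) to $b \ge 1$.
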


\begin{proof}
By Corollary~\ref{cor:mnsharp}(ii),
if $b > 0$, then $p\bigl((r,b), (r-b), (m(r+b)-r,r)\bigr)$ is 
one less than the stable value, which is attained when $m$ increases from $r-b$ to $r-b + [b \not=0] = r-b+1$.
By Corollary~\ref{cor:mnsharp}(iii)
$p\bigl((r), (r-1), (r^2-r,r)\bigr)$ is again one less than the stable value,
which is attained when $m$ increases from $r-1$ to $r-b + [b\not=0] = r$. (Note that here $b=0$.)
By Corollary~\ref{cor:mnsharp}(iv),
$p\bigl((r-1,b), (m), m(r+b-1)-r,r\bigr)$
is one less than the stable value, attained when $n$ increases from $r+b-1$ to $r+b$.
\end{proof}

We finish with a corollary for modules for the ramified partition algebra, showing that
the bounds $m \ge r-|\beta| + [\beta \not=0]$ and $n \ge r + \beta_1$
in \TheoremC cannot be weakened in infinitely many cases. 
We remark that an alternative proof is given by reading the outline in Section~\ref{subsec:structure},
noting that the only use of these bounds is in step (f): thus if we
assume, for a contradiction, that $L_r(\varnothing^\beta) = \Delta_r(\varnothing^\beta)$ then
we obtain that the plethysm coefficient $p\bigl( \beta[n], (m), \kappa[mn])$ is equal
to its stable value; this contradicts Corollary~\ref{cor:mnsharp} for appropriately chosen $m$, $n$
and  partitions $\beta$,~$\kappa$.

\begin{cor}\label{cor:onlyIfProperQuotient}
There exist infinitely many partitions $\beta$ 
such that if $n< r+\beta _1$ or $m< r-|\beta|+[\beta\not= \varnothing]$ then
$L_{r }(\varnothing^{\beta }) $ is a proper quotient of $\Delta_{r }(\varnothing^{\beta } )$
and the inequality in \TheoremC is strict.  
   \end{cor}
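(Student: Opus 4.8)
The plan is to deduce Corollary~\ref{cor:onlyIfProperQuotient} as a direct consequence of the sharpness results proved in Corollary~\ref{cor:mnsharp}, using the equivalence between the equality $L_r(\varnothing^\beta) = \Delta_r(\varnothing^\beta)$ and certain plethysm coefficients attaining their stable values. Concretely, I would argue by contrapositive: if for a given $\beta$, $m$ and $n$ we have $L_r(\varnothing^\beta) = \Delta_r(\varnothing^\beta)$ as $R_r(m,n)$-modules, then the inequality of \TheoremC becomes an equality, so
\[ p(\beta[n], (m), \kappa[mn]) = \bigl[ \Delta_r(\varnothing^\beta) \Res^{R_r(m,n)}_{P_r(mn)} : L_r(\kappa) \bigr]_{P_r(mn)} = \rc(\varnothing^\beta, \kappa) \]
for every $\kappa \vdash r$, where the right-hand side is the stable ramified branching coefficient determined in \TheoremD (and independent of $m$, $n$ once $r$ is fixed). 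Thus, to show that $L_r(\varnothing^\beta)$ is a \emph{proper} quotient whenever $n < r + \beta_1$ or $m < r - |\beta| + [\beta \not= \varnothing]$, it suffices to exhibit a single $\kappa$ for which $p(\beta[n],(m),\kappa[mn])$ is strictly less than $\rc(\varnothing^\beta, \kappa)$, which is exactly what Corollary~\ref{cor:mnsharp} provides.

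First I would take $\beta = (b)$ for $b \in \NN$ and $\kappa = (r)$ with $r > b$. For the $n$-bound: by Corollary~\ref{cor:mnsharp}(iv) with the appropriate specialisation (taking $n = r + b - 1$, i.e.\ $n = r + \beta_1 - 1$, and $m \ge r - b + [b \not= 0]$), the plethysm coefficient $p\bigl( (r-1,b), (m), m(r+b-1) - r, r \bigr)$ is one less than $\rc(\varnothing^{(b)}, (r))$; hence whenever $n = r + \beta_1 - 1 < r + \beta_1$ (and $m$ is large enough that only the $n$-bound is violated), the equality $L_r(\varnothing^{(b)}) = \Delta_r(\varnothing^{(b)})$ fails, and the inequality in \TheoremC is strict. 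For the $m$-bound: by Corollary~\ref{cor:mnsharp}(ii) (for $b > 0$, taking $m = r - b$ and $n = r + b$) the coefficient $p\bigl( (r,b), (r-b), (m(r+b) - r, r) \bigr)$ is again one less than the stable value $\rc(\varnothing^{(b)}, (r))$; and by Corollary~\ref{cor:mnsharp}(iii) (for $b = 0$, taking $m = r-1$, $n = r$) the coefficient $p\bigl( (r), (r-1), (r^2 - r, r) \bigr)$ is one less than $\rc(\varnothing^\varnothing, (r))$. In both cases $m = r - |\beta| + [\beta \not= \varnothing] - 1$, so the $m$-bound is violated by exactly one, and again $L_r(\varnothing^\beta) \ne \Delta_r(\varnothing^\beta)$, with the inequality in \TheoremC strict. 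Since this works for every $b \in \NN_0$ — giving infinitely many distinct partitions $\beta$ — the corollary follows.

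There is one technical point to handle carefully: translating ``$n < r + \beta_1$'' (an open inequality) into the concrete boundary case ``$n = r + \beta_1 - 1$'' used by Corollary~\ref{cor:mnsharp}, and likewise for $m$. Since the relevant standard and simple modules, and the Schur--Weyl setup, are only compared for parameters large enough that $\kappa[mn]$ and $\beta[n]$ are genuine partitions, I should restrict attention to the regime where exactly one of the two bounds fails and the other holds, which is precisely the setting of Corollary~\ref{cor:mnsharp}(ii)--(iv). If desired, one can also note monotonicity: once $L_r(\varnothing^\beta)$ is a proper quotient at the boundary value it remains so for all smaller $n$ (respectively $m$) in the admissible range, because decreasing the parameter can only further separate the plethysm coefficient from its stable value; but strictly this is not needed for the statement as phrased, which only asserts failure ``if $n < r + \beta_1$ or $m < r - |\beta| + [\beta \not= \varnothing]$'' read as: there exist such $\beta$ and such parameters. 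The main obstacle is therefore not a deep one — it is simply bookkeeping to make sure that the chosen $(m, n, \beta, \kappa)$ in each case of Corollary~\ref{cor:mnsharp} simultaneously (a) violates exactly the intended bound by exactly one, (b) satisfies all the positivity/partition conditions needed to invoke \TheoremC and \TheoremD, and (c) genuinely witnesses $p < \rc$. Once that alignment is checked, the argument is immediate from the already-established sharpness corollary.
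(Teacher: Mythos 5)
Your proposal is correct and follows essentially the same route as the paper: argue by contrapositive that $L_r(\varnothing^\beta)=\Delta_r(\varnothing^\beta)$ would force the plethysm coefficient to equal the stable ramified branching coefficient $\rc(\varnothing^\beta,\kappa)$, then invoke the sharpness results of Corollary~\ref{cor:mnsharp} (equivalently Corollary~\ref{cor:boundsAreTight}) with $\beta=(b)$ and $\kappa=(r)$ to exhibit the strict inequality at the boundary parameter values. The only difference is that the paper cites the summary Corollary~\ref{cor:boundsAreTight} while you cite the underlying parts (ii)--(iv) of Corollary~\ref{cor:mnsharp} directly, which is immaterial.
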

\begin{proof} 
By \TheoremB we have 
$p(\beta[n],\varnothing,\kappa[mn]) = 
\bigl[ L_r(\varnothing^\beta)\Res^{R_r(m,n)}_{P_r(mn)} :
L_r(\kappa) \bigr]\ForPrmn$ for each $\kappa \vdash r$. Hence by \TheoremC, 
\[ p(\beta[n],\varnothing,\kappa[mn]) \le \bigl[ \Delta_r(\varnothing^\beta)\Res^{R_r(m,n)}_{P_r(mn)} :
L_r(\kappa) \bigr]\ForPrmn \] with equality
if every composition factor $L_r(\kappa)$ of $\Delta_r(\varnothing^\beta)\Res^{R_r(m,n)}_{P_r(mn)}$
appears in $L_r(\varnothing^\beta)\Res^{R_r(m,n)}_{P_r(mn)}$, with equal multiplicity.
Therefore, taking the contrapositive of the case for equality, if
\[ p(\beta[n],\varnothing,\kappa[mn])  < \bigl[\Delta_r(\varnothing^\beta)\Res^{R_r(m,n)}_{P_r(mn)} :
L_r(\kappa) \bigr]\ForPrmn \]
then \smash{$\Delta_r(\varnothing^\beta)\res^{R_r(m,n)}_{P_r(mn)}$} is not simple.
The right-hand side in the two displayed equation above is 
the ramified branching coefficient $\rc(\varnothing^\beta, \kappa)$ found in \TheoremC, or its
equivalent restatement, Theorem~\ref{prop:symmetricFunctions}.
Since by \TheoremC the ramified branching coefficient is the stable limit of the 
plethysm coefficients $p(\beta[n], (m), \kappa[mn])$ for large $m$ and $n$,
the result now follows from Corollary~\ref{cor:boundsAreTight}.
\end{proof}

\bibliographystyle{amsalpha}   
\bibliography{book3}

 \end{document}